\DeclareSymbolFontAlphabet{\mathbb}{AMSb}
\DeclareSymbolFontAlphabet{\mathbbl}{bbold}
\renewcommand{\epsilon}{\varepsilon}
\renewcommand{\phi}{\varphi}
\setlist{nosep}
\setlist[enumerate,1]{label*=(\roman*)}
\newcommand{\labeleditem}[1]{
\item[\text{#1}]\protected@edef\@currentlabel{\text{#1}}\phantomsection
}
\renewrobustcmd*{\mkbibemph}{\mkbibitalic}
\protected\long\def\blx@imc@mkbibemph#1{\blx@imc@mkbibitalic{#1}}
\definecolor{darkblue}{rgb}{0.2,0,0.6}
\definecolor{darkgreen}{rgb}{0.2,0.5,0.2}
\crefname{equation}{}{}
\crefname{enumi}{}{}
\crefname{section}{Section}{Sections}
\crefname{appendix}{Appendix}{Appendices}
\declaretheoremstyle[
    spaceabove=6pt,%
    spacebelow=6pt,%
    headfont=\normalfont\bfseries,%
    notefont=\mdseries,%
    notebraces={(}{)},%
    bodyfont=\normalfont,%
    postheadspace=1em,%
]{mystyle}
\declaretheoremstyle[
    spaceabove=0pt,%
    spacebelow=0pt,%
    headfont={\normalfont\ttfamily},%
    notefont={\normalfont},%
    bodyfont={\addtolength{\leftskip}{.8em}},%
    postheadspace=.5em,%
    headpunct={.},%
]{claim}
\declaretheoremstyle[
    spaceabove=0pt,%
    spacebelow=5pt,%
    headfont={\normalfont},%
    notefont={\normalfont},%
    bodyfont={\addtolength{\leftskip}{.8em}},%
    postheadspace=.5em,%
    headpunct={},%
]{since}
\declaretheorem[%
    style=mystyle,%
    numberwithin=section,%
    qed=$\blacklozenge$,%
]{definition}
\declaretheorem[%
    style=mystyle,%
    numberwithin=section,%
    sibling=definition,%
    qed=$\blacklozenge$,%
]{remark,example,notation,fact,construction}
\declaretheorem[%
    style=mystyle,%
    numberwithin=section,%
    sibling=definition,%
]{theorem,lemma,corollary,proposition}
\declaretheorem[%
    style=claim,%
]{claim}
\declaretheorem[%
    style=since,%
    numbered=no,%
    qed=$\lozenge$,%
    name=$\because$,%
]{since}
\crefname{theorem}{Theorem}{Theorems}
\crefname{definition}{Definition}{Definitions}
\crefname{lemma}{Lemma}{Lemmas}
\crefname{corollary}{Corollary}{Corollaries}
\crefname{proposition}{Proposition}{Propositions}
\crefname{remark}{Remark}{Remarks}
\crefname{example}{Example}{Examples}
\crefname{notation}{Notation}{Notations}
\crefname{construction}{Construction}{Construction}
\crefname{claim}{Claim}{Claims}
\def\l@subsection{\@tocline{2}{0pt}{2pc}{6pc}{}}
    \def\tikzcd@myar#1{\relax\pgfutil@ifnextchar[{\tikzcd@handle@shortcuts@next\tikzcd@@myar{#1}}{\tikzcd@arrow[#1]}}
    \def\tikzcd@@myar#1[#2]{\tikzcd@arrow[#1,#2]}%
\def\tikzcd@[#1]{%
    \tikzpicture[/tikz/commutative diagrams/.cd,every diagram,#1]%
    \ifx\arrow\tikzcd@arrow%
        \pgfutil@packageerror{tikz-cd}{Diagrams cannot be nested}{}%
    \fi%
    \let\arrow\tikzcd@arrow%
    \let\ar\tikzcd@arrow%
    \def\larrow{\tikzcd@myar{vslash}}
    \def\lar{\tikzcd@myar{vslash}}
    \def\lpath{\tikzcd@myar{vslash,path}}
    \def\lphan{\tikzcd@myar{vslash,phan}}
    \global\let\tikzcd@savedpaths\pgfutil@empty%
    \matrix[%
        /tikz/matrix of \iftikzcd@mathmode math \fi nodes,%
        /tikz/every cell/.append code={\tikzcdset{every cell}},%
        /tikz/commutative diagrams/.cd,every matrix]%
    \bgroup}
\tikzstyle{vslash} = [decoration={markings, mark=at position 0.5 with {\draw[-,solid] (0,-1.5pt) -- (0,1.5pt);}}, postaction ={decorate}]
\tikzstyle{largetri} = [row sep={2.8em}, column sep={1.4em}]
\tikzstyle{tri} = [row sep={1.6em}, column sep={0.8em}]
\tikzstyle{tinytri} = [row sep={0.8em}, column sep={0.1em}]
\tikzstyle{huge} = [row sep={3.6em}, column sep={3.6em}]
\tikzstyle{large} = [row sep={2.7em}, column sep={2.7em}]
\tikzstyle{normal} = [row sep={1.8em}, column sep={1.8em}]
\tikzstyle{scriptsize} = [row sep={1.35em}, column sep={1.35em}]
\tikzstyle{small} = [row sep={0.9em}, column sep={0.9em}]
\tikzstyle{tiny} = [row sep={0.45em}, column sep={0.45em}]
\tikzstyle{xhugecolumn} = [column sep={4.5em}]
\tikzstyle{hugecolumn} = [column sep={3.6em}]
\tikzstyle{largecolumn} = [column sep={2.7em}]
\tikzstyle{normalcolumn} = [column sep={1.8em}]
\tikzstyle{scriptsizecolumn} = [column sep={1.35em}]
\tikzstyle{smallcolumn} = [column sep={0.9em}]
\tikzstyle{tinycolumn} = [column sep={0.45em}]
\tikzstyle{hugerow} = [row sep={3.6em}]
\tikzstyle{largerow} = [row sep={2.7em}]
\tikzstyle{normalrow} = [row sep={1.8em}]
\tikzstyle{scriptsizerow} = [row sep={1.35em}]
\tikzstyle{smallrow} = [row sep={0.9em}]
\tikzstyle{tinyrow} = [row sep={0.45em}]
\renewcommand{\-}{\mathchar`-}
\newcommand{\N}{\mathscr{N}}
\newcommand{\U}{\mathscr{U}}
\newcommand{\bE}{\dbl{E}}
\newcommand{\bJ}{\dbl{J}}
\newcommand{\bK}{\dbl{K}}
\newcommand{\bL}{\dbl{L}}
\newcommand{\bX}{\dbl{X}}
\newcommand{\bfempty}{{\pmb{\varnothing}}}
\newcommand{\Set}{\one{Set}}
\newcommand{\SET}{\one{SET}}
\newcommand{\AVDC}{{\bi{AVDC}}}
\newcommand{\UAVDC}{{\bi{UAVDC}}}
\newcommand{\UVDCn}{{\bi{UVDC}_\mathrm{n}}}
\newcommand{\op}{\mathrm{op}}
\newcommand{\co}{\mathrm{co}}
\newcommand{\Ob}{\zero{Ob}}
\newcommand{\vvect}[2]{
\begin{psmallmatrix*}[c]
   #1 \\
   #2
\end{psmallmatrix*}
}
\newcommand{\abs}[1]{{|#1|}}
\newcommand{\const}[1]{\ulcorner #1\urcorner}
\newcommand{\tup}[1]{{\vec{#1}}}
\newcommand{\incat}[1]{\hspace{1em}\text{in }{#1}}
\newcommand{\id}{\mathsf{id}}
\newcommand{\Id}{\mathsf{Id}}
\newcommand{\Vline}{\quad\vline\,\vline\quad}
\newcommand{\proofdirection}[2]{{[#1$\implies$#2]}}
\newcommand{\fgrpd}{{\pi_1}}
\NewDocumentCommand{\arr}{D(){} O{} O{2}}{\mathrel{%
    \begin{tikzcd}[column sep={#3em}, ampersand replacement=\&]
        \hspace{-2ex} \& \hspace{-2ex}
        \arrow[from=1-1, to=1-2, "{%
            \IfValueTF{#1}{%
                \raisebox{0pt}[3pt][\depth]{$\scriptstyle #1$}%
            }{%
                \scriptstyle #1%
            }%
        }", #2]
    \end{tikzcd}
}}
\NewDocumentCommand{\rra}{D(){} O{} O{2}}{\mathrel{%
    \begin{tikzcd}[column sep={#3em}, ampersand replacement=\&]
        \hspace{-2ex} \& \hspace{-2ex}
        \arrow[from=1-2, to=1-1, "{%
            \IfValueTF{#1}{%
                \raisebox{0pt}[3pt][\depth]{$\scriptstyle #1$}%
            }{%
                \scriptstyle #1%
            }%
        }"', #2]
    \end{tikzcd}
}}
\NewDocumentCommand{\larr}{D(){} O{} O{2}}{\mathrel{%
    \begin{tikzcd}[column sep={#3em}, ampersand replacement=\&]
        \hspace{-2ex} \& \hspace{-2ex}
        \arrow[from=1-1, to=1-2, vslash, "{%
            \IfValueTF{#1}{%
                \raisebox{0pt}[3pt][\depth]{$\scriptstyle #1$}%
            }{%
                \scriptstyle #1%
            }%
        }", #2]
    \end{tikzcd}
}}
\NewDocumentCommand{\adjoint}{D(){\perp} O{1} m m m m }{
    \begin{tikzcd}[column sep={#2em},ampersand replacement=\&]
        {#3}\ar[rr,"{#5}",shift left=5pt,bend left=10] \& {#1} \& {#4}\ar[ll,"{#6}",shift left=5pt,bend left=10]
    \end{tikzcd}
}
\newcommand*{\vcong}{\rotatebox{-90}{$\,\cong$}}
\newcommand{\linv}{\vcong}
\newcommand{\cart}{\mathsf{cart}}
\newcommand{\cocart}{\mathsf{cocart}}
\newcommand{\VDcocart}{\scalebox{0.8}{$\mathsf{VD.cocart}$}}
\newcommand{\cocartorVD}{\scalebox{0.8}{$\mathsf{(VD.)cocart}$}}
\newcommand{\extending}{\mathsf{ext}}
\newcommand{\lifting}{\mathsf{lift}}
\newcommand{\heq}{%
    \mathord{%
        \rule[3.5pt]{4pt}{0.6pt}\hspace{-4pt}\rule[2pt]{4pt}{0.6pt}
    }%
}
\newcommand{\veq}{%
    \mathord{%
        \mkern4mu\rule[0.6pt]{0.6pt}{5pt}\hspace{-2pt}\rule[0.6pt]{0.6pt}{5pt}\mkern4mu
    }%
}
\NewDocumentCommand{\cellsymb}{D(){} O{} m m}{
    \arrow[from=#3,to=#4,phantom,"{#1}"{#2}]
}
\NewDocumentCommand{\utwocell}{D(){} O{right=0pt} m m O{}}{
    \arrow[from=#3, to=#4, phantom, "{#1}"{#2}, "\rotatebox{90}{$\Rightarrow$}", #5]
}
\NewDocumentCommand{\dtwocell}{D(){} O{right=0pt} m m O{}}{
    \arrow[from=#3, to=#4, phantom, "{\scriptstyle #1}"{#2}, "\rotatebox{90}{$\Leftarrow$}", #5]
}
\NewDocumentCommand{\ltwocell}{D(){} O{above=2pt} m m O{}}{
    \arrow[from=#3, to=#4, phantom, "{\scriptstyle #1}"{#2}, "\Leftarrow", #5]
}
\NewDocumentCommand{\rtwocell}{D(){} O{above=2pt} m m O{}}{
    \arrow[from=#3, to=#4, phantom, "{\scriptstyle #1}"{#2}, "\Rightarrow", #5]
}
\NewDocumentCommand{\urtwocell}{D(){} O{above left=-1pt} m m O{}}{
    \arrow[from=#3, to=#4, phantom, "{\scriptstyle #1}"{#2}, "\rotatebox{45}{$\Rightarrow$}", #5]
}
\NewDocumentCommand{\rutwocell}{D(){} O{above left=-1pt} m m O{}}{
    \arrow[from=#3, to=#4, phantom, "{\scriptstyle #1}"{#2}, "\rotatebox{45}{$\Rightarrow$}", #5]
}
\NewDocumentCommand{\drtwocell}{D(){} O{above right=0pt} m m O{}}{
    \arrow[from=#3, to=#4, phantom, "{\scriptstyle #1}"{#2}, "\rotatebox{-45}{$\Rightarrow$}", #5]
}
\NewDocumentCommand{\rdtwocell}{D(){} O{above right=0pt} m m O{}}{
    \arrow[from=#3, to=#4, phantom, "{\scriptstyle #1}"{#2}, "\rotatebox{-45}{$\Rightarrow$}", #5]
}
\NewDocumentCommand{\ultwocell}{D(){} O{above right=-1pt} m m O{}}{
    \arrow[from=#3, to=#4, phantom, "{\scriptstyle #1}"{#2}, "\rotatebox{-45}{$\Leftarrow$}", #5]
}
\NewDocumentCommand{\lutwocell}{D(){} O{above right=-1pt} m m O{}}{
    \arrow[from=#3, to=#4, phantom, "{\scriptstyle #1}"{#2}, "\rotatebox{-45}{$\Leftarrow$}", #5]
}
\NewDocumentCommand{\dltwocell}{D(){} O{above left=0pt} m m O{}}{
    \arrow[from=#3, to=#4, phantom, "{\scriptstyle #1}"{#2}, "\rotatebox{45}{$\Leftarrow$}", #5]
}
\NewDocumentCommand{\ldtwocell}{D(){} O{above left=0pt} m m O{}}{
    \arrow[from=#3, to=#4, phantom, "{\scriptstyle #1}"{#2}, "\rotatebox{45}{$\Leftarrow$}", #5]
}
\newcommand{\lcomp}{\mathord{\odot}}
\newcommand{\tcomp}{\mathord{\fatsemi}}
\newcommand{\comp}[1]{{{#1}_{\ast}}}
\newcommand{\conj}[1]{{{#1}^{\ast}}}
\newcommand{\Unit}{\mathsf{U}}
\newcommand{\compcell}[1]{{{#1}_{\dagger}}}
\newcommand{\conjcell}[1]{{{#1}^{\dagger}}}
\NewDocumentCommand{\Cells}{O{} m m m m}{
    {\zero{Cell}_{#1}\mkern-2mu\bigl(
        \begin{smallmatrix}
        & #4 & \\
        #2 & & #3 \\
        & #5 &
        \end{smallmatrix}
    \bigl)}
}
\newcommand{\Homcat}[1][]{\one{Hom}_{#1}}
\newcommand{\Homset}[1][]{\zero{Hom}_{#1}}
\newcommand{\Tcat}{\one{T}}
\newcommand{\Ttwocat}{\bi{T}}
\newcommand{\Lbicat}{\bi{L}}
\newcommand{\Larcat}{{\Tcat^1}}
\newcommand{\Lphancat}{{\Tcat^{\le 1}}}
\NewDocumentCommand{\Prof}{o}{%
    {%
        \IfValueT{#1}{#1\-}%
        \dbl{P}\mathrm{rof}
    }%
}
\NewDocumentCommand{\Mat}{o}{%
    {%
        \IfValueT{#1}{#1\-}%
        \dbl{M}\mathrm{at}
    }%
}
\NewDocumentCommand{\biProf}{o}{
    {%
        \IfValueT{#1}{#1\-}%
        \bi{P}\mathrm{rof}
    }%
}
\newcommand{\Mod}{{\dbl{M}\mathrm{od}}}
\renewcommand{\dim}[1]{{{#1}^{\flat}}} 
\newcommand{\Rel}{{\dbl{R}\mathrm{el}}}
\NewDocumentCommand{\extension}{m o m}{%
    \mathord{%
        #1\mkern-2mu\vartriangleright\mkern-5mu%
        \IfValueT{#2}{%
            {}^{#2}\mkern-3mu
        }%
        #3
    }%
}
\NewDocumentCommand{\lift}{m o m}{%
    \mathord{%
        \IfValueTF{#2}{#1^{#2}\mkern-6mu}{#1\mkern-4mu}%
        \blacktriangleleft\mkern-2mu%
        #3
    }%
}
\acrodef{AVDC}{augmented virtual double category}
\acrodef{VDC}{virtual double category}
\acrodef{UVDC}{unitary virtual double category}
\acrodef{AVD}{augmented virtual double}
\acrodef{VD}{virtual double}
\newcommand{\zero}[1]{\mathrm{#1}}
\newcommand{\one}[1]{\mathbf{#1}}
\newcommand{\bi}[1]{\mathcal{#1}}
\newcommand{\dbl}[1]{\mathbb{#1}}
\newcommand{\Mdl}[2]{%
    \mathord{%
        \mathop{\one{Mdl}}({#1},{#2})
    }%
}
\newcommand{\Cone}{%
    \mathord{%
        \one{Cone}
    }%
}
\newcommand{\Ldbl}{\mathord{\dbl{V}}}
\newcommand{\Ddbl}{\mathord{\dbl{D}}}
\newcommand{\Idbl}{\mathord{\dbl{I}}}
\newcommand{\Idimdbl}{\mathord{\dim{\dbl{I}}}}
\newcommand{\slice}[2]{{\Tcat{#1}/{#2}}}
\newcommand{\Max}{{\one{Max}}}
\newcommand{\Fam}{{\one{Fam}}}
\newcommand{\dblslash}{/\mkern-6mu/}
\newcommand{\ie}[2]{{(#1{;}#2)}}
\newcommand{\hq}{\mathfrak{q}}
\newcommand{\hp}{\mathfrak{p}}
\newcommand{\coslice}[2]{{\one{S}\vvect{#1}{#2}}}
\newcommand{\fs}{\mathfrak{s}}
\newcommand{\fl}{\mathfrak{l}}
\newcommand{\fm}{\mathfrak{m}}
\newcommand{\Alg}{\mathop{\one{Alg}}}
\newcommand{\mate}[1]{\overline{#1}}
\title{Double categories of profunctors}
\author{Yuto Kawase}
\address{Research Institute for Mathematical Sciences, Kyoto University, Kyoto 606-8502, Japan}
\email{ykawase@kurims.kyoto-u.ac.jp}
\date{\today}
\keywords{%
    enriched category, %
    collage of profunctors, %
    augmented virtual double category%
}
\thanks{%
    The author wishes to express his thanks to his supervisor, Prof.\ Masahito Hasegawa, for his support.
    The author is also grateful to Keisuke Hoshino, Yuki Imamura, Nathanael Arkor, and the anonymous referees for several valuable comments, and grateful to Hayato Nasu for providing the idea for the proof of the strongness theorem and for suggesting the term ``versatile colimits.''
    This work was supported by JSPS KAKENHI Grant Numbers JP24KJ1462.
}
\subjclass[2020]{%
    18A30, 
    18N10
}
\begin{document}
\begin{abstract}
    We characterize virtual double categories of enriched categories, functors, and profunctors by introducing a new notion of double-categorical colimits.
    Our characterization is strict in the sense that it is up to equivalence between virtual double categories and, at the level of objects, up to isomorphism of enriched categories.
    Throughout the paper, we treat enrichment in a unital virtual double category rather than in a bicategory or a monoidal category, and, for consistency and better visualization of pasting diagrams, we adopt augmented virtual double categories as a fundamental language for double-categorical concepts.
\end{abstract}

\maketitle
\tableofcontents

\section{Introduction}\label{sec:intro}
One key aspect of formal category theory is the study of profunctors.
Their behavior has classically been studied through \textit{proarrow equipments} \cite{Wood1982proarrowI,Wood1985proarrowII}, as introduced by Wood.
However, recently, \textit{(augmented) virtual double categories} have begun to be used instead with the expectation that they are a better refinement of proarrow equipments \cite{Koudenburg2024formal,ArkorMcdermott2024formal,Arkor2024nervetheorem,ArkorMcdermott2025relative}.
Enriched category theory is a prototypical stage for applying formal category theory.
For each monoidal category $\bi{V}$, we can obtain a virtual double category of $\bi{V}$-enriched profunctors, where we can do $\bi{V}$-enriched category theory.
The aim of the paper is to characterize such virtual double categories of enriched profunctors.

We will treat enrichment not only in a monoidal category but also in a bicategory \cite{Walters1982sheaves} or, moreover, in a virtual double category \cite{Leinster1999enrichment,Leinster2002enrichment,Leinster2004higher}.
Since it is more general and contains the other cases, we will focus on enrichment in a virtual double category.
Analogous to the monoidally enriched case, for each virtual double category $\bX$, we can obtain a new virtual double category $\Prof[\bX]$ of profunctors enriched in $\bX$.

In the paper, we give a characterization of virtual double categories equivalent to $\Prof[\bX]$ for some virtual double category $\bX$.
Besides $\Prof[\bX]$, we also characterize virtual double categories of the forms $\Mod(\bX)$ and $\Mat[\bX]$, arising from other constructions on virtual double categories $\bX$.
The first one is known as the \textit{module} construction \cite{Leinster1999enrichment,Leinster2004higher,CruttwellShulman2010unified}, and the latter is known as the \textit{matrix} construction in the bicategorical context \cite{BettiCarboniStreetWalters1983variation}.
Since the profunctor construction can be decomposed into them as $\Prof[\bX]=\Mod(\Mat[\bX])$, those three constructions are strongly related to each other.

Our strategy for characterization is parallel to the characterization of cocompletions in ordinary category theory.
Recall that, in ordinary category theory, cocompletions of a category under specific colimits are characterized by the following properties \cite[4.3.\ Proposition]{KellySchmitt2005notes}:
\begin{itemize}
    \item
        It has all colimits in mind;
    \item
        Every object can be written as such a colimit of ``atomic'' objects.
\end{itemize}
For instance, when considering filtered colimits, the atomic objects are precisely finitely presentable objects, while for coproducts, the atomic objects are connected objects.
If the base virtual double category $\bX$ is \textit{unital}, i.e., all objects admit a \textit{unit}, then it can be fully embedded into the virtual double category $\Prof[\bX]$ (\cref{thm:Z_is_embedding}).
Objects in $\bX$ are regarded as single-object $\bX$-categories there.
Then, every $\bX$-category is constructed by pasting single-object $\bX$-categories together and behaves like a ``colimit'' whose universal property extends in three directions in the virtual double category $\Prof[\bX]$ (\cref{thm:every_obj_is_versatile_colim}).
This observation leads us to a new notion of double-categorical colimits called \textit{versatile collages}, refining Street's \textit{collage} construction for profunctors \cite{Street1981cauchy}.
Then, $\Prof[\bX]$ can be regarded as a cocompletion of the enriching base $\bX$ under such colimits, and we obtain a cocompletion-like characterization of $\Prof[\bX]$ (\cref{thm:characterization_prof}).
That is, $\Prof[\bX]$ is determined by the following properties:
\begin{itemize}
    \item
        It has all versatile collages;
    \item
        Every object can be written as a versatile collage of \textit{collage-atomic} objects.
\end{itemize}
In addition to the unitality, our characterization theorem also requires \textit{iso-fibrancy} on the enriching base.
However, when we are considering enrichment in a bicategory, these conditions are satisfied automatically.
This indicates that our theorem is new even in the case of enrichment in a bicategory.

The same strategy works not only for the profunctor construction but also for the module and matrix constructions.
The notion of colimits corresponding to the module construction is called \textit{versatile collapses}, and the corresponding notion of colimits to the matrix construction is called \textit{versatile coproducts}.
These kinds of colimits are unified under a more general notion called \textit{versatile colimits}, which is also a new notion of double-categorical colimits and recovers the notion of colimits that Wood studied in \cite{Wood1985proarrowII}.

\begin{remark}
    A central ingredient in the theory of versatile colimits is the notion of ``cocones,'' which should be defined as a family of 0-coary cells, i.e., cells whose bottom boundary is of length 0.
    However, virtual double categories cannot naturally deal with 0-coary cells unless they are unital.
    While our virtual double categories $\Prof[\bX]$ and $\Mod(\bX)$ are unital, unfortunately, virtual double categories $\Mat[\bX]$ of matrices are not.
    To address this limitation, we adopt \textit{augmented virtual double categories} as a framework for developing the general theory of versatile colimits.
    Furthermore, we will regard every virtual double category as an augmented virtual double category and will consistently use the language of augmented virtual double categories throughout the paper.
    Although this is an experimental approach, the author believes that it enhances the coherence of the paper and unifies our treatment of double-categorical concepts.
\end{remark}

\vspace{1em}
\paragraph{\textbf{Related work}}
Bicategories of profunctors enriched in a bicategory were characterized by Street \cite{Street1981cauchy} and by Carboni et al.\ \cite{CarboniKasangianWalters1987axiomatics}.
Our characterization is a double-categorical refinement of Street's but differs considerably from these previous work with respect to ``strictness.''
In fact, our characterization is strict in the sense that it is up to equivalence between (augmented) virtual double categories and, at the level of objects, up to isomorphism of enriched categories.
On the other hand, the previous characterizations are up to biequivalence between bicategories and thus, at the level of objects, up to Morita equivalence of categories.

Related results also appear in the work of Garner and Shulman \cite{GarnerShulman2016enriched}, which characterizes, under certain assumptions, the profunctor construction as the cocompletion under a suitable notion of colimits.
Without going into the technical framework used there, their characterization can be reinterpreted as concerning pseudo double categories of profunctors enriched in a pseudo double category with companions and with appropriate colimits in the loose hom-categories.
Their characterization also differs from ours with respect to strictness: it is, at the level of objects, up to equivalence of categories.

Another related work appears in \cite[Proposition 4.23]{SpivakGarnerFairbanks2025functorial}, which gives a characterization of the (co)module construction from a monoidal category with suitable colimits.

\vspace{1em}
\paragraph{\textbf{Outline}}
In \cref{sec:prelim}, we first introduce basic concepts of augmented virtual double categories as the fundamental language of the paper.
We next recall enrichment in a virtual double category from \cite{Leinster1999enrichment,Leinster2002enrichment} and discuss its double-categorical aspects.

In \cref{sec:colim}, we develop the general theory of \textit{versatile colimits}, which is a new concept of double-categorical colimits.
We will show the \textit{unitality theorem} (\cref{thm:unitality_theorem}) and the \textit{strongness theorem} (\cref{thm:strongness_theorem}), which depict the behavior of versatile colimits.
In particular, the latter will play a crucial role in our characterization theorem.

\cref{sec:axioma} is devoted to the characterization of virtual double categories of enriched profunctors (\cref{thm:characterization_prof}), of modules (\cref{thm:characterization_mod}), and of matrices (\cref{thm:characterization_mat}).
These are the main theorems in the paper.
We will also apply them to the slice virtual double categories.

\cref{sec:proarrow_equipments} is devoted to a detailed investigation of how versatile colimits relate to the notion of colimits that Wood studied in \cite{Wood1985proarrowII}.

We also explore the notion of \textit{finality} with respect to versatile colimits (\cref{sec:final_functors}), which brings us a natural insight, especially when we are in a virtual equipment (\cref{sec:density_ve}).
However, since we can reach the main theorems without finality, it is driven to the appendix.

\vspace{1em}
\paragraph{\textbf{Notation and terminology}}
\begin{remark}
    In this paper, we will use the terms ``left'' and ``right'' relative to the direction of tight arrows in an augmented virtual double category.
    That is, ``left'' and ``right'' refer to the sides that lie to the left and right when facing in the direction of the tight arrow.
    For example, the terminologies \textit{left/right modules} (\cref{def:left_and_right_modules}) and \textit{left/right-pullingness} (\cref{def:pulling}) follow this convention.
    Since tight arrows are often written in the downward direction, our convention is opposite to the natural visual perception of left and right when viewing diagrams.
\end{remark}

\begin{remark}
    For clarity, let us declare the sizes of the categories we treat.
    We fix three Grothendieck universes $\U_0\in\U_1\in\U_2$.
    Elements in $\U_0$ are called \emph{small}, elements in $\U_1$ are called \emph{large}, elements in $\U_2$ are called \emph{huge}.
    Arbitrary sets (not necessarily in $\U_0$ nor $\U_1$ nor $\U_2$) are called \emph{classes}.
    However, we do not distinguish between small (resp.\ large; huge) sets and ``essentially'' small (resp.\ large; huge) sets, i.e., sets that are bijective to some small (resp.\ large; huge) set.
\end{remark}

\begin{remark}
    In order to distinguish different categorical structures by their dimensional levels, we use a systematic notation using distinct font styles:
    \begin{itemize}
        \item
            0-dimensional structures (such as sets) are written in roman font, e.g., $\zero{X}$, $\zero{Y}$, $\zero{Z}$.
        \item
            1-dimensional structures (such as ordinary categories and enriched categories) are written in bold font, e.g., $\one{X}$, $\one{Y}$, $\one{Z}$.
        \item
            2-dimensional structures (such as 2-categories, bicategories, and monoidal categories) are written in script font, e.g., $\bi{X}$, $\bi{Y}$, $\bi{Z}$.
        \item
            Double-dimensional structures (such as double categories) are written in blackboard font, e.g., $\dbl{X}$, $\dbl{Y}$, $\dbl{Z}$.\qedhere
    \end{itemize}
\end{remark}
\section{Preliminaries}\label{sec:prelim}
\subsection{Augmented virtual double categories}
As explained in the introduction, the main language of this paper is that of augmented virtual double categories, whose definition we recall below.
\subsubsection{The 2-category of augmented virtual double categories}
\begin{definition}[{\cite[1.2.\ Definition]{Koudenburg2020aug}}]
    An \emph{\ac{AVDC}} $\bL$ consists of the following data:
    \begin{itemize}
        \item
            A class $\Ob\bL$, whose elements are called \emph{objects} in $\bL$.
            We write $A\in\bL$ to mean $A\in\Ob\bL$.
        \item
            For $A,B\in\bL$, a class $\Homset[\bL]\vvect{A}{B}$, whose elements are called \emph{tight arrows} from $A$ to $B$ in $\bL$.
            The objects and the tight arrows are required to form a category $\Tcat\bL$, which is called the \emph{tight category} of $\bL$.
            We write $\id_A$ for the identity on an object $A\in\bL$.
            The composite of $A\arr(f)[][1]B\arr(g)[][1]C$ in $\Tcat\bL$ is denoted by $f\tcomp g$.
            Tight arrows are often written vertically:
            \begin{equation*}
                \begin{tikzcd}[scriptsize]
                    A\ar[d,"f"'] \\
                    B
                \end{tikzcd}
                \qquad
                \begin{tikzcd}[scriptsize]
                    A\ar[d,equal,"\id_A"] \\
                    A
                \end{tikzcd}\incat{\bL}
            \end{equation*}
        \item
            For $A,B\in\bL$, a class $\Homset[\bL](A,B)$, whose elements are called \emph{loose arrows} from $A$ to $B$ in $\bL$.
            A loose arrow is denoted by $\larr$ and is often written horizontally.
            A path of loose arrows $A_0\larr(u_1)A_1\larr(u_2)\cdots\larr(u_n)A_n$ is called a \emph{loose path} of length $n$ and is often denoted by a dashed arrow $A_0\larr(\tup{u})[path]A_n$.
            A loose path $v$ of length 0 or 1 is denoted by a dotted arrow $A\larr(v)[phan]B$.
            Note that $A=B$ is required when the loose path $v$ is of length 0.
        \item
            A class $\Cells[\bL]{f}{g}{\tup{u}}{v}$, whose elements are called \emph{cells}, for each ``boundary'' formed by loose arrows and tight arrows in the following way:
            \begin{equation*}
                \begin{tikzcd}
                    A_0\ar[d,"f"']\lar[r,path,"\tup{u}"] & A_n\ar[d,"g"] \\
                    B\lar[r,phan,"v"'] & C
                \end{tikzcd}\incat{\bL}.
            \end{equation*}
            Cells where $v$ is of length 1 (resp.\ 0) are called \emph{1-coary} (resp.\ \emph{0-coary}).
        \item
            Two kinds of special cells:
            \begin{equation*}
                \begin{tikzcd}
                    A\ar[d,equal]\lar[r,"u"] & B\ar[d,equal] \\
                    A\lar[r,"u"'] & B
                    \cellsymb(\veq_u){1-1}{2-2}
                \end{tikzcd}
                \qquad
                \begin{tikzcd}
                    A\ar[d,"f"{left},bend right=30]\ar[d,"f"{right},bend left=30] \\
                    B
                    \cellsymb(\heq_f){1-1}{2-1}
                \end{tikzcd}\incat{\bL}.
            \end{equation*}
            The cells $\veq_u$ on the left are called \emph{loose identity cells}.
            The cells $\heq_f$ on the right are called \emph{tight identity cells}.
        \item
            For cells $\alpha_1,\dots,\alpha_n,\beta$ on the left below, a cell $\tup{\alpha}\tcomp\beta$ of the following form:
            \begin{equation*}
                \begin{tikzcd}
                    A_0\ar[d,"f_0"']\lar[r,path,"\tup{u}_1"] & A_1\ar[d,"f_1"]\lar[r,path,"\tup{u}_2"] & \cdots\lar[r,path,"\tup{u}_n"] & A_n\ar[d,"f_n"] \\
                    B_0\ar[d,"g"']\lar[r,phan,"v_1"] & B_1\lar[r,phan,"v_2"] & \cdots\lar[r,phan,"v_n"] & B_n\ar[d,"h"] \\
                    C\lar[rrr,phan,"w"'] &&& D
                    \cellsymb(\alpha_1){1-1}{2-2}
                    \cellsymb(\alpha_2){1-2}{2-3}
                    \cellsymb(\alpha_n){2-3}{1-4}
                    \cellsymb(\beta){2-1}{3-4}
                \end{tikzcd}
                \mapsto
                \begin{tikzcd}
                    A_0\ar[d,"f_0\tcomp g"']\lar[r,path,"\tup{u}_1"] & A_1\lar[r,path,"\tup{u}_2"] & \cdots\lar[r,path,"\tup{u}_n"] & A_n\ar[d,"f_n\tcomp h"] \\
                    C\lar[rrr,phan,"w"'] &&& D
                    \cellsymb(\tup{\alpha}\tcomp\beta){1-1}{2-4}
                \end{tikzcd}
            \end{equation*}
            The composition defined by the assignments $(\alpha_1,\dots,\alpha_n,\beta)\mapsto\tup{\alpha}\tcomp\beta$ is required to satisfy a suitable associative law and a unit law with identity cells.
            See \cite[1.2.\ Definition]{Koudenburg2020aug} for more detail.\qedhere
    \end{itemize}
\end{definition}

\begin{notation}
    Let $A_0\larr(\tup{u})[path]A_n$ be a loose path of length $n$ in an \ac{AVDC}.
    We extend the notation for the loose identity cells as follows:
    \begin{equation}\label{eq:notaion_for_loose_identities}
        \begin{tikzcd}
            A_0\ar[d,equal]\lar[r,path,"\tup{u}"] & A_n\ar[d,equal] \\
            A_0\lar[r,path,"\tup{u}"'] & A_n
            \cellsymb(\veq_\tup{u}){1-1}{2-2}
        \end{tikzcd}
    \end{equation}
    When $n\ge 1$, the notation \cref{eq:notaion_for_loose_identities} means the path $(\veq_{u_1},\dots,\veq_{u_n})$ of loose identity cells.
    When $n=0$, the notation \cref{eq:notaion_for_loose_identities} means the tight identity cell $\heq_{\id_{A_0}}$, where $A_0=A_n$.
\end{notation}

\begin{notation}
    Let $\alpha_1,\dots,\alpha_n$ be cells in an \ac{AVDC} of the following form:
    \begin{equation}\label{eq:path_of_cells}
        \begin{tikzcd}
            A_0\ar[d,"f_0"']\lar[r,path,"\tup{u}_1"] & A_1\ar[d,"f_1"]\lar[r,path,"\tup{u}_2"] & \cdots\lar[r,path,"\tup{u}_n"] & A_n\ar[d,"f_n"] \\
            B_0\lar[r,phan,"v_1"'] & B_1\lar[r,phan,"v_2"'] & \cdots\lar[r,phan,"v_n"'] & B_n
            \cellsymb(\alpha_1){1-1}{2-2}
            \cellsymb(\alpha_2){1-2}{2-3}
            \cellsymb(\alpha_n){2-3}{1-4}
        \end{tikzcd}
    \end{equation}
    When the composite path $\tup{v}$ of $v_1,\dots,v_n$ is of length $\le 1$, we use the same notation \cref{eq:path_of_cells} for the composite of the following cells:
    \begin{equation*}
        \begin{tikzcd}
            A_0\ar[d,"f_0"']\lar[r,path,"\tup{u}_1"] & A_1\ar[d,"f_1"]\lar[r,path,"\tup{u}_2"] & \cdots\lar[r,path,"\tup{u}_n"] & A_n\ar[d,"f_n"] \\
            B_0\ar[d,equal]\lar[r,phan,"v_1"] & B_1\lar[r,phan,"v_2"] & \cdots\lar[r,phan,"v_n"] & B_n\ar[d,equal] \\
            B_0\lar[rrr,phan,"\tup{v}"'] &&& B_n
            \cellsymb(\alpha_1){1-1}{2-2}
            \cellsymb(\alpha_2){1-2}{2-3}
            \cellsymb(\alpha_n){2-3}{1-4}
            \cellsymb(\veq){2-1}{3-4}
        \end{tikzcd}
    \end{equation*}
    For example, the following exhibits a cell given by composition:
    \begin{equation}\label{eq:example_0-coary_composite}
        \begin{tikzcd}
            A_0\ar[dr,"f_0"']\lar[r,path,"\tup{u}_1"] & A_1\ar[d,"f_1"{right=-1}]\lar[r,path,"\tup{u}_2"] & A_2\ar[dl,equal]\ar[d,"f_3"] \\
            & A_2\lar[r,phan,"v_3"'] & B_3
            \cellsymb(\alpha_1)[above right]{1-1}{2-2}
            \cellsymb(\alpha_2)[above left]{1-3}{2-2}
            \cellsymb(\alpha_3)[below right]{1-3}{2-2}
        \end{tikzcd}
    \end{equation}
    Note that, by the laws of cell composition in \acp{AVDC}, the cell \cref{eq:example_0-coary_composite} coincides with another composite of the following cells.
    \begin{equation*}
        \begin{tikzcd}[smallcolumn]
            A_0\ar[drr,"f_0"']\lar[rr,path,"\tup{u}_1"] &[-20pt] & A_1\ar[d,"f_1"{right=-1}]\lar[rr,path,"\tup{u}_2"] & &[-20pt] A_2\ar[dll,equal] \\
            && A_2\ar[dl,equal]\ar[dr,"f_3"] && \\
            & A_2\lar[rr,phan,"v_3"'] &{}& B_3 &
            \cellsymb(\alpha_1)[above right]{1-1}{2-3}
            \cellsymb(\alpha_2)[above left]{1-5}{2-3}
            \cellsymb(\alpha_3){2-3}{3-3}
        \end{tikzcd}
    \end{equation*}
\end{notation}

\begin{notation}
    Let $\bL$ be an \ac{AVDC}.
    We write $\Ttwocat\bL$ for the 2-category defined as follows:
    The underlying category is $\Tcat\bL$; 2-cells from $f$ to $g$ are cells whose top and bottom boundaries are of length 0 and whose left and right boundaries are $f$ and $g$, respectively.
    The 2-category $\Ttwocat\bL$ is called the \emph{tight 2-category} of $\bL$.
\end{notation}

\begin{notation}
    Let $\bL$ be an \ac{AVDC}, and let $A,B\in\bL$.
    We write $\Homcat[\bL]\vvect{A}{B}$ for the category of tight arrows from $A$ to $B$, i.e., the hom-category of the 2-category $\Ttwocat\bL$.
    In addition, loose arrows from $A$ to $B$ also form a category $\Homcat[\bL](A,B)$.
    Here, the hom-set from $A\larr(u)[][1]B$ to $A\larr(v)[][1]B$ is defined as $\Cells{\id_A}{\id_B}{u}{v}$.
\end{notation}

\begin{example}\label{eg:avdc_of_relations}
    The \ac{AVDC} $\Rel$ is defined as follows:
    \begin{itemize}
        \item
            An object is a (large) set.
        \item
            A tight arrow is a map.
        \item
            A loose arrow $\zero{X}\larr \zero{Y}$ is a relation $R\subseteq\zero{X}\times\zero{Y}$.
        \item
            $\Rel$ has at most one cell for every boundary.
            A 1-coary cell on the left below exists if and only if, for any $x_0\in\zero{X}_0,\dots,x_n\in\zero{X}_n$, the conjunction of $(x_0,x_1)\in R_1,\dots,(x_{n-1},x_n)\in\nolinebreak R_n$ implies $(f(x_0),g(x_n))\in S$.
            A 0-coary cell on the right below exists if and only if, for any $x_0\in\zero{X}_0,\dots,x_n\in\zero{X}_n$, the conjunction of $(x_0,x_1)\in R_1,\dots,(x_{n-1},x_n)\in R_n$ implies $f(x_0)=g(x_n)$.
            \begin{equation*}
                \begin{tikzcd}
                    \zero{X}_0\ar[d,"f"']\lar[r,path,"\tup{R}"] & \zero{X}_n\ar[d,"g"] \\
                    \zero{Y}\lar[r,"S"'] & \zero{Z}
                    \cellsymb(\cdot){1-1}{2-2}
                \end{tikzcd}
                \qquad
                \begin{tikzcd}[tri]
                    \zero{X}_0\ar[dr,"f"']\lar[rr,path,"\tup{R}"] &{}& \zero{X}_n\ar[dl,"g"] \\
                    & \zero{Y} &
                    \cellsymb(\cdot)[above=-1]{1-2}{2-2}
                \end{tikzcd}\incat{\Rel}
            \end{equation*}\qedhere
    \end{itemize}
\end{example}

\begin{definition}[{\cite[3.1.\ Definition]{Koudenburg2020aug}}]
    Let $\bK$ and $\bL$ be \acp{AVDC}.
    An \emph{\ac{AVD}-functor} $\bK\arr(F)\bL$ consists of the following data:
    \begin{itemize}
        \item
            A functor $F\colon\Tcat\bK\to\Tcat\bL$, which is also denoted by $\Tcat F$.
        \item
            Assignments of loose arrows
            \[
                A\larr(u)B\incat{\bK}
                \quad\mapsto\quad
                FA\larr(Fu)FB\incat{\bL}.
            \]
            In what follows, we extend the assignments from loose arrows to loose paths.
            Specifically, $F\tup{u}=F(u_1,\dots,u_n)\coloneq(Fu_1,\dots,Fu_n)$.
        \item
            Assignments of cells
            \[
                \begin{tikzcd}
                    A\ar[d,"f"']\lar[r,path,"\tup{u}"] & B\ar[d,"g"] \\
                    X\lar[r,phan,"v"'] & Y
                    \cellsymb(\alpha){1-1}{2-2}
                \end{tikzcd}\incat{\bK}
                \quad\mapsto\quad
                \begin{tikzcd}
                    FA\ar[d,"Ff"']\lar[r,path,"F\tup{u}"] & FB\ar[d,"Fg"] \\
                    FX\lar[r,phan,"Fv"'] & FY
                    \cellsymb(F\alpha){1-1}{2-2}
                \end{tikzcd}\incat{\bL}.
            \]
    \end{itemize}
    These are required to satisfy the following:
    \begin{itemize}
        \item
            For any composable cells
            \begin{equation*}
                \begin{tikzcd}
                    A_0\ar[d,"f_0"']\lar[r,path,"\tup{u}_1"] & A_1\ar[d,"f_1"']\lar[r,path,"\tup{u}_2"] & \cdots\lar[r,path,"\tup{u}_n"] & A_n\ar[d,"f_n"] \\
                    B_0\ar[d,"g"']\lar[r,phan,"v_1"'] & B_1\lar[r,phan,"v_2"'] & \cdots\lar[r,phan,"v_n"'] & B_n\ar[d,"h"] \\
                    X\lar[rrr,phan,"w"'] & & & Y
                    \cellsymb(\alpha_1){1-1}{2-2}
                    \cellsymb(\alpha_2){1-2}{2-3}
                    \cellsymb(\alpha_n){1-4}{2-3}
                    \cellsymb(\beta){2-1}{3-4}
                \end{tikzcd}
                =
                \begin{tikzcd}
                    A_0\ar[d,"f_0"']\lar[r,path,"\tup{u}_1"] & A_1\lar[r,path,"\tup{u}_2"] & \cdots\lar[r,path,"\tup{u}_n"] & A_n\ar[d,"f_n"] \\
                    B_0\ar[d,"g"'] &  &  & B_n\ar[d,"h"] \\
                    X\lar[rrr,phan,"w"'] & & & Y
                    \cellsymb(\tup{\alpha}\tcomp\beta){1-1}{3-4}
                \end{tikzcd}
                \incat{\bK},
            \end{equation*}
            the equality $F\tup{\alpha}\tcomp F\beta = F(\tup{\alpha}\tcomp\beta)$ holds.
            \begin{equation*}
                \begin{tikzcd}
                    FA_0\ar[d,"Ff_0"']\lar[r,path,"F\tup{u}_1"] & FA_1\ar[d,"Ff_1"']\lar[r,path,"F\tup{u}_2"] & \cdots\lar[r,path,"F\tup{u}_n"] & FA_n\ar[d,"Ff_n"] \\
                    FB_0\ar[d,"Fg"']\lar[r,phan,"Fv_1"'] & FB_1\lar[r,phan,"Fv_2"'] & \cdots\lar[r,phan,"Fv_n"'] & FB_n\ar[d,"Fh"] \\
                    FX\lar[rrr,phan,"Fw"'] & & & FY
                    \cellsymb(F\alpha_1){1-1}{2-2}
                    \cellsymb(F\alpha_2){1-2}{2-3}
                    \cellsymb(F\alpha_n){1-4}{2-3}
                    \cellsymb(F\beta){2-1}{3-4}
                \end{tikzcd}
                =
                \begin{tikzcd}
                    FA_0\ar[d,"Ff_0"']\lar[r,path,"F\tup{u}_1"] & FA_1\lar[r,path,"F\tup{u}_2"] & \cdots\lar[r,path,"F\tup{u}_n"] & FA_n\ar[d,"Ff_n"] \\
                    FB_0\ar[d,"Fg"'] & & & FB_n\ar[d,"Fh"] \\
                    FX\lar[rrr,phan,"Fw"'] & & & FY
                    \cellsymb(F(\tup{\alpha}\tcomp\beta)){1-1}{3-4}
                \end{tikzcd}
                \incat{\bL}
            \end{equation*}
        \item
            For any $A\larr(u)B$ in $\bK$, the equality $F\veq_u=\veq_{Fu}$ holds.
            \begin{equation*}
                \begin{tikzcd}
                    A\ar[d,equal]\lar[r,"u"] & B\ar[d,equal] \\
                    A\lar[r,"u"'] & B
                    \cellsymb(\veq_u){1-1}{2-2}
                \end{tikzcd}
                \mapsto
                \begin{tikzcd}
                    FA\ar[d,equal]\lar[r,"Fu"] & FB\ar[d,equal] \\
                    FA\lar[r,"Fu"'] & FB
                    \cellsymb(F\veq_u){1-1}{2-2}
                \end{tikzcd}
                =
                \begin{tikzcd}
                    FA\ar[d,equal]\lar[r,"Fu"] & FB\ar[d,equal] \\
                    FA\lar[r,"Fu"'] & FB
                    \cellsymb(\veq_{Fu}){1-1}{2-2}
                \end{tikzcd}
            \end{equation*}
        \item
            For any $A\arr(f)B$ in $\bK$, the equality $F\heq_f=\heq_{Ff}$ holds.
            \begin{equation*}
                \begin{tikzcd}
                    A\ar[d,bend right=30,"f"{left}]\ar[d,bend left=30,"f"{right}] \\
                    B
                    \cellsymb(\heq_f){1-1}{2-1}
                \end{tikzcd}
                \mapsto
                \begin{tikzcd}
                    FA\ar[d,bend right=30,shift right=2,"Ff"{left}]\ar[d,bend left=30,shift left=2,"Ff"{right}] \\
                    FB
                    \cellsymb(F\heq_f){1-1}{2-1}
                \end{tikzcd}
                =
                \begin{tikzcd}
                    FA\ar[d,bend right=30,shift right=2,"Ff"{left}]\ar[d,bend left=30,shift left=2,"Ff"{right}] \\
                    FB
                    \cellsymb(\heq_{Ff}){1-1}{2-1}
                \end{tikzcd}
            \end{equation*}
    \end{itemize}
\end{definition}

\begin{definition}[{\cite[3.2.\ Definition]{Koudenburg2020aug}}]
    Let $F,G\colon\bK\to\bL$ be \ac{AVD}-functors between \acp{AVDC}.
    A \emph{tight \ac{AVD}-transformation} $F\arr(\rho)[Rightarrow]G$ consists of:
    \begin{itemize}
        \item
            for each $A\in\bK$, a tight arrow
            $
            \begin{tikzcd}[small]
                FA\ar[d,"\rho_A"']\\
                GA
            \end{tikzcd}
            $
            in $\bL$;
        \item
            for each $A\larr(u)B$ in $\bK$, a cell
            $
            \begin{tikzcd}
                FA\ar[d,"\rho_A"']\lar[r,"Fu"] & FB\ar[d,"\rho_B"] \\
                GA\lar[r,"Gu"'] & GB
                \cellsymb(\rho_u){1-1}{2-2}
            \end{tikzcd}
            $
            in $\bL$
    \end{itemize}
    satisfying the following:
    \begin{itemize}
        \item
            $\rho$ yields a natural transformation
            $
            \begin{tikzcd}[scriptsize]
                \Tcat\bK\ar[rr,"F",bend left=20]\ar[rr,"G"',bend right=20] & & \Tcat\bL,
                \dtwocell(\rho){1-1}{1-3}
            \end{tikzcd}
            $
            i.e., for any $A\arr(f)B$ in $\bK$,
            \begin{equation*}
                \begin{tikzcd}[tinytri]
                    & FA\ar[dl,"\rho_A"']\ar[dr,"Ff"] & \\
                    GA\ar[dr,"Gf"'] && FB\ar[dl,"\rho_B"] \\
                    & GB &
                    \cellsymb(\heq){2-1}{2-3}
                \end{tikzcd}\incat{\bL}.
            \end{equation*}
        \item
            For any 1-coary cell
            \begin{equation*}
                \begin{tikzcd}[small]
                    A_0\ar[d,"f"']\lar[r,"u_1"] & A_1\lar[r,"u_2"] & \cdots\lar[r,"u_n"] & A_n\ar[d,"g"] \\
                    X\lar[rrr,"v"'] &&& Y
                    \cellsymb(\alpha){1-1}{2-4}
                \end{tikzcd}\incat{\bK},
            \end{equation*}
            the following equality holds.
            \begin{equation*}
                \begin{tikzcd}
                    FA_0\ar[d,"\rho_{A_0}"']\lar[r,"Fu_1"] & FA_1\ar[d,"\rho_{A_1}"']\lar[r,"Fu_2"] & \cdots\lar[r,"Fu_n"] & FA_n\ar[d,"\rho_{A_n}"] \\
                    GA_0\ar[d,"Gf"']\lar[r,"Gu_1"'] & GA_1\lar[r,"Gu_2"'] & \cdots\lar[r,"Gu_n"'] & GA_n\ar[d,"Gg"] \\
                    GX\lar[rrr,"Gv"'] & & & GY
                    \cellsymb(\rho_{u_1}){1-1}{2-2}
                    \cellsymb(\rho_{u_2}){1-2}{2-3}
                    \cellsymb(\rho_{u_n}){1-4}{2-3}
                    \cellsymb(G\alpha){2-1}{3-4}
                \end{tikzcd}
                =
                \begin{tikzcd}
                    FA_0\ar[d,"Ff"']\lar[r,"Fu_1"] & FA_1\lar[r,"Fu_2"] & \cdots\lar[r,"Fu_n"] & FA_n\ar[d,"Fg"] \\
                    FX\ar[d,"\rho_X"']\lar[rrr,"Fv"'] &&& FY\ar[d,"\rho_Y"] \\
                    GX\lar[rrr,"Gv"'] &&& GY
                    \cellsymb(F\alpha){1-1}{2-4}
                    \cellsymb(\rho_v){2-1}{3-4}
                \end{tikzcd}
            \end{equation*}
            When $n=0$, the left-hand cell above is $\heq_{\rho_{A_0}}\tcomp G\alpha$.
        \item
            For any 0-coary cell
            \begin{equation*}
                \begin{tikzcd}[tri]
                    A_0\ar[dr,"f"']\lar[r,"u_1"] & \cdots\lar[r,"u_n"] & A_n\ar[dl,"g"] \\
                    & X &
                    \cellsymb(\alpha)[above=-1]{1-2}{2-2}
                \end{tikzcd}\incat{\bK},
            \end{equation*}
            the following equality holds.
            \begin{equation*}
                \begin{tikzcd}
                    FA_0\ar[d,"\rho_{A_0}"']\lar[r,"Fu_1"] & \cdots\lar[r,"Fu_n"] & FA_n\ar[d,"\rho_{A_n}"] \\
                    GA_0\ar[dr,"Gf"']\lar[r,"Gu_1"'] & \cdots\lar[r,"Gu_n"'] & GA_n\ar[dl,"Gg"] \\
                    & GX &
                    \cellsymb(\rho_{u_1}){1-1}{2-2}
                    \cellsymb(\rho_{u_n}){1-3}{2-2}
                    \cellsymb(G\alpha){2-2}{3-2}
                \end{tikzcd}
                =
                \begin{tikzcd}
                    FA_0\ar[dr,"Ff"']\lar[r,"Fu_1"] & \cdots\lar[r,"Fu_n"] & FA_n\ar[dl,"Fg"] \\
                    & FX\ar[d,"\rho_X"{left},bend right=30]\ar[d,"\rho_X"{right},bend left=30] & \\
                    & GX &
                    \cellsymb(F\alpha)[above=-4]{1-2}{2-2}
                    \cellsymb(\heq){2-2}{3-2}
                \end{tikzcd}
            \end{equation*}
            When $n=0$, the left-hand cell above is $\heq_{\rho_{A_0}}\tcomp G\alpha$.\qedhere
    \end{itemize}
\end{definition}

\begin{notation}
    The huge \acp{AVDC}, \ac{AVD}-functors, and tight \ac{AVD}-transformations form a 2-category \cite[3.3.\ Proposition]{Koudenburg2020aug}, which is denoted by $\AVDC$.
\end{notation}

\begin{definition}
    Let $\bL$ be an \ac{AVDC}.
    A \emph{full sub-\ac{AVDC}} of $\bL$ is an \ac{AVDC} whose class of objects is a subclass of $\Ob\bL$ and whose ``local'' classes of tight arrows, loose arrows, and cells are identical to those of $\bL$.
    Additionally, all compositions and identities in the full sub-\ac{AVDC} are required to be inherited directly from $\bL$.
\end{definition}

The following is convenient to treat virtual-double-categorical concepts in the augmented-virtual-double-categorical setting.
\begin{definition}
    An \ac{AVDC} is called \emph{diminished} if all 0-coary cells are tight identity cells, that is, $\heq_f$ for some tight arrow $f$.
\end{definition}

\begin{notation}
    Let $\bL$ be an \ac{AVDC}.
    We write $\dim{\bL}$ for the diminished \ac{AVDC} obtained by removing all 0-coary cells, except for tight identity cells, from $\bL$.
\end{notation}

\begin{remark}
    Diminished \acp{AVDC} are essentially the same concept as \emph{\acp{VDC}} \cite[2.1.\ Definition]{CruttwellShulman2010unified}, which are also called \emph{$\mathsf{fc}$-multicategories} \cite{Leinster1999enrichment,Leinster2002enrichment,Leinster2004higher} and were originally introduced in \cite{Burroni1971tcategories}.
    Indeed, the \ac{AVD}-functors between diminished \acp{AVDC} correspond to the \emph{\ac{VD}-functors} between \acp{VDC}.
\end{remark}

\subsubsection{Equivalences in the 2-category $\AVDC$}
\begin{notation}
    For an \ac{AVDC} $\bL$, let $\Lphancat\bL$ denote a category defined as follows:
    \begin{itemize}
        \item
            An object is a loose path $A^0\larr(A)[phan]A^1$ in $\bL$ of length $\le 1$.
        \item
            A morphism from $A^0\larr(A)[phan][1]A^1$ to $B^0\larr(B)[phan][1]B^1$ is a tuple $(\alpha^0,\alpha^1,\alpha)$ of the following form:
            \begin{equation*}
                \begin{tikzcd}
                    A^0\ar[d,"\alpha^0"']\lar[r,phan,"A"] & A^1\ar[d,"\alpha^1"] \\
                    B^0\lar[r,phan,"B"'] & B^1
                    \cellsymb(\alpha){1-1}{2-2}
                \end{tikzcd}\incat{\bL}.
            \end{equation*}
    \end{itemize}
    We write $\Larcat\bL$ for the full subcategory of $\Lphancat\bL$ consisting of paths of length 1, i.e., loose arrows.
\end{notation}

\begin{definition}[Loosewise invertible cells]
    Let $\bL$ be an \ac{AVDC}.
    Isomorphisms in the category $\Tcat\bL$ are called \emph{invertible tight arrows}.
    Two objects in $\bL$ are called \emph{tightwise isomorphic} if there is an invertible tight arrow between them.
    Isomorphisms in the category $\Lphancat\bL$ are called \emph{loosewise invertible cells} and are often denoted by the symbol ``$\cong$'' as follows:
    \begin{equation*}
        \begin{tikzcd}
            \cdot\ar[d,"f"']\lar[r,phan] & \cdot\ar[d,"g"] \\
            \cdot\lar[r,phan] & \cdot
            \cellsymb(\linv){1-1}{2-2}
        \end{tikzcd}\incat{\bL}
    \end{equation*}
    For a loosewise invertible cell of the above form, the tight arrows $f$ and $g$ automatically become invertible.
\end{definition}

\begin{theorem}[{\cite[3.8.\ Proposition]{Koudenburg2020aug}}]\label{thm:equiv_in_AVDC}
    An \ac{AVD}-functor $F\colon\bK\to\bL$ is a part of an equivalence in the 2-category $\AVDC$ if and only if it satisfies the following conditions:
    \begin{itemize}
        \item
            The assignments $\alpha\mapsto F\alpha$ induce bijections $\Cells[\bK]{f}{g}{\tup{u}}{v}\cong\Cells[\bL]{Ff}{Fg}{F\tup{u}}{Fv}$;
        \item
            The assignments $f\mapsto Ff$ induce bijections $\Homset[\bK]\vvect{A}{B}\cong\Homset[\bL]\vvect{FA}{FB}$;
        \item
            We can simultaneously make the following choices:
            \begin{itemize}
                \item
                    for each $A\in\bL$, an object $A'\in\bK$ and an invertible tight arrow $FA'\arr(\epsilon_A)[][1]A$ in $\bL$;
                \item
                    for each $A\larr(u)B$ in $\bL$, a loose arrow $A'\larr(u')B'$ in $\bK$ and a loosewise invertible cell
                    \begin{equation*}
                        \begin{tikzcd}
                            FA'\ar[d,"\epsilon_A"']\lar[r,"Fu'"] & FB'\ar[d,"\epsilon_B"] \\
                            A\lar[r,"u"'] & B
                            \cellsymb(\linv){1-1}{2-2}
                        \end{tikzcd}\incat{\bL}.
                    \end{equation*}
            \end{itemize}
    \end{itemize}
\end{theorem}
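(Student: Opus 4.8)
The plan is to treat this as the \ac{AVDC} incarnation of the classical theorem that a functor is part of an equivalence exactly when it is fully faithful and essentially surjective. Under this analogy, the first two conditions say that $F$ is ``fully faithful'' at the level of cells and of tight arrows, while the third encodes ``essential surjectivity'' simultaneously on objects and on loose arrows. The ``only if'' direction is then a matter of reading off the data from a given quasi-inverse, and the substance of the proof lies in the ``if'' direction, in which a quasi-inverse must be built by hand.

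For necessity, suppose $F$ is part of an equivalence, so that there are an \ac{AVD}-functor $G\colon\bL\to\bK$ and invertible tight \ac{AVD}-transformations $\eta\colon\id_{\bK}\Rightarrow GF$ and $\epsilon\colon FG\Rightarrow\id_{\bL}$. The object-components $\epsilon_A\colon FGA\to A$ are invertible tight arrows and the cell-components $\epsilon_u$ are loosewise invertible, this being precisely what it means for $\epsilon$ to be invertible; hence setting $A':=GA$ and $u':=Gu$ produces exactly the data required by the third condition. For the two bijection conditions, I would observe that transporting cells and tight arrows along the invertible transformations $\eta$ and $\epsilon$ supplies a two-sided inverse to the assignments $\alpha\mapsto F\alpha$ and $f\mapsto Ff$, so that both are bijections.

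For sufficiency, assume all three conditions and construct $G$. On objects and loose arrows set $GA:=A'$ and $Gu:=u'$. For a tight arrow $f\colon A\to B$ in $\bL$, use the tight-arrow bijections to define $Gf\colon A'\to B'$ as the unique tight arrow of $\bK$ with $F(Gf)=\epsilon_A\tcomp f\tcomp\epsilon_B^{-1}$; this defining equation is exactly the naturality square that the prospective counit will require. For a cell $\alpha$ of $\bL$, I would paste $\alpha$ along its top and bottom loose boundaries with the loosewise invertible cells of the third condition and recognise its left and right tight boundaries through the $\epsilon$'s: the outcome is a cell whose entire boundary is the $F$-image of a boundary in $\bK$, and the cell bijections then furnish a unique preimage, which I take to be $G\alpha$. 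Functoriality of $F$ together with the uniqueness clauses in the bijection conditions forces $G$ to preserve tight composition, the loose and tight identity cells, and cell composition, so $G$ is an \ac{AVD}-functor. The counit $\epsilon\colon FG\Rightarrow\id_{\bL}$ is then assembled from the object-components $\epsilon_A$ and the cell-components of the third condition, and is invertible by construction; the unit $\eta\colon\id_{\bK}\Rightarrow GF$ has object-component $\eta_X\colon X\to(FX)'$ defined as the unique preimage under the tight-arrow bijections of $\epsilon_{FX}^{-1}$, which is invertible because a map inducing bijections on tight hom-classes reflects invertibility.

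The hard part will be the verifications rather than the constructions: checking that $G$ respects cell composition and both kinds of identity cell, and that $\eta$ and $\epsilon$ satisfy the three axioms of a tight \ac{AVD}-transformation, namely tight naturality and compatibility with $1$-coary and $0$-coary cells. A point deserving particular care is the behaviour at loose paths of length $0$, where loose identity cells collapse to the tight identity cells $\heq$ in the augmented setting; one must confirm that the pasting definition of $G\alpha$ and the transformation axioms remain coherent there. Each such check, however, reduces to a pasting identity that is pinned down uniquely by the cell and tight-arrow bijections together with the functoriality of $F$, so no new idea beyond careful bookkeeping across the three kinds of boundary is required. Finally, since a mere equivalence in a $2$-category asks only that $\eta$ and $\epsilon$ be invertible, no triangle identities need to be established.
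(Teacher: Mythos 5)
The paper does not prove this statement: it is quoted verbatim from Koudenburg's work and cited as \cite[3.8.\ Proposition]{Koudenburg2020aug}, so there is no in-paper proof to compare against. Your argument is the standard ``fully faithful plus essentially surjective'' characterization transported to \acp{AVDC}, and it is correct in both directions, including the construction of the quasi-inverse $G$ by conjugating cells with the $\epsilon_A$'s and the loosewise invertible comparison cells and then taking unique preimages; this is essentially the argument of the cited reference. The routine verifications you defer (functoriality of $G$, the transformation axioms for $\eta$ and $\epsilon$, and the $0$-coary case) do all reduce to pasting identities resolved by the injectivity halves of the two bijection hypotheses, as you claim.
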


\subsubsection{Cartesian cells}
\begin{definition}[{Cartesian cells \cite[4.1.\ Definition]{Koudenburg2020aug}}]
    A cell
    \begin{equation}\label{eq:phantom_cell}
        \begin{tikzcd}
            X^0\ar[d,"\alpha^0"']\lar[r,phan,"X"] & X^1\ar[d,"\alpha^1"] \\
            Y^0\lar[r,phan,"Y"'] & Y^1
            \cellsymb(\alpha){1-1}{2-2}
        \end{tikzcd}
    \end{equation}
    in an \ac{AVDC} is called \emph{cartesian} if it satisfies the following condition:
    Suppose that we are given a loose path $A\larr(\tup{u})[path]B$, tight arrows $A\arr(f)[][1]X^0$ and $B\arr(g)[][1]X^1$, and a cell $\beta$ on the right below; then there uniquely exists a cell $\gamma$ satisfying the following equation.
    \begin{equation*}
        \begin{tikzcd}
            A\ar[d,"f"']\lar[r,path,"\tup{u}"] & B\ar[d,"g"] \\
            X^0\ar[d,"\alpha^0"']\lar[r,phan,"X"] & X^1\ar[d,"\alpha^1"] \\
            Y^0\lar[r,phan,"Y"'] & Y^1
            \cellsymb(\gamma){1-1}{2-2}
            \cellsymb(\alpha){2-1}{3-2}
        \end{tikzcd}
        =
        \begin{tikzcd}
            A\ar[d,"f"']\lar[r,path,"\tup{u}"] & B\ar[d,"g"] \\
            X^0\ar[d,"\alpha^0"'] & X^1\ar[d,"\alpha^1"] \\
            Y^0\lar[r,phan,"Y"'] & Y^1
            \cellsymb(\beta){1-1}{3-2}
        \end{tikzcd}
    \end{equation*}
    We will use the symbol ``$\cart$'' to represent a cartesian cell:
    \begin{equation*}
        \begin{tikzcd}[largecolumn]
            \cdot\ar[d]\lar[r,phan] & \cdot\ar[d] \\
            \cdot\lar[r,phan] & \cdot
            \cellsymb(\cart){1-1}{2-2}
        \end{tikzcd}
    \end{equation*}
\end{definition}

\begin{proposition}\label{prop:cart_and_loosewise_inv}
    Let $\alpha$ be a cell of the form \cref{eq:phantom_cell} in an \ac{AVDC}, and suppose that $\alpha^0$ and $\alpha^1$ are invertible.
    Then, the cell $\alpha$ is cartesian if and only if it is loosewise invertible.
    In particular, every loosewise invertible cell is cartesian.
\end{proposition}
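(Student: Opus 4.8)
The plan is to prove the two implications separately, in each case reducing everything to the unit and associativity laws of cell composition in an \ac{AVDC} together with the defining universal property of a cartesian cell. Throughout I write $\bar\alpha$ for a candidate loosewise inverse of $\alpha$, that is, a cell with top $Y$, bottom $X$, and left and right boundaries $(\alpha^0)^{-1}$ and $(\alpha^1)^{-1}$. Being loosewise invertible means precisely that $\alpha$ is an isomorphism in $\Lphancat\bL$, which unwinds to the two identities $\alpha\tcomp\bar\alpha=\veq_X$ and $\bar\alpha\tcomp\alpha=\veq_Y$ (here the vertical pasting $\tcomp$ realizes composition in $\Lphancat\bL$, read in the appropriate order).

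For the direction ``loosewise invertible $\implies$ cartesian,'' I would start from such a $\bar\alpha$ and, given a test cell $\beta$ with bottom $Y$ and outer boundary $f\tcomp\alpha^0$ and $g\tcomp\alpha^1$ as in the definition of cartesian, simply set $\gamma:=\beta\tcomp\bar\alpha$. The boundary computation $(f\tcomp\alpha^0)\tcomp(\alpha^0)^{-1}=f$, and likewise on the right, shows $\gamma$ has the required shape, while associativity together with $\bar\alpha\tcomp\alpha=\veq_Y$ and the unit law gives $\gamma\tcomp\alpha=\beta\tcomp(\bar\alpha\tcomp\alpha)=\beta$. Uniqueness is the dual bookkeeping: any $\gamma'$ with $\gamma'\tcomp\alpha=\beta$ satisfies $\gamma'=\gamma'\tcomp\veq_X=\gamma'\tcomp(\alpha\tcomp\bar\alpha)=(\gamma'\tcomp\alpha)\tcomp\bar\alpha=\beta\tcomp\bar\alpha=\gamma$. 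This already settles the final sentence of the statement, since a loosewise invertible cell has $\alpha^0$ and $\alpha^1$ automatically invertible, so the hypotheses of the proposition are met.

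For the converse, assume $\alpha$ is cartesian with $\alpha^0,\alpha^1$ invertible, and construct $\bar\alpha$ by the universal property. Applying cartesianness to the loose path $\tup u=Y$, the tight arrows $f=(\alpha^0)^{-1}$ and $g=(\alpha^1)^{-1}$, and the test cell $\beta=\veq_Y$ (whose outer boundary $(\alpha^0)^{-1}\tcomp\alpha^0=\id$ is exactly what is required) produces a unique $\bar\alpha$ with $\bar\alpha\tcomp\alpha=\veq_Y$; this is the first inverse identity for free. The remaining identity $\alpha\tcomp\bar\alpha=\veq_X$ is where I expect the only real friction, and I would establish it not by direct computation but by re-using uniqueness: both cells $\alpha\tcomp\bar\alpha$ and $\veq_X$ have top $X$, bottom $X$, and identity outer boundary, and each composes below with $\alpha$ to recover $\alpha$, since $(\alpha\tcomp\bar\alpha)\tcomp\alpha=\alpha\tcomp(\bar\alpha\tcomp\alpha)=\alpha\tcomp\veq_Y=\alpha$ and $\veq_X\tcomp\alpha=\alpha$. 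Hence the uniqueness clause of the cartesian property, applied with $\tup u=X$, $f=\id_{X^0}$, and $g=\id_{X^1}$, forces the two to coincide, giving a genuine two-sided inverse and thus loosewise invertibility.

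The main obstacle, as flagged, is exactly this second inverse identity in the converse: the universal property naturally delivers only a one-sided inverse, and promoting it to a two-sided inverse in $\Lphancat\bL$ requires feeding the right endo-cell back through uniqueness. A secondary point to keep in mind is that $X$ and $Y$ are loose paths of length $\le 1$, so when either has length $0$ the symbol $\veq$ silently denotes the corresponding tight identity cell $\heq_{\id}$; since every manipulation above uses only associativity, the unit law, and the invertibility of $\alpha^0$ and $\alpha^1$, the argument is uniform across these cases and no separate treatment is needed.
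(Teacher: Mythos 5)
Your proof is correct, and since the paper dismisses this proposition with ``Straightforward,'' your argument is precisely the standard one being omitted: pasting with a loosewise inverse gives the factorizations and their uniqueness in one direction, and in the other the universal property applied to $\veq_Y$ produces a one-sided inverse that the uniqueness clause (tested against $\alpha$ itself) upgrades to a two-sided one. Your handling of the length-$0$ case via the convention $\veq=\heq_{\id}$ is also consistent with the paper's notational conventions.
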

\begin{proof}
    Straightforward.
\end{proof}

\begin{proposition}[{Pasting lemma \cite[4.15.\ Lemma]{Koudenburg2020aug}}]\label{prop:pasting_lemma_cartesian}
    Let $\alpha$ and $\beta$ be cells of the following forms in an \ac{AVDC}.
    \begin{equation*}
        \begin{tikzcd}
            X^0\ar[d,"\alpha^0"']\lar[r,phan,"X"] & X^1\ar[d,"\alpha^1"] \\
            Y^0\ar[d,"\beta^0"']\lar[r,phan,"Y"] & Y^1\ar[d,"\beta^1"] \\
            Z^0\lar[r,phan,"Z"'] & Z^1
            \cellsymb(\alpha){1-1}{2-2}
            \cellsymb(\beta){2-1}{3-2}
        \end{tikzcd}
    \end{equation*}
    Suppose that $\beta$ is cartesian.
    Then, $\alpha$ is cartesian if and only if the composite $\alpha\tcomp\beta$ is cartesian.
\end{proposition}

\begin{definition}[Restrictions]
    Suppose that we are given a cartesian cell in an \ac{AVDC} of the following form:
    \begin{equation*}
        \begin{tikzcd}
            \cdot\ar[d,"f"']\lar[r,"p"] & \cdot\ar[d,"g"] \\\
            X\lar[r,phan,"u"'] & Y
            \cellsymb(\cart){1-1}{2-2}
        \end{tikzcd}
    \end{equation*}
    \begin{enumerate}
        \item
            Since the loose arrow $p$ is unique up to a loosewise invertible cell, we call $p$ the \emph{restriction} of $u$ along $f$ and $g$ and write $u(f,g)$ for it.
            When $u$ is of length 0 (hence $X=Y$), we also write $X(f,g)$ for $p$.
            To emphasize that $u$ is of length 1 (resp.\ 0), we sometimes call $u(f,g)$ the \emph{1-coary restriction} (resp.\ \emph{0-coary restriction}).
            \begin{equation*}
                \begin{tikzcd}
                    \cdot\ar[d,"f"']\lar[r,"{u(f,g)}"] & \cdot\ar[d,"g"] \\\
                    X\lar[r,phan,"u"'] & Y
                    \cellsymb(\cart){1-1}{2-2}
                \end{tikzcd}
                \quad
                \begin{tikzcd}[tri]
                    \cdot\ar[dr,"f"']\lar[rr,"{X(f,g)}"] &{}& \cdot\ar[dl,"g"] \\
                    & X &
                    \cellsymb(\cart)[above=-2]{1-2}{2-2}
                \end{tikzcd}
            \end{equation*}
        \item
            When $g=\id$ and $u$ is of length 0, we call $p$ the \emph{companion} of $f$ and write $\comp{f}$ for it.
            When $f=\id$ and $u$ is of length 0, we call $p$ the \emph{conjoint} of $g$ and write $\conj{g}$ for it.
            We write $\compcell{f}$ and $\conjcell{g}$ for the associated cartesian cells as follows:
            \begin{equation*}
                \begin{tikzcd}[tri]
                    \cdot\ar[dr,"f"']\lar[rr,"\comp{f}"] &{}& X\ar[dl,equal] \\
                    & X &
                    \cellsymb(\compcell{f})[above=-5]{1-2}{2-2}
                \end{tikzcd}\colon\cart
                \qquad
                \begin{tikzcd}[tri]
                    X\ar[dr,equal]\lar[rr,"\conj{g}"] &{}& \cdot\ar[dl,"g"] \\
                    & X &
                    \cellsymb(\conjcell{g})[above=-4]{1-2}{2-2}
                \end{tikzcd}\colon\cart
            \end{equation*}
        \item
            When $f=g=\id$ and $u$ is of length 0, we call $p$ the \emph{loose unit} on $X$ and write $\Unit_X$ for it.
            Note that the associated cartesian cell is loosewise invertible automatically:
            \begin{equation*}
                \begin{tikzcd}[tri]
                    X\ar[dr,equal]\lar[rr,"\Unit_X"] &{}& X\ar[dl,equal] \\
                    & X &
                    \cellsymb(\linv)[above=-4]{1-2}{2-2}
                \end{tikzcd}\colon\cart
            \end{equation*}\qedhere
    \end{enumerate}
\end{definition}

\begin{definition}
    Let $\bL$ be an \ac{AVDC}.
    We say $\bL$ \emph{has restrictions} (resp.\ \emph{1-coary restrictions}) if the restriction $u(f,g)$ exists for any $f$, $g$, and $u$ of length $\le 1$ (resp.\ length 1).
    We say $\bL$ \emph{has companions} (resp.\ \emph{conjoints}) if the companion $\comp{f}$ (resp.\ conjoint $\conj{f}$) exists for any $f$.
    We say $\bL$ \emph{has loose units} if the loose unit $\Unit_X$ exists for any $X$.
    We refer to such an $\bL$ as an \ac{AVDC} with restrictions, companions, etc.
\end{definition}

\begin{proposition}[{\cite[5.4.\ Lemma]{Koudenburg2020aug}}]
    Let $A\arr(f)[][1]X$ be a tight arrow in an \ac{AVDC}.
    Then, the following data correspond bijectively to each other:
    \begin{enumerate}
        \item
            A pair $(p,\epsilon)$ of a loose arrow $A\larr(p)X$ and a cartesian cell
            \begin{equation*}
                \begin{tikzcd}[tri]
                    A\ar[dr,"f"']\lar[rr,"p"] &{}& X\ar[dl,equal] \\
                    & X &
                    \cellsymb(\epsilon)[above=-2]{1-2}{2-2}
                \end{tikzcd}\colon\cart,
            \end{equation*}
            which defines $p$ as the companion of $f$.
        \item
            A tuple $(p,\eta,\epsilon)$ of a loose arrow $A\larr(p)X$ and cells $\eta,\epsilon$ satisfying the following equations:
            \begin{equation*}
                \begin{tikzcd}
                    & A\ar[dl,equal]\ar[d,"f"] \\
                    A\ar[d,"f"']\lar[r,"p"] & X\ar[dl,equal] \\
                    X &
                    \cellsymb(\eta)[below right]{1-2}{2-1}
                    \cellsymb(\epsilon)[above left]{2-2}{3-1}
                \end{tikzcd}
                =
                \begin{tikzcd}
                    A\ar[d,"f"{left},bend right=20]\ar[d,"f"{right},bend left=20] \\
                    X
                    \cellsymb(\heq){1-1}{2-1}
                \end{tikzcd}
                \quad
                \begin{tikzcd}
                    & A\ar[dl,equal]\ar[d,"f"]\lar[r,"p"] & X\ar[dl,equal] \\
                    A\lar[r,"p"'] & X &
                    \cellsymb(\eta)[below right]{1-2}{2-1}
                    \cellsymb(\epsilon)[above left]{1-3}{2-2}
                \end{tikzcd}
                =
                \begin{tikzcd}
                    A\ar[d,equal]\lar[r,"p"] & X\ar[d,equal] \\
                    A\lar[r,"p"'] & X
                    \cellsymb(\veq){1-1}{2-2}
                \end{tikzcd}
            \end{equation*}
    \end{enumerate}
\end{proposition}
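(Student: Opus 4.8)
The plan is to treat this exactly as the two equivalent presentations of an adjunction: a universal (cartesian) cell on one side, and unit–counit data constrained by triangle identities on the other. Concretely, the bijection will send a pair $(p,\epsilon)$ as in (i) to the triple $(p,\eta,\epsilon)$ of (ii), where $\eta$ is forced by the universal property of $\epsilon$, and will send a triple $(p,\eta,\epsilon)$ back to $(p,\epsilon)$ by forgetting $\eta$ while remembering that $\epsilon$ is cartesian. The whole content is that these two passages are well-defined and mutually inverse.

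For the direction (i)$\Rightarrow$(ii), I would start from the cartesian cell $\epsilon$ exhibiting $p$ as $\comp{f}$ and \emph{define} $\eta$ to be the unique factorization of the tight identity cell $\heq_f$ through $\epsilon$ supplied by cartesianness; note that $\heq_f$ has the correct boundary, with both legs equal to $f$ and a length-$0$ bottom at $X$. By construction this factorization is precisely the first equation $\eta\tcomp\epsilon=\heq_f$. The second equation is then automatic: I would show that its left-hand side and $\veq_p$ agree after postcomposition with $\epsilon$. Expanding the left-hand side composed with $\epsilon$ and regrouping by associativity so that the subpasting $\eta\tcomp\epsilon=\heq_f$ can be substituted, a unit-law simplification leaves exactly $\epsilon$; and $\veq_p\tcomp\epsilon=\epsilon$ by the unit law as well. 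Since both cells have top $p$ and length-$0$ bottom at $X$ with legs $f$ and $\id_X$, the uniqueness clause of cartesianness forces them to coincide.

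For the direction (ii)$\Rightarrow$(i), I would take a triple $(p,\eta,\epsilon)$ satisfying both triangle identities and prove that $\epsilon$ is cartesian. Given an arbitrary test cell $\beta$ as in the definition of cartesianness -- top a loose path $\tup{u}$, right leg $b$, left leg of the form $a\tcomp f$, and length-$0$ bottom at $X$ -- I would build the required factorization $\gamma$ as the ``mate'' of $\beta$: first whisker $\eta$ by $a$ (harmless, since $\eta$ has a length-$0$ top, so this is just the vertical composite $\heq_a\tcomp\eta$ with legs $a$ and $a\tcomp f$), then glue the result to $\beta$ along the shared leg $a\tcomp f$. Existence, namely $\gamma\tcomp\epsilon=\beta$, follows by whiskering the first triangle identity $\eta\tcomp\epsilon=\heq_f$ by $a$ and collapsing against $\beta$ via the unit laws. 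Uniqueness follows from the second triangle identity: if $\gamma'\tcomp\epsilon=\beta$ also holds, then pasting $\eta$ onto $\gamma'\tcomp\epsilon$ and cancelling the resulting $\eta$–$\epsilon$ pair through $\veq_p$ recovers $\gamma'$, so $\gamma'$ equals the mate of $\beta$, i.e.\ $\gamma$.

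Finally I would verify that the two passages are inverse. Passing (i)$\to$(ii)$\to$(i) returns $p$ and $\epsilon$ literally unchanged; passing (ii)$\to$(i)$\to$(ii) returns the same $\eta$, because $\eta$ is characterized as the unique cell with $\eta\tcomp\epsilon=\heq_f$ and the given $\eta$ satisfies this by the first triangle identity. I expect the main obstacle to be the bookkeeping of the pasting diagrams in the augmented setting -- keeping the length-$0$ bottom boundaries correctly typed while whiskering $\eta$ and gluing to $\beta$, and invoking the uniqueness part of cartesianness at the right moment. The single genuinely non-formal step is the derivation of the second triangle identity from the first in (i)$\Rightarrow$(ii) (the familiar fact that one triangle identity is redundant in the presence of the universal property), which rests on the regrouping and the final appeal to cartesian uniqueness described above.
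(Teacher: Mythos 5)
The paper does not prove this statement at all: it is quoted verbatim from Koudenburg's work (cited as [5.4.\ Lemma] there), so there is no in-paper proof to compare against. Your reconstruction is the standard argument and is correct in outline: define $\eta$ as the unique factorization of $\heq_f$ through the cartesian cell $\epsilon$, derive the second identity by postcomposing with $\epsilon$ and invoking uniqueness, and in the converse direction build the factorization as the mate of $\beta$ via $\eta$, with uniqueness coming from the second identity.

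One caution on the step you yourself flag as the delicate one. In the composite $(\eta,\epsilon)\tcomp\epsilon$ the inner $\epsilon$ sits in the $0$-coary ``gap'' at the right endpoint of the outer $\epsilon$'s source, so the subpasting $\eta\tcomp\epsilon$ is \emph{not} a contractible subtree of that configuration, and the associativity axiom cannot be applied to it directly. The regrouping only becomes legitimate after you first pad the lower cartesian cell using the unit laws, rewriting it as $(\epsilon,\heq_{\id_X})\tcomp\heq_{\id_X}$; then associativity places $\eta$ over $\epsilon$ and the inner $\epsilon$ over $\heq_{\id_X}$, giving $(\eta\tcomp\epsilon,\epsilon)\tcomp\heq_{\id_X}=(\heq_f,\epsilon)\tcomp\heq_{\id_X}=\epsilon$, which is the virtual-double-categorical incarnation of the interchange computation $(\eta\odot\epsilon)\cdot(\epsilon\odot 1)=(\eta\cdot\epsilon)\odot(\epsilon\cdot 1)$. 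The same padding is needed to verify $\gamma\tcomp\epsilon=\beta$ in the converse direction. With that precisification your argument goes through; the uniqueness half and the round-trip checks are fine as written.
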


\begin{corollary}[{\cite[5.5.\ Corollary]{Koudenburg2020aug}}]
    Companions, conjoints, and loose units are preserved by any \ac{AVD}-functor.
\end{corollary}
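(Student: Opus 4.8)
The plan is to reduce everything to the equational characterization of companions furnished by the preceding Proposition. The essential difficulty is that cartesianness is a universal property, and an arbitrary \ac{AVD}-functor has no reason to preserve universal factorizations; a cell that factors uniquely in $\bK$ need not factor uniquely in $\bL$. What \emph{is} manifestly preserved by $F$ is any equality between pasting composites, because $F$ respects $\tcomp$-composition of cells and sends $\veq_u\mapsto\veq_{Fu}$ and $\heq_g\mapsto\heq_{Fg}$. The preceding Proposition is precisely the bridge between these two worlds: it trades the cartesian companion cell (data (1)) for the triangle-identity data $(p,\eta,\epsilon)$ (data (2)), which is purely equational. So instead of transporting the cartesian cell directly, I transport the triangle identities.

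I would first treat companions. Let $A\arr(f)[][1]X$ be a tight arrow in $\bK$ admitting a companion, presented via a tuple $(\comp{f},\eta,\epsilon)$ satisfying the two equations of data (2). Applying $F$ to both sides of each equation and using that $F$ preserves $\tcomp$-composition, that $F\veq_{\comp{f}}=\veq_{F\comp{f}}$, and that $F\heq_f=\heq_{Ff}$, I obtain that $(F\comp{f}, F\eta, F\epsilon)$ satisfies the very same two equations, now with $f$ and $\comp{f}$ replaced by $Ff$ and $F\comp{f}$. Hence this triple is a valid instance of data (2) for the tight arrow $Ff$ in $\bL$. Invoking the bijection of the Proposition in the reverse direction converts it back into data (1), i.e.\ a cartesian cell exhibiting $F\comp{f}$ as a companion of $Ff$. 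Therefore $F\comp{f}$ is (up to loosewise invertible cell) the companion $\comp{(Ff)}$, and companions are preserved.

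Conjoints follow by the evident dual argument: one applies $F$ to the analogous triangle-identity data for $\conj{g}$, using the transpose of the Proposition in which the left and right tight boundaries are interchanged. Loose units then need no independent argument, since the defining cartesian cell of $\Unit_X=X(\id_X,\id_X)$ is exactly the companion cell of $\id_X$ (equally the conjoint cell of $\id_X$), so that $\Unit_X=\comp{\id_X}$. Consequently preservation of loose units is the special case $f=\id_X$ of the companion statement, combined with $F\id_X=\id_{FX}$, giving $F\Unit_X\cong\Unit_{FX}$.

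The only step that carries any real content is the first-paragraph observation that forces the detour through data (2); everything afterward is a routine transport of equalities through $F$. I do not expect the dualization for conjoints to present any obstacle beyond bookkeeping of the swapped boundaries, and the loose-unit case is immediate once companions are handled.
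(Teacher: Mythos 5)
Your proof is correct and is exactly the intended derivation: the paper states this corollary immediately after the equational characterization of companions precisely so that it follows by applying $F$ to the triangle identities (the paper itself gives no proof beyond the citation to Koudenburg, whose argument is the same). Your reductions of conjoints to the loosewise dual and of loose units to the case $f=\id_X$ are both standard and unproblematic.
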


\begin{remark}\label{rem:avdc_with_units}
    An \ac{AVDC} with loose units, called a \emph{unital \ac{AVDC}} in \cite{Koudenburg2020aug}, can be identified with a \emph{unital \ac{VDC}}, i.e., \ac{VDC} with ``units'' in the sense of \cite[5.1.\ Definition]{CruttwellShulman2010unified}.
    When we regard \acp{AVDC} with loose units as unital \acp{VDC}, the {AVD}-functors between them correspond to the \emph{normal} \ac{VD}-functors \cite[Section 5]{CruttwellShulman2010unified}.
    Indeed, there is a 2-equivalence \cite[10.1.\ Theorem]{Koudenburg2020aug}:
    \begin{equation}\label{eq:2-equiv_betwen_uavdc_and_uvdcn}
        \UAVDC\simeq\UVDCn.
    \end{equation}
    Here, $\UAVDC$ denotes the 2-category of (huge) unital \acp{AVDC} and \ac{AVD}-functors, and $\UVDCn$ denotes the 2-category of (huge) unital \acp{VDC} and normal \ac{VD}-functors.

    An \ac{AVDC} with 1-coary restrictions is called an \emph{augmented virtual equipment}, and \ac{AVDC} with restrictions is called a \emph{unital virtual equipment} in \cite{Koudenburg2020aug}.
    The latter can be identified with a \emph{virtual equipment} \cite{CruttwellShulman2010unified} by the 2-equivalence \cref{eq:2-equiv_betwen_uavdc_and_uvdcn}.
\end{remark}

\begin{remark}
    We now have two ways to regard unital \acp{VDC} as \acp{AVDC}.
    The first one is to regard them as diminished \acp{AVDC}, where the \ac{AVD}-functors between them correspond to the \ac{VD}-functors.
    The second one is to regard them as \acp{AVDC} with loose units, where the \ac{AVD}-functors between them correspond to the normal \ac{VD}-functors.
    Depending on which types of \ac{VD}-functors are considered, we will use either way.
\end{remark}

\begin{proposition}\label{prop:radj_preserves_cart}
    Every right adjoint in the 2-category $\AVDC$ preserves cartesian cells.
\end{proposition}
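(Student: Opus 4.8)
The plan is to reduce the cartesianness of $G\alpha$ to that of $\alpha$ by a transpose argument across the adjunction. Write $F\dashv G$ with $F\colon\bK\to\bL$ the left adjoint, $G\colon\bL\to\bK$ the right adjoint, unit $\eta\colon\Id_\bK\Rightarrow GF$ and counit $\epsilon\colon FG\Rightarrow\Id_\bL$ (both tight \ac{AVD}-transformations), subject to the two triangle identities. Fix a cartesian cell $\alpha$ in $\bL$ with boundary as in \cref{eq:phantom_cell}; we must verify the universal property of $G\alpha$ in $\bK$. So suppose we are given a loose path $A\larr(\tup{u})[path]B$, tight arrows $A\arr(f)[][1]GX^0$ and $B\arr(g)[][1]GX^1$, and a cell $\beta$ with top $\tup{u}$, bottom $GY$, and legs $f\tcomp G\alpha^0$ and $g\tcomp G\alpha^1$.

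First I would introduce a transpose operation on cells. Given a cell $\gamma$ in $\bK$ with top $\tup{u}$, bottom $GX$ and legs $f,g$, set $\tilde\gamma:=F\gamma\tcomp\epsilon_X$, a cell in $\bL$ with top $F\tup{u}$, bottom $X$, and legs $\bar f:=Ff\tcomp\epsilon_{X^0}$ and $\bar g:=Fg\tcomp\epsilon_{X^1}$; conversely, to a cell $\gamma'$ in $\bL$ with top $F\tup{u}$, bottom $X$ and legs $\bar f,\bar g$ I assign $\eta_{\tup{u}}\tcomp G\gamma'$ in $\bK$. The key computational lemma is that these two assignments are mutually inverse bijections. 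Both verifications are pure diagram chases: the naturality axioms for $\eta$ and $\epsilon$ move the outer cells past $GF\gamma'$ (respectively $FG\gamma$), and the triangle identities collapse the resulting composites $F\eta_{\tup{u}}\tcomp\epsilon_{F\tup{u}}$ and $\eta_{GX}\tcomp G\epsilon_X$ to loose identity cells $\veq_{F\tup{u}}$ and $\veq_{GX}$. The same triangle identities, together with naturality of $\eta$ along the tight arrow $f$, identify the transposed legs, e.g.\ $\eta_A\tcomp G\bar f=f$.

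Next I would produce the filler. Set $\beta':=F\beta\tcomp\epsilon_Y$ in $\bL$; using naturality of $\epsilon$ along the tight arrows $\alpha^0,\alpha^1$, its legs are exactly $\bar f\tcomp\alpha^0$ and $\bar g\tcomp\alpha^1$, so cartesianness of $\alpha$ supplies a unique cell $\gamma'$ with top $F\tup{u}$, bottom $X$, legs $\bar f,\bar g$, and $\gamma'\tcomp\alpha=\beta'$. I then define $\gamma:=\eta_{\tup{u}}\tcomp G\gamma'$, which by the transpose lemma has top $\tup{u}$, bottom $GX$ and legs $f,g$, and I check the defining equation $\gamma\tcomp G\alpha=\beta$: by functoriality of $G$ and naturality of $\eta$ along $\beta$ one rewrites $\gamma\tcomp G\alpha=\eta_{\tup{u}}\tcomp G\beta'=\beta\tcomp(\eta_{GY}\tcomp G\epsilon_Y)$, and the triangle identity collapses $\eta_{GY}\tcomp G\epsilon_Y=\veq_{GY}$, giving $\beta$. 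For uniqueness, any $\gamma$ with $\gamma\tcomp G\alpha=\beta$ transposes to $\tilde\gamma$ satisfying $\tilde\gamma\tcomp\alpha=F(\gamma\tcomp G\alpha)\tcomp\epsilon_Y=\beta'$ (again by naturality of $\epsilon$ along $\alpha$ and functoriality of $F$); the uniqueness clause in the cartesianness of $\alpha$ then forces $\tilde\gamma=\gamma'$, and applying the transpose bijection returns the $\gamma$ constructed above.

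The argument carries no conceptual difficulty beyond the adjunction formalism, so the main obstacle is purely organizational. One must keep track of the multi-ary vertical composites $\eta_{\tup{u}}\tcomp(-)$ of whiskered unit cells along an arbitrary-length path $\tup{u}$, and apply the naturality axioms for $\eta$ and $\epsilon$ in both their $1$-coary and $0$-coary forms according to whether the phantom loose arrows $X$ and $Y$ have length $1$ or $0$. Since \cref{eq:phantom_cell} permits $X$ and $Y$ to have length $0$, I would state the transpose lemma uniformly for phantom boundaries so that both coarity cases are subsumed at once, and then invoke it in the final reduction without re-deriving the two cases separately.
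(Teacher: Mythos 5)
Your proposal is correct and is essentially the paper's own argument: the paper likewise reduces the universal property of $\Psi\alpha$ to that of $\alpha$ by chaining two bijections on cell-classes coming from the adjunction with the bijection given by cartesianness of $\alpha$, which is exactly the transpose correspondence you spell out via unit, counit, and the triangle identities. The only difference is presentational — the paper asserts the adjunction bijections abstractly, whereas you unpack them explicitly — so no gap.
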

\begin{proof}
    Consider an adjunction in $\AVDC$ of the following form:
    \begin{equation*}
        \adjoint(\hspace{11pt}\perp_{\eta,\epsilon}){\bL}{\bL'}{\Phi}{\Psi}\incat{\AVDC},
    \end{equation*}
    where $\eta$ and $\epsilon$ are the unit and counit, respectively.
    Let $\alpha$ be a cartesian cell in $\bL'$ of the form \cref{eq:phantom_cell}.
    Then, we have bijective correspondences among the cells of the following forms:
    \begin{equation*}
        \begin{tikzcd}
            A\ar[d,"f"']\lar[r,path,"\tup{u}"] & B\ar[d,"g"] \\
            \Psi X^0\ar[d,"\Psi\alpha^0"'] & \Psi X^1\ar[d,"\Psi\alpha^1"] \\
            \Psi Y^0\lar[r,phan,"\Psi Y"'] & \Psi Y^1
            \cellsymb(\cdot){1-1}{3-2}
        \end{tikzcd}\incat{\bL}
        \Vline
        \begin{tikzcd}
            \Phi A\ar[d,"\hat{f}"']\lar[r,path,"\Phi\tup{u}"] & \Phi B\ar[d,"\hat{g}"] \\
            X^0\ar[d,"\alpha^0"'] & X^1\ar[d,"\alpha^1"] \\
            Y^0\lar[r,phan,"Y"'] & Y^1
            \cellsymb(\cdot){1-1}{3-2}
        \end{tikzcd}\incat{\bL'}
    \end{equation*}
    \begin{equation*}
        \Vline
        \begin{tikzcd}
            \Phi A\ar[d,"\hat{f}"']\lar[r,path,"\Phi\tup{u}"] & \Phi B\ar[d,"\hat{g}"] \\
            X^0\lar[r,phan,"X"'] & X^1
            \cellsymb(\cdot){1-1}{2-2}
        \end{tikzcd}\incat{\bL'}
        \Vline
        \begin{tikzcd}
            A\ar[d,"f"']\lar[r,path,"\tup{u}"] & B\ar[d,"g"] \\
            \Psi X^0\lar[r,phan,"\Psi X"'] & \Psi X^1
            \cellsymb(\cdot){1-1}{2-2}
        \end{tikzcd}\incat{\bL}.
    \end{equation*}
    The first and third correspondences come from the adjunction, and the second one follows from the universal property of the cartesian cell $\alpha$.
    This shows that the cell $\Psi\alpha$ is cartesian.
\end{proof}

\subsubsection{Cocartesian cells}
\begin{definition}[Cocartesian cells]
    A cell
    \begin{equation*}
        \begin{tikzcd}
            A\ar[d,equal]\lar[r,path,"\tup{u}"] & B\ar[d,equal] \\
            A\lar[r,phan,"v"'] & B
            \cellsymb(\alpha){1-1}{2-2}
        \end{tikzcd}
    \end{equation*}
    in an \ac{AVDC} is called \emph{cocartesian} if the following assignment induces a bijection $\Cells{f}{g}{\tup{p}v\tup{q}}{w}\cong\Cells{f}{g}{\tup{p}\tup{u}\tup{q}}{w}$ for any $f,g,\tup{p},\tup{q},w$:
    \begin{equation*}
        \begin{tikzcd}
            \cdot\ar[d,"f"']\lar[r,path,"\tup{p}"] & A\lar[r,phan,"v"] & B\lar[r,path,"\tup{q}"] & \cdot\ar[d,"g"] \\
            \cdot\lar[rrr,phan,"w"'] &&& \cdot
            \cellsymb(\cdot){1-1}{2-4}
        \end{tikzcd}
        \mapsto
        \begin{tikzcd}
            \cdot\ar[d,equal]\lar[r,path,"\tup{p}"] & A\ar[d,equal]\lar[r,path,"\tup{u}"] & B\ar[d,equal]\lar[r,path,"\tup{q}"] & \cdot\ar[d,equal] \\
            \cdot\ar[d,"f"']\lar[r,path,"\tup{p}"] & A\lar[r,phan,"v"] & B\lar[r,path,"\tup{q}"] & \cdot\ar[d,"g"] \\
            \cdot\lar[rrr,phan,"w"'] &&& \cdot
            \cellsymb(\veq){1-1}{2-2}
            \cellsymb(\alpha){1-2}{2-3}
            \cellsymb(\veq){1-3}{2-4}
            \cellsymb(\cdot){2-1}{3-4}
        \end{tikzcd}
    \end{equation*}
    The cell $\alpha$ is called \emph{\ac{VD}-cocartesian} if it induces the above bijection only for $w$ of length 1.
    Cocartesian cells and \ac{VD}-cocartesian cells are often denoted by the symbol ``$\cocart$'' and ``$\mathsf{VD.cocart}$,'' respectively:
    \begin{equation*}
        \begin{tikzcd}[largecolumn]
            \cdot\ar[d,equal]\lar[r,path] & \cdot\ar[d,equal] \\
            \cdot\lar[r,phan] & \cdot
            \cellsymb(\cocart){1-1}{2-2}
        \end{tikzcd}
        \qquad
        \begin{tikzcd}[largecolumn]
            \cdot\ar[d,equal]\lar[r,path] & \cdot\ar[d,equal] \\
            \cdot\lar[r,phan] & \cdot
            \cellsymb(\VDcocart){1-1}{2-2}
        \end{tikzcd}
    \end{equation*}
\end{definition}

\begin{remark}
    We can also consider cocartesian cells with an arbitrary boundary rather than identity tight arrows.
    See \cite[Section 7]{Koudenburg2020aug} for details.
\end{remark}

\begin{remark}
    The \ac{VD}-cocartesian cells recover the concept of ``cocartesian cells in \acp{VDC}'' introduced in \cite[5.1.\ Definition]{CruttwellShulman2010unified}, where a different term ``opcartesian'' is used.
    Indeed, \ac{VD}-cocartesian cells in a diminished \ac{AVDC} are nothing but opcartesian cells, in the sense of \cite[5.1.\ Definition]{CruttwellShulman2010unified}, in the corresponding \ac{VDC}.
\end{remark}

\begin{definition}
    Let $\bL$ be an \ac{AVDC}, and let $X\in\bL$.
    A loose arrow $u$ in a \ac{VD}-cocartesian cell of the following form is called the \emph{loose \ac{VD}-unit} on $X$.
    \begin{equation}\label{eq:loose_VD_unit}
        \begin{tikzcd}
            & X\ar[dl,equal]\ar[dr,equal] & \\
            X\lar[rr,"u"'] &{}& X
            \cellsymb(\VDcocart)[below=-2]{1-2}{2-2}
        \end{tikzcd}\incat{\bL}
    \end{equation}
    Note that the loose \ac{VD}-unit on $X$ is, if it exists, unique up to loosewise invertible cell.
\end{definition}

\begin{remark}
    If the cell \cref{eq:loose_VD_unit} is cocartesian rather than \ac{VD}-cocartesian, the loose cell $u$ in \cref{eq:loose_VD_unit} becomes the loose unit on $X$.
    Indeed, every cocartesian cell of the form \cref{eq:loose_VD_unit} is loosewise invertible.
    Thus, the loose \ac{VD}-units are a weaker concept than the loose units.
    Clearly, loose \ac{VD}-units in diminished \acp{AVDC} are the same concept as (loose) ``units'' in \acp{VDC} in the sense of \cite[5.1.\ Definition]{CruttwellShulman2010unified}.
\end{remark}

\begin{definition}[Loose composites]
    Suppose that we are given a (\ac{VD}-)cocartesian cell in an \ac{AVDC} of the following form:
    \begin{equation*}
        \begin{tikzcd}[hugecolumn]
            A\ar[d,equal]\lar[r,path,"\tup{u}"] & B\ar[d,equal] \\
            A\lar[r,"p"'] & B
            \cellsymb(\cocartorVD){1-1}{2-2}
        \end{tikzcd}
    \end{equation*}
    Since the loose arrow $p$ is unique up to isomorphism, we call $p$ the \emph{loose (\ac{VD}-)composite} of $\tup{u}$.
    The loose composite of a loose path $\tup{u}$ is often denoted by $\lcomp\tup{u}$.
    (We do not assign a specific symbol to loose \ac{VD}-composites.)
    Note that the loose (\ac{VD}-)composite of a loose path of length 0 is the same as the loose (\ac{VD}-)unit.
    An \ac{AVDC} is said to \emph{have loose (\ac{VD}-)composites} if the loose (\ac{VD}-)composite exists for every loose path.
    Clearly, an \ac{AVDC} has loose composites if and only if it has loose units and loose \ac{VD}-composites.
\end{definition}

\begin{definition}
    Let $\bL$ be an \ac{AVDC}.
    An object $A\in\bL$ is called \emph{(\ac{VD}-)composable} in $\bL$ if:
    \begin{itemize}
        \item
            For any loose arrows $\cdot\larr(u_1)A\larr(u_2)\cdot$ in $\bL$, there exists the loose (\ac{VD}-)composite of them:
            \begin{equation}\label{eq:VD-composition}
                \begin{tikzcd}
                    \cdot\ar[d,equal]\lar[r,"u_1"] & A\lar[r,"u_2"] & \cdot\ar[d,equal] \\
                    \cdot\lar[rr] & & \cdot
                    \cellsymb(\cocartorVD){1-1}{2-3}
                \end{tikzcd}\incat{\bL};
            \end{equation}
        \item
            $A$ has the loose (\ac{VD}-)unit:
            \begin{equation}\label{eq:VD-identity}
                \begin{tikzcd}
                    & A\ar[dl,equal]\ar[dr,equal] & \\
                    A\lar[rr] &{}& A
                    \cellsymb(\cocartorVD)[below=-2]{1-2}{2-2}
                \end{tikzcd}\incat{\bL}.
            \end{equation}
    \end{itemize}
\end{definition}

\begin{notation}\label{note:diminished_avdc_from_bicategory}
    Given a bicategory $\bi{W}$, we can obtain a diminished \ac{AVDC} $\Ldbl\bi{W}$ as follows.
    The tight category $\Tcat(\Ldbl\bi{W})$ is the discrete category of objects in $\bi{W}$.
    A loose arrow in $\Ldbl\bi{W}$ is a 1-cell in $\bi{W}$.
    A cell from $\tup{f}$ to $g$ in $\Ldbl\bi{W}$ is a 2-cell from $\lcomp\tup{f}$ to $g$ in $\bi{W}$:
    \begin{equation*}
        \begin{tikzcd}
            c\ar[d,equal]\lar[r,path,"\tup{f}"] & c'\ar[d,equal] \\
            c\lar[r,"g"'] & c'
            \cellsymb(\alpha){1-1}{2-2}
        \end{tikzcd}\incat{\Ldbl\bi{W}}
        \Vline
        \begin{tikzcd}
            c\ar[rr,bend left=30,"\lcomp\tup{f}"{above}]\ar[rr,bend right=30,"g"{below}] & & c'
            \dtwocell(\alpha){1-1}{1-3}
        \end{tikzcd}\incat{\bi{W}}
    \end{equation*}
    Here, $\lcomp\tup{f}$ denotes the composite of $\tup{f}$ in $\bi{W}$.
\end{notation}

\begin{theorem}\label{thm:avd_functor_is_lax}
    For bicategories $\bi{W}$ and $\bi{W}'$, the lax-functors $\bi{W}\to\bi{W}'$ are the same as the \ac{AVD}-functors $\Ldbl\bi{W}\to\Ldbl\bi{W}'$.
    Moreover, the pseudo-functors $\bi{W}\to\bi{W}'$ are the same as the \ac{AVD}-functors that preserve all \ac{VD}-cocartesian cells.
\end{theorem}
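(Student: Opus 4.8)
The plan is to exhibit a bijective correspondence between the data and axioms of a lax functor $F\colon\bi{W}\to\bi{W}'$ and those of an \ac{AVD}-functor $G\colon\Ldbl\bi{W}\to\Ldbl\bi{W}'$, exploiting that $\Ldbl\bi{W}$ is diminished with a discrete tight category. First I would observe that, because $\Tcat(\Ldbl\bi{W})$ is discrete, the underlying functor $\Tcat G$ is merely a map on objects, the only tight arrows are identities, and the tight-identity-cell axiom $G\heq_{\id_c}=\heq_{\id_{Gc}}$ holds automatically. Hence the genuine data of $G$ consist of an object assignment, a loose-arrow assignment (a map on $1$-cells of $\bi{W}$), and a cell assignment which, by \cref{note:diminished_avdc_from_bicategory}, sends a $2$-cell $\circ\tup{u}\Rightarrow v$ in $\bi{W}$ to a $2$-cell $\circ(G\tup{u})\Rightarrow Gv$ in $\bi{W}'$.

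The key observation is that the cells exhibiting loose \ac{VD}-composites are exactly what produce the laxity constraints. For a composable pair $c\larr(f)c'\larr(g)c''$, the identity $2$-cell on $\circ(f,g)$, read as a cell of $\Ldbl\bi{W}$, is the \ac{VD}-cocartesian cell exhibiting $\circ(f,g)$ as the loose \ac{VD}-composite of the path $(f,g)$; applying $G$ yields a $2$-cell $\circ(Gf,Gg)\Rightarrow G(\circ(f,g))$, which I would take as the laxity constraint $\phi_{g,f}$. Likewise the loose \ac{VD}-unit cell at $c$ (the identity on $\id_c$) is sent by $G$ to a $2$-cell $\id_{Gc}\Rightarrow G(\id_c)$, the unit constraint $\phi_c$. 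Meanwhile the action of $G$ on length-$1$-to-length-$1$ cells is precisely a family of functors $\bi{W}(c,c')\to\bi{W}'(Gc,Gc')$ on hom-categories: preservation of loose identity cells $\veq_u$ encodes preservation of identity $2$-cells, and the relevant instances of \ac{AVD}-composition encode functoriality under vertical composition.

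To see that these data are interchangeable I would check both round trips. Given a lax functor $F$, I define $\bar F$ on a cell $\alpha\colon\circ\tup{u}\Rightarrow v$ of length $n$ by vertically composing the iterated comparison $\circ(F\tup{u})\Rightarrow F(\circ\tup{u})$ (built from the binary and nullary constraints, with the length-$0$ case being $\phi_c$ and the length-$1$ case the identity) with $F\alpha\colon F(\circ\tup{u})\Rightarrow Fv$; the coherence axioms of $F$ together with bicategorical coherence guarantee this is well defined and compatible with \ac{AVD}-composition and loose identities, so $\bar F$ is an \ac{AVD}-functor. Conversely, factoring an arbitrary cell $\alpha$ as the \ac{VD}-cocartesian cell of its source path followed by a length-$1$-to-$1$ cell, and using that $G$ preserves \ac{AVD}-composition, shows that $G$ is reconstructed from its hom-functors and its values on the composition and unit cells; this makes the two assignments mutually inverse. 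The translation of axioms is then a direct dictionary: the associativity and unit coherence conditions for $F$ become, respectively, compatibility of the iterated comparisons with ternary \ac{AVD}-composition and with loose/tight identities.

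For the pseudo-functor case I would note that a cell from the length-$2$ path $(Gf,Gg)$ to $G(\circ(f,g))$ is \ac{VD}-cocartesian in $\Ldbl\bi{W}'$ exactly when its underlying $2$-cell is invertible, and similarly for the nullary unit cell; hence $G$ preserves these particular \ac{VD}-cocartesian cells iff all $\phi_{g,f}$ and $\phi_c$ are invertible, i.e.\ iff $F$ is a pseudo-functor. Since every \ac{VD}-cocartesian cell in $\Ldbl\bi{W}$ is built by \ac{AVD}-composition from these binary and nullary ones, preservation of all \ac{VD}-cocartesian cells is equivalent to preservation of just these, which closes the identification. The main obstacle I anticipate is the bookkeeping of the bicategorical associators and unitors when verifying that the lax coherence axioms correspond precisely to the \ac{AVD}-composition axioms: this is exactly where the iterated comparison $\circ(F\tup{u})\Rightarrow F(\circ\tup{u})$ must be shown independent of bracketing, a routine but careful application of coherence for bicategories and for lax functors.
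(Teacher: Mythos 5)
Your argument is correct and is exactly the standard correspondence that the paper invokes by simply citing Cruttwell--Shulman (Example 3.5) in lieu of a proof: the laxity constraints are the images of the canonical binary and nullary \ac{VD}-cocartesian cells, the hom-functors are the action on length-$1$ cells, and \ac{VD}-cocartesianness in $\Ldbl\bi{W}'$ amounts to invertibility of the underlying $2$-cell. The only slight imprecision is that a general \ac{VD}-cocartesian cell is a canonical one followed by an invertible length-$1$ cell rather than literally a composite of binary and nullary ones, but such invertible cells are preserved by every \ac{AVD}-functor, so your conclusion stands.
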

\begin{proof}
    See \cite[3.5.\ Example]{CruttwellShulman2010unified}.
\end{proof}

\begin{remark}\label{rem:pseudo_double_category}
    Diminished \acp{AVDC} with loose \ac{VD}-composites are essentially the same concept as \emph{pseudo double categories}; \ac{AVD}-functors between them are the same as \emph{lax double functors}.
    \Acp{AVDC} with loose composites are also essentially the same concept as pseudo double categories, but \ac{AVD}-functors between them are the same as \emph{normal lax double functors}.
    See \cite[5.2.\ Theorem]{CruttwellShulman2010unified} or \cite[2.8.\ Proposition]{DawsonParePronk2006paths} for details.
\end{remark}

\begin{notation}
    Let $\bL$ be an \ac{AVDC}.
    Then, all of the \ac{VD}-composable objects yield a bicategory $\Lbicat\bL$, called the \emph{loose bicategory} of $\bL$, where 1-cells are loose arrows and compositions and identities are defined by the \ac{VD}-cocartesian cells \cref{eq:VD-composition,eq:VD-identity}.
    This can be justified as follows.
    Consider the full sub-\ac{AVDC} of $\bL$ consisting of all \ac{VD}-composable objects.
    Since the diminished \ac{AVDC} obtained by forgetting non-trivial 0-coary cells from the full sub-\ac{AVDC} still has loose \ac{VD}-composites, it can be regarded as a pseudo double category, as explained in \cref{rem:pseudo_double_category}.
\end{notation}

\begin{proposition}\label{prop:special_sandwich}
    Suppose that we are given the following data in an \ac{AVDC}:
    \begin{equation*}
        \begin{tikzcd}
            A\ar[d,"f"']\lar[r,phan,"p"] & X\ar[dl,equal]\lar[r,phan,"u"] & Y\ar[dr,equal]\lar[r,phan,"q"] & B\ar[d,"g"] \\
            X\lar[rrr,phan,"u"'] &&& Y
            \cellsymb(\cart)[above left]{1-2}{2-1}
            \cellsymb(\veq){1-1}{2-4}
            \cellsymb(\cart)[above right]{1-3}{2-4}
        \end{tikzcd}
        =
        \begin{tikzcd}
            A\ar[d,equal]\lar[r,phan,"p"] & X\lar[r,phan,"u"] & Y\lar[r,phan,"q"] & B\ar[d,equal] \\
            A\ar[d,"f"']\lar[rrr,phan,"r"] &&& B\ar[d,"g"] \\
            X\lar[rrr,phan,"u"'] &&& Y
            \cellsymb(\alpha){1-1}{2-4}
            \cellsymb(\beta){2-1}{3-4}
        \end{tikzcd}
    \end{equation*}
    Then, the cell $\alpha$ is cocartesian if and only if the cell $\beta$ is cartesian.
    In particular, we have the following cocartesian cells whenever all restrictions, companions, and conjoints exist in each diagram:
    \begin{equation*}
        \begin{tikzcd}[scriptsizecolumn]
            A\ar[d,equal]\lar[r,"\comp{f}"] & X\lar[r,phan,"u"] & Y\ar[d,equal] \\
            A\lar[rr,"{u(f,\id)}"'] && Y
            \cellsymb(\cocart){1-1}{2-3}
        \end{tikzcd}
        \qquad
        \begin{tikzcd}[scriptsizecolumn]
            A\ar[d,equal]\lar[r,"\comp{f}"] & X\lar[r,phan,"u"] & Y\lar[r,"\conj{g}"] & B\ar[d,equal] \\
            A\lar[rrr,"{u(f,g)}"'] &&& B
            \cellsymb(\cocart){1-1}{2-4}
        \end{tikzcd}
        \qquad
        \begin{tikzcd}[scriptsizecolumn]
            X\ar[d,equal]\lar[r,phan,"u"] & Y\lar[r,"\conj{g}"] & B\ar[d,equal] \\
            X\lar[rr,"{u(\id,g)}"'] && B
            \cellsymb(\cocart){1-1}{2-3}
        \end{tikzcd}
    \end{equation*}
\end{proposition}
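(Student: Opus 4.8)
The plan is to read off the two universal properties as hom-bijections and to use the two cartesian cells on the left as a dictionary between them. First I would unwind the hypothesis. The left cartesian cell is the companion cell $\compcell f$, so $p=\comp f$, and the right one is the conjoint cell $\conjcell g$, so $q=\conj g$; the middle cell is $\veq_u$. By the characterization of companions and conjoints via a unit and counit recalled above (the cited Lemma of \cite{Koudenburg2020aug}), each of $\compcell f$ and $\conjcell g$ is one half of a unit--counit pair, so I also have units $\eta_f$ and $\eta_g$ subject to the two snake identities. The equation in the statement then says exactly that the horizontal paste $\compcell f\lcomp\veq_u\lcomp\conjcell g$ factors as $\alpha\tcomp\beta$, which is the hinge of the whole argument: pasting the sandwich onto a test cell is the same as first pasting $\beta$ and then pasting $\alpha$.

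For the implication ``$\beta$ cartesian $\Rightarrow\alpha$ cocartesian'' I would verify the cocartesian bijection of $\alpha$ directly. Given a cell $\chi$ with source $\tup a\,(p,u,q)\,\tup b$ and target $w$, I would use the units $\eta_f,\eta_g$ to strip the outer legs $p=\comp f$ and $q=\conj g$, trading them for the tight arrows $f,g$; this turns $\chi$ into a cell built from $\tup a$, $u$, $\tup b$ to which the cartesian universal property of $\beta$ applies, yielding a unique factorization through $r$. Reinserting $p,q$ by pasting the counits $\compcell f,\conjcell g$ produces a candidate cell over $\tup a\,r\,\tup b$, and the snake identities guarantee that the strip-and-reinsert operations are mutually inverse. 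The factorization equation $\compcell f\lcomp\veq_u\lcomp\conjcell g=\alpha\tcomp\beta$ is precisely what certifies that pasting this candidate with $\alpha$ returns $\chi$, and uniqueness is inherited from the uniqueness in the cartesian property of $\beta$ together with the bijectivity of the companion/conjoint operations.

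The converse ``$\alpha$ cocartesian $\Rightarrow\beta$ cartesian'' runs along the same dictionary in the opposite order: starting from a cell over a single source $\tup c$ with boundary $f,g$ and target $u$, the same companion/conjoint operations present it, via $\eta_f,\eta_g$ and $\compcell f,\conjcell g$, as the result of pasting the sandwich onto a cell whose source is $(p,u,q)$, to which the cocartesian property of $\alpha$ now supplies the unique factorization through $r$; the equation again identifies this as the desired factorization through $\beta$. Finally, the three displayed cocartesian cells follow by specialization: taking $\beta$ to be the cartesian cell defining the restriction $u(f,g)$ (respectively $u(f,\id)$ and $u(\id,g)$, where the missing leg collapses to a loose unit) makes the corresponding $\alpha$ cocartesian, which is exactly the assertion that $u(f,g)$ is the loose composite of $(\comp f,u,\conj g)$, and likewise in the one-sided cases.

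The step I expect to be the main obstacle is making the ``strip the legs'' operation precise in the presence of the arbitrary surrounding context $\tup a,\tup b$: because cocartesianness is a property of the source while cartesianness is a property of the target, and because the sandwich carries the nonidentity boundary $(f,g)$, one cannot simply delete $p$ and $q$ and push $f,g$ onto the outer boundary unless the relevant restrictions happen to exist. The resolution I would pursue is to carry out all of the companion/conjoint manipulations against the structured cells $\alpha,\beta,\compcell f,\conjcell g$ supplied by the factorization equation, rather than against a bare context, so that every intermediate tight boundary is matched by construction; equivalently, one first checks that $\compcell f\lcomp\veq_u\lcomp\conjcell g$ is cocartesian relative to the boundary $(f,g)$ and then cancels this cocartesian cell against the cartesian $\beta$. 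Once this bookkeeping is in place, each direction is a transport of one bijection along another.
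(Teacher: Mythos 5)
Your plan is correct and is essentially the argument behind the result the paper cites (it defers to [8.1.\ Lemma] of Koudenburg's paper on augmented virtual double categories): present $p$ and $q$ as the companion of $f$ and the conjoint of $g$ with their unit cells $\eta_f,\eta_g$ and snake identities, and transport each universal property through the given factorization $\alpha\tcomp\beta$. One clarification on exactly the point you flag as the obstacle: the candidate factorizing cell in each direction is obtained by pasting the units together with the already-given cell of the \emph{other} kind --- for instance $\chi:=(\veq_{\tup{a}},\eta_f,\beta,\eta_g,\veq_{\tup{b}})\tcomp\psi$ when showing $\alpha$ cocartesian, whose boundaries match precisely because $\beta$ lies over $(f,g)$ --- so existence needs only associativity and the snake identities, while the assumed universal property of $\beta$ (resp.\ of $\alpha$) enters only to prove the key identity $(\eta_f,\beta,\eta_g)\tcomp\alpha=\veq_r$, from which uniqueness follows; no cell with source ``$\tup{a},u,\tup{b}$'' ever needs to be formed.
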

\begin{proof}
    The special case where both $p$ and $q$ are of length 1 is exactly \cite[8.1.\ Lemma]{Koudenburg2020aug}, and this can be proved in the same way.
\end{proof}

\subsubsection{Extensions and lifts}
\begin{definition}[Extending cells]
    Let $\bL$ be an \ac{AVDC}.
    A cell
    \begin{equation}\label{eq:extending_cell}
        \begin{tikzcd}
            A\ar[d,"f"']\lar[r,path,"\tup{u}"] & B\lar[r,"p"] & C\ar[d,equal] \\
            X\lar[rr,phan,"v"'] & & C
            \cellsymb(\alpha){1-1}{2-3}
        \end{tikzcd}\incat{\bL}
    \end{equation}
    is called \emph{extending} if, for any $Y$, $\tup{q}$, $g$, and a cell $\beta$ of the following form, there exists a unique cell $\gamma$ satisfying the following equation:
    \begin{equation*}
        \begin{tikzcd}
            A\ar[d,"f"']\lar[r,path,"\tup{u}"] & B\lar[r,path,"\tup{q}"] & Y\ar[d,"g"] \\
            X\lar[rr,phan,"v"'] & & C
            \cellsymb(\beta){1-1}{2-3}
        \end{tikzcd}
        =
        \begin{tikzcd}
            A\ar[d,equal]\lar[r,path,"\tup{u}"] & B\ar[d,equal]\lar[r,path,"\tup{q}"] & Y\ar[d,"g"] \\
            A\ar[d,"f"']\lar[r,path,"\tup{u}"] & B\lar[r,"p"] & C\ar[d,equal] \\
            X\lar[rr,phan,"v"'] & & C
            \cellsymb(\veq){1-1}{2-2}
            \cellsymb(\gamma){1-2}{2-3}
            \cellsymb(\alpha){2-1}{3-3}
        \end{tikzcd}\incat{\bL}.
    \end{equation*}
    By the universal property, a loose arrow $p$ in the extending cell \cref{eq:extending_cell} is unique up to isomorphism, hence we write $\extension{\tup{u}}[f]{v}$ for $p$.
    When $f$ is the identity, we also use the notation $\extension{\tup{u}}{v}$.
    When $v$ is of length 0 (hence $X=C$), we also use the notation $\extension{\tup{u}}[f]{X}$.
    An extending cell is often denoted by the symbol ``$\extending$'' as follows:
    \begin{equation*}
        \begin{tikzcd}
            A\ar[d,"f"']\lar[r,path,"\tup{u}"] & B\lar[r,"{\extension{\tup{u}}[f]{v}}"] & C\ar[d,equal] \\
            X\lar[rr,phan,"v"'] & & C
            \cellsymb(\extending){1-1}{2-3}
        \end{tikzcd}
    \end{equation*}
    An \ac{AVDC} is said to \emph{have extensions} (resp.\ \emph{1-coary extensions}) if $\extension{\tup{u}}[f]{v}$ exists for any $\tup{u}$, $f$, and $v$ of length $\le 1$ (resp.\ length 1).
\end{definition}

\begin{definition}[Lifting cells]
    \emph{Lifting} cells are defined to be the loosewise dual of extending cells.
    A lifting cell is often denoted by the symbol ``$\lifting$'' as follows:
    \begin{equation*}
        \begin{tikzcd}
            C\ar[d,equal]\lar[r,"{\lift{v}[f]{\tup{u}}}"] & B\lar[r,path,"\tup{u}"] & A\ar[d,"f"] \\
            C\lar[rr,phan,"v"'] && X
            \cellsymb(\lifting){1-1}{2-3}
        \end{tikzcd}
    \end{equation*}
    An \ac{AVDC} is said to \emph{have lifts} (resp.\ \emph{1-coary lifts}) if $\lift{v}[f]{\tup{u}}$ exists for any $\tup{u}$, $f$, and $v$ of length $\le 1$ (resp.\ length 1).
\end{definition}

\begin{remark}
    Let $\bi{W}$ be a bicategory.
    Then, the diminished \ac{AVDC} $\Ldbl\bi{W}$ as in \cref{note:diminished_avdc_from_bicategory} has 1-coary extensions (resp.\ 1-coary lifts) if and only if the bicategory $\bi{W}$ has right Kan extensions (resp.\ right Kan lifts) in the usual sense.
\end{remark}

\begin{remark}
    In the context of pseudo double categories, the concept corresponding to 1-coary extensions (resp.\ lifts) is studied in \cite{Pare2024retrocells} under the name \textit{strong right homs} (resp.\ \textit{strong left homs}), where the tight arrow $f$ in \cref{eq:extending_cell} is supposed to be the identity.
\end{remark}

\pagebreak
\begin{proposition}\label{prop:consequence_from_ext_lift}
    Let $\bL$ be an \ac{AVDC}.
    \begin{enumerate}
        \item\label{prop:consequence_from_ext_lift-1}
            If $\bL$ has extensions, then $\bL$ has companions.
        \item\label{prop:consequence_from_ext_lift-2}
            If $\bL$ has extensions and conjoints, then, for any $A\arr(f)[][1]X\larr(u)Y$ in $\bL$, the restriction $u(f,\id)$ exists.
        \item\label{prop:consequence_from_ext_lift-3}
            If $\bL$ has extensions and lifts, then $\bL$ has restrictions.
    \end{enumerate}
\end{proposition}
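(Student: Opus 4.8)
The plan is to realise every restriction as an extension, recognising companions through the $(\eta,\epsilon)$-characterisation \cite[5.4.\ Lemma]{Koudenburg2020aug} and using conjoints to absorb non-identity tight legs; part (iii) then follows by combining parts (i) and (ii) with their loosewise duals.

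For part (i), given $f\colon A\to X$ I take $\comp{f}$ to be the extension along $f$ of the length-$0$ loose path at $A$; by construction its extending cell $\epsilon$ is a $0$-coary cell with left leg $f$ and right leg $\id_X$, exactly the shape of a companion cell. Feeding the tight identity cell $\heq_f$ into the universal property of the extension produces a unique cell $\eta$ with $\eta\tcomp\epsilon=\heq_f$, which is the first of the two companion identities of \cite[5.4.\ Lemma]{Koudenburg2020aug}. The second identity holds because both it and $\veq_{\comp{f}}$ paste with $\epsilon$ to the same cell, so they agree by the uniqueness clause of the same universal property. Thus $(\comp{f},\eta,\epsilon)$ exhibits a companion, and in particular $\epsilon$ is cartesian.

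For part (ii), fix $f\colon A\to X$ and a loose arrow $u\colon X\to Y$. Since $\bL$ has conjoints the loose arrow $\conj{f}$ exists, and since $\bL$ has extensions I may form $p:=\extension{\conj{f}}{u}$. Pasting the unit cell of the conjoint $\conj{f}$ onto the extending cell of $p$ yields a candidate restriction cell $\pi\colon p\Rightarrow u$ with left leg $f$ and right leg $\id_Y$, and the task is to show that $-\tcomp\pi$ is a bijection $\Cells{h}{k}{\tup{w}}{p}\cong\Cells{h\tcomp f}{k}{\tup{w}}{u}$ for all $\tup{w},h,k$. The universal property of the extension supplies this only for $h=\id$, so to treat an arbitrary left leg $h\colon D\to A$ I prepend the conjoint $\conj{h}$ (which also exists), using that prepending a conjoint trades a left leg $\ell$ for the composite of the underlying tight arrow with $\ell$. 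Concretely I chain
\begin{align*}
    \Cells{h}{k}{\tup{w}}{p}
    &\cong\Cells{\id_A}{k}{(\conj{h},\tup{w})}{p}\\
    &\cong\Cells{\id_X}{k}{(\conj{f},\conj{h},\tup{w})}{u}\\
    &\cong\Cells{f}{k}{(\conj{h},\tup{w})}{u}\\
    &\cong\Cells{h\tcomp f}{k}{\tup{w}}{u},
\end{align*}
where the outer two steps are mediated by the conjoint $\conj{h}$, the inner two by the conjoint $\conj{f}$ together with the universal property of the extension $p$, and the composite bijection is checked to equal $-\tcomp\pi$. This identifies $p$ with $u(f,\id)$.

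For part (iii) I run the previous parts together with their loosewise duals. Since lifts are loosewise dual to extensions and conjoints to companions, part (i) gives companions from extensions while the dual of part (i) gives conjoints from lifts; hence $\bL$ has both companions and conjoints, so part (ii) and its dual yield $u(f,\id)$ and $u(\id,g)$ for every loose arrow $u$ and all tight $f,g$. A general $1$-coary restriction is then $u(f,g)=(u(f,\id))(\id,g)$, using that a vertical paste of two cartesian cells is cartesian, and a $0$-coary restriction is recovered as $X(f,g)=\conj{g}(f,\id)$ since $\conj{g}=X(\id,g)$ is a loose arrow to which part (ii) applies. The main obstacle is the cartesian verification in part (ii): the extension alone factors only cells with identity left leg, so the crux is to use conjoints to trade an arbitrary tight leg for a prepended loose arrow while tracking the legs, and to confirm that the resulting composite bijection is indeed pasting with $\pi$.
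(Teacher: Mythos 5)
Your proof is correct and follows essentially the same route as the paper: the companion is the extension $\extension{}[f]{X}$ of the empty path along $f$, with the second triangle identity obtained from the uniqueness clause; the restriction $u(f,\id)$ is realised as an extension whose cartesianness is verified by using conjoints to trade arbitrary tight left legs for prepended loose arrows; and part (iii) combines these with their loosewise duals. The only (immaterial) deviation is in part (ii), where the paper takes $u(f,\id)=\extension{}[f]{u}$ and needs a single conjoint $\conj{s}$ of the test leg, whereas you take $u(f,\id)=\extension{\conj{f}}{u}$ and hence insert one extra conjoint-trading step with $\conj{f}$; the two candidates coincide up to loosewise invertible cell, so both arguments go through.
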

\begin{proof}\quad
    \begin{enumerate}
        \item
            Let $A\arr(f)[][1]X$ in $\bL$.
            By the universal property of extending cells, we have a unique cell $\eta$ satisfying the following:
            \begin{equation*}
                \begin{tikzcd}[large]
                    & A\ar[dl,equal]\ar[d,"f"] \\
                    A\ar[d,"f"']\lar[r,"{\extension{}[f]{X}}"] & X\ar[dl,equal] \\
                    X &
                    \cellsymb(\eta)[below right=1]{1-2}{2-1}
                    \cellsymb(\extending)[above left]{2-2}{3-1}
                \end{tikzcd}
                =
                \begin{tikzcd}[large]
                    A\ar[d,bend right=20,"f"{left}]\ar[d,bend left=20,"f"{right}] \\
                    X
                    \cellsymb(\heq){1-1}{2-1}
                \end{tikzcd}\incat{\bL}.
            \end{equation*}
            By the universal property of $\extension{}[f]{X}$ again, the other equation for $\extension{}[f]{X}$ being a companion follows, hence $\comp{f}=\extension{}[f]{X}$.
        \item
            Consider the following extending cell:
            \begin{equation*}
                \begin{tikzcd}
                    A\ar[d,"f"']\lar[r,"{\extension{}[f]{u}}"] & Y\ar[d,equal] \\
                    X\lar[r,"u"'] & Y
                    \cellsymb(\extending){1-1}{2-2}
                \end{tikzcd}\incat{\bL}.
            \end{equation*}
            Then, for any $A\rra(s)[][1]B_0\larr(\tup{v})[path]B_n\arr(t)[][1]Y$ in $\bL$, there are bijective correspondences among the cells of the following forms in $\bL$:
            \begin{equation*}
                \begin{tikzcd}
                    B_0\ar[d,"s"']\lar[r,path,"\tup{v}"] & B_n\ar[d,"t"] \\
                    A\ar[d,"f"'] & Y\ar[d,equal] \\
                    X\lar[r,"u"'] & Y
                    \cellsymb(\cdot){1-1}{3-2}
                \end{tikzcd}
                \Vline
                \begin{tikzcd}[scriptsizecolumn]
                    A\ar[d,"f"']\lar[r,"\conj{s}"] & B_0\lar[r,path,"\tup{v}"] & B_n\ar[d,"t"] \\
                    X\lar[rr,"u"'] && Y
                    \cellsymb(\cdot){1-1}{2-3}
                \end{tikzcd}
                \Vline
                \begin{tikzcd}[scriptsizecolumn]
                    A\ar[d,equal]\lar[r,"\conj{s}"] & B_0\lar[r,path,"\tup{v}"] & B_n\ar[d,"t"] \\
                    A\lar[rr,"{\extension{}[f]{u}}"'] && Y
                    \cellsymb(\cdot){1-1}{2-3}
                \end{tikzcd}
                \Vline
                \begin{tikzcd}
                    B_0\ar[d,"s"']\lar[r,path,"\tup{v}"] & B_n\ar[d,"t"] \\
                    A\ar[r,"{\extension{}[f]{u}}"'] & Y
                    \cellsymb(\cdot){1-1}{2-2}
                \end{tikzcd}
            \end{equation*}
            This shows that $\extension{}[f]{u}$ gives the desired restriction $u(f,\id)$.
        \item
            Combining \cref{prop:consequence_from_ext_lift-1}, \cref{prop:consequence_from_ext_lift-2}, and their loosewise duals, we can show that $\bL$ has companions, conjoints, and 1-coary restrictions.
            Here, we use the fact that tightwise composition preserves cartesian cells (\cref{prop:pasting_lemma_cartesian}).
            Since loose units exist in this case, $\bL$ also has 0-coary restrictions.\qedhere
    \end{enumerate}
\end{proof}

\subsubsection{The module construction}
We recall the $\Mod$-construction from \cite{Leinster1999enrichment,Leinster2004higher,CruttwellShulman2010unified}, which is a construction of a \ac{VDC} ``$\Mod(\bX)$'' from a \ac{VDC} $\bX$.
Since the resulting \acp{VDC} are always unital and normal \ac{VD}-functors between them are often considered, we redefine ``$\Mod(\bX)$'' as an \ac{AVDC} with loose units.
Such a redefinition is also considered in \cite[2.2.\ Example]{Koudenburg2020aug}.
\pagebreak
\begin{definition}[\cite{Leinster1999enrichment,Leinster2004higher,CruttwellShulman2010unified,Koudenburg2020aug}]
    Let $\bX$ be an \ac{AVDC}.
    The \ac{AVDC} $\Mod(\bX)$ is defined as follows:
    \begin{itemize}
        \item
            An object is a \emph{monoid}, which consists of the following data $A\coloneq(A^0,A^1,A^e,A^m)$:
            \begin{equation*}
                \begin{tikzcd}[tri]
                    & A^0\ar[dl,equal]\ar[dr,equal] & \\
                    A^0\lar[rr,"A^1"'] &{}& A^0
                    \cellsymb(A^e)[above=-5]{1-2}{2-2}
                \end{tikzcd}
                \qquad
                \begin{tikzcd}
                    A^0\ar[d,equal]\lar[r,"A^1"] & A^0\lar[r,"A^1"] & A^0\ar[d,equal] \\
                    A^0\lar[rr,"A^1"'] && A^0
                    \cellsymb(A^m){1-1}{2-3}
                \end{tikzcd}\incat{\bX}.
            \end{equation*}
            The data $(A^0,A^1,A^e,A^m)$ are required to satisfy monoid-like axioms.
            The cells $A^e$ and $A^m$ are called the \emph{unit} and the \emph{multiplication} of the monoid $A$, respectively.
        \item
            A tight arrow $A\arr(f)B$ is called a \emph{monoid homomorphism}.
            It consists of the following data $(f^0,f^1)$:
            \begin{equation*}
                \begin{tikzcd}
                    A^0\ar[d,"f^0"']\lar[r,"A^1"] & A^0\ar[d,"f^0"] \\
                    B^0\lar[r,"B^1"'] & B^0
                    \cellsymb(f^1){1-1}{2-2}
                \end{tikzcd}\incat{\bX}
            \end{equation*}
            that is required to be compatible with units and multiplications.
        \item
            A loose arrow $A\larr(M)B$ is called a \emph{(bi)module}.
            It consists of the following data $(M^1,M^l,M^r)$:
            \begin{equation*}
                \begin{tikzcd}
                    A^0\ar[d,equal]\lar[r,"A^1"] & A^0\lar[r,"M^1"] & B^0\ar[d,equal] \\
                    A^0\lar[rr,"M^1"'] && B^0
                    \cellsymb(M^l){1-1}{2-3}
                \end{tikzcd}
                \qquad
                \begin{tikzcd}
                    A^0\ar[d,equal]\lar[r,"M^1"] & B^0\lar[r,"B^1"] & B^0\ar[d,equal] \\
                    A^0\lar[rr,"M^1"'] && B^0
                    \cellsymb(M^r){1-1}{2-3}
                \end{tikzcd}\incat{\bX}
            \end{equation*}
            that is required to satisfy module-like axioms.
        \item
            A 1-coary cell $\alpha$ in $\Mod(\bX)$ on the left below is a cell in $\bX$ on the right below
            \begin{equation*}
                \begin{tikzcd}
                    A_0\ar[d,"f"']\lar[r,path,"\tup{M}"] & A_n\ar[d,"g"] \\
                    B\lar[r,"N"'] & C
                    \cellsymb(\alpha){1-1}{2-2}
                \end{tikzcd}\incat{\Mod(\bX)}
                \qquad
                \begin{tikzcd}
                    A_0^0\ar[d,"f^0"']\lar[r,"M_1^1"] & \cdots\lar[r,"M_n^1"] & A_n^0\ar[d,"g^0"] \\
                    B^0\lar[rr,"N^1"'] && C^0
                    \cellsymb(\alpha){1-1}{2-3}
                \end{tikzcd}\incat{\bX}
            \end{equation*}
            such that, for each $0\le i\le n$, the two canonical ways to fill the following boundary give the same cell in $\bX$:
            \begin{equation*}
                \begin{tikzcd}[hugecolumn]
                    A_0^0\ar[d,"f^0"']\lar[r,path,"(M_j^1)_{0<j\le i}"] & A_i^0\lar[r,"A_i^1"] & A_i^0\lar[r,path,"(M_j^1)_{i<j\le n}"] & A_n^0\ar[d,"g^0"] \\
                    B^0\lar[rrr,"N^1"'] &&& C^0
                \end{tikzcd}\incat{\bX}.
            \end{equation*}
        \item
            A 0-coary cell $\beta$ in $\Mod(\bX)$ on the left below is a cell in $\bX$ on the right below
            \begin{equation*}
                \begin{tikzcd}[tri]
                    A_0\ar[dr,"f"']\lar[rr,path,"\tup{M}"] &{}& A_n\ar[dl,"g"] \\
                    & B &
                    \cellsymb(\beta)[above=-3]{1-2}{2-2}
                \end{tikzcd}\incat{\Mod(\bX)}
                \qquad
                \begin{tikzcd}
                    A_0^0\ar[d,"f^0"']\lar[r,"M_1^1"] & \cdots\lar[r,"M_n^1"] & A_n^0\ar[d,"g^0"] \\
                    B^0\lar[rr,"B^1"'] && B^0
                    \cellsymb(\beta){1-1}{2-3}
                \end{tikzcd}\incat{\bX}
            \end{equation*}
            such that, for each $0\le i\le n$, the two canonical ways to fill the following boundary give the same cell in $\bX$:
            \begin{equation*}
                \begin{tikzcd}[hugecolumn]
                    A_0^0\ar[d,"f^0"']\lar[r,path,"(M_j^1)_{0<j\le i}"] & A_i^0\lar[r,"A_i^1"] & A_i^0\lar[r,path,"(M_j^1)_{i<j\le n}"] & A_n^0\ar[d,"g^0"] \\
                    B^0\lar[rrr,"B^1"'] &&& B^0
                \end{tikzcd}\incat{\bX}.
            \end{equation*}
    \end{itemize}
\end{definition}

\begin{remark}
    In the construction of $\Mod(\bX)$, no 0-coary cell in $\bX$ is used except for identities.
    In particular, we have $\Mod(\bX)=\Mod(\dim{\bX})$.
\end{remark}

\begin{theorem}[\cite{CruttwellShulman2010unified}]\label{thm:universal_property_of_Mod}
    Let $\bL$ be an \ac{AVDC} with loose units and let $\bX$ be an \ac{AVDC}.
    Then, the following data correspond to each other up to isomorphism:
    \begin{enumerate}
        \item
            An \ac{AVD}-functor $\bL\to\Mod(\bX)$.
        \item
            An \ac{AVD}-functor $\dim{\bL}\to\bX$.\qedhere
    \end{enumerate}
\end{theorem}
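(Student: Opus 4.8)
The plan is to construct the two passages explicitly and verify that they are mutually inverse up to isomorphism; unitality of $\bL$ is needed only for the passage (2)$\Rightarrow$(1). For (1)$\Rightarrow$(2), given $F\colon\bL\to\Mod(\bX)$ I take underlying data: an object $A$ goes to the carrier $(FA)^0$, a tight arrow $f$ to $(Ff)^0$, a loose arrow $u$ to the underlying loose arrow $(Fu)^1$, and a $1$-coary cell $\alpha$ to the cell of $\bX$ that is the datum of $F\alpha$. On $\dim\bL$ the only $0$-coary cells are the tight identity cells $\heq_f$, which $F$ sends to $\heq_{Ff}$ and hence to $\heq_{(Ff)^0}$ underlyingly; this is exactly why the assignment defines an AVD-functor $G\colon\dim\bL\to\bX$, since the non-identity $0$-coary cells of $\Mod(\bX)$, whose underlying cells are $1$-coary in $\bX$, are never encountered. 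Functoriality of $G$ is immediate from the fact that cell composition and identity cells in $\Mod(\bX)$ are computed on underlying cells exactly as in $\bX$.

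For (2)$\Rightarrow$(1) I would first record a canonical AVD-functor $\eta_\bL\colon\bL\to\Mod(\dim\bL)$. It sends an object $A$ to the monoid with carrier $A$ and loose part the loose unit $\Unit_A$, equipped with the canonical monoid structure that a loose unit carries in any unital AVDC---its unit and multiplication cells being the $1$-coary cells that correspond, under the universal property of the cartesian cell defining $\Unit_A$, to the identity $0$-coary cell and to the pasting of two copies of that defining cell. A loose arrow $u\colon A\larr B$ is sent to $u$ viewed as a $(\Unit_A,\Unit_B)$-bimodule, a $1$-coary cell is carried along unchanged, and a $0$-coary cell with target $B$ is sent to the $1$-coary cell into $\Unit_B$ corresponding to it. This last clause is the crux: the loosewise-invertible cartesian cell exhibiting $\Unit_B$ as the loose unit provides the required bijection between $0$-coary cells with target $B$ and $1$-coary cells into $\Unit_B$, and the latter is precisely the datum of a $0$-coary cell in $\Mod(\dim\bL)$. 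Then, using that $\Mod$ extends to a functor on AVD-functors (monoids, bimodules, and their structure cells are expressed purely in $1$-coary terms and are therefore preserved), I set $F:=\Mod(G)\circ\eta_\bL$, an AVD-functor $\bL\to\Mod(\dim\bL)\to\Mod(\bX)$ since $\Mod(\bX)=\Mod(\dim\bX)$.

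Finally I would verify the round trips. The composite (2)$\Rightarrow$(1)$\Rightarrow$(2) returns $G$ on the nose, since $(\Mod(G)\,\eta_\bL A)^0=GA$ and the underlying loose arrow of the bimodule assigned to $u$ is $Gu$, and likewise on cells. The composite (1)$\Rightarrow$(2)$\Rightarrow$(1) returns a functor isomorphic to $F$: the reconstructed monoid at $A$ has loose part $G\Unit_A=(F\Unit_A)^1$, and since AVD-functors preserve loose units, $F\Unit_A$ is the loose unit on the monoid $FA$, whose underlying loose arrow is $(FA)^1$; this matches the two monoids up to isomorphism, naturally in $A$. I expect the main obstacle to be the construction of $\eta_\bL$---checking that the monoid and bimodule structure cells it produces satisfy the $\Mod$ axioms, and that the $0$-coary clause is compatible with composition---rather than any conceptual difficulty; once $\eta_\bL$ is available the remaining steps are routine naturality and bookkeeping.
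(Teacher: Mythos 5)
Your proposal is correct, but it takes a genuinely different route from the paper. The paper's proof is a two-line citation argument: an AVD-functor $\dim{\bL}\to\bX$ is a VD-functor $\dim{\bL}\to\dim{\bX}$, which by the universal property of the $\Mod$-construction (Cruttwell--Shulman, 5.14) corresponds to a normal VD-functor into $\dim{\Mod(\dim{\bX})}$, and this in turn corresponds to an AVD-functor $\bL\to\Mod(\bX)$ via the 2-equivalence $\UAVDC\simeq\UVDCn$ and the identity $\Mod(\dim{\bX})=\Mod(\bX)$. You instead re-prove the relevant content directly in the AVDC language: your unit $\eta_\bL\colon\bL\to\Mod(\dim{\bL})$, sending $A$ to the monoid carried by $\Unit_A$ and converting $0$-coary cells into $1$-coary cells targeting $\Unit_B$ via the cartesian cell, is exactly the unit of the adjunction that Cruttwell--Shulman's proposition packages, and $F:=\Mod(G)\circ\eta_\bL$ is the transpose. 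What your approach buys is self-containedness and explicitness --- it makes visible why $F$ sends $L$ to the monoid $F\Unit_L$, which the paper only states in the remark following the theorem --- at the cost of the verification that $\eta_\bL$ satisfies all the $\Mod$-axioms and respects the mixed $0$-coary/$1$-coary cell compositions (routine via the uniqueness clause of the cartesian universal property, but genuine work). What the paper's approach buys is brevity, at the cost of asking the reader to translate between the VDC and AVDC formulations. Your treatment of the round trips, including the use of preservation of loose units to identify $(F\Unit_A)^1$ with the underlying loose arrow of $FA$ up to isomorphism, is sound and consistent with the paper's claim that the correspondence is only up to isomorphism unless loose units are chosen.
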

\begin{proof}
    An \ac{AVD}-functor $\dim{\bL}\to\bX$ is nothing but a \ac{VD}-functor $\dim{\bL}\to\dim{\bX}$.
    By the universal property of the $\Mod$-construction \cite[5.14.\ Proposition]{CruttwellShulman2010unified}, it corresponds to a normal \ac{VD}-functor $\dim{\bL}\to\dim{\Mod(\dim{\bX})}$ in the sense of \cite{CruttwellShulman2010unified}.
    Since $\Mod(\dim{\bX})=\Mod(\bX)$ and since both $\bL$ and $\Mod(\bX)$ have loose units, it also corresponds to an \ac{AVD}-functor $\bL\to\Mod(\bX)$.
\end{proof}

\begin{remark}
    We now give an explicit description of the above construction.
    Let $F\colon\dim{\bL}\to\bX$ be an \ac{AVD}-functor in the situation of \cref{thm:universal_property_of_Mod}.
    Consider loose units $\Unit_L$ on objects $L\in\bL$.
    Then, $\Unit_L$ is no longer a loose unit in the diminished \ac{AVDC} $\dim{\bL}$, but it is a monoid in $\dim{\bL}$.
    Thus, $F\Unit_L$ is still a monoid in $\bX$, to which the corresponding \ac{AVD}-functor $\bL\to\Mod(\bX)$ sends each object $L\in\bL$.

    Furthermore, if loose units are chosen for each object in $\bL$, the correspondence of \cref{thm:universal_property_of_Mod} actually becomes a bijection, which gives a (strict) 2-adjunction.
\end{remark}

\begin{notation}\label{note:AVD_functor_U}
    For an \ac{AVDC} $\bX$ with loose units, we write $U\colon\bX\to\Mod(\bX)$ for the \ac{AVD}-functor corresponding to the inclusion $\dim{\bX}\to\bX$.
    This \ac{AVD}-functor sends each object $c\in\bX$ to the trivial monoid, denoted by $U_c$, which is induced by the loose unit $\Unit_c$ on $c$.
\end{notation}

\begin{remark}
    It follows straightforwardly that $U$ locally induces bijections on the classes of tight arrows, loose arrows, and cells. Thus, we can regard $\bX$ as a full sub-\ac{AVDC} of $\Mod(\bX)$ by the inclusion $U$.
\end{remark}

\begin{proposition}[\cite{CruttwellShulman2010unified}]\label{prop:restriction_in_Mod}
    Let $\bX$ be an \ac{AVDC}.
    \begin{enumerate}
        \item
            $\Mod(\bX)$ has loose units.
        \item\label{prop:restriction_in_Mod-1coaryrest}
            If $\bX$ has 1-coary restrictions, then $\Mod(\bX)$ has restrictions.
    \end{enumerate}
\end{proposition}
\begin{proof}\quad
    \begin{enumerate}
        \item
            By \cite[5.5.\ Proposition]{CruttwellShulman2010unified}, the diminished \ac{AVDC} $\dim{\Mod(\bX)}$ has loose \ac{VD}-units.
            Those units automatically become loose units in $\Mod(\bX)$ since all 0-coary cells are inherited from them.
        \item
            By \cite[7.4.\ Proposition]{CruttwellShulman2010unified}, 1-coary restrictions in $\bX$ give those in $\Mod(\bX)$.\qedhere
    \end{enumerate}
\end{proof}

\subsubsection{Loosewise indiscreteness}
We now introduce several notions of indiscreteness and discreteness for \acp{AVDC}.
\Acp{AVDC} satisfying these properties will play an important role as diagram shapes for the various notions of colimits introduced later.
\begin{definition}
    An \ac{AVDC} $\bK$ is called \emph{loosewise discrete} if:
    \begin{itemize}
        \item
            It has no loose arrows.
        \item
            It has no cells except for tight identity cells.\qedhere
    \end{itemize}
\end{definition}

\begin{definition}
    An \ac{AVDC} $\bK$ is called \emph{loosewise \ac{AVD}-indiscrete} if:
    \begin{itemize}
        \item
            For any objects $A,B\in\bK$, there is a unique loose arrow from $A$ to $B$, denoted by $A\larr(!_{AB})B$.
        \item
            For any boundary for cells, there is a unique cell filling it.\qedhere
    \end{itemize}
\end{definition}

\begin{definition}
    An \ac{AVDC} $\bK$ is called \emph{loosewise \ac{VD}-indiscrete} if:
    \begin{itemize}
        \item
            For any objects $A,B\in\bK$, there is a unique loose arrow from $A$ to $B$, denoted by $A\larr(!_{AB})B$.
        \item
            For any $A_0,A_1,\dots,A_n,X,Y\in\bK$ $(n\ge 0)$ and any tight arrows $A_0\arr(f)[][1]X,A_n\arr(g)[][1]Y$ in $\bK$, there is a unique cell of the following form:
            \begin{equation*}
                \begin{tikzcd}
                    A_0\ar[d,"f"']\lar[r,"!_{A_0A_1}"] & A_1\lar[r,"!_{A_1A_2}"] & \cdots\lar[r,"!_{A_{n-1}A_n}"] & A_n\ar[d,"g"] \\
                    X\lar[rrr,"!_{XY}"'] & & & Y
                    \cellsymb(!){1-1}{2-4}
                \end{tikzcd}\incat{\bK}.
            \end{equation*}
        \item
            $\bK$ is diminished.\qedhere
    \end{itemize}
\end{definition}

\begin{notation}\label{note:loosely_indiscrete_avdc}
    Let $\one{C}$ be a category.
    Let $\Ddbl\one{C}$ (resp.\ $\Idbl\one{C}$; $\Idimdbl\one{C}$) denote the loosewise discrete (resp.\ \ac{AVD}-indiscrete; \ac{VD}-indiscrete) \ac{AVDC} uniquely determined by $\Tcat(\Ddbl\one{C})=\one{C}$ (resp.\ $\Tcat(\Idbl\one{C})=\one{C}$; $\Tcat(\Idimdbl\one{C})=\one{C}$).
    Then, $\Idimdbl\one{C}=\dim{(\Idbl\one{C})}$ follows immediately.
    Note that every loosewise discrete (resp.\ \ac{AVD}-indiscrete; \ac{VD}-indiscrete) \ac{AVDC} is of the form $\Ddbl\one{C}$ (resp.\ $\Idbl\one{C}$; $\Idimdbl\one{C}$) for some $\one{C}$.
\end{notation}

\begin{notation}
    For a large set $\zero{S}$, we write $\Ddbl\zero{S}$ (resp.\ $\Idbl\zero{S}$; $\Idimdbl\zero{S}$) for the loosewise discrete (resp.\ \ac{AVD}-indiscrete; \ac{VD}-indiscrete) large \ac{AVDC} of \cref{note:loosely_indiscrete_avdc} obtained from the discrete category $\zero{S}$.
\end{notation}

\begin{remark}
    Let $\zero{1}$ denote the singleton, and let $\bL$ be an \ac{AVDC}.
    \begin{enumerate}
        \item
            An \ac{AVD}-functor $\Ddbl\zero{1}\to\bL$ is the same as an object in $\bL$.
        \item
            An \ac{AVD}-functor $\Idbl\zero{1}\to\bL$ is the same as an object with a chosen loose unit in $\bL$.
        \item
            An \ac{AVD}-functor $\Idimdbl\zero{1}\to\bL$ is the same as a monoid in $\bL$.\qedhere
    \end{enumerate}
\end{remark}

The following definition is useful for dealing with two types of ``indiscreteness'' simultaneously.
\begin{definition}\label{def:loosewise_indiscrete}
    An \ac{AVDC} $\bK$ is called \emph{loosewise indiscrete} if:
    \begin{itemize}
        \item
            For any objects $A,B\in\bK$, there is a unique loose arrow from $A$ to $B$.
        \item
            For any boundary for 1-coary cells, there is a unique cell filling it.
        \item
            For any boundary for 0-coary cells, there is at most one cell filling it.
    \end{itemize}
    Note that either loosewise \ac{AVD}-indiscreteness or loosewise \ac{VD}-indiscreteness implies loosewise indiscreteness.
\end{definition}

Surprisingly, in a loosewise indiscrete \ac{AVDC}, all cells whose top and bottom boundaries are of length 1 become cartesian for a diagrammatic reason.
To show this, we introduce a special type of ``absolutely'' cartesian cells.
\begin{definition}\label{def:split_cell}
    A cell
    \begin{equation*}
        \begin{tikzcd}
            A_0\ar[d,"f_0"']\lar[r,phan,"u"] & A_1\ar[d,"f_1"] \\
            B_0\lar[r,phan,"v"'] & B_1
            \cellsymb(\alpha){1-1}{2-2}
        \end{tikzcd}
    \end{equation*}
    in an \ac{AVDC} is called \emph{split} if there are data $(p_0,p_1,q_0,q_1,\beta_0,\beta_1,\gamma,\delta_0,\delta_1,\sigma,\eta_0,\eta_1)$ of the following forms:
    \begin{equation*}
        \begin{tikzcd}
            & A_0\ar[dl,equal]\ar[d,"f_0"] \\
            A_0\lar[r,phan,"p_0"'] & B_0
            \cellsymb(\beta_0)[below right]{1-2}{2-1}
        \end{tikzcd}
        \quad
        \begin{tikzcd}
            A_1\ar[d,"f_1"']\ar[dr,equal] & \\
            B_1\lar[r,phan,"p_1"'] & A_1
            \cellsymb(\beta_1)[below left]{1-1}{2-2}
        \end{tikzcd}
        \quad
        \begin{tikzcd}
            A_0\ar[d,equal]\lar[r,phan,"p_0"] & B_0\lar[r,phan,"v"] & B_1\lar[r,phan,"p_1"] & A_1\ar[d,equal] \\
            A_0\lar[rrr,phan,"u"'] &&& A_1
            \cellsymb(\gamma){1-1}{2-4}
        \end{tikzcd}
    \end{equation*}
    \begin{equation*}
        \begin{tikzcd}
            A_0\ar[d,"f_0"']\lar[r,phan,"p_0"] & B_0\ar[d,equal] \\
            B_0\lar[r,phan,"q_0"'] & B_0
            \cellsymb(\delta_0){1-1}{2-2}
        \end{tikzcd}
        \quad
        \begin{tikzcd}
            B_1\ar[d,equal]\lar[r,phan,"p_1"] & A_1\ar[d,"f_1"] \\
            B_1\lar[r,phan,"q_1"'] & B_1
            \cellsymb(\delta_1){1-1}{2-2}
        \end{tikzcd}
        \quad
        \begin{tikzcd}
            B_0\ar[d,equal]\lar[r,phan,"q_0"] & B_0\lar[r,phan,"v"] & B_1\lar[r,phan,"q_1"] & B_1\ar[d,equal] \\
            B_0\lar[rrr,phan,"v"'] & & & B_1
            \cellsymb(\sigma){1-1}{2-4}
        \end{tikzcd}
    \end{equation*}
    \begin{equation*}
        \begin{tikzcd}[tri]
            & B_0\ar[dl,equal]\ar[dr,equal] & \\
            B_0\lar[rr,phan,"q_0"'] &{}& B_0
            \cellsymb(\eta_0){1-2}{2-2}
        \end{tikzcd}
        \quad
        \begin{tikzcd}[tri]
            & B_1\ar[dl,equal]\ar[dr,equal] & \\
            B_1\lar[rr,phan,"q_1"'] &{}& B_1
            \cellsymb(\eta_1){1-2}{2-2}
        \end{tikzcd}
    \end{equation*}
    These are required to satisfy the following equations:
    \begin{equation*}
        \begin{tikzcd}
            & A_0\ar[dl,equal]\ar[d,"f_0"]\lar[r,phan,"u"] & A_1\ar[d,"f_1"']\ar[dr,equal] & \\
            A_0\ar[d,equal]\lar[r,phan,"p_0"'] & B_0\lar[r,phan,"v"'] & B_1\lar[r,phan,"p_1"'] & A_1\ar[d,equal] \\
            A_0\lar[rrr,phan,"u"'] &&& A_1
            \cellsymb(\beta_0)[below right]{1-2}{2-1}
            \cellsymb(\alpha){1-2}{2-3}
            \cellsymb(\beta_1)[below left]{1-3}{2-4}
            \cellsymb(\gamma){2-1}{3-4}
        \end{tikzcd}
        =
        \begin{tikzcd}
            A_0\ar[d,equal]\lar[r,phan,"u"] & A_1\ar[d,equal] \\
            A_0\lar[r,phan,"u"'] & A_1
            \cellsymb(\veq){1-1}{2-2}
        \end{tikzcd}
    \end{equation*}
    \begin{equation*}
        \begin{tikzcd}
            A_0\ar[d,"f_0"']\lar[r,phan,"p_0"] & B_0\ar[d,equal]\lar[r,phan,"v"] & B_1\ar[d,equal]\lar[r,phan,"p_1"] & A_1\ar[d,"f_1"] \\
            B_0\ar[d,equal]\lar[r,phan,"q_0"'] & B_0\lar[r,phan,"v"'] & B_1\lar[r,phan,"q_1"'] & B_1\ar[d,equal] \\
            B_0\lar[rrr,phan,"v"'] & & & B_1
            \cellsymb(\delta_0){1-1}{2-2}
            \cellsymb(\veq){1-2}{2-3}
            \cellsymb(\delta_1){1-3}{2-4}
            \cellsymb(\sigma){2-1}{3-4}
        \end{tikzcd}
        =
        \begin{tikzcd}
            A_0\ar[d,equal]\lar[r,phan,"p_0"] & B_0\lar[r,phan,"v"] & B_1\lar[r,phan,"p_1"] & A_1\ar[d,equal] \\
            A_0\ar[d,"f_0"']\lar[rrr,phan,"u"'] &&& A_1\ar[d,"f_1"] \\
            B_0\lar[rrr,phan,"v"'] &&& B_1
            \cellsymb(\gamma){1-1}{2-4}
            \cellsymb(\alpha){2-1}{3-4}
        \end{tikzcd}
    \end{equation*}
    \begin{equation*}
        \begin{tikzcd}
            & A_0\ar[dl,equal]\ar[d,"f_0"] \\
            A_0\ar[d,"f_0"']\lar[r,phan,"p_0"'] & B_0\ar[d,equal] \\
            B_0\lar[r,phan,"q_0"'] & B_0
            \cellsymb(\beta_0)[below right]{1-2}{2-1}
            \cellsymb(\delta_0){2-1}{3-2}
        \end{tikzcd}
        =
        \begin{tikzcd}[tri]
            & A_0\ar[d,"f_0"{left},bend right=20]\ar[d,"f_0"{right},bend left=20] & \\
            & B_0\ar[dl,equal]\ar[dr,equal] & \\
            B_0\lar[rr,phan,"q_0"'] &{}& B_0
            \cellsymb(\heq){1-2}{2-2}
            \cellsymb(\eta_0){2-2}{3-2}
        \end{tikzcd}
        \quad
        \begin{tikzcd}[tri]
            & A_1\ar[d,"f_1"{left},bend right=20]\ar[d,"f_1"{right},bend left=20] & \\
            & B_1\ar[dl,equal]\ar[dr,equal] & \\
            B_1\lar[rr,phan,"q_1"'] &{}& B_1
            \cellsymb(\heq){1-2}{2-2}
            \cellsymb(\eta_1){2-2}{3-2}
        \end{tikzcd}
        =
        \begin{tikzcd}
            A_1\ar[dr,equal]\ar[d,"f_1"'] & \\
            B_1\ar[d,equal]\lar[r,phan,"p_1"'] & A_1\ar[d,"f_1"] \\
            B_1\lar[r,phan,"q_1"'] & B_1
            \cellsymb(\beta_1)[below left]{1-1}{2-2}
            \cellsymb(\delta_1){2-1}{3-2}
        \end{tikzcd}
    \end{equation*}
    \begin{equation*}
        \begin{tikzcd}
            & B_0\ar[dl,equal]\ar[d,equal]\lar[r,phan,"v"] & B_1\ar[d,equal]\ar[dr,equal] & \\
            B_0\ar[d,equal]\lar[r,phan,"q_0"'] & B_0\lar[r,phan,"v"'] & B_1\lar[r,phan,"q_1"'] & B_1\ar[d,equal] \\
            B_0\lar[rrr,phan,"v"'] &&& B_1
            \cellsymb(\eta_0)[below right]{1-2}{2-1}
            \cellsymb(\veq){1-2}{2-3}
            \cellsymb(\eta_1)[below left]{1-3}{2-4}
            \cellsymb(\sigma){2-1}{3-4}
        \end{tikzcd}
        =
        \begin{tikzcd}
            B_0\ar[d,equal]\lar[r,phan,"v"] & B_1\ar[d,equal] \\
            B_0\lar[r,phan,"v"'] & B_1
            \cellsymb(\veq){1-1}{2-2}
        \end{tikzcd}
    \end{equation*}
\end{definition}

\begin{lemma}\label{lem:split_cell_is_cartesian}
    Every split cell is cartesian.
    In particular, every split cell is \emph{absolutely cartesian}; that is, it is a cartesian cell preserved by any \ac{AVD}-functor.
\end{lemma}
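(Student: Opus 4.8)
The plan is to verify the universal property of cartesian cells directly from the split data, and then to observe that this data is preserved by any \ac{AVD}-functor, which yields absoluteness for free. Write $\alpha$ for the given split cell, with boundary $f_0,f_1$ and loose arrows $u,v$ as in \cref{def:split_cell}, and fix its witnessing data $(p_0,p_1,q_0,q_1,\beta_0,\beta_1,\gamma,\delta_0,\delta_1,\sigma,\eta_0,\eta_1)$. To test cartesianness, suppose we are given a loose path $A\larr(\tup{w})[path]B$, tight arrows $A\arr(a)[][1]A_0$ and $B\arr(b)[][1]A_1$, and a cell $\beta\colon\tup{w}\to v$ whose left and right boundaries are $a\tcomp f_0$ and $b\tcomp f_1$. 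I must produce a unique $\gamma'\colon\tup{w}\to u$ with boundaries $a,b$ satisfying $\gamma'\tcomp\alpha=\beta$.

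For existence I would set $\gamma':=(\hat\beta_0,\beta,\hat\beta_1)\tcomp\gamma$, where $\hat\beta_0,\hat\beta_1$ are the whiskerings of $\beta_0,\beta_1$ by $a,b$ (legitimate precisely because these cells have tops of length $0$). To check $\gamma'\tcomp\alpha=\beta$, I would re-associate to $(\hat\beta_0,\beta,\hat\beta_1)\tcomp(\gamma\tcomp\alpha)$, rewrite $\gamma\tcomp\alpha$ as $(\delta_0,\veq_v,\delta_1)\tcomp\sigma$ by the second defining equation, and then contract each $\hat\beta_i\tcomp\delta_i$ to a whiskered unit $\tilde\eta_i$ using the two third equations together with functoriality of $\heq$. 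This leaves $(\tilde\eta_0,\beta,\tilde\eta_1)\tcomp\sigma$; factoring $\beta$ out of the central slot (possible since $\eta_0,\eta_1$ have empty tops) turns this into $\beta\tcomp\bigl((\eta_0,\veq_v,\eta_1)\tcomp\sigma\bigr)$, which the fourth defining equation collapses to $\beta\tcomp\veq_v=\beta$.

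For uniqueness, suppose $\gamma''\colon\tup{w}\to u$ also satisfies $\gamma''\tcomp\alpha=\beta$. Using the first defining equation $(\beta_0,\alpha,\beta_1)\tcomp\gamma=\veq_u$ and the unit law, I would compute $\gamma''=\gamma''\tcomp\veq_u=\gamma''\tcomp\bigl((\beta_0,\alpha,\beta_1)\tcomp\gamma\bigr)$. Re-associating and pushing $\gamma''$ up into the $\alpha$-slot (whiskering the two empty-topped slots by $a,b$) rewrites this as $(\hat\beta_0,\gamma''\tcomp\alpha,\hat\beta_1)\tcomp\gamma=(\hat\beta_0,\beta,\hat\beta_1)\tcomp\gamma=\gamma'$, so $\gamma''=\gamma'$. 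For the ``in particular'' clause, every piece of the split data and each of its four defining equations is built solely from loose and tight identity cells and the composition operation, all of which are strictly preserved by an \ac{AVD}-functor $F$; hence $F\alpha$ carries the split data $(Fp_0,\dots,F\eta_1)$ and is again split, thus cartesian by the first part, giving absolute cartesianness.

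The genuine content is entirely packaged into the four split equations, so I expect the main obstacle to be bookkeeping rather than ideas: justifying the two interchange moves — factoring $\beta$ out of the central slot in the existence step and pushing $\gamma''$ into the $\alpha$-slot in the uniqueness step — which require tracking whiskerings and left/right boundaries carefully through the associativity and unit laws of the \ac{AVDC}. Once those moves are set up, the verification is a routine matter of turning the crank.
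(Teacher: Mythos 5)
Your proof is correct and follows essentially the same route as the paper's: the factoring cell is the same candidate $(\hat\beta_0,\beta,\hat\beta_1)\tcomp\gamma$, existence is verified with the second, third, and fourth split equations exactly as in the paper's computation, uniqueness uses the first equation in the same way (the paper merely presents the two halves in the opposite order), and the absoluteness clause is the same observation that split data is built from composites and identities and hence preserved by any \ac{AVD}-functor.
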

\begin{proof}
    Let $\alpha$ be a split cell as in \cref{def:split_cell}.
    Take an arbitrary cell $\theta$ on the left below:
    \begin{equation}\label{eq:universal_property_of_alpha}
        \begin{tikzcd}
            X_0\ar[d,"x_0"']\lar[r,path,"\tup{w}"] & X_1\ar[d,"x_1"] \\
            A_0\ar[d,"f_0"'] & A_1\ar[d,"f_1"] \\
            B_0\lar[r,phan,"v"'] & B_1
            \cellsymb(\theta){1-1}{3-2}
        \end{tikzcd}
        =
        \begin{tikzcd}
            X_0\ar[d,"x_0"']\lar[r,path,"\tup{w}"] & X_1\ar[d,"x_1"] \\
            A_0\ar[d,"f_0"']\lar[r,phan,"u"] & A_1\ar[d,"f_1"] \\
            B_0\lar[r,phan,"v"'] & B_1
            \cellsymb(\bar{\theta}){1-1}{2-2}
            \cellsymb(\alpha){2-1}{3-2}
        \end{tikzcd}
    \end{equation}
    If there exists a cell $\bar{\theta}$ satisfying the above equation, then $\bar{\theta}$ must be given by the following:
    \begin{equation*}
        \bar{\theta}
        =
        \begin{tikzcd}
            & X_0\ar[d,"x_0"']\lar[r,path,"\tup{w}"] & X_1\ar[d,"x_1"] & \\
            & A_0\ar[dl,equal]\ar[d,"f_0"]\lar[r,phan,"u"] & A_1\ar[d,"f_1"']\ar[dr,equal] & \\
            A_0\ar[d,equal]\lar[r,phan,"p_0"'] & B_0\lar[r,phan,"v"'] & B_1\lar[r,phan,"p_1"'] & A_1\ar[d,equal] \\
            A_0\lar[rrr,phan,"u"'] &&& A_1
            \cellsymb(\bar{\theta}){1-2}{2-3}
            \cellsymb(\alpha){2-2}{3-3}
            \cellsymb(\beta_0)[below right]{2-2}{3-1}
            \cellsymb(\beta_1)[below left]{2-3}{3-4}
            \cellsymb(\gamma){3-1}{4-4}
        \end{tikzcd}
        =
        \begin{tikzcd}
            & X_0\ar[d,"x_0"']\lar[r,path,"\tup{w}"] & X_1\ar[d,"x_1"] & \\
            & A_0\ar[dl,equal]\ar[d,"f_0"] & A_1\ar[d,"f_1"']\ar[dr,equal] & \\
            A_0\ar[d,equal]\lar[r,phan,"p_0"'] & B_0\lar[r,phan,"v"'] & B_1\lar[r,phan,"p_1"'] & A_1\ar[d,equal] \\
            A_0\lar[rrr,phan,"u"'] &&& A_1
            \cellsymb(\theta){1-2}{3-3}
            \cellsymb(\beta_0)[below right]{2-2}{3-1}
            \cellsymb(\beta_1)[below left]{2-3}{3-4}
            \cellsymb(\gamma){3-1}{4-4}
        \end{tikzcd}
    \end{equation*}
    Conversely, let us define $\bar{\theta}$ by the above equation.
    Then, the following calculation shows that $\bar{\theta}$ satisfies the desired equation \cref{eq:universal_property_of_alpha}:
    \begin{equation*}
        \begin{tikzcd}
            & X_0\ar[d,"x_0"']\lar[r,path,"\tup{w}"] & X_1\ar[d,"x_1"] & \\
            & A_0\ar[dl,equal]\ar[d,"f_0"] & A_1\ar[d,"f_1"']\ar[dr,equal] & \\
            A_0\ar[d,equal]\lar[r,phan,"p_0"'] & B_0\lar[r,phan,"v"'] & B_1\lar[r,phan,"p_1"'] & A_1\ar[d,equal] \\
            A_0\ar[d,"f_0"']\lar[rrr,phan,"u"'] &&& A_1\ar[d,"f_1"] \\
            B_0\lar[rrr,phan,"v"'] &&& B_1
            \cellsymb(\theta){1-2}{3-3}
            \cellsymb(\beta_0)[below right]{2-2}{3-1}
            \cellsymb(\beta_1)[below left]{2-3}{3-4}
            \cellsymb(\gamma){3-1}{4-4}
            \cellsymb(\alpha){4-1}{5-4}
        \end{tikzcd}
        =
        \begin{tikzcd}
            & X_0\ar[d,"x_0"']\lar[r,path,"\tup{w}"] & X_1\ar[d,"x_1"] & \\
            & A_0\ar[dl,equal]\ar[d,"f_0"] & A_1\ar[d,"f_1"']\ar[dr,equal] & \\
            A_0\ar[d,"f_0"']\lar[r,phan,"p_0"'] & B_0\ar[d,equal]\lar[r,phan,"v"'] & B_1\ar[d,equal]\lar[r,phan,"p_1"'] & A_1\ar[d,"f_1"] \\
            B_0\ar[d,equal]\lar[r,phan,"q_0"'] & B_0\lar[r,phan,"v"'] & B_1\lar[r,phan,"q_1"'] & B_1\ar[d,equal] \\
            B_0\lar[rrr,phan,"v"'] &&& B_1
            \cellsymb(\theta){1-2}{3-3}
            \cellsymb(\beta_0)[below right]{2-2}{3-1}
            \cellsymb(\beta_1)[below left]{2-3}{3-4}
            \cellsymb(\delta_0){3-1}{4-2}
            \cellsymb(\delta_1){3-3}{4-4}
            \cellsymb(\veq){3-2}{4-3}
            \cellsymb(\sigma){4-1}{5-4}
        \end{tikzcd}
    \end{equation*}
    \begin{equation*}
        =
        \begin{tikzcd}
            & X_0\ar[d,"x_0"']\lar[r,path,"\tup{w}"] & X_1\ar[d,"x_1"] & \\
            & A_0\ar[d,"f_0"'] & A_1\ar[d,"f_1"] & \\
            & B_0\ar[dl,equal]\ar[d,equal]\lar[r,phan,"v"] & B_1\ar[d,equal]\ar[dr,equal] & \\
            B_0\ar[d,equal]\lar[r,phan,"q_0"'] & B_0\lar[r,phan,"v"'] & B_1\lar[r,phan,"q_1"'] & B_1\ar[d,equal] \\
            B_0\lar[rrr,phan,"v"'] &&& B_1
            \cellsymb(\theta){1-2}{3-3}
            \cellsymb(\eta_0)[below right]{3-2}{4-1}
            \cellsymb(\veq){3-2}{4-3}
            \cellsymb(\eta_1)[below left]{3-3}{4-4}
            \cellsymb(\sigma){4-1}{5-4}
        \end{tikzcd}
        =\theta.
    \end{equation*}
    This shows that $\alpha$ is cartesian.
\end{proof}

\begin{corollary}\label{cor:abs_cart_in_loosewise_indiscrete}
    Let $\bK$ be a loosewise indiscrete \ac{AVDC}.
    Then, every cell of the following form is absolutely cartesian.
    \begin{equation*}
        \begin{tikzcd}
            A\ar[d,"f"']\lar[r,"!_{AB}"] & B\ar[d,"g"] \\
            X\lar[r,"!_{XY}"'] & Y
            \cellsymb(!_{fg}){1-1}{2-2}
        \end{tikzcd}\incat{\bK}.\qedhere
    \end{equation*}
\end{corollary}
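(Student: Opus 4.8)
The plan is to show that every such cell $!_{fg}$ is \emph{split} in the sense of \cref{def:split_cell}; the claim then follows at once from \cref{lem:split_cell_is_cartesian}. Adopting the notation of \cref{def:split_cell}, I identify $A_0=A$, $A_1=B$, $B_0=X$, $B_1=Y$, $f_0=f$, $f_1=g$, $u=!_{AB}$, and $v=!_{XY}$, so that the task reduces to exhibiting the data $(p_0,p_1,q_0,q_1,\beta_0,\beta_1,\gamma,\delta_0,\delta_1,\sigma,\eta_0,\eta_1)$ together with the four defining equations.

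First I would supply the loose arrows. Since $\bK$ is loosewise indiscrete, there is a unique loose arrow between any two objects, so we are forced to take $p_0=!_{AX}$, $p_1=!_{YB}$, $q_0=!_{XX}$, and $q_1=!_{YY}$. The eight cells $\beta_0,\beta_1,\gamma,\delta_0,\delta_1,\sigma,\eta_0,\eta_1$ required by \cref{def:split_cell} all have a bottom boundary of length $1$ (one of $p_0$, $p_1$, $u$, $v$, $q_0$, or $q_1$), hence each is \emph{1-coary}. By loosewise indiscreteness, every boundary for 1-coary cells admits a unique filler, so each of these cells exists and is uniquely determined by its boundary.

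It remains to verify the four equations of \cref{def:split_cell}. The key observation is that every composite appearing on either side of these equations is again a 1-coary cell, since its bottom boundary is one of $u$, $v$, $q_0$, or $q_1$, each of length $1$. Consequently, in every equation the two sides are 1-coary cells with identical boundaries, and they therefore coincide by the uniqueness of 1-coary fillers. For instance, the first equation compares two cells from $u$ to $u$ with identity legs, the fourth compares two cells from $v$ to $v$ with identity legs, and the two halves of the third equation compare cells from the empty path to $q_0$ (resp.\ $q_1$) whose legs are both $f_0$ (resp.\ $f_1$). Thus $!_{fg}$ is split, and \cref{lem:split_cell_is_cartesian} gives that it is absolutely cartesian.

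The argument requires essentially no calculation; the only point demanding care is the bookkeeping that confirms every cell involved is 1-coary rather than 0-coary, so that the \emph{uniqueness} clause of loosewise indiscreteness (rather than the weaker ``at most one'' clause for 0-coary cells) applies throughout. Once this is checked, both the existence of the split structure and the validity of its equations are immediate from indiscreteness, which is why I expect this to be the sole, and quite mild, obstacle.
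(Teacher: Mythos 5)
Your proof is correct and follows exactly the paper's route: the paper also establishes the claim by observing that loosewise indiscreteness makes $!_{fg}$ split and then invoking \cref{lem:split_cell_is_cartesian}. The paper leaves the verification as "immediate," whereas you carry it out explicitly; your bookkeeping (all required cells and all composites in the four equations are 1-coary, so the uniqueness clause applies) is precisely the point that makes the paper's one-line argument go through.
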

\begin{proof}
    By the loosewise indiscreteness, it immediately follows that the cell $!_{fg}$ is split.
    Then, \cref{lem:split_cell_is_cartesian} shows that it is absolutely cartesian.
\end{proof}
\subsection{Categories enriched in a virtual double category}
In this subsection, we will recall the notion of enriched categories in a \ac{VDC} from \cite{Leinster1999enrichment,Leinster2002enrichment}.
We first define the diminished \ac{AVDC} of \emph{matrices}, whose special cases have appeared in the literature: \cite{BettiCarboniStreetWalters1983variation} for bicategories, and \cite[Example 5.1.9]{Leinster2004higher} for multicategories.
\begin{definition}
    Let $\bX$ be an \ac{AVDC}.
    By an \emph{$\bX$-colored large set}, we mean a large set $\zero{A}$ equipped with a map $\zero{A}\arr(\abs{\cdot}_\zero{A})\Ob\bX$.
\end{definition}

\begin{definition}
    Let $\bX$ be an \ac{AVDC}.
    Let $\zero{A}$ and $\zero{B}$ be $\bX$-colored large sets.
    A \emph{morphism of families} $F$ from $\zero{A}$ to $\zero{B}$ consists of:
    \begin{itemize}
        \item
            for $x\in\zero{A}$, an element $F^0x\in\zero{B}$;
        \item
            for $x\in\zero{A}$, a tight arrow $\abs{x}_\zero{A}\arr(F^1x)\abs{F^0x}_\zero{B}$ in $\bX$.\qedhere
    \end{itemize}
\end{definition}

\begin{definition}
    Let $\bX$ be an \ac{AVDC}.
    Let $\zero{A}$ and $\zero{B}$ be $\bX$-colored large sets.
    An \emph{$(\zero{A}\times\zero{B})$-matrix} $M$ over $\bX$ is defined to be a family of loose arrows $\abs{x}_\zero{A}\larr(M(x,y))[][3]\abs{y}_\zero{B}$ in $\bX$ for $x\in\zero{A}$ and $y\in\zero{B}$.
\end{definition}

\begin{definition}
    Let $\bX$ be an \ac{AVDC}.
    The \emph{\ac{AVDC} of matrices over $\bX$}, denoted by $\Mat[\bX]$, is defined as follows: it is diminished, its objects are $\bX$-colored large sets, its tight arrows are morphisms of families, its loose arrows $\zero{A}\larr\zero{B}$ are $(\zero{A}\times\zero{B})$-matrices over $\bX$, and a 1-coary cell of the form
    \begin{equation*}
        \begin{tikzcd}
            \zero{A}_0\ar[d,"F"']\lar[r,"M_1"] & \zero{A}_1\lar[r,"M_2"] & \cdots\lar[r,"M_n"] & \zero{A}_n\ar[d,"G"] \\
            \zero{B}\lar[rrr,"N"'] & & & \zero{C}
            \cellsymb(\alpha){1-1}{2-4}
        \end{tikzcd}\incat{\Mat[\bX]}
    \end{equation*}
    consists of a family of cells
    \begin{equation*}
        \begin{tikzcd}[hugecolumn]
            \abs{x_0}_{\zero{A}_0}\ar[d,"F^1x_0"']\lar[r,"{M_1(x_0,x_1)}"] & \abs{x_1}_{\zero{A}_1}\lar[r,"{M_2(x_1,x_2)}"] & \cdots\lar[r,"{M_n(x_{n-1},x_n)}"] & \abs{x_n}_{\zero{A}_n}\ar[d,"G^1x_n"] \\
            \abs{F^0x_0}_\zero{B}\lar[rrr,"{N(F^0x_0,G^0x_n)}"'] &&& \abs{G^0x_n}_\zero{C}
            \cellsymb(\alpha_{x_0,x_1,\dots,x_n}){1-1}{2-4}
        \end{tikzcd}\incat{\bX},
    \end{equation*}
    one for each tuple of $x_0\in\zero{A}_0,x_1\in\zero{A}_1,\dots,x_n\in\zero{A}_n$.
\end{definition}

\begin{remark}
    In the above definition of $\Mat[\bX]$, we do not use any 0-coary cell in $\bX$, hence $\Mat[\bX]=\Mat[\dim{\bX}]$.
\end{remark}

\begin{remark}
    The tight category $\Tcat(\Mat[\bX])$ is isomorphic to $\Fam(\Tcat\bX)$, known as the \emph{category of families} or the (large) \emph{coproduct cocompletion of $\Tcat\bX$}.
\end{remark}

\begin{example}
    Let $\bi{V}$ be a monoidal category.
    Regarding $\bi{V}$ as a single-object bicategory, we have a diminished \ac{AVDC} $\Mat[(\Ldbl\bi{V})]$, which is also denoted by $\Mat[\bi{V}]$, whose objects are (large) sets, whose tight arrows are maps, and whose loose arrows $\zero{X}\larr\zero{Y}$ are families $(M(x,y))_{x\in\zero{X},y\in\zero{Y}}$ of objects in $\bi{V}$.
    When $\bi{V}$ is the two element chain, we have $\Mat[\bi{V}]\cong\dim{\Rel}$.
\end{example}

\begin{proposition}\label{prop:Mat_has_1-coary_restrictions}
    If an \ac{AVDC} $\bX$ has all 1-coary restrictions, so does $\Mat[\bX]$.
\end{proposition}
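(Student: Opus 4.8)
The plan is to construct restrictions in $\Mat[\bX]$ entrywise, reducing everything to the corresponding $1$-coary restrictions in $\bX$. Given a loose arrow (matrix) $\zero{B}\larr(N)\zero{C}$ together with morphisms of families $\zero{A}\arr(F)\zero{B}$ and $\zero{A}'\arr(G)\zero{C}$, I want to produce the restriction $\zero{A}\larr(N(F,G))\zero{A}'$. For each pair $x\in\zero{A}$, $x'\in\zero{A}'$ we have the loose arrow $N(F^0x,G^0x')\colon\abs{F^0x}_\zero{B}\larr\abs{G^0x'}_\zero{C}$ in $\bX$ together with the tight arrows $F^1x\colon\abs{x}_\zero{A}\to\abs{F^0x}_\zero{B}$ and $G^1x'\colon\abs{x'}_{\zero{A}'}\to\abs{G^0x'}_\zero{C}$. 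Using the hypothesis that $\bX$ has $1$-coary restrictions, I define the matrix $N(F,G)$ by the entries
\[
    N(F,G)(x,x') := N(F^0x,G^0x')(F^1x,G^1x')\incat{\bX},
\]
the $1$-coary restriction in $\bX$.

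Next I assemble the candidate cartesian cell. Its entry at $(x,x')$ is precisely the cartesian cell in $\bX$ exhibiting $N(F^0x,G^0x')(F^1x,G^1x')$ as a restriction; collecting these over all $(x,x')$ yields a cell in $\Mat[\bX]$ with left boundary $F$, right boundary $G$, top $N(F,G)$, and bottom $N$. It remains to verify that this cell is cartesian. The crucial structural fact is that a cell in $\Mat[\bX]$ is nothing but a family of cells in $\bX$ indexed by tuples of elements, and that composition of cells is computed entrywise, with the intermediate colors determined by the morphisms of families involved. Hence, given any testing cell $\beta$ in $\Mat[\bX]$ of the appropriate boundary (with a loose path $\tup{P}=(P_1,\dots,P_n)$ on top and left and right boundaries of the form $H\tcomp F$ and $K\tcomp G$), its entry at a tuple $(x_0,\dots,x_n)$ is a cell in $\bX$ whose left and right boundaries factor as $H^1x_0\tcomp F^1(H^0x_0)$ and $K^1x_n\tcomp G^1(K^0x_n)$. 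The universal property of the $1$-coary restriction in $\bX$, applied to the loose path $(P_1(x_0,x_1),\dots,P_n(x_{n-1},x_n))$ and the tight arrows $H^1x_0$ and $K^1x_n$, then produces a unique factoring cell $\gamma_{x_0,\dots,x_n}$; collecting these over all tuples gives the required factorization in $\Mat[\bX]$.

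The point that needs genuine care is the bookkeeping of colors, which is where the entrywise reduction could go wrong. I must check that the bottom boundary of each factoring cell $\gamma_{x_0,\dots,x_n}$ is indeed the entry of the matrix $N(F,G)$ at the \emph{composite} colors $(H^0x_0,K^0x_n)$, namely that
\[
    N(F,G)(H^0x_0,K^0x_n)=N(F^0H^0x_0,G^0K^0x_n)(F^1(H^0x_0),G^1(K^0x_n)),
\]
which holds by the very definition of $N(F,G)$ and is exactly the restriction whose cartesian cell in $\bX$ governs the entry of the candidate cell used in the composite. Once this matching of intermediate colors is confirmed, the composite $\gamma\tcomp\alpha=\beta$ holds entrywise and hence in $\Mat[\bX]$, and uniqueness in $\Mat[\bX]$ follows immediately from the entrywise uniqueness in $\bX$. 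I expect this index-tracking to be the only real obstacle; the remainder is a mechanical transfer of the universal property from $\bX$ to $\Mat[\bX]$.
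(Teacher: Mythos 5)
Your proposal is correct and follows essentially the same route as the paper: the restriction matrix $N(F,G)$ is defined entrywise by $N(F,G)(x,y):=N(F^0x,G^0y)(F^1x,G^1y)$ using the $1$-coary restrictions of $\bX$, and the exhibiting cell is the family of the corresponding cartesian cells. The paper leaves the verification of cartesianness implicit, whereas you spell out the entrywise factorization and the color bookkeeping; this is a faithful elaboration rather than a different argument.
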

\begin{proof}
    Suppose that we are given the following data:
    \begin{equation*}
        \begin{tikzcd}
            \zero{A}'\ar[d,"F"'] & \zero{B}'\ar[d,"G"] \\
            \zero{A}\lar[r,"N"'] & \zero{B}
        \end{tikzcd}\incat{\Mat[\bX]}.
    \end{equation*}
    For $x\in\zero{A}'$ and $y\in\zero{B}'$, let $N(F,G)(x,y)$ denote the following loose arrow:
    \begin{equation*}
        \begin{tikzcd}[xhugecolumn]
            \abs{x}\ar[d,"F^1x"']\lar[r,"{N(F,G)(x,y)}"] & \abs{y}\ar[d,"G^1y"] \\
            \abs{F^0x}\lar[r,"{N(F^0x,G^0y)}"'] & \abs{G^0y}
            \cellsymb(\cart){1-1}{2-2}
        \end{tikzcd}\incat{\bX}.
    \end{equation*}
    Then, the matrix $N(F,G)$ over $\bX$ gives the desired restriction.
\end{proof}

\begin{definition}[Enrichment in a virtual double category]
    Let $\bX$ be an \ac{AVDC}.
    The \emph{\ac{AVDC} of $\bX$-enriched profunctors}, denoted by  $\Prof[\bX]$, is defined to be $\Mod(\Mat[\bX])$.
    Objects in $\Prof[\bX]$ are called \emph{$\bX$-enriched (large) categories}, tight arrows are called \emph{$\bX$-functors}, and loose arrows are called \emph{$\bX$-profunctors}.
    Note that $\Prof[\bX]$ has restrictions whenever $\bX$ has all 1-coary restrictions, which follows from \cref{prop:restriction_in_Mod,prop:Mat_has_1-coary_restrictions}.
\end{definition}

\begin{remark}
    Our $\bX$-enriched categories and $\bX$-functors coincide with Leinster's \cite{Leinster1999enrichment,Leinster2002enrichment}.
    For a bicategory $\bi{W}$, the \ac{AVDC} $\Prof[(\Ldbl\bi{W})]$ recovers the classical notion of enrichment in a bicategory, which includes ordinary enrichment in a monoidal category as a special case.
    Indeed, the tight 2-category $\Ttwocat(\Prof[(\Ldbl\bi{W})])$ is isomorphic to the 2-category of $\bi{W}$-enriched categories and $\bi{W}$-functors defined by Walters \cite{Walters1982sheaves}.
    Moreover, the loose bicategory $\Lbicat(\Prof[(\Ldbl\bi{W})])$ of \ac{VD}-composable objects coincides with the bicategory of sufficiently small $\bi{W}$-enriched categories and $\bi{W}$-profunctors (sometimes called \emph{$\bi{W}$-modules}).
    The \ac{AVDC} $\Prof[(\Ldbl\bi{W})]$ is also denoted by $\Prof[\bi{W}]$.
\end{remark}

\begin{remark}
    If an \ac{AVDC} $\bX$ is huge, then the \acp{AVDC} $\Mat[\bX]$, $\Mod(\bX)$, and $\Prof[\bX]$ are also huge.
\end{remark}

We now unpack the definition.
\begin{remark}
    Let $\bX$ be an \ac{AVDC}.
    An $\bX$-enriched (large) category $\one{A}$ consists of:
    \begin{itemize}
        \item (\emph{Colored objects})
            An $\bX$-colored large set $\Ob\one{A}$.
            For $x\in\Ob\one{A}$, its color is denoted by $\abs{x}_\one{A}$ or simply $\abs{x}$.
            When $\abs{x}=c$, we call $x$ an \emph{object colored with $c$}.
        \item (\emph{Hom-loose arrows})
            For $x,y\in\Ob\one{A}$, a loose arrow $\abs{x}\larr(\one{A}(x,y))[][3]\abs{y}$ in $\bX$.
        \item (\emph{Compositions})
            For $x,y,z\in\Ob\one{A}$, a cell $\mu_{x,y,z}$ of the following form:\vspace{-0.4em}
            \begin{equation*}
                \begin{tikzcd}[largecolumn]
                    \abs{x}\ar[d,equal]\lar[r,"{\one{A}(x,y)}"] & \abs{y}\lar[r,"{\one{A}(y,z)}"] & \abs{z}\ar[d,equal] \\
                    \abs{x}\lar[rr,"{\one{A}(x,z)}"'] & & \abs{z}
                    \cellsymb(\mu_{x,y,z}){1-1}{2-3}
                \end{tikzcd}\incat{\bX}.
            \end{equation*}
        \item (\emph{Identities})
            For each $x\in\Ob\one{A}$, a cell $\eta_x$ of the following form:\vspace{-0.4em}
            \begin{equation*}
                \begin{tikzcd}[tri]
                    & \abs{x}\ar[dl,equal]\ar[dr,equal] & \\
                    \abs{x}\lar[rr,"{\one{A}(x,x)}"'] &{}& \abs{x}
                    \cellsymb(\eta_x){1-2}{2-2}
                \end{tikzcd}\incat{\bX}.
            \end{equation*}
    \end{itemize}\vspace{-0.4em}
    The above data are required to satisfy suitable axioms.
\end{remark}

\begin{proposition}\label{prop:enriched_cat_is_AVD_functor}
    Let $\bX$ be an \ac{AVDC}.
    Then, an $\bX$-enriched (large) category is the same as the following data:
    \begin{itemize}
        \item
            A (large) set $\zero{S}$;
        \item
            An \ac{AVD}-functor $\Idimdbl\zero{S}\to\bX$.\qedhere
    \end{itemize}
\end{proposition}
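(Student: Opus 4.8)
The plan is to reduce everything to the definition $\Prof[\bX]=\Mod(\Mat[\bX])$ together with the identification, stated above, of monoids in an \ac{AVDC} $\bL$ with \ac{AVD}-functors $\Idimdbl\zero{1}\to\bL$. Indeed, by definition an $\bX$-enriched (large) category is an object of $\Mod(\Mat[\bX])$, i.e.\ a monoid in $\Mat[\bX]$, and hence the same as an \ac{AVD}-functor $\Idimdbl\zero{1}\to\Mat[\bX]$. It therefore suffices to construct a bijection between such \ac{AVD}-functors and pairs $(\zero{S},G)$, where $\zero{S}$ is a large set and $G\colon\Idimdbl\zero{S}\to\bX$ is an \ac{AVD}-functor.

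First I would extract the underlying set and the hom-data. An \ac{AVD}-functor $H\colon\Idimdbl\zero{1}\to\Mat[\bX]$ sends the unique object of $\Idimdbl\zero{1}$ to an object of $\Mat[\bX]$, namely an $\bX$-colored large set; I take $\zero{S}$ to be its underlying set, so that its coloring is a map $\zero{S}\to\Ob\bX$. Since $\Tcat(\Idimdbl\zero{S})$ is discrete, an \ac{AVD}-functor $G\colon\Idimdbl\zero{S}\to\bX$ carries no tight-morphism data, and its object-assignment is exactly such a coloring. The unique loose arrow of $\Idimdbl\zero{1}$ is sent by $H$ to a single $(\zero{S}\times\zero{S})$-matrix over $\bX$, whose $(x,y)$-entry is a loose arrow $\abs{x}\larr\abs{y}$ in $\bX$; I declare this entry to be $G(!_{xy})$, matching $G$ on the loose arrows of $\Idimdbl\zero{S}$.

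The heart of the argument is that the remaining data is recorded entrywise. For each $n\ge 0$ there is a unique $1$-coary cell of $\Idimdbl\zero{1}$ from the $n$-fold loose path to the loose arrow, with identity tight boundaries; by the very definition of $\Mat[\bX]$, its image under $H$ is a family, indexed by tuples $(x_0,\dots,x_n)\in\zero{S}^{n+1}$, of cells in $\bX$ with identity tight boundaries. This family is exactly the collection of images under $G$ of the unique cells of $\Idimdbl\zero{S}$ attached to these tuples. Because composition and identity cells in $\Mat[\bX]$ are computed entrywise in $\bX$, and because cells in both $\Idimdbl\zero{1}$ and $\Idimdbl\zero{S}$ are unique for each boundary, the functoriality axioms for $H$ hold precisely when, for every tuple, the corresponding axioms for $G$ hold; loose and tight identity cells correspond in the same way. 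Reversing these assignments---reading a pair $(\zero{S},G)$ off as an \ac{AVD}-functor $\Idimdbl\zero{1}\to\Mat[\bX]$---provides the inverse, yielding the desired bijection.

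The step I expect to demand the most care is the cell-level matching in the previous paragraph: one must check that a single cell of $\Mat[\bX]$ of the relevant shape is faithfully the same datum as the tuple-indexed family of cells produced by $G$, and that ``preservation of composition'' transports between the two sides without loss. This is not conceptually difficult, since both are dictated by the pointwise definition of $\Mat[\bX]$ and by the \ac{VD}-indiscreteness (hence cell-uniqueness) of the indiscrete \acp{AVDC}, but it is where all of the bookkeeping resides.
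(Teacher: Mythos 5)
Your proposal is correct. The paper's own proof is essentially a one-line direct construction: having already unpacked an $\bX$-enriched category into colored objects, hom-loose arrows, compositions, and identities in the preceding remark, it simply writes down the assignments $x\mapsto\abs{x}$, $!_{xy}\mapsto\one{A}(x,y)$, (unit cell) $\mapsto\eta_x$, (composition cell) $\mapsto\mu_{x,y,z}$ and asserts that this yields an \ac{AVD}-functor and that $\one{A}$ can be reconstructed from it. You reach the same bijection by a two-step factorization: first identifying an object of $\Mod(\Mat[\bX])$ with an \ac{AVD}-functor $\Idimdbl\zero{1}\to\Mat[\bX]$ (an identification the paper records in a remark but does not invoke here), and then comparing that functor entrywise with an \ac{AVD}-functor $\Idimdbl\zero{S}\to\bX$, using that cells of $\Mat[\bX]$ are tuple-indexed families of cells of $\bX$ with entrywise composition, and that cells in the indiscrete shapes are unique per boundary. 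The substance of the verification is the same in both cases; your organization makes the bookkeeping more systematic and makes explicit why the monoid axioms on one side correspond exactly to functoriality on the other, at the cost of being longer. Your flagged concern---that a single cell of $\Mat[\bX]$ is faithfully the tuple-indexed family and that preservation of composition transports entrywise---is indeed where the work lies, and it goes through because all tight boundaries involved are identities, so no reindexing along morphisms of families intervenes.
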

\begin{proof}
    Let $\one{A}$ be an $\bX$-enriched large category.
    Then, the following assignments yield an \ac{AVD}-functor $\Idimdbl\Ob\one{A}\to\bX$:
    \begin{equation*}
        x\mapsto \abs{x}_\one{A},
        \qquad\qquad
        x\larr(!_{xy})y \quad\mapsto\quad \abs{x}\larr(\one{A}(x,y))[][3]\abs{y},
    \end{equation*}
    \begin{equation*}
        \begin{tikzcd}[tri]
            & x\ar[dl,equal]\ar[dr,equal] & \\
            x\lar[rr,"!_{xx}"'] &{}& x
            \cellsymb(!){1-2}{2-2}
        \end{tikzcd}
        \mapsto
        \begin{tikzcd}[tri]
            & \abs{x}\ar[dl,equal]\ar[dr,equal] & \\
            \abs{x}\lar[rr,"{\one{A}(x,x)}"'] &{}& \abs{x}
            \cellsymb(\eta_x){1-2}{2-2}
        \end{tikzcd}
        \qquad
        \begin{tikzcd}
            x\ar[d,equal]\lar[r,"!_{xy}"] & y\lar[r,"!_{yz}"] & z\ar[d,equal] \\
            x\lar[rr,"!_{xz}"'] && z
            \cellsymb(!){1-1}{2-3}
        \end{tikzcd}
        \mapsto
        \begin{tikzcd}[largecolumn]
            \abs{x}\ar[d,equal]\lar[r,"{\one{A}(x,y)}"] & \abs{y}\lar[r,"{\one{A}(y,z)}"] & \abs{z}\ar[d,equal] \\
            \abs{x}\lar[rr,"{\one{A}(x,z)}"'] & & \abs{z}
            \cellsymb(\mu_{x,y,z}){1-1}{2-3}
        \end{tikzcd}
    \end{equation*}
    Furthermore, we can reconstruct $\one{A}$ from the \ac{AVD}-functor $\Idimdbl\Ob\one{A}\to\bX$.
\end{proof}

\begin{notation}\label{note:AVD_functor_Y}
    Let $\bX$ be an \ac{AVDC}.
    For $c\in\bX$, let $\zero{Y}_c$ denote the $\bX$-colored set $\zero{Y}_c\coloneq\{\ast\}$ containing a unique element $\ast$ colored with $c$.
    It easily follows that the full sub-\ac{AVDC} of $\Mat[\bX]$ spanned by the objects $\zero{Y}_c$ is isomorphic to $\dim{\bX}$.
    We write $Y\colon \dim{\bX}\to\Mat[\bX]$ for the corresponding inclusion.
\end{notation}

\begin{notation}\label{note:AVD_functor_Z}
    Let $\bX$ be an \ac{AVDC} with loose units.
    We write $Z\colon\bX\to\Prof[\bX]$ for an \ac{AVD}-functor corresponding to $Y\colon \dim{\bX}\to\Mat[\bX]$ by \cref{thm:universal_property_of_Mod}.
    We write $\one{Z}_c$ for the $\bX$-enriched category assigned to each $c\in\bX$ by $Z$.
\end{notation}

\begin{lemma}\label{rem:monoid_structure_of_Zc}
    Let $\bX$ be an \ac{AVDC} with loose units, and let $c\in\bX$.
    Then, the unit cell associated with the monoid $\one{Z}_c$ is \ac{VD}-cocartesian in $\Mat[\bX]$.   
\end{lemma}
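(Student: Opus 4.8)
The plan is to show that the loose arrow underlying $\one{Z}_c$ is the loose \ac{VD}-unit on $\zero{Y}_c$ in $\Mat[\bX]$, with the claimed witness being the unit cell of the monoid $\one{Z}_c$; equivalently, that precomposition with this unit cell induces the defining \ac{VD}-cocartesian bijection. The whole argument reduces this bijection to a single component in $\bX$.

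First I would identify the unit cell concretely. By the explicit description following \cref{thm:universal_property_of_Mod}, the monoid $\one{Z}_c$ is $Y\Unit_c$, whose underlying loose arrow is the $(\zero{Y}_c\times\zero{Y}_c)$-matrix sending the unique pair $(\ast,\ast)$ to the loose unit $\Unit_c$ on $c$ in $\bX$. Writing $\mathsf{u}_c$ for the loosewise invertible cartesian cell that exhibits $\Unit_c$ as the loose unit (top boundary $\Unit_c$, empty bottom boundary), the object $\Unit_c$ becomes a monoid in $\dim{\bX}$ whose unit is the loosewise inverse $\mathsf{u}_c^{-1}$, a cell from the empty path at $c$ to $\Unit_c$. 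Since a cell in $\Mat[\bX]$ is a family of cells in $\bX$ and $\zero{Y}_c$ is a singleton, the unit cell of $\one{Z}_c$ has exactly one component, namely $\mathsf{u}_c^{-1}$. Recalling that a loose unit is in particular a loose \ac{VD}-unit, the cell $\mathsf{u}_c^{-1}$ is cocartesian, hence \ac{VD}-cocartesian, in $\bX$; for the present purpose only the $1$-coary bijection it induces is needed.

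Next I would reduce the \ac{VD}-cocartesian condition in $\Mat[\bX]$ to this component. Fix loose paths $\tup{p}\colon\zero{P}\to\zero{Y}_c$ and $\tup{q}\colon\zero{Y}_c\to\zero{Q}$, morphisms of families $F$ and $G$, and a loose arrow $w$ of length $1$. A $1$-coary cell in $\Mat[\bX]$ out of $\tup{p}\cdot Y\Unit_c\cdot\tup{q}$, respectively out of $\tup{p}\cdot\tup{q}$, is by definition a family of $1$-coary cells in $\bX$ indexed by the tuples of objects along the path. Because $\zero{Y}_c$ is a singleton, inserting $Y\Unit_c$ in the middle enlarges each index tuple only by a second, forced copy of $\ast$; deleting this redundant entry gives a canonical bijection between the two index sets. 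At a fixed index, the source of the component cell for $\tup{p}\cdot Y\Unit_c\cdot\tup{q}$ is obtained from that for $\tup{p}\cdot\tup{q}$ by inserting $\Unit_c$ between the relevant loose arrows of $\bX$, leaving the bottom boundary coming from $w$ unchanged.

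Finally I would assemble the bijection. At each index, precomposition with the single component $\mathsf{u}_c^{-1}$ is exactly the \ac{VD}-cocartesian bijection for the loose unit $\Unit_c$ in $\bX$; as these are all implemented by the one cell $\mathsf{u}_c^{-1}$, they are compatible with forming families, and transported along the canonical bijection of index sets they combine into a bijection $\Cells[{\Mat[\bX]}]{F}{G}{\tup{p}\cdot Y\Unit_c\cdot\tup{q}}{w}\cong\Cells[{\Mat[\bX]}]{F}{G}{\tup{p}\cdot\tup{q}}{w}$ realized by precomposition with the unit cell of $\one{Z}_c$. This is the defining property of a \ac{VD}-cocartesian cell, so the unit cell is \ac{VD}-cocartesian. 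The main obstacle is precisely the bookkeeping in this last step: one must check that the forced duplication of the basepoint $\ast$ produced by inserting $Y\Unit_c$ introduces no genuinely new indices, so that the fiberwise cocartesian bijections in $\bX$ really do glue to one bijection in $\Mat[\bX]$; it is the singleton nature of $\zero{Y}_c$ that makes this gluing work.
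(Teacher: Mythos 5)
Your proof is correct and follows essentially the same route as the paper: identify the unit cell of $\one{Z}_c$ as the image under $Y$ of the cocartesian (hence \ac{VD}-cocartesian) cell exhibiting $\Unit_c$ as a loose unit in $\bX$, and check that $Y$ preserves its \ac{VD}-cocartesianness. The paper dismisses the preservation step with ``we see at once,'' whereas you spell out the index-set bookkeeping coming from $\zero{Y}_c$ being a singleton; that is exactly the content being elided.
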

\begin{proof}
     Let
    \begin{equation*}
        \begin{tikzcd}[tri]
            & c\ar[dl,equal]\ar[dr,equal] & \\
            c\lar[rr,"\Unit_c"'] &{}& c
            \cellsymb(\gamma){1-2}{2-2}
        \end{tikzcd}\incat{\bX}
    \end{equation*}
    be the loosewise invertible (cocartesian) cell associated with the loose unit $\Unit_c$ of $c$.
    In the diminished \ac{AVDC} $\dim{\bX}$, the cell $\gamma$ is no longer cocartesian but \ac{VD}-cocartesian.
    Moreover, we see at once that the \ac{VD}-cocartesian cell $\gamma$ is preserved by the \ac{AVD}-functor $Y\colon \dim{\bX}\to\Mat[\bX]$.
    Thus, the monoid structure of $\one{Z}_c$ is induced by the \ac{VD}-cocartesian cell $Y\gamma$.
\end{proof}

\begin{definition}
    Let $\one{A}$ be an $\bX$-enriched category.
    A \emph{preobject} in $\one{A}$ colored with $c\in\bX$ is a pair $x=(x^0,x^1)$ of an object $x^0\in\Ob\one{A}$ and a tight arrow $c\arr(x^1)\abs{x^0}$ in $\bX$.
\end{definition}

\begin{remark}
    In this terminology, we regard objects as preobjects $x$ whose underlying tight arrow $x^1$ is the identity.
    In addition, if $\bX$ has loose units, then the preobjects of an $\bX$-enriched category $\one{A}$ form the category $\Tcat\bX/\one{A}$ defined later in \cref{note:comma_category}, whose \textit{maximal} objects (\cref{def:maximal_objects}) are the same as the objects in $\one{A}$.
    The term ``preobject'' comes from this fact.
\end{remark}

We call $\one{Z}_c$ the \emph{preobject classifier} because it classifies the preobjects colored with $c$ in the following sense:
\begin{theorem}\label{thm:object_classifiers_vdc-enriched}
    Let $\bX$ be an \ac{AVDC} with loose units, and let $c\in\bX$.
    Then, there is a bijective correspondence between the $\bX$-functors $\one{Z}_c\to\one{A}$ and the preobjects in $\one{A}$ colored with $c$.
\end{theorem}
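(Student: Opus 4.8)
The plan is to unwind the definition of an $\bX$-functor $\one{Z}_c\to\one{A}$ and read off its object part as a preobject. By definition a tight arrow in $\Prof[\bX]=\Mod(\Mat[\bX])$ is a monoid homomorphism $(f^0,f^1)$ in $\Mat[\bX]$. Recall that $\one{Z}_c$ has underlying $\bX$-colored set $\zero{Y}_c=\{\ast\}$, with $\abs{\ast}=c$, and hom-matrix with single entry $\Unit_c$. Hence the morphism of families $f^0\colon\zero{Y}_c\to\Ob\one{A}$ consists of exactly an object $x^0:=f^0(\ast)\in\Ob\one{A}$ together with a tight arrow $x^1\colon c\to\abs{x^0}$ in $\bX$ --- that is, precisely a preobject in $\one{A}$ colored with $c$. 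First I would record this identification and take ``monoid homomorphism $\mapsto(x^0,x^1)$'' as the candidate bijection.

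It then remains to show that, for each preobject $(x^0,x^1)$, there is a unique cell part $f^1$ making $(f^0,f^1)$ a monoid homomorphism. Since all matrices involved are concentrated on the single index $\ast$ (resp.\ the diagonal index $x^0$), the cell $f^1$ is a single cell of $\bX$ with domain $\Unit_c$, codomain $\one{A}(x^0,x^0)$, and both legs equal to $x^1$. The key input is \cref{rem:monoid_structure_of_Zc}: the unit cell of the monoid $\one{Z}_c$ is \ac{VD}-cocartesian in $\Mat[\bX]$. Its universal property gives a bijection between cells with domain $\Unit_c$ and cells with empty (length-zero) domain sharing the same legs and codomain, realized by composing with the unit cell. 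Under this bijection, the unit homomorphism axiom --- that composing the unit cell of $\one{Z}_c$ with $f^1$ equals the identity cell $\eta_{x^0}$ of $\one{A}$ whiskered by $x^1$ --- determines a unique such $f^1$. This single step yields both the uniqueness of $f^1$ (hence injectivity of the candidate bijection) and a distinguished candidate for the inverse map.

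The main obstacle is to verify that this $f^1$, extracted solely from the unit axiom, automatically satisfies the multiplication axiom: that composing the two hom-arrows by the multiplication of $\one{Z}_c$ and then applying $f^1$ agrees with applying $f^1$ to each factor and then composing by the multiplication $\mu_{x^0,x^0,x^0}$ of $\one{A}$. Both sides are cells with domain the two-fold path $\Unit_c,\Unit_c$, so I would compare them after composing with the unit cell of $\one{Z}_c$ on the first factor; since that unit cell is \ac{VD}-cocartesian, this operation is injective on cells with such a domain. On the left, the monoid unit law of $\one{Z}_c$ collapses the unit cell against the multiplication, leaving $f^1$. On the right, the already-established unit axiom rewrites the unit cell composed with the first $f^1$ as the whiskered $\eta_{x^0}$, after which the unit law of the enriched category $\one{A}$ at $x^0$ collapses $\eta_{x^0}$ against $\mu_{x^0,x^0,x^0}$, again leaving $f^1$. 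Equality of the two composites then forces the multiplication axiom by injectivity. Hence every preobject extends uniquely to an $\bX$-functor $\one{Z}_c\to\one{A}$, and the two assignments are mutually inverse.
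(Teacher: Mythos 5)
Your proposal is correct and follows essentially the same route as the paper: identify a tight arrow $\one{Z}_c\to\one{A}$ with a monoid homomorphism $(f^0,f^1)$, observe that $f^0$ is exactly a preobject, and use \cref{rem:monoid_structure_of_Zc} (the unit cell of $\one{Z}_c$ is \ac{VD}-cocartesian) to reconstruct $f^1$ uniquely from the unit-compatibility axiom. The only difference is that you spell out the verification that the multiplication axiom then holds automatically (by cancelling against the \ac{VD}-cocartesian unit cell and invoking the unit laws of the two monoids), a step the paper asserts without detail; your argument for it is sound.
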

\begin{proof}
    By \cref{rem:monoid_structure_of_Zc}, a monoid homomorphism $\one{Z}_c\to\one{A}$ is simply a tight arrow $\zero{Y}_c\to\Ob\one{A}$ in $\Mat[\bX]$.
    Indeed, a monoid homomorphism $\one{Z}_c\arr((f^0,f^1))[][3]\one{A}$ must be compatible with units as follows:
    \begin{equation*}
        \begin{tikzcd}[tri]
            & \zero{Y}_c\ar[d,bend right=20,"f^0"{left}]\ar[d,bend left=20,"f^0"{right}] & \\
            & A^0\ar[dl,equal]\ar[dr,equal] & \\
            A^0\lar[rr,"A^1"'] &{}& A^0
            \cellsymb(\heq){1-2}{2-2}
            \cellsymb(A^e){2-2}{3-2}
        \end{tikzcd}
        =
        \begin{tikzcd}
            & \zero{Y}_c\ar[dl,equal]\ar[dr,equal] & \\
            \zero{Y}_c\ar[d,"f^0"']\lar[rr,"Y\Unit_c"'] &{}& \zero{Y}_c\ar[d,"f^0"] \\
            A^0\lar[rr,"A^1"'] && A^0
            \cellsymb(\VDcocart){1-2}{2-2}
            \cellsymb(f^1){2-1}{3-3}
        \end{tikzcd}\incat{\Mat[\bX]}.
    \end{equation*}
    Here, $\one{A}$ is regarded as a monoid $(\Ob\one{A}=A^0,A^1,A^e,A^m)$ in $\Mat[\bX]$.
    By the universal property of the \ac{VD}-cocartesian cell, $f^1$ can be reconstructed uniquely from $f^0$.
    Since the compatibility of $f^1$ with multiplications is automatically satisfied, the monoid homomorphism $(f^0,f^1)$ is the same as the tight arrow $f^0$.
    Since $f^0$ is simply a choice of a preobject in $\one{A}$ colored with $c$, this finishes the proof.
\end{proof}

\begin{theorem}\label{thm:Z_is_embedding}
    For an \ac{AVDC} $\bX$ with loose units, the \ac{AVD}-functor $Z\colon\bX\to\Prof[\bX]$ makes $\bX$ into a full sub-\ac{AVDC} of $\Prof[\bX]$.
\end{theorem}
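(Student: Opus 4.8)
The plan is to show that $Z$ is injective on objects and that it induces bijections on tight arrows, loose arrows, and cells between the objects $\one{Z}_c$; by the definition of a full sub-\ac{AVDC}, this exhibits the image of $Z$ as a full sub-\ac{AVDC} of $\Prof[\bX]$ isomorphic to $\bX$. Injectivity on objects is immediate: the underlying $\bX$-colored set of $\one{Z}_c$ is $\zero{Y}_c=\{\ast\}$ with $\abs{\ast}=c$, so $c$ is recovered as the color of the unique object, and $\one{Z}_c$ determines $c$.

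For tight arrows I would invoke \cref{thm:object_classifiers_vdc-enriched} with $\one{A}=\one{Z}_d$. Since $\Ob\one{Z}_d$ is the singleton $\{\ast\}$ with $\abs{\ast}=d$, a preobject in $\one{Z}_d$ colored with $c$ is a pair $(\ast,x^1)$ with $x^1\colon c\to d$ a tight arrow in $\bX$, i.e.\ simply a tight arrow $c\to d$. Thus $\bX$-functors $\one{Z}_c\to\one{Z}_d$ correspond bijectively to tight arrows $c\to d$ in $\bX$, and one checks from the construction of $Z$ out of $Y$ (via \cref{thm:universal_property_of_Mod}) that this bijection is exactly $f\mapsto Zf$.

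The heart of the argument is the bijection on loose arrows and cells, which I would obtain by reducing the $\Mod$-structure of $\Prof[\bX]=\Mod(\Mat[\bX])$ down to $\Mat[\bX]$, and then transporting along the full embedding $Y\colon\dim{\bX}\to\Mat[\bX]$ onto the single-object matrices $\zero{Y}_c$. The key input is \cref{rem:monoid_structure_of_Zc}: the hom loose arrow of $\one{Z}_c$ is $Y\Unit_c$, and its unit cell $\one{Z}_c^e$ is \ac{VD}-cocartesian in $\Mat[\bX]$. Given any matrix $m\colon\zero{Y}_c\larr\zero{Y}_d$ — equivalently, a loose arrow $m\colon c\larr d$ in $\bX$ — the left and right actions of $\one{Z}_c$ and $\one{Z}_d$ are then forced: the left-unit (resp.\ right-unit) law identifies each action with the unique cell whose precomposition with the \ac{VD}-cocartesian unit cell is $\veq_m$. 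Since $Zm$ already exhibits one such bimodule structure, every loose arrow $\one{Z}_c\larr\one{Z}_d$ in $\Prof[\bX]$ equals $Zm$ for a unique $m$, giving the bijection on loose arrows. The same mechanism makes the $\Mod$-cell compatibility conditions automatic: at each interior object $c_i$, the two prescribed fillings of the boundary containing $\one{Z}_{c_i}^1=\Unit_{c_i}$ both recover the underlying cell after precomposition with the \ac{VD}-cocartesian unit cell $\one{Z}_{c_i}^e$, hence coincide by its universal property. Therefore a $1$-coary cell in $\Prof[\bX]$ between images of $Z$ is the same as a cell in $\Mat[\bX]$ between single-object matrices, which by fullness of $Y$ is the same as a $1$-coary cell in $\bX$. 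For $0$-coary cells, the $\Mod$-description yields a $1$-coary cell in $\Mat[\bX]$ whose bottom boundary is $Y\Unit_d$; through $Y$ this is a $1$-coary cell in $\bX$ with bottom $\Unit_d$, which corresponds to a $0$-coary cell in $\bX$ by the cocartesian property of the loose unit $\Unit_d$. All of these bijections are realized by $Z$.

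I expect the main obstacle to be the third step, specifically verifying that the module actions are uniquely determined and that the $\Mod$-cell compatibility conditions hold automatically; this is exactly where one must exploit the \ac{VD}-cocartesian unit cells of \cref{rem:monoid_structure_of_Zc} together with their universal property. Once $Y$ is known to be a full embedding, the remaining identifications are routine bookkeeping.
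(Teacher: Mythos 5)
Your proposal is correct and follows essentially the same route as the paper: tight arrows via \cref{thm:object_classifiers_vdc-enriched}, loose arrows and cells by using the \ac{VD}-cocartesian unit cell of \cref{rem:monoid_structure_of_Zc} to show the bimodule actions and the $\Mod$-compatibility conditions are uniquely forced, reduction to $\Mat[\bX]$ along the full embedding $Y$, and the loose-unit argument for $0$-coary cells. No gaps.
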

\begin{proof}
    Let $c,d$ be objects in $\bX$.
    By \cref{thm:object_classifiers_vdc-enriched}, the $\bX$-functors $\one{Z}_c\to\one{Z}_d$ are the same as the tight arrows $c\to d$ in $\bX$.
    The same is true for loose arrows.
    Indeed, an $\bX$-profunctor $\one{Z}_c\larr((P^1,P^l,P^r))[][3.5]\one{Z}_d$ must be compatible with the unit of $\one{Z}_c$ for example:
    \begin{equation*}
        \begin{tikzcd}
            & \zero{Y}_c\ar[dl,equal]\ar[dr,equal]\lar[rr,"P^1"] && \zero{Y}_d\ar[dr,equal] & \\
            \zero{Y}_c\ar[d,equal]\lar[rr,"Y\Unit_c"'] &{}& \zero{Y}_c\lar[rr,"P^1"'] &{}& \zero{Y}_d\ar[d,equal] \\
            \zero{Y}_c\lar[rrrr,"P^1"'] &&&& \zero{Y}_d
            \cellsymb(\VDcocart){1-2}{2-2}
            \cellsymb(\veq){1-4}{2-3}
            \cellsymb(P^l){2-1}{3-5}
        \end{tikzcd}
        =
        \begin{tikzcd}
            \zero{Y}_c\ar[d,equal]\lar[r,"P^1"] & \zero{Y}_d\ar[d,equal] \\
            \zero{Y}_c\lar[r,"P^1"'] & \zero{Y}_d
            \cellsymb(\veq){1-1}{2-2}
        \end{tikzcd}\incat{\Mat[\bX]}.
    \end{equation*}
    By the universal property of the \ac{VD}-cocartesian cell, $P^l$ can be reconstructed uniquely from $P^1$, and so can $P^r$.
    Since the compatibility with the multiplications of $\one{Z}_c$ and $\one{Z}_d$ is automatically satisfied, the $\bX$-profunctor $(P^1,P^l,P^r)$ is the same as a loose arrow $P^1$.
    Since $Y\colon\dim{\bX}\to\Mat[\bX]$ is a full inclusion, the loose arrow $P^1$ is simply a loose arrow $c\larr[][1] d$ in $\bX$.

    Similarly, we can establish between $\bX$ and $\Prof[\bX]$, a bijective correspondence of 1-coary cells.
    Furthermore, since both $\bX$ and $\Prof[\bX]$ have loose units, the same is true also for 0-coary cells.
    This finishes the proof.
\end{proof}

We give a remark on idempotency of the $\Prof$-construction.
\begin{remark}
    It is known that, at the level of bicategories, the profunctor construction is idempotent up to biequivalence.
    Indeed, for any bicategory $\bi{W}$ admitting a suitable cocompleteness condition, the bicategory $\biProf[\bi{W}]$ of small $\bi{W}$-categories and $\bi{W}$-profunctors is biequivalent to the bicategory $\biProf[{(\biProf[\bi{W}])}]$ of small $(\biProf[\bi{W}])$-categories and $(\biProf[\bi{W}])$-profunctors \cite[Proposition 2.2]{CarboniKasangianWalters1987axiomatics}.
    Note that this idempotency result was later refined in \cite{GarnerShulman2016enriched}.
    However, at the level of double categories, such idempotency fails (cf.\ \cite[p.\ 7]{GarnerShulman2016enriched}).
    Indeed, there is a bicategory $\bi{B}$ with no equivalence in $\AVDC$ between $\Prof[\bi{B}]$ and $\Prof[(\Prof[\bi{B}])]$.
    Moreover, such an equivalence does still not exist even if we weaken the notion of equivalences into, at the level of objects, up to ``tightwise equivalence'' rather than up to invertible tight arrow.
    For such a counterexample, we can take $\bi{B}$ to be the trivial one, as shown in the following lemma.
\end{remark}

\begin{lemma}
    Let us consider the diminished \ac{AVDC} $\bX\coloneq\Idimdbl\zero{1}$, where $\zero{1}\coloneq\{c\}$ is the singleton.
    Then, there is no \ac{AVD}-functor $\Prof[\bX]\arr(K)\Prof[(\Prof[\bX])]$ satisfying the following:
    \begin{itemize}
        \item
            It is surjective on objects up to \emph{tightwise equivalence}, i.e., equivalence in the tight 2-categories.
        \item
            It is ``full'' on tight arrows.
            Equivalently, it induces surjections between the local classes of tight arrows.
    \end{itemize}
\end{lemma}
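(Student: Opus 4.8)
The plan is to reduce the statement to a counting argument about tightwise-equivalence classes, after identifying both sides explicitly. First I would unwind the two constructions for $\bX=\Idimdbl\zero{1}$. Because $\bX$ has a single object and is loosewise \ac{VD}-indiscrete, a matrix over $\bX$ has a unique entry and every boundary in $\Mat[\bX]$ admits a unique filler; combined with $\Tcat(\Mat[\bX])\cong\Fam(\Tcat\bX)\cong\Fam(\zero{1})\cong\Set$ (the category of large sets), this shows that $\Mat[\bX]$ is loosewise \ac{VD}-indiscrete with tight category $\Set$, i.e.\ $\Mat[\bX]\cong\Idimdbl\Set$. Passing to $\Mod$ and using that a monoid in a loosewise \ac{VD}-indiscrete \ac{AVDC} is uniquely determined by its underlying object, I obtain that $\Prof[\bX]=\Mod(\Mat[\bX])$ is loosewise \ac{AVD}-indiscrete with tight category $\Set$. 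Repeating the same analysis one level higher, an object of $\Mat[\Prof[\bX]]$ is a large set colored by objects of $\Prof[\bX]$, i.e.\ a family of large sets, so $\Mat[\Prof[\bX]]\cong\Idimdbl(\Fam\Set)$ and therefore $\Prof[(\Prof[\bX])]$ is loosewise \ac{AVD}-indiscrete with tight category $\Fam\Set$.

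The key structural consequence of loosewise \ac{AVD}-indiscreteness is that, for any two parallel tight arrows, there is a unique cell with empty top boundary and length-$0$ bottom boundary, and this cell is automatically invertible in the tight 2-category. Hence in either \ac{AVDC} two objects are tightwise equivalent exactly when there exist tight arrows between them in both directions. I would then compute these ``mutual-map'' classes. In $\Set$ there are exactly two: the class $\{\emptyset\}$ and the class of all nonempty sets (between nonempty sets constant maps exist both ways, while $\emptyset$ receives a map only from itself). In $\Fam\Set$ there are exactly three, represented by the empty family, by $(\{\ast\},\emptyset)$, and by $(\{\ast\},\{\ast\})$; the case analysis of when a morphism $(S,X)\to(T,Y)$ exists --- which reduces to finding $\phi\colon S\to T$ carrying every index with nonempty component into an index with nonempty component --- shows that these three are pairwise inequivalent.

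Finally I would derive the contradiction. Any \ac{AVD}-functor $K\colon\Prof[\bX]\to\Prof[(\Prof[\bX])]$ restricts to a functor $\Tcat K\colon\Set\to\Fam\Set$, and every functor preserves the existence of mutual maps, hence carries tightwise-equivalent objects to tightwise-equivalent objects. Consequently the objects $K\one{A}$ meet at most two of the three tightwise-equivalence classes of $\Fam\Set$, so the remaining class consists of objects tightwise equivalent to no $K\one{A}$. This already contradicts surjectivity on objects up to tightwise equivalence, so no such $K$ exists; notably, the fullness hypothesis on tight arrows is not even needed.

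The main obstacle is the bookkeeping of the first step: recognizing that both iterated profunctor constructions collapse to loosewise \ac{AVD}-indiscrete \acp{AVDC} and correctly pinning down their tight categories as $\Set$ and $\Fam\Set$. Once this identification and the reading of tightwise equivalence as mutual-map existence are in place, the computation of the equivalence classes ($2$ versus $3$) and the pigeonhole step are routine.
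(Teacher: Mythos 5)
Your proof is correct, and it sharpens the paper's argument. Both proofs ultimately rest on the same numerical fact: the specialization preorder ("a tight arrow exists from $A$ to $B$") on $\Tcat(\Prof[\bX])\cong\SET$ has two levels, while that on $\Tcat(\Prof[(\Prof[\bX])])\cong\Fam(\SET)$ has (at least) three, and tightwise equivalence coincides with mutual reachability because both \acp{AVDC} are loosewise \ac{AVD}-indiscrete, hence locally chaotic as tight 2-categories. The paper reaches the contradiction by exhibiting the explicit chain $\bar\bfempty\to\one{Z}_\bfempty\to\one{Z}_{\one{Z}_c}$ (which is exactly your list of three representatives: the empty family, $(\{\ast\},\emptyset)$, $(\{\ast\},\{\ast\})$), choosing preimages up to tightwise equivalence, and then invoking fullness on tight arrows to lift the forward maps to a strictly increasing three-term chain in $\SET$, which cannot exist. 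You instead observe that any functor $\Tcat K$ preserves mutual reachability, so its image can meet at most two of the three equivalence classes of $\Fam(\SET)$; this pigeonhole kills surjectivity-up-to-tightwise-equivalence outright and shows the fullness hypothesis is not needed at all, so your argument proves a slightly stronger statement. The price is the explicit identification $\Prof[\bX]\cong\Idbl\SET$ and $\Prof[(\Prof[\bX])]\cong\Idbl(\Fam\SET)$ up front, which the paper only needs in the weaker form $\Tcat(\Prof[\bX])\cong\SET$; your identifications are correct (matrices and modules over a loosewise indiscrete base collapse as you describe, and $\Tcat(\Mat[\bY])\cong\Fam(\Tcat\bY)$), so this is only extra bookkeeping, not a gap.
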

\begin{proof}
    Suppose that such an \ac{AVD}-functor $K$ exists.
    Let $\bfempty$ be the empty $\bX$-category.
    Consider the following three $(\Prof[\bX])$-categories and unique $(\Prof[\bX])$-functors between them:
    \begin{equation}\label{eq:sequence_in_profprof}
        \bar\bfempty \arr \one{Z}_\bfempty \arr \one{Z}_{\one{Z}_c}
        \incat{\Prof[(\Prof[\bX])]}.
    \end{equation}
    Here, $\bar\bfempty$ denotes the empty $(\Prof[\bX])$-category.
    Then, we can observe that, in the sequence \cref{eq:sequence_in_profprof}, there is no $(\Prof[\bX])$-functor in the opposite direction.
    Now, from the first condition for $K$, there are three $\bX$-categories $\one{A},\one{B},\one{C}$ and tightwise equivalences $K\one{A}\simeq\bar\bfempty, K\one{B}\simeq\one{Z}_\bfempty, K\one{C}\simeq\one{Z}_{\one{Z}_c}$ in $\Prof[(\Prof[\bX])]$.
    Then, from the second condition for $K$, we have a sequence of $\bX$-functors
    \begin{equation*}
        \one{A}\arr\one{B}\arr\one{C}
        \incat{\Prof[\bX]},
    \end{equation*}
    and there is still no $\bX$-functor in the opposite direction.
    However, since the tight category $\Tcat(\Prof[\bX])$ is isomorphic to the category of large sets, such a sequence cannot exist.
    This is a contradiction.
\end{proof}
\section{Colimits in augmented virtual double categories}\label{sec:colim}
\subsection{Cocones, modules, and modulations}
To give a notion of ``colimits'' in an \ac{AVDC}, we consider ``cocones'' for each of the three directions: left, right, and downward.
The ``cocones'' for the downward direction are called \emph{tight cocones}, and the ``cocones'' for the left and right directions are called left and right \emph{modules}, respectively.
In addition, we also consider several types of morphisms between them, called \emph{modulations}.
The terms ``module'' and ``modulation'' come from similar concepts in \cite{Pare2011yoneda}.
Although modulations can be defined in greater generality, we will only consider certain special cases, which we call types 0, 1, 2, and 3.
See \cref{rem:general_modulations} for further comments.
\begin{definition}[Tight cocones]
    Let $F\colon\bK\to\bL$ be an \ac{AVD}-functor between \acp{AVDC}.
    A \emph{tight cocone} $l$ (from $F$) consists of:
    \begin{itemize}
        \item
            an object $L\in\bL$ (the \emph{vertex} of $l$);
        \item
            for each $A\in\bK$, a tight arrow
            $
            \begin{tikzcd}[small]
                FA\ar[d,"l_A"']\\
                L
            \end{tikzcd}
            $
            in $\bL$;
        \item
            for each $A\larr(u)B$ in $\bK$, a cell
            $
            \begin{tikzcd}[tri]
                FA\ar[dr,"l_A"']\lar[rr,"Fu"] &{}& FB\ar[dl,"l_B"] \\
                & L &
                \cellsymb(l_u)[above=-2]{1-2}{2-2}
            \end{tikzcd}\incat{\bL}
            $
    \end{itemize}
    satisfying the following conditions:
    \begin{itemize}
        \item
            For any tight arrow $A\arr(f)B$ in $\bK$, $(Ff)\tcomp l_B=l_A$;
        \item
            For any cell
            \begin{equation*}
                \begin{tikzcd}[small]
                    A_0\ar[d,"f"']\lar[r,"u_1"] & A_1\lar[r,"u_2"] & \cdots\lar[r,"u_n"] & A_n\ar[d,"g"] \\
                    X\lar[rrr,phan,"v"'] &&& Y
                    \cellsymb(\alpha){1-1}{2-4}
                \end{tikzcd}\incat{\bK},
            \end{equation*}
            \begin{equation*}
                \begin{tikzcd}[scriptsize]
                    FA_0\ar[d,"Ff"']\lar[rr,path,"F\tup{u}"] && FA_n\ar[d,"Fg"] \\
                    FX\ar[dr,"l_X"']\lar[rr,phan,"Fv"] &{}& FY\ar[dl,"l_Y"] \\
                    & L &
                    \cellsymb(F\alpha){1-1}{2-3}
                    \cellsymb(l_v)[above=-3]{2-2}{3-2}
                \end{tikzcd}
                =
                \begin{tikzcd}[scriptsize]
                    FA_0\ar[dr,"l_{A_0}"']\lar[rr,path,"F\tup{u}"] &{}& FA_n\ar[dl,"l_{A_n}"] \\
                    & L &
                    \cellsymb(l_{\tup{u}})[above=-3]{1-2}{2-2}
                \end{tikzcd}\incat{\bL}.
            \end{equation*}
            Here $l_\tup{u}$ denotes the composite of the following cells:
            \begin{equation*}
                \begin{tikzcd}
                    FA_0\ar[drr,"l_{A_0}"']\lar[r,"Fu_1"] & FA_1\ar[dr,"l_{A_1}"{description}]\lar[r,"Fu_2"] & \cdots\lar[r,"Fu_{n-1}"] & FA_{n-1}\ar[dl,"l_{A_{n-1}}"'{description}]\lar[r,"Fu_n"] & FA_n\ar[dll,"l_{A_n}"] \\[10pt]
                    && L &&
                    \cellsymb(l_{u_1})[above=2]{1-1}{2-3}
                    \cellsymb(\cdots)[above=2]{1-3}{2-3}
                    \cellsymb(l_{u_n})[above=2]{1-5}{2-3}
                \end{tikzcd}\incat{\bL}.
            \end{equation*}
            When $\tup{u}$ is of length 0, the cell $l_\tup{u}$ is defined to be the tight identity.\qedhere
    \end{itemize}
\end{definition}

\begin{definition}
    A tight cocone $l$ is called \emph{strong} if $l_u$ is cartesian for any loose arrow $u$.
\end{definition}

\begin{remark}\label{rem:cocone_grandis_pare}
    Let $\bK$ be a diminished \ac{AVDC} with loose \ac{VD}-composites, and let $\bL$ be an \ac{AVDC} with loose composites.
    Then, both $\bK$ and $\bL$ can be regarded as pseudo double categories, and in fact, an \ac{AVD}-functor $F\colon\bK\to\bL$ coincides with a lax double functor between them, as well as described in \cref{rem:pseudo_double_category}.
    Then, a tight cocone from $F$ is the same thing as a \textit{(strict) cocone} of $F$ in the sense of \cite[5.2.1]{Grandis2020higher}, whose tightwise dual notion is originally introduced in \cite[4.1]{GrandisPare1999limits}.
\end{remark}

\begin{definition}[Left/right modules]\label{def:left_and_right_modules}
    Let $F\colon\bK\to\bL$ be an \ac{AVD}-functor between \acp{AVDC}.
    A \emph{left $F$-module} $m$ consists of:
    \begin{itemize}
        \item
            an object $M\in\bL$ (the \emph{vertex} of $m$);
        \item
            for each $A\in\bK$, a loose arrow $FA\larr(m_A)M$ in $\bL$;
        \item
            for each $A\arr(f)B$ in $\bK$, a cartesian cell
            \begin{equation*}
                \begin{tikzcd}
                    FA\ar[d,"Ff"']\lar[r,"m_A"] & M\ar[d,equal] \\
                    FB\lar[r,"m_B"'] & M
                    \cellsymb(m_f\colon\cart){1-1}{2-2}
                \end{tikzcd}\incat{\bL};
            \end{equation*}
        \item
            for each $A\larr(u)B$ in $\bK$,  a cell
            \begin{equation*}
                \begin{tikzcd}
                    FA\ar[d,equal]\lar[r,"Fu"] & FB\lar[r,"m_B"] & M\ar[d,equal] \\
                    FA\lar[rr,"m_A"'] & & M
                    \cellsymb(m_u){1-1}{2-3}
                \end{tikzcd}\incat{\bL}
            \end{equation*}
    \end{itemize}
    satisfying the following conditions:
    \begin{itemize}
        \item
            For any $A\arr(f)B\arr(g)C$ in $\bK$,
            \begin{equation*}
                \begin{tikzcd}
                    FA\ar[d,"Ff"']\lar[r,"m_A"] & M\ar[d,equal] \\
                    FB\ar[d,"Fg"']\lar[r,"m_B"] & M\ar[d,equal] \\
                    FC\lar[r,"m_C"'] & M
                    \cellsymb(m_f){1-1}{2-2}
                    \cellsymb(m_g){2-1}{3-2}
                \end{tikzcd}
                =
                \begin{tikzcd}
                    FA\ar[d,"Ff"']\lar[r,"m_A"] & M\ar[dd,equal] \\
                    FB\ar[d,"Fg"'] & \\
                    FC\lar[r,"m_C"'] & M
                    \cellsymb(m_{f\tcomp g}){1-1}{3-2}
                \end{tikzcd}\incat{\bL}.
            \end{equation*}
        \item
            For any $A\in\bK$,
            \begin{equation*}
                \begin{tikzcd}
                    FA\ar[d,equal,"F\id_A"']\lar[r,"m_A"] & M\ar[d,equal] \\
                    FA\lar[r,"m_A"'] & M
                    \cellsymb(m_{\id_A}){1-1}{2-2}
                \end{tikzcd}
                =
                \begin{tikzcd}
                    FA\ar[d,equal]\lar[r,"m_A"] & M\ar[d,equal] \\
                    FA\lar[r,"m_A"'] & M
                    \cellsymb(\veq){1-1}{2-2}
                \end{tikzcd}\incat{\bL}.
            \end{equation*}
        \item
            For any cell
            \begin{equation*}
                \begin{tikzcd}[small]
                    A_0\ar[d,"f"']\lar[r,"u_1"] & A_1\lar[r,"u_2"] & \cdots\lar[r,"u_n"] & A_n\ar[d,"g"] \\
                    X\lar[rrr,phan,"v"'] &&& Y
                    \cellsymb(\alpha){1-1}{2-4}
                \end{tikzcd}\incat{\bK},
            \end{equation*}
            \begin{equation*}
                \begin{tikzcd}
                    FA_0\ar[d,"Ff"']\lar[r,path,"F\tup{u}"] & FA_n\ar[d,"Fg"']\lar[r,"m_{A_n}"] & M\ar[d,equal] \\
                    FX\ar[d,equal]\lar[r,phan,"Fv"'] & FY\lar[r,"m_Y"'] & M\ar[d,equal] \\
                    FX\lar[rr,"m_X"'] & & M
                    \cellsymb(F\alpha){1-1}{2-2}
                    \cellsymb(m_g){1-2}{2-3}
                    \cellsymb(m_v){2-1}{3-3}
                \end{tikzcd}
                =
                \begin{tikzcd}
                    FA_0\ar[d,equal]\lar[r,path,"F\tup{u}"] & FA_n\lar[r,"m_{A_n}"] & M\ar[d,equal] \\
                    FA_0\ar[d,"Ff"']\lar[rr,"m_{A_0}"'] & & M\ar[d,equal] \\
                    FX\lar[rr,"m_X"'] & & M
                    \cellsymb(m_{\tup{u}}){1-1}{2-3}
                    \cellsymb(m_f){2-1}{3-3}
                \end{tikzcd}\incat{\bL}.
            \end{equation*}
            Here, $m_{\tup{u}}$ denotes the composite of the following cells:
            \begin{equation*}
                \begin{tikzcd}
                    FA_0\ar[d,equal]\lar[r,"Fu_1"] & FA_1\ar[d,equal]\lar[r,"Fu_2"] & \cdots\lar[r,"Fu_{n-1}"] & FA_{n-1}\ar[d,equal]\lar[r,"Fu_n"] & FA_n\lar[r,"m_{A_n}"] & M\ar[d,equal] \\
                    FA_0\ar[d,equal]\lar[r,"Fu_1"'] & FA_1\ar[d,equal]\lar[r,"Fu_2"'] & \cdots\lar[r,"Fu_{n-1}"'] & FA_{n-1}\lar[rr,"m_{A_{n-1}}"'] & & M\ar[d,equal] \\
                    \vdots\ar[d,equal] & \vdots\ar[d,equal] & & & & \vdots\ar[d,equal] \\
                    FA_0\ar[d,equal]\lar[r,"Fu_1"] & FA_1\lar[rrrr,"m_{A_1}"] & & & & M\ar[d,equal] \\
                    FA_0\lar[rrrrr,"m_{A_0}"'] & & & & & M
                    \cellsymb(\veq){1-1}{2-2}
                    \cellsymb(\veq){1-2}{2-3}
                    \cellsymb(\cdots){1-3}{2-3}
                    \cellsymb(\veq){2-3}{1-4}
                    \cellsymb(m_{u_n}){1-4}{2-6}
                    \cellsymb(m_{u_1}){4-1}{5-6}
                \end{tikzcd}\incat{\bL}.
            \end{equation*}
            When $\tup{u}$ (resp.\ $v$) is of length 0, the cell $m_\tup{u}$ (resp.\ $m_v$) is defined to be the loose identity.
    \end{itemize}
    Moreover, \emph{right $F$-modules} are also defined as the loosewise dual of the left $F$-modules.
\end{definition}

\begin{remark}
    The most significant difference between our definition of modules and that of \cite[3.2.\ Definition]{Pare2011yoneda} lies in the requirement of the cells $m_f$ being cartesian.
    This requirement is necessary in order for the axioms \ref{axiom:loose_esssurj_left} and \ref{axiom:loose_esssurj_right}, introduced later in the definition of versatile colimits, to be meaningful.
    Indeed, the modules constructed in \cref{const:canonical_functor_loose_to_mdl} automatically satisfy this cartesianness condition by the Pasting Lemma (\cref{prop:pasting_lemma_cartesian}).
\end{remark}

\begin{notation}
    A tight cocone from $F$ with a vertex $L$ is denoted by a double arrow $F\arr[Rightarrow][1]L$.
    A left (resp.\ right) $F$-module with a vertex $M$ is denoted by a slashed double arrow $F\larr[Rightarrow]M$ (resp.\ $M\larr[Rightarrow]F$).
\end{notation}

\begin{definition}[Modulations of type 0]
    Let $F\colon\bK\to\bL$ be an \ac{AVD}-functor between \acp{AVDC}.
    Let $m,m'$ be left $F$-modules whose vertices are $M,M'\in\bL$, respectively.
    Consider\linebreak \mbox{$M\larr(\tup{p})[path] M''\arr(j) M'$} in $\bL$.
    A \emph{modulation (of type 0)} $\rho$, denoted by
    \begin{equation}\label{eq:modulation_of_type0}
        \begin{tikzcd}
            F\ar[d,equal]\lar[r,"m",Rightarrow] & M\lar[r,path,"\tup{p}"] & M''\ar[d,"j"] \\
            F\lar[rr,"m'"',Rightarrow] & & M'
            \cellsymb(\rho){1-1}{2-3}
        \end{tikzcd}
    \end{equation}
    consists of:
    \begin{itemize}
        \item
            for each $A\in\bK$, a cell
            \begin{equation*}
                \begin{tikzcd}
                    FA\ar[d,equal]\lar[r,"m_A"] & M\lar[r,path,"\tup{p}"] & M''\ar[d,"j"] \\
                    FA\lar[rr,"m'_A"'] & & M'
                    \cellsymb(\rho_A){1-1}{2-3}
                \end{tikzcd}\incat{\bL}
            \end{equation*}
    \end{itemize}
    satisfying the following conditions:
    \begin{itemize}
        \item
            For any $A\arr(f)B$ in $\bK$,
            \begin{equation*}
                \begin{tikzcd}
                    FA\ar[d,"Ff"']\lar[r,"m_A"] & M\ar[d,equal]\lar[r,path,"\tup{p}"] & M''\ar[d,equal] \\
                    FB\ar[d,equal]\lar[r,"m_B"] & M\lar[r,path,"\tup{p}"] & M''\ar[d,"j"] \\
                    FB\lar[rr,"m'_B"'] && M'
                    \cellsymb(m_f){1-1}{2-2}
                    \cellsymb(\veq){1-2}{2-3}
                    \cellsymb(\rho_B){2-1}{3-3}
                \end{tikzcd}
                =
                \begin{tikzcd}
                    FA\ar[d,equal]\lar[r,"m_A"] & M\lar[r,path,"\tup{p}"] & M''\ar[d,"j"] \\
                    FA\ar[d,"Ff"']\lar[rr,"m'_A"] && M'\ar[d,equal] \\
                    FB\lar[rr,"m'_B"'] && M'
                    \cellsymb(\rho_A){1-1}{2-3}
                    \cellsymb(m'_f){2-1}{3-3}
                \end{tikzcd}\incat{\bL}.
            \end{equation*}
        \item
            For any $A\larr(u)B$ in $\bK$,
            \begin{equation*}
                \begin{tikzcd}
                    FA\ar[d,equal]\lar[r,"Fu"] & FB\lar[r,"m_B"] & M\ar[d,equal]\lar[r,path,"\tup{p}"] & M''\ar[d,equal] \\
                    FA\ar[d,equal]\lar[rr,"m_A"'] & & M\lar[r,path,"\tup{p}"] & M''\ar[d,"j"] \\
                    FA\lar[rrr,"m'_A"'] & & & M'
                    \cellsymb(m_u){1-1}{2-3}
                    \cellsymb(\veq){1-3}{2-4}
                    \cellsymb(\rho_A){2-1}{3-4}
                \end{tikzcd}
                =
                \begin{tikzcd}
                    FA\ar[d,equal]\lar[r,"Fu"] & FB\ar[d,equal]\lar[r,"m_B"] & M\lar[r,path,"\tup{p}"] & M''\ar[d,"j"] \\
                    FA\ar[d,equal]\lar[r,"Fu"'] & FB\lar[rr,"m'_B"'] & & M'\ar[d,equal] \\
                    FA\lar[rrr,"m'_A"'] & & & M'
                    \cellsymb(\veq){1-1}{2-2}
                    \cellsymb(\rho_B){1-2}{2-4}
                    \cellsymb(m'_u){2-1}{3-4}
                \end{tikzcd}\incat{\bL}.
            \end{equation*}
    \end{itemize}
\end{definition}

\begin{notation}
    For a functor $F\colon\bK\to\bL$ between \acp{AVDC} and $M\in\bL$, let $\Mdl{F}{M}$ denote the category of left $F$-modules with the vertex $M$.
    A morphism $m\to m'$ in $\Mdl{F}{M}$ is defined as a modulation of type 0 such that $\tup{p}$ is of length 0 and $j$ is the identity in \cref{eq:modulation_of_type0}.
    Similarly, we write $\Mdl{M}{F}$ for the category of right $F$-modules with the vertex $M$.
\end{notation}

\begin{remark}
    A modulation (of type 0) $\rho\colon m\to m'$ in $\Mdl{F}{M}$ is called \emph{invertible} if every component $\rho_A$ is loosewise invertible.
    Such a modulation (of type 0) is the same as an isomorphism in $\Mdl{F}{M}$.
\end{remark}

\begin{definition}[Modulations of type 1]\label{def:modulation_of_type1}
    Let $F\colon\bK\to\bL$ be an \ac{AVD}-functor between \acp{AVDC}.
    Let $F\arr(l)[Rightarrow]L\in\bL$ be a tight cocone and let $F\larr(m)[Rightarrow]M\in\bL$ be a left $F$-module.
    Consider $M\larr(\tup{p})[path]M'$, $M'\arr(j)L'$, and $L\larr(q)[phan]L'$ in $\bL$.
    A \emph{modulation (of type 1)} $\sigma$, denoted by
    \begin{equation*}
        \begin{tikzcd}
            F\ar[d,Rightarrow,"l"']\lar[r,Rightarrow,"m"] & M\lar[r,path,"\tup{p}"] & M'\ar[d,"j"] \\
            L\lar[rr,phan,"q"'] & & L'
            \cellsymb(\sigma){1-1}{2-3}
        \end{tikzcd}
    \end{equation*}
    consists of:
    \begin{itemize}
        \item
            for each $A\in\bK$, a cell
            \begin{equation*}
                \begin{tikzcd}
                    FA\ar[d,"l_A"']\lar[r,"m_A"] & M\lar[r,path,"\tup{p}"] & M'\ar[d,"j"] \\
                    L\lar[rr,phan,"q"'] & & L'
                    \cellsymb(\sigma_A){1-1}{2-3}
                \end{tikzcd}\incat{\bL}
            \end{equation*}
    \end{itemize}
    satisfying the following conditions:
    \begin{itemize}
        \item
            For any $A\arr(f)B$ in $\bK$,
            \begin{equation*}
                \begin{tikzcd}
                    FA\ar[d,"Ff"']\lar[r,"m_A"] & M\ar[d,equal]\lar[r,path,"\tup{p}"] & M'\ar[d,equal] \\
                    FB\ar[d,"l_B"']\lar[r,"m_B"'] & M\lar[r,path,"\tup{p}"'] & M'\ar[d,"j"] \\
                    L\lar[rr,phan,"q"'] & & L'
                    \cellsymb(m_f){1-1}{2-2}
                    \cellsymb(\veq){1-2}{2-3}
                    \cellsymb(\sigma_B){2-1}{3-3}
                \end{tikzcd}
                =
                \begin{tikzcd}[scriptsize]
                    FA\ar[d,"l_A"']\lar[r,"m_A"] & M\lar[r,path,"\tup{p}"] & M'\ar[d,"j"] \\
                    L\lar[rr,phan,"q"'] & & L'
                    \cellsymb(\sigma_A){1-1}{2-3}
                \end{tikzcd}\incat{\bL}.
            \end{equation*}
        \item
            For any $A\larr(u)B$ in $\bK$,
            \begin{equation*}
                \begin{tikzcd}
                    FA\ar[d,equal]\lar[r,"Fu"] & FB\lar[r,"m_B"] & M\ar[d,equal]\lar[r,path,"\tup{p}"] & M'\ar[d,equal] \\
                    FA\ar[d,"l_A"']\lar[rr,"m_A"'] & & M\lar[r,path,"\tup{p}"'] & M'\ar[d,"j"] \\
                    L\lar[rrr,phan,"q"'] & & & L'
                    \cellsymb(m_u){1-1}{2-3}
                    \cellsymb(\veq){1-3}{2-4}
                    \cellsymb(\sigma_A){2-1}{3-4}
                \end{tikzcd}
                =
                \begin{tikzcd}
                    FA\ar[d,"l_A"']\lar[r,"Fu"] & FB\lar[r,"m_B"]\ar[dl,"l_B"{right=3}] & M\lar[r,path,"\tup{p}"] & M'\ar[d,"j"] \\
                    L\lar[rrr,phan,"q"'] & & & L'
                    \cellsymb(l_u)[above left]{1-2}{2-1}
                    \cellsymb(\sigma_B)[left=0.2]{1-2}{2-4}
                \end{tikzcd}\incat{\bL}.
            \end{equation*}
    \end{itemize}
\end{definition}

\begin{remark}
    Suppose that, in the situation of \cref{def:modulation_of_type1}, we are alternatively given a right $F$-module $M\larr(m)[Rightarrow][1.5]F$, loose paths $M'\larr(\tup{p})[path][1.3]M$ and $L'\larr(q)[phan][1.3]L$ in $\bL$.
    Then, we can also define the loosewise dual concept, which is called modulations of type 1 as well and is denoted by
    \begin{equation*}
        \begin{tikzcd}
            M'\ar[d,"j"']\lar[r,path,"\tup{p}"] & M\lar[r,Rightarrow,"m"] & F\ar[d,Rightarrow,"l"] \\
            L'\lar[rr,phan,"q"'] && L
            \cellsymb(\sigma){1-1}{2-3}
        \end{tikzcd}
    \end{equation*}
\end{remark}

\begin{definition}[Modulations of type 2]
    Let $F\colon\bK\to\bL$ be an \ac{AVD}-functor between \acp{AVDC}.
    Let $F\arr(l)[Rightarrow]L\in\bL$ and $F\arr(l')[Rightarrow]L'\in\bL$ be tight cocones.
    Consider $L\larr(q)[phan]L'$ in $\bL$.
    A \emph{modulation (of type 2)} $\tau$, denoted by
    \begin{equation}\label{eq:modulation_of_type2}
        \begin{tikzcd}[tri]
            & F\ar[dl,"l"',Rightarrow]\ar[dr,"l'",Rightarrow] & \\
            L\lar[rr,phan,"q"'] & {} & L'
            \cellsymb(\tau)[above=-5]{1-2}{2-2}
        \end{tikzcd}
    \end{equation}
    consists of:
    \begin{itemize}
        \item
            for each $A\in\bK$, a cell
            \begin{equation*}
                \begin{tikzcd}[tri]
                    & FA\ar[dl,"l_A"']\ar[dr,"l'_A"] & \\
                    L\lar[rr,phan,"q"'] & {} & L'
                    \cellsymb(\tau_A)[above=-5]{1-2}{2-2}
                \end{tikzcd}\incat{\bL}
            \end{equation*}
    \end{itemize}
    satisfying the following conditions:
    \begin{itemize}
        \item
            For any $A\arr(f)B$ in $\bK$,
            \begin{equation*}
                \begin{tikzcd}[tri]
                    & FA\ar[d,"Ff"{left},bend right=20]\ar[d,"Ff"{right},bend left=20] & \\
                    & FB\ar[dl,"l_B"']\ar[dr,"l'_B"] & \\
                    L\lar[rr,phan,"q"'] &{}& L'
                    \cellsymb(\heq){1-2}{2-2}
                    \cellsymb(\tau_B){2-2}{3-2}
                \end{tikzcd}
                =
                \begin{tikzcd}[tri]
                    & FA\ar[dl,"l_A"']\ar[dr,"l'_A"] & \\
                    L\lar[rr,phan,"q"'] &{}& L'
                    \cellsymb(\tau_A){1-2}{2-2}
                \end{tikzcd}\incat{\bL}.
            \end{equation*}
        \item
            For any $A\larr(u)B$ in $\bK$,
            \begin{equation*}
                \begin{tikzcd}[large]
                    FA\ar[d,"l_A"']\ar[dr,"l'_A"{left=0.1}]\lar[r,"Fu"] & FB\ar[d,"l'_B"] \\
                    L\lar[r,phan,"q"'] & L'
                    \cellsymb(l'_u)[above right=2]{1-1}{2-2}
                    \cellsymb(\tau_A)[below left=3]{1-1}{2-2}
                \end{tikzcd}
                =
                \begin{tikzcd}[large]
                    FA\ar[d,"l_A"']\lar[r,"Fu"] & FB\ar[dl,"l_B"{right=2}]\ar[d,"l'_B"] \\
                    L\lar[r,phan,"q"'] & L'
                    \cellsymb(l_u)[above left=2]{1-2}{2-1}
                    \cellsymb(\tau_B)[below right=4]{1-2}{2-1}
                \end{tikzcd}\incat{\bL}.
            \end{equation*}
    \end{itemize}
\end{definition}

\begin{notation}
    Let $\Cone\vvect{F}{L}$ denote the category of tight cocones from $F$ with a vertex $L$.
    A morphism $l\to l'$ in $\Cone\vvect{F}{L}$ is defined as a modulation of type 2 such that $q$ is of length 0 in \cref{eq:modulation_of_type2}.
\end{notation}

\begin{definition}[Modulations of type 3]
    Let $F\colon\bK\to\bL$ be an \ac{AVD}-functor between \acp{AVDC}.
    Let $N\larr(n)[Rightarrow]F\larr(m)[Rightarrow]M$ be a right $F$-module and a left $F$-module, respectively.
    Consider $N'\larr(\tup{q})[path]N$, $M\larr(\tup{p})[path]M'$, $N'\arr(j)N''$, $M'\arr(i)M''$, and $N''\larr(r)[phan]M''$ in $\bL$.
    A \emph{modulation (of type 3)} $\omega$, denoted by
    \begin{equation*}
        \begin{tikzcd}
            N'\ar[d,"j"']\lar[r,path,"\tup{q}"] & N\lar[r,Rightarrow,"n"] & F\lar[r,Rightarrow,"m"] & M\lar[r,path,"\tup{p}"] & M'\ar[d,"i"] \\
            N''\lar[rrrr,phan,"r"'] &&&& M''
            \cellsymb(\omega){1-1}{2-5}
        \end{tikzcd}
    \end{equation*}
    consists of:
    \begin{itemize}
        \item
            for each $A\in\bK$, a cell
            \begin{equation*}
                \begin{tikzcd}
                    N'\ar[d,"j"']\lar[r,path,"\tup{q}"] & N\lar[r,"n_A"] & FA\lar[r,"m_A"] & M\lar[r,path,"\tup{p}"] & M'\ar[d,"i"] \\
                    N''\lar[rrrr,phan,"r"'] &&&& M''
                    \cellsymb(\omega_A){1-1}{2-5}
                \end{tikzcd}
            \end{equation*}
    \end{itemize}
    satisfying the following conditions:
    \begin{itemize}
        \item
            For any $A\arr(f)B$ in $\bK$,
            \begin{equation*}
                \begin{tikzcd}
                    N'\ar[d,equal]\lar[r,path,"\tup{q}"] & N\ar[d,equal]\lar[r,"n_A"] & FA\ar[d,"Ff"{right=-2}]\lar[r,"m_A"] & M\ar[d,equal]\lar[r,path,"\tup{p}"] & M'\ar[d,equal] \\
                    N'\ar[d,"j"']\lar[r,path,"\tup{q}"] & N\lar[r,"n_B"] & FB\lar[r,"m_B"] & M\lar[r,path,"\tup{p}"] & M'\ar[d,"i"] \\
                    N''\lar[rrrr,phan,"r"'] &&&& M''
                    \cellsymb(\veq){1-1}{2-2}
                    \cellsymb(n_f){1-2}{2-3}
                    \cellsymb(m_f){1-3}{2-4}
                    \cellsymb(\veq){1-4}{2-5}
                    \cellsymb(\omega_B){2-1}{3-5}
                \end{tikzcd}\quad
                =\quad\omega_A \incat{\bL}.
            \end{equation*}
        \item
            For any $A\larr(u)B$ in $\bK$,
            \begin{equation*}
                \begin{tikzcd}
                    N'\ar[d,equal]\lar[r,path,"\tup{q}"] & N\ar[d,equal]\lar[r,"n_A"] & FA\ar[d,equal]\lar[r,"Fu"] & FB\lar[r,"m_B"] & M\ar[d,equal]\lar[r,path,"\tup{p}"] & M'\ar[d,equal] \\
                    N'\ar[d,"j"']\lar[r,path,"\tup{q}"] & N\lar[r,"n_A"] & FA\lar[rr,"m_A"] & & M\lar[r,path,"\tup{p}"] & M'\ar[d,"i"] \\
                    N''\lar[rrrrr,phan,"r"'] &&&&& M''
                    \cellsymb(\veq){1-1}{2-2}
                    \cellsymb(\veq){1-2}{2-3}
                    \cellsymb(m_u){1-3}{2-5}
                    \cellsymb(\veq){1-5}{2-6}
                    \cellsymb(\omega_A){2-1}{3-6}
                \end{tikzcd}
            \end{equation*}
            \begin{equation*}
                =
                \begin{tikzcd}
                    N'\ar[d,equal]\lar[r,path,"\tup{q}"] & N\ar[d,equal]\lar[r,"n_A"] & FA\lar[r,"Fu"] & FB\ar[d,equal]\lar[r,"m_B"] & M\ar[d,equal]\lar[r,path,"\tup{p}"] & M'\ar[d,equal] \\
                    N'\ar[d,"j"']\lar[r,path,"\tup{q}"] & N\lar[rr,"n_B"] & & FB\lar[r,"m_B"] & M\lar[r,path,"\tup{p}"] & M'\ar[d,"i"] \\
                    N''\lar[rrrrr,phan,"r"'] &&&&& M''
                    \cellsymb(\veq){1-1}{2-2}
                    \cellsymb(n_u){1-2}{2-4}
                    \cellsymb(\veq){1-4}{2-5}
                    \cellsymb(\veq){1-5}{2-6}
                    \cellsymb(\omega_B){2-1}{3-6}
                \end{tikzcd}\incat{\bL}.
            \end{equation*}
    \end{itemize}
\end{definition}

\begin{construction}\label{const:canonical_functor_tight_to_cocone}
    Let $F\colon\bK\to\bL$ be an \ac{AVD}-functor between \acp{AVDC} and let $L\in\bL$.
    Let $F\arr(\xi)[Rightarrow]\Xi\in\bL$ be a tight cocone.
    For a tight arrow $\Xi\arr(k)L$ in $\bL$, we have a tight cocone $F\arr(\xi\tcomp k)[Rightarrow]L$ as follows:
    \begin{itemize}
        \item
            For any $A\in\bK$,
            \begin{equation*}
                \begin{tikzcd}[tinytri]
                    & FA\ar[dl,"\xi_A"']\ar[dd,"(\xi\tcomp k)_A"] \\
                    \Xi\ar[dr,"k"'] & {} \\
                    & L
                    \cellsymb(\heq{\vcentcolon}){2-1}{2-2}
                \end{tikzcd}\incat{\bL}.
            \end{equation*}
        \item
            For any $A\larr(u)B$ in $\bK$,
            \begin{equation*}
                \begin{tikzcd}[tri]
                    FA\ar[dr,"\xi_A"']\lar[rr,"Fu"] & {} & FB\ar[dl,"\xi_B"] \\
                    & \Xi\ar[d,"k"{left},bend right=30]\ar[d,"k"{right},bend left=30] & \\
                    & L &
                    \cellsymb(\xi_u)[above=-3]{1-2}{2-2}
                    \cellsymb(\heq){2-2}{3-2}
                \end{tikzcd}
                \eqcolon
                \begin{tikzcd}[tri]
                    FA\ar[dr,"(\xi\tcomp k)_A"']\lar[rr,"Fu"] & {} & FB\ar[dl,"(\xi\tcomp k)_B"] \\
                    & L &
                    \cellsymb((\xi\tcomp k)_u)[above=-3]{1-2}{2-2}
                \end{tikzcd}\incat{\bL}.
            \end{equation*}
    \end{itemize}
    Furthermore, the assignment $k\mapsto\xi\tcomp k$ extends to a functor $\Homcat[\bL]\vvect{\Xi}{L}\arr(\xi\tcomp -)\Cone\vvect{F}{L}$.
\end{construction}

\begin{definition}\label{def:pulling}
    A tight arrow $A\arr(f)[][1]B$ in an \ac{AVDC} is called \emph{left-pulling} if every loose arrow $B\larr(p)\cdot$ has a restriction $p(f,\id)$ along $f$:
    \begin{equation*}
        \begin{tikzcd}
            A\ar[d,"f"']\lar[r,"{p(f,\id)}"] & \cdot\ar[d,equal] \\
            B\lar[r,"p"'] & \cdot
            \cellsymb(\cart){1-1}{2-2}
        \end{tikzcd}
    \end{equation*}
    Moreover, \emph{right-pulling} tight arrows are also defined in the loosewise dual way.
    Tight arrows that are left-pulling and right-pulling are simply called \emph{pulling}.
\end{definition}

\begin{lemma}\label{lem:pulling_and_restriction}
    Suppose that we are given the following data in an \ac{AVDC} such that $f$ is left-pulling and $g$ is right pulling.
    \begin{equation*}
        \begin{tikzcd}
            A\ar[d,"f"'] & B\ar[d,"g"] \\
            X\lar[r,"u"'] & Y
        \end{tikzcd}
    \end{equation*}
    Then, the 1-coary restriction $u(f,g)$ exists.
\end{lemma}
\begin{proof}
    Since cartesianness of cells is preserved under tightwise composition (\cref{prop:pasting_lemma_cartesian}), the desired restriction can be given as $u(f,\id)(\id,g)$, or equivalently $u(\id,g)(f,\id)$.
\end{proof}

\begin{construction}\label{const:canonical_functor_loose_to_mdl}
    Let $F\colon\bK\to\bL$ be an \ac{AVD}-functor between \acp{AVDC} and let $L\in\bL$.
    Let $\xi$ be a tight cocone from $F$ with a vertex $\Xi\in\bL$.
    Assume that $\xi_A$ is left-pulling for any $A\in\bK$.
    Then, depending on a choice of cartesian cells
    \begin{equation*}
        \begin{tikzcd}
            FA\ar[d,"\xi_A"']\lar[r,"{p(\xi_A,\id)}"] & L\ar[d,equal] \\
            \Xi\lar[r,"p"'] & L
            \cellsymb(\tilde{p}_A\colon\cart){1-1}{2-2}
        \end{tikzcd}\incat{\bL}
    \end{equation*}
    for each loose arrow $p$, the following assignments yield a functor $\Homcat[\bL](\Xi,L)\arr(\comp{\xi}-)\Mdl{F}{L}$ between categories.
    \begin{itemize}
        \item
            For each $\Xi\larr(p)L$ in $\bL$, a left $F$-module $\comp{\xi}p$ with the vertex $L$ is defined as follows:
            \begin{itemize}
                \item
                    For each $A\in\bK$, $(\comp{\xi}p)_A\coloneq p(\xi_A,\id)$.
                \item
                    For each $A\arr(f)B$ in $\bK$, $(\comp{\xi}p)_f$ is a unique cell such that
                    \begin{equation*}
                        \begin{tikzcd}
                            FA\ar[d,"Ff"']\lar[r,"(\comp{\xi}p)_A"] & L\ar[d,equal] \\
                            FB\ar[d,"\xi_B"{left=-2}]\lar[r,"(\comp{\xi}p)_B"] & L\ar[d,equal] \\
                            \Xi\lar[r,"p"'] & L
                            \cellsymb((\comp{\xi}p)_f){1-1}{2-2}
                            \cellsymb(\tilde{p}_B\colon\cart){2-1}{3-2}
                        \end{tikzcd}
                        =
                        \begin{tikzcd}
                            FA\ar[d,"\xi_A"']\lar[r,"(\comp{\xi}p)_A"] & L\ar[d,equal] \\
                            \Xi\lar[r,"p"'] & L
                            \cellsymb(\tilde{p}_A\colon\cart){1-1}{2-2}
                        \end{tikzcd}\incat{\bL}.
                    \end{equation*}
                \item
                    For each $A\larr(u)B$ in $\bK$, $(\comp{\xi}p)_u$ is a unique cell such that
                    \begin{equation*}
                        \begin{tikzcd}
                            FA\ar[d,equal]\lar[r,"Fu"] & FB\lar[r,"(\comp{\xi}p)_B"] & L\ar[d,equal] \\
                            FA\ar[d,"\xi_A"']\lar[rr,"(\comp{\xi}p)_A"] & & L\ar[d,equal] \\
                            \Xi\lar[rr,"p"'] & & L
                            \cellsymb((\comp{\xi}p)_u){1-1}{2-3}
                            \cellsymb(\tilde{p}_A\colon\cart){2-1}{3-3}
                        \end{tikzcd}
                        =
                        \begin{tikzcd}
                            FA\ar[d,"\xi_A"']\lar[r,"Fu"] & FB\ar[dl,"\xi_B"{below right=-2}]\lar[r,"(\comp{\xi}p)_B"] & L\ar[d,equal] \\
                            \Xi\lar[rr,"p"'] & & L
                            \cellsymb(\xi_u)[above left]{1-2}{2-1}
                            \cellsymb(\tilde{p}_B\colon\cart)[left=-5]{1-2}{2-3}
                        \end{tikzcd}\incat{\bL}.
                    \end{equation*}
            \end{itemize}
        \item
            For each cell
            \begin{equation*}
                \begin{tikzcd}
                    \Xi\ar[d,equal]\lar[r,"p"] & L\ar[d,equal] \\
                    \Xi\lar[r,"q"'] & L
                    \cellsymb(\delta){1-1}{2-2}
                \end{tikzcd}\incat{\bL},
            \end{equation*}
            a modulation $\comp{\xi}\delta\colon\comp{\xi}p\to\comp{\xi}q$ is defined as follows:
            \begin{itemize}
                \item
                    For each $A\in\bK$, $(\comp{\xi}\delta)_A$ is a unique cell such that
                    \begin{equation*}
                        \begin{tikzcd}
                            FA\ar[d,equal]\lar[r,"(\comp{\xi}p)_A"] & L\ar[d,equal] \\
                            FA\ar[d,"\xi_A"']\lar[r,"(\comp{\xi}q)_A"] & L\ar[d,equal] \\
                            \Xi\lar[r,"q"'] & L
                            \cellsymb((\comp{\xi}\delta)_A){1-1}{2-2}
                            \cellsymb(\tilde{q}_A\colon\cart){2-1}{3-2}
                        \end{tikzcd}
                        =
                        \begin{tikzcd}
                            FA\ar[d,"\xi_A"']\lar[r,"(\comp{\xi}p)_A"] & L\ar[d,equal] \\
                            \Xi\ar[d,equal]\lar[r,"p"] & L\ar[d,equal] \\
                            \Xi\lar[r,"q"'] & L
                            \cellsymb(\tilde{p}_A\colon\cart){1-1}{2-2}
                            \cellsymb(\delta){2-1}{3-2}
                        \end{tikzcd}\incat{\bL}.
                    \end{equation*}
            \end{itemize}
    \end{itemize}
\end{construction}

\begin{notation}\label{note:canonical_modulation_of_type1}
    In \cref{const:canonical_functor_loose_to_mdl}, the cartesian cells $(\tilde{p}_A)_{A\in\bK}$ yield a modulation of type 1 below.
    We write $\compcell{\xi}p$ for such a modulation.
    \begin{equation*}
        \begin{tikzcd}
            F\ar[d,"\xi"',Rightarrow]\lar[r,"\comp{\xi}p",Rightarrow] & L\ar[d,equal] \\
            \Xi\lar[r,"p"'] & L
            \cellsymb(\compcell{\xi}p){1-1}{2-2}
        \end{tikzcd}
    \end{equation*}
\end{notation}

\begin{remark}\label{rem:companion_of_tight_cocone}
    By an argument similar to \cref{const:canonical_functor_loose_to_mdl}, we can show that every tight cocone $F\arr(l)[Rightarrow]L$ induces a left $F$-module $F\larr(\comp{l})[Rightarrow]L$ whenever the companions $\comp{l_A}$ $(A\in\bK)$ exist.
\end{remark}

\begin{notation}
    In \cref{const:canonical_functor_loose_to_mdl}, if we alternatively assume that $\xi_A$ is right-pulling for any $A\in\bK$, then we can construct in the same way a functor $\Homcat[\bL](L,\Xi)\arr(-\conj{\xi})\Mdl{L}{F}$, which sends $q$ to a right $F$-module $q\conj{\xi}$.
    As well as \cref{note:canonical_modulation_of_type1}, we can get a modulation of type 1, denoted by $q\conjcell{\xi}$, of the following form:
    \begin{equation*}
        \begin{tikzcd}
            L\ar[d,equal]\lar[r,"q\conj{\xi}",Rightarrow] & F\ar[d,"\xi",Rightarrow] \\
            L\lar[r,"q"'] & \Xi
            \cellsymb(q\conjcell{\xi}){1-1}{2-2}
        \end{tikzcd}
    \end{equation*}
\end{notation}

\begin{remark}\label{rem:general_modulations}
    We have defined the modulations of the following types:
    \begin{equation*}
        \begin{tikzcd}
            \cdot\ar[d]\lar[r,path] & \cdot\lar[r,Rightarrow] & F\ar[d,equal] \\
            \cdot\lar[rr,Rightarrow] && F
            \cellsymb(\text{type 0}){1-1}{2-3}
        \end{tikzcd}
        \begin{tikzcd}
            F\ar[d,equal]\lar[r,Rightarrow] & \cdot\lar[r,path] & \cdot\ar[d] \\
            F\lar[rr,Rightarrow] && \cdot
            \cellsymb(\text{type 0}){1-1}{2-3}
        \end{tikzcd}
        \quad
        \begin{tikzcd}
            \cdot\ar[d]\lar[r,path] & \cdot\lar[r,Rightarrow] & F\ar[d,Rightarrow] \\
            \cdot\lar[rr,phan] && \cdot
            \cellsymb(\text{type 1}){1-1}{2-3}
        \end{tikzcd}
        \begin{tikzcd}
            F\ar[d,Rightarrow]\lar[r,Rightarrow] & \cdot\lar[r,path] & \cdot\ar[d] \\
            \cdot\lar[rr,phan] & & \cdot
            \cellsymb(\text{type 1}){1-1}{2-3}
        \end{tikzcd}
    \end{equation*}
    \begin{equation*}
        \begin{tikzcd}[scriptsizecolumn]
            & F\ar[dl,Rightarrow]\ar[dr,Rightarrow] & \\
            \cdot\lar[rr,phan] &{}& \cdot
            \cellsymb(\text{type 2})[below=-3]{1-2}{2-2}
        \end{tikzcd}
        \quad
        \begin{tikzcd}[scriptsizecolumn]
            \cdot\ar[d]\lar[r,path] & \cdot\lar[r,Rightarrow] & F\lar[r,Rightarrow] & \cdot\lar[r,path] & \cdot\ar[d] \\
            \cdot\lar[rrrr,phan] &&&& \cdot
            \cellsymb(\text{type 3}){1-1}{2-5}
        \end{tikzcd}
    \end{equation*}
    We may consider another type of ``modulation.''
    For example:
    \begin{equation*}
        \begin{tikzcd}[tri]
            & F\ar[dl,equal]\ar[dr,Rightarrow] & \\
            F\lar[rr,Rightarrow] &{}& \cdot
        \end{tikzcd}
    \end{equation*}
    However, for the purpose of defining the notion of \emph{versatile colimits} in the next subsection, no further types are required, and it would be difficult to give a fully general definition of modulations, for technical reasons described below.

    In order to consider general boundaries for modulations, one would first need to generalize the notion of vertices of cocones or modules from objects to \ac{AVD}-functors, which yields notions of \textit{multicocones} and \textit{bimodules}.
    In the author's view, multicocones should be defined by using a notion of ``tight \ac{AVD}-profunctors,'' while bimodules should be defined by using ``loose \ac{AVD}-profunctors.''
    Then, it seems most natural to define general modulations as ``3-cells'' in the ``(augmented) virtual triple category'' of \acp{AVDC}, \ac{AVD}-functors, tight \ac{AVD}-profunctors, and loose \ac{AVD}-profunctors.
    However, developing the theory of such a virtual triple category would be highly nontrivial and would exceed the scope of the paper.
\end{remark}
\subsection{Versatile colimits}
In this subsection, we fix an \ac{AVD}-functor $F\colon\bK\to\bL$ between \acp{AVDC} and a tight cocone $\xi$ from $F$ with a vertex $\Xi\in\bL$.
\begin{definition}
    We consider the following conditions for $\xi$:
    \begin{itemize}
        \labeleditem{(T)}\label{axiom:tight_bij}
            The canonical functor $\Homcat[\bL]\vvect{\Xi}{L}\arr(\xi\tcomp-)\Cone\vvect{F}{L}$ of \cref{const:canonical_functor_tight_to_cocone} is bijective on objects for any $L\in\bL$.
        \labeleditem{(L-l)}\label{axiom:loose_esssurj_left}
            $\xi_A$ is left-pulling for any $A\in\bK$, and the canonical functor $\Homcat[\bL](\Xi,L)\arr(\comp{\xi}-)\Mdl{F}{L}$ of \cref{const:canonical_functor_loose_to_mdl} is essentially surjective for any $L\in\bL$.
        \labeleditem{(L-r)}\label{axiom:loose_esssurj_right}
            The loosewise dual of \ref{axiom:loose_esssurj_left} holds.
        \labeleditem{(M0-l)}\label{axiom:cell_type0_left}
            $\xi_A$ is left-pulling for any $A\in\bK$, and the following holds.
            Take objects $M,M'\in\bL$ and $\Xi\larr(p)[][1]M,\Xi\larr(p')[][1]M'$ in $\bL$ arbitrarily.
            Then, for any modulation $\rho$ of type 0
            \begin{equation*}
                \begin{tikzcd}
                    F\ar[d,equal]\lar[r,"\comp{\xi}p",Rightarrow] & M\lar[r,path,"\tup{q}"] & M''\ar[d,"j"] \\
                    F\lar[rr,"\comp{\xi}p'"',Rightarrow] & & M',
                    \cellsymb(\rho){1-1}{2-3}
                \end{tikzcd}
            \end{equation*} 
            there exists a unique cell $\hat{\rho}$ such that for any $A\in\bK$,
            \begin{equation*}
                \begin{tikzcd}
                    FA\ar[d,equal]\lar[r,"(\comp{\xi}p)_A"] & M\lar[r,path,"\tup{q}"] & M''\ar[d,"j"] \\
                    FA\ar[d,"\xi_A"']\lar[rr,"(\comp{\xi}p')_A"'] & & M'\ar[d,equal] \\
                    \Xi\lar[rr,"p'"'] & & M'
                    \cellsymb(\rho_A){1-1}{2-3}
                    \cellsymb((\compcell{\xi}p')_A\colon\cart){2-1}{3-3}
                \end{tikzcd}
                =
                \begin{tikzcd}[hugecolumn]
                    FA\ar[d,"\xi_A"']\lar[r,"(\comp{\xi}p)_A"] & M\ar[d,equal]\lar[r,path,"\tup{q}"] & M''\ar[d,equal] \\
                    \Xi\ar[d,equal]\lar[r,"p"'] & M\lar[r,path,"\tup{q}"] & M''\ar[d,"j"] \\
                    \Xi\lar[rr,"p'"'] & & M'
                    \cellsymb((\compcell{\xi}p)_A\colon\cart){1-1}{2-2}
                    \cellsymb(\veq){1-2}{2-3}
                    \cellsymb(\hat{\rho}){2-1}{3-3}
                \end{tikzcd}\incat{\bL}.
            \end{equation*}
        \labeleditem{(M0-r)}\label{axiom:cell_type0_right}
            The loosewise dual of \ref{axiom:cell_type0_left} holds.
        \labeleditem{(M1-l)}\label{axiom:cell_type1_left}
            $\xi_A$ is left-pulling for any $A\in\bK$, and the following holds.
            Take objects $L,M\in\bL$ and $\Xi\arr(k)[][1]L,\Xi\larr(p)[][1]M$ in $\bL$ arbitrarily.
            Then, for any modulation $\sigma$ of type 1
            \begin{equation*}
                \begin{tikzcd}
                    F\ar[d,Rightarrow,"\xi\tcomp k"']\lar[r,Rightarrow,"\comp{\xi}p"] & M\lar[r,path,"\tup{q}"] & M'\ar[d,"j"] \\
                    L\lar[rr,phan,"r"'] & & L',
                    \cellsymb(\sigma){1-1}{2-3}
                \end{tikzcd}
            \end{equation*}
            there exists a unique cell $\hat{\sigma}$ such that for any $A\in\bK$,
            \begin{equation*}
                \begin{tikzcd}
                    FA\ar[d,"(\xi\tcomp k)_A"']\lar[r,"(\comp{\xi}p)_A"] & M\lar[r,path,"\tup{q}"] & M'\ar[d,"j"] \\
                    L\lar[rr,phan,"r"'] & & L'
                    \cellsymb(\sigma_A){1-1}{2-3}
                \end{tikzcd}
                =
                \begin{tikzcd}[hugecolumn]
                    FA\ar[d,"\xi_A"']\lar[r,"(\comp{\xi}p)_A"] & M\ar[d,equal]\lar[r,path,"\tup{q}"] & M'\ar[d,equal] \\
                    \Xi\ar[d,"k"']\lar[r,"p"'] & M\lar[r,path,"\tup{q}"'] & M'\ar[d,"j"] \\
                    L\lar[rr,phan,"r"'] & & L'
                    \cellsymb((\compcell{\xi}p)_A\colon\cart){1-1}{2-2}
                    \cellsymb(\veq){1-2}{2-3}
                    \cellsymb(\hat{\sigma}){2-1}{3-3}
                \end{tikzcd}\incat{\bL}.
            \end{equation*}
        \labeleditem{(M1-r)}\label{axiom:cell_type1_right}
            The loosewise dual of \ref{axiom:cell_type1_left} holds.
        \labeleditem{(M2)}\label{axiom:cell_type2}
            Take $L,L'\in\bL$ and $\Xi\arr(k)L,\Xi\arr(k')L'$ in $\bL$ arbitrarily.
            Then, for any modulation $\tau$ of type 2
            \begin{equation*}
                \begin{tikzcd}[tri]
                    & F\ar[dl,"\xi\tcomp k"',Rightarrow]\ar[dr,"\xi\tcomp k'",Rightarrow] & \\
                    L\lar[rr,phan,"q"'] & {} & L',
                    \cellsymb(\tau)[above=-5]{1-2}{2-2}
                \end{tikzcd}
            \end{equation*}
            there exists a unique cell $\hat{\tau}$ such that for any $A\in\bK$,
            \begin{equation*}
                \begin{tikzcd}[tri]
                    & FA\ar[dl,"(\xi\tcomp k)_A"']\ar[dr,"(\xi\tcomp k')_A"] & \\
                    L\lar[rr,phan,"q"'] & {} & L'
                    \cellsymb(\tau_A)[above=-5]{1-2}{2-2}
                \end{tikzcd}
                =
                \begin{tikzcd}[tri]
                    & FA\ar[d,"\xi_A"{left},bend right=20]\ar[d,"\xi_A"{right},bend left=20] & \\
                    & \Xi\ar[dl,"k"']\ar[dr,"k'"] & \\
                    L\lar[rr,phan,"q"'] & {} & L'
                    \cellsymb(\heq){1-2}{2-2}
                    \cellsymb(\hat{\tau})[above=-5]{2-2}{3-2}
                \end{tikzcd}\incat{\bL}.
            \end{equation*}
        \labeleditem{(M3)}\label{axiom:cell_type3}
            $\xi_A$ is pulling for any $A\in\bK$, and the following holds.
            Take $N,M\in\bL$ and\linebreak $N\larr(t)\Xi\larr(s)M$ in $\bL$ arbitrarily.
            Then, for any modulation $\omega$ of type 3
            \begin{equation*}
                \begin{tikzcd}
                    N'\ar[d,"j"']\lar[r,path,"\tup{q}"] & N\lar[r,Rightarrow,"t\conj{\xi}"] & F\lar[r,Rightarrow,"\comp{\xi}s"] & M\lar[r,path,"\tup{p}"] & M'\ar[d,"i"] \\
                    N''\lar[rrrr,phan,"r"'] &&&& M'',
                    \cellsymb(\omega){1-1}{2-5}
                \end{tikzcd}
            \end{equation*}
            there exists a unique cell $\hat{\omega}$ such that for any $A\in\bK$,
            \begin{equation*}
                \omega_A
                =
                \begin{tikzcd}[hugecolumn]
                    N'\ar[d,equal]\lar[r,path,"\tup{q}"] & N\ar[d,equal]\lar[r,"(t\conj{\xi})_A"] &[5pt] FA\ar[d,"\xi_A"{description}]\lar[r,"(\comp{\xi}s)_A"] &[5pt] M\ar[d,equal]\lar[r,path,"\tup{p}"] & M'\ar[d,equal] \\
                    N'\ar[d,"j"']\lar[r,path,"\tup{q}"] & N\lar[r,"t"] & \Xi\lar[r,"s"] & M\lar[r,path,"\tup{p}"] & M'\ar[d,"i"] \\
                    N''\lar[rrrr,phan,"r"'] &&&& M''
                    \cellsymb(\veq){1-1}{2-2}
                    \cellsymb((t\conjcell{\xi})_A\colon\cart){1-2}{2-3}
                    \cellsymb((\compcell{\xi}s)_A\colon\cart){1-3}{2-4}
                    \cellsymb(\veq){1-4}{2-5}
                    \cellsymb(\hat{\omega}){2-1}{3-5}
                \end{tikzcd}\incat{\bL}.
            \end{equation*}
    \end{itemize}
\end{definition}

\begin{remark}\label{rem:variation_axiom_hor_esssurj}
    The above conditions are independent of the construction of the functors $\comp{\xi}-$ and $-\conj{\xi}$.
    In particular, the condition \ref{axiom:loose_esssurj_left} can be rephrased as follows:
    \begin{itemize}[topsep=6pt]
        \labeleditem{(L-l)'}\label{axiom:loose_esssurj_left-dash}
            $\xi_A$ is left-pulling for any $A\in\bK$.
            Furthermore, for any left $F$-module $m\colon F\larr[Rightarrow][1]L$, there exist a loose arrow $\Xi\larr(p)[][1]L$ in $\bL$ and a modulation $\sigma$ of type 1
            \begin{equation*}
                \begin{tikzcd}
                    F\ar[d,Rightarrow,"\xi"']\lar[r,Rightarrow,"m"] & L\ar[d,equal] \\
                    \Xi\lar[r,"p"'] & L
                    \cellsymb(\sigma){1-1}{2-2}
                \end{tikzcd}
            \end{equation*}
            such that every component $\sigma_A$ $(A\in\bK)$ is cartesian.\qedhere
    \end{itemize}
\end{remark}

\begin{proposition}\label{prop:fully_faithfulness_from_M2_M0}\quad
    \begin{enumerate}
        \item
            \ref{axiom:cell_type2} implies that the functor $\Homcat[\bL]\vvect{\Xi}{L}\arr(\xi\tcomp-)\Cone\vvect{F}{L}$ is fully faithful for any $L\in\bL$.
        \item\label{prop:fully_faithfulness_from_M2_M0-module}
            \ref{axiom:cell_type0_left} implies that the functor $\Homcat[\bL](\Xi,L)\arr(\comp{\xi}-)\Mdl{F}{L}$ is fully faithful for any $L\in\bL$.
    \end{enumerate}
\end{proposition}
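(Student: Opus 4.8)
The plan is to observe that each claim is exactly the corresponding condition specialized to a degenerate boundary, so that essentially nothing has to be proved beyond identifying the shapes of the cells involved and matching them against the defining equations of the two functors.

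For part (1), recall that a morphism $\xi\tcomp k\to\xi\tcomp k'$ in $\Cone\vvect{F}{L}$ is a modulation of type 2 whose bottom loose path is of length $0$; such a path forces the two vertices to coincide, and the modulation is recorded by a family $\tau=(\tau_A)_{A\in\bK}$. First I would apply \ref{axiom:cell_type2} with $L'=L$ and $q=\id_L$. It produces a \emph{unique} cell $\hat{\tau}$ whose top and bottom boundaries are both of length $0$ and whose left and right legs are $k$ and $k'$; by \cref{def:modulation_of_type1}'s companion conventions this is precisely a $2$-cell $k\Rightarrow k'$ in $\Ttwocat\bL$, i.e.\ a morphism of $\Homcat[\bL]\vvect{\Xi}{L}$. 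The defining equation of $\hat{\tau}$ in \ref{axiom:cell_type2} is literally the identity obtained by whiskering $\hat{\tau}$ beneath each $\xi_A$, which is exactly how $\xi\tcomp(-)$ acts on morphisms in \cref{lem:canonical_functor_ver_to_cocone}; hence that equation says $\xi\tcomp\hat{\tau}=\tau$. Existence of $\hat{\tau}$ thus gives fullness and its uniqueness gives faithfulness.

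For part (2), a morphism $\comp{\xi}p\to\comp{\xi}p'$ in $\Mdl{F}{L}$ is a modulation $\rho$ of type $0$ with bottom loose path of length $0$ and with $j=\id_L$. I would apply \ref{axiom:cell_type0_left} with $M=M'=M''=L$, with $\tup{q}$ of length $0$, and with $j=\id_L$, obtaining a unique cell
\begin{equation*}
    \begin{tikzcd}
        \Xi\ar[d,equal]\lar[r,"p"] & L\ar[d,equal] \\
        \Xi\lar[r,"p'"'] & L
        \cellsymb(\hat{\rho}){1-1}{2-2}
    \end{tikzcd}
\end{equation*}
which is exactly a morphism $p\to p'$ in $\Homcat[\bL](\Xi,L)$. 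Comparing the defining equation of $\hat{\rho}$ in \ref{axiom:cell_type0_left} with the construction of $\comp{\xi}(-)$ on morphisms in \cref{lem:canonical_functor_hol_to_mdl} (taking $\delta=\hat{\rho}$ and the target loose arrow to be $p'$), the two equations coincide, so $(\comp{\xi}\hat{\rho})_A=\rho_A$ for every $A\in\bK$, i.e.\ $\comp{\xi}\hat{\rho}=\rho$. Again existence yields fullness and uniqueness yields faithfulness.

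The only point requiring care -- and the closest thing to an obstacle -- is the purely bookkeeping verification that the degenerate instances of the two conditions genuinely land in the source hom-categories: one must confirm that when the bottom boundary has length $0$ and $j$ is an identity, the cell $\hat{\tau}$ (resp.\ $\hat{\rho}$) has exactly the boundary of a morphism of $\Homcat[\bL]\vvect{\Xi}{L}$ (resp.\ $\Homcat[\bL](\Xi,L)$), and that the equation characterizing it is \emph{verbatim} the equation defining the action of $\xi\tcomp(-)$ (resp.\ $\comp{\xi}(-)$) on that morphism. Both checks are immediate from the definitions in \cref{lem:canonical_functor_ver_to_cocone,lem:canonical_functor_hol_to_mdl}; no genuine computation occurs, and compatibility with composition (functoriality of the induced bijection) is automatic from the uniqueness clauses of \ref{axiom:cell_type2} and \ref{axiom:cell_type0_left}.
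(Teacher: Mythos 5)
Your proposal is correct and takes exactly the paper's approach: the paper's proof is the one-line observation that morphisms between tight cocones (resp.\ left modules) are precisely modulations of type 2 (resp.\ type 0) with degenerate bottom boundary, so that \ref{axiom:cell_type2} and \ref{axiom:cell_type0_left} supply the unique preimages under $\xi\tcomp(-)$ and $\comp{\xi}(-)$. Your write-up just spells out the boundary-matching that the paper leaves implicit.
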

\begin{proof}
    This follows from the fact that morphisms between tight cocones or modules are a special case of modulations of type 2 or 0. 
\end{proof}

\begin{proposition}\label{prop:M1_and_M0}\quad
    \begin{enumerate}
        \item
            \ref{axiom:cell_type1_left} implies \ref{axiom:cell_type0_left}.
        \item
            If $\bL$ has loose units and every tight arrow is left-pulling in $\bL$, then \ref{axiom:cell_type1_left} and \ref{axiom:cell_type0_left} are equivalent.
    \end{enumerate}
\end{proposition}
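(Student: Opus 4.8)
The plan is to reduce both implications to the universal properties already packaged in \ref{axiom:cell_type1_left} and \ref{axiom:cell_type0_left}, using the canonical cartesian modulations $\compcell{\xi}p$ of \cref{rem:canonical_modulation_of_type1} together with the companion/cocartesian machinery of \cref{prop:special_sandwich}.

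For \textbf{part (1)}, assume \ref{axiom:cell_type1_left}; then $\xi_A$ is left-pulling for every $A$, so the cartesian cells $(\compcell{\xi}p)_A$ and the type-1 modulation $\compcell{\xi}p$ are available for every loose arrow $p$ out of $\Xi$. Given a type-0 modulation $\rho$ as in \ref{axiom:cell_type0_left}, I would stack each component $\rho_A$ on top of the cartesian cell $(\compcell{\xi}p')_A$; since the bottom boundary of $\rho_A$ is $(\comp{\xi}p')_A$ and the top boundary of $(\compcell{\xi}p')_A$ is the same loose arrow, this pasting is legitimate and produces a type-1 modulation $\sigma$ whose bottom cocone is $\xi=\xi\tcomp\id_\Xi$ and whose bottom boundary is $p'$ (the modulation axioms for $\sigma$ being inherited from those of $\rho$ and $\compcell{\xi}p'$). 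Applying \ref{axiom:cell_type1_left} with $k=\id_\Xi$, $L=\Xi$, and $r=p'$ yields a unique cell $\hat{\sigma}$. The key observation is that, once $k=\id_\Xi$, the defining equation of \ref{axiom:cell_type1_left} becomes \emph{verbatim} the defining equation of \ref{axiom:cell_type0_left} with $\hat{\rho}:=\hat{\sigma}$: its left-hand side is exactly the pasting of $\rho_A$ onto $(\compcell{\xi}p')_A$ that we used to define $\sigma_A$, and its right-hand side is the pasting of $(\compcell{\xi}p)_A$ onto $\hat{\sigma}$. Hence $\hat{\rho}:=\hat{\sigma}$ is a solution, and uniqueness of $\hat{\rho}$ follows at once from uniqueness of $\hat{\sigma}$. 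Note that this direction uses neither loose units nor left-pulling of arbitrary tight arrows.

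For \textbf{part (2)}, part (1) already supplies \ref{axiom:cell_type1_left}$\Rightarrow$\ref{axiom:cell_type0_left}, so it remains to prove the converse under the standing hypotheses. When $\bL$ has loose units and every tight arrow is left-pulling, all companions and all restrictions $u(f,\id)$ exist. For a type-1 modulation with bottom data $\Xi\arr(k)L$ and a length-$\le 1$ loose arrow $r$ from $L$ to $L'$, I would set $p':=r(k,\id)$, a loose arrow from $\Xi$ to $L'$; functoriality of restriction then gives
\[
    r((\xi\tcomp k)_A,\id)=r(\xi_A\tcomp k,\id)=r(k,\id)(\xi_A,\id)=(\comp{\xi}p')_A,
\]
so fusing the companion of $(\xi\tcomp k)_A$ with $r$ through the cocartesian cell of \cref{prop:special_sandwich} reproduces exactly the component of the module $\comp{\xi}p'$. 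I would therefore convert each $\sigma_A$ into a type-0 component $\rho_A$ by replacing the tight leg $(\xi\tcomp k)_A$ with its companion (companion cartesian cell) and then fusing that companion with $r$ (cocartesian cell); both steps are bijective, so this sets up a bijection between type-1 modulations $\sigma$ (bottom cocone $\xi\tcomp k$, bottom $r$) and type-0 modulations $\rho$ with bottom module $\comp{\xi}p'$. Applying \ref{axiom:cell_type0_left} yields a unique $\hat{\rho}$, and composing $\hat{\rho}$ with the cartesian cell witnessing $p'=r(k,\id)$ returns the required $\hat{\sigma}$; uniqueness transfers along the same bijections.

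The hard part will be the bookkeeping in part (2): one must verify that the \emph{two} conversions applied on the ``data'' side ($\sigma_A\leftrightarrow\rho_A$, via the companion cell followed by the cocartesian cell) are compatible with the \emph{single} conversion applied on the ``solution'' side ($\hat{\sigma}\leftrightarrow\hat{\rho}$, via the cartesian cell for $r(k,\id)$), so that the defining equation of \ref{axiom:cell_type1_left} is carried exactly onto that of \ref{axiom:cell_type0_left}. This is a diagram-pasting verification hinging on the interaction of the companion cells $\compcell{(\xi\tcomp k)_A}$, the cocartesian cells of \cref{prop:special_sandwich}, and functoriality of restriction; once these coherences are in place, both halves reduce to the respective universal properties with no further computation.
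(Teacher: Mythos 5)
Your proposal is correct and follows essentially the same route as the paper, whose proof is only a two-sentence sketch invoking "a bijective correspondence between the modulations of type 1 and the modulations of type 0" obtained from the universal property of restrictions. Your pasting of $\rho_A$ onto $(\compcell{\xi}p')_A$ in part (1), and the factorization through the cartesian cell for $p'=r(k,\id)$ (equivalently, the companion-plus-cocartesian route of \cref{prop:special_sandwich}) in part (2), are precisely the details that correspondence is meant to encode.
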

\begin{proof}\quad
    \begin{enumerate}
        \item
            We have bijective correspondences among the cells and the modulations of the following forms:
            \begin{equation*}
                \begin{tikzcd}
                    F\ar[d,equal]\lar[r,Rightarrow,"\comp{\xi}p"] & M\lar[r,path,"\tup{q}"] & M''\ar[d,"j"] \\
                    F\lar[rr,Rightarrow,"\comp{\xi}p'"'] && M'
                    \cellsymb(\cdot){1-1}{2-3}
                \end{tikzcd}
                \Vline
                \begin{tikzcd}
                    F\ar[d,Rightarrow,"\xi"']\lar[r,Rightarrow,"\comp{\xi}p"] & M\lar[r,path,"\tup{q}"] & M''\ar[d,"j"] \\
                    \Xi\lar[rr,"p'"'] && M'
                    \cellsymb(\cdot){1-1}{2-3}
                \end{tikzcd}
                \Vline
                \begin{tikzcd}
                    \Xi\ar[d,equal]\lar[r,"p"] & M\lar[r,path,"\tup{q}"] & M''\ar[d,"j"] \\
                    \Xi\lar[rr,"p'"'] && M'
                    \cellsymb(\cdot){1-1}{2-3}
                \end{tikzcd}
            \end{equation*}
            The first correspondence is given by tightwise composition with the modulation $\compcell{\xi}p'$ as in \cref{note:canonical_modulation_of_type1} whose components are cartesian.
            The second one is given by \ref{axiom:cell_type1_left}.
            Hence \ref{axiom:cell_type0_left} follows.
        \item
            Suppose \ref{axiom:cell_type0_left} and that $\bL$ has loose units and every tight arrow is left-pulling in $\bL$.
            Then, we have bijective correspondences among the cells and the modulations of the following forms:
            \begin{equation*}
                \begin{tikzcd}[scriptsize]
                    F\ar[d,Rightarrow,"\xi"']\lar[r,Rightarrow,"\comp{\xi}p"] & M\lar[r,path,"\tup{q}"] & M'\ar[dd,"j"] \\
                    \Xi\ar[d,"k"'] && \\
                    L\lar[rr,phan,"r"'] && L'
                    \cellsymb(\cdot){1-1}{3-3}
                \end{tikzcd}
                \Vline
                \begin{tikzcd}
                    F\ar[d,Rightarrow,"\xi"']\lar[r,Rightarrow,"\comp{\xi}p"] & M\lar[r,path,"\tup{q}"] & M'\ar[d,"j"] \\
                    \Xi\lar[rr,"{r(k,\id)}"'] && L'
                    \cellsymb(\cdot){1-1}{2-3}
                \end{tikzcd}
                \Vline
                \begin{tikzcd}
                    F\ar[d,equal]\lar[r,Rightarrow,"\comp{\xi}p"] & M\lar[r,path,"\tup{q}"] & M'\ar[d,"j"] \\
                    F\lar[rr,Rightarrow,"{\comp{\xi}r(k,\id)}"'] && L'
                    \cellsymb(\cdot){1-1}{2-3}
                \end{tikzcd}
            \end{equation*}
            \begin{equation*}
                \Vline
                \begin{tikzcd}
                    \Xi\ar[d,equal]\lar[r,"p"] & M\lar[r,path,"\tup{q}"] & M'\ar[d,"j"] \\
                    \Xi\lar[rr,"{r(k,\id)}"'] && L'
                    \cellsymb(\cdot){1-1}{2-3}
                \end{tikzcd}
                \Vline
                \begin{tikzcd}
                    \Xi\ar[d,"k"']\lar[r,"p"] & M\lar[r,path,"\tup{q}"] & M'\ar[d,"j"] \\
                    L\lar[rr,phan,"r"'] && L'
                    \cellsymb(\cdot){1-1}{2-3}
                \end{tikzcd}
            \end{equation*}
            Here, the restriction $r(k,\id)$ exists by the assumption, whose universal property implies the first and the last correspondences.
            The second correspondence follows from the fact that every components of the modulation $\compcell{\xi}r(k,\id)$ is cartesian, and the third one follows from \ref{axiom:cell_type0_left}.
            This shows \ref{axiom:cell_type1_left}.\qedhere
    \end{enumerate}
\end{proof}

\begin{proposition}\label{prop:M3_to_M1_to_M2}\quad
    \begin{enumerate}
        \item
            If $\Xi$ has a loose unit, then \ref{axiom:cell_type1_left} implies \ref{axiom:cell_type2}.
        \item
            If $\Xi$ has a loose unit, then \ref{axiom:cell_type3} implies \ref{axiom:cell_type1_left}.
    \end{enumerate}
\end{proposition}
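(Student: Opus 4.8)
The plan is to prove both implications by a single device: turning a modulation of one type into a modulation of another type by pasting each of its components with the companion (for (i)) or conjoint (for (ii)) cartesian cell of the leg $\xi_A$, and using the loose unit $\Unit_\Xi$ to pass between boundaries of length $0$ and length $1$ at $\Xi$. Once each such pasting is seen to be a bijection of modulations, the required existence-and-uniqueness of the factorising cell is transported directly from the stronger axiom that is assumed to hold.

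For (i), since each $\xi_A$ has a companion and $\Xi$ has a loose unit, the left $F$-module with components $\comp{\xi_A}=\Unit_\Xi(\xi_A,\id)$ is precisely $\comp{\xi}\Unit_\Xi$, so it is of the form admitted by \ref{axiom:cell_type1_left}. I would then take a modulation $\tau$ of type $2$ between the cocones $\xi\tcomp k$ and $\xi\tcomp k'$ over $q$ and paste each component $\tau_A$ with the companion cartesian cell $\compcell{\xi_A}$, peeling the factor $\xi_A$ off the right leg $\xi_A\tcomp k'$ so as to leave $\comp{\xi_A}$ on top and $k'$ as the remaining right tight leg, while keeping the left leg $\xi_A\tcomp k$ untouched. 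By the universal property of companions (\cref{prop:special_sandwich}) this is a bijection, and it carries $\tau$ to a modulation $\sigma$ of type $1$ with module $\comp{\xi}\Unit_\Xi$, connecting loose path of length $0$, right tight leg $j=k'$, and bottom $q$. Applying \ref{axiom:cell_type1_left} to $\sigma$ yields a unique $\hat\sigma\in\Cells{k}{k'}{\Unit_\Xi}{q}$; composing with the loosewise invertible cell of the loose unit reinterprets it as the desired $0$-coary cell $\hat\tau\in\Cells{k}{k'}{}{q}$, and the defining equation for $\hat\sigma$ becomes exactly the one for $\hat\tau$ in \ref{axiom:cell_type2}. Uniqueness of $\hat\tau$ is inherited from that of $\hat\sigma$ through the bijection.

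For (ii), I would argue dually with conjoints. Since each $\xi_A$ has a conjoint and $\Xi$ has a loose unit, the right $F$-module with components $\conj{\xi_A}=\Unit_\Xi(\id,\xi_A)$ is the module $\Unit_\Xi\conj{\xi}$, with vertex $\Xi$. Given a modulation $\sigma$ of type $1$ as in \ref{axiom:cell_type1_left}, with left module $\comp{\xi}p$ and cocone $\xi\tcomp k$, I would paste each $\sigma_A$ with the conjoint cartesian cell $\conjcell{\xi_A}$, peeling $\xi_A$ off the left leg $\xi_A\tcomp k$ and leaving $\conj{\xi_A}$ on the top-left and $k$ as the remaining left tight leg. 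Again by \cref{prop:special_sandwich} this is a bijection respecting the modulation axioms, identifying $\sigma$ with a modulation $\omega$ of type $3$ whose right module is $\Unit_\Xi\conj{\xi}$, whose left module is $\comp{\xi}p$, with $N=N'=\Xi$, left path of length $0$, and tight legs $j=k$ and $i=j'$. Applying \ref{axiom:cell_type3} produces a unique $\hat\omega$, whose top boundary carries a spurious factor $\Unit_\Xi$; removing it with the loosewise invertible cell of the loose unit yields the required $\hat\sigma$ for \ref{axiom:cell_type1_left}. Note that \ref{axiom:cell_type3} already forces each $\xi_A$ to be pulling, hence left-pulling, so the construction $\comp{\xi}-$ of \cref{lem:canonical_functor_hol_to_mdl} needed to state \ref{axiom:cell_type1_left} is indeed available.

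The main obstacle in both parts is not the existence of the conversions, which is immediate from the companion/conjoint universal property, but the verification that pasting with $\compcell{\xi_A}$ (resp.\ $\conjcell{\xi_A}$) intertwines the two naturality axioms (for tight and for loose arrows of $\bK$) of a type-$2$ (resp.\ type-$1$) modulation with those of the corresponding type-$1$ (resp.\ type-$3$) modulation, and that the two defining equations match under the bijection. These are routine but bulky pasting computations that rely on the cocone equations for $\xi$ and on the functoriality of restriction recorded in \cref{lem:canonical_functor_hol_to_mdl}.
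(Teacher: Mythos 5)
Your proposal is correct and follows essentially the same route as the paper: both parts are proved by the same chain of bijective correspondences, converting a type-2 (resp.\ type-1) modulation into a type-1 (resp.\ type-3) one by component-wise pasting with the companion (resp.\ conjoint) cells of the $\xi_A$, applying the assumed axiom with $p=\Unit_\Xi$ (resp.\ $t=\Unit_\Xi$, $s=p$), and then discharging the loose-unit factor via its universal property. The level of detail — stating the correspondences and leaving the naturality verifications as routine pasting — matches the paper's own proof.
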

\begin{proof}\quad
    \begin{enumerate}
        \item
            Suppose \ref{axiom:cell_type1_left} and that the loose unit $\Unit_\Xi$ on $\Xi$ exists.
            Since every $\xi_A$ is left-pulling, it has a companion, which is given by the restriction $\Unit_\Xi(\xi_A,\id)$.
            Consider the canonical cells associated with the companion $\comp{\xi_A}$:
            \begin{equation}\label{eq:canonial_cells_companion_xi_A}
                \begin{tikzcd}
                    FA\ar[d,"\xi_A"']\lar[r,"\comp{\xi_A}"] & \Xi\ar[dl,equal] \\
                    \Xi &
                    \cellsymb(\cdot)[above left=3]{1-2}{2-1}
                \end{tikzcd}
                \begin{tikzcd}
                    & FA\ar[dl,equal]\ar[d,"\xi_A"] \\
                    FA\lar[r,"\comp{\xi_A}"'] & \Xi
                    \cellsymb(\cdot)[below right=3]{1-2}{2-1}
                \end{tikzcd}\incat{\bL}\quad (A\in\bK).
            \end{equation}
            Let $\comp{\xi}$ denote the left $F$-module given by the companions $\comp{\xi_A}$.
            Then, we have bijective correspondences among the cells and the modulations of the following forms:
            \begin{equation*}
                \begin{tikzcd}[tri]
                    & F\ar[dl,Rightarrow,"\xi"']\ar[dr,Rightarrow,"\xi"] & \\
                    \Xi\ar[d,"k"'] &{}& \Xi\ar[d,"k'"] \\
                    L\lar[rr,phan,"q"'] & & L'
                    \cellsymb(\cdot)[below]{2-1}{2-3}
                \end{tikzcd}
                \Vline
                \begin{tikzcd}
                    F\ar[d,Rightarrow,"\xi"']\lar[r,Rightarrow,"\comp{\xi}"] & \Xi\ar[dd,"k'"] \\
                    \Xi\ar[d,"k"'] & \\
                    L\lar[r,phan,"q"'] & L'
                    \cellsymb(\cdot){1-1}{3-2}
                \end{tikzcd}
                \Vline
                \begin{tikzcd}
                    \Xi\ar[d,"k"']\lar[r,equal] & \Xi\ar[d,"k'"] \\
                    L\lar[r,phan,"q"'] & L'
                    \cellsymb(\cdot){1-1}{2-2}
                \end{tikzcd}
                \Vline
                \begin{tikzcd}[tri]
                    & \Xi\ar[dl,"k"']\ar[dr,"k'"] & \\
                    L\lar[rr,phan,"q"'] &{}& L'
                    \cellsymb(\cdot){1-2}{2-2}
                \end{tikzcd}
            \end{equation*}
            Here, the first correspondence is given by component-wise pasting with the cells \cref{eq:canonial_cells_companion_xi_A}.
            The second one is given by \ref{axiom:cell_type1_left}.
            The third one is given by the universal property of the loose unit.
            Therefore \ref{axiom:cell_type2} follows.
        \item
            Suppose \ref{axiom:cell_type3} and that the loose unit on $\Xi$ exists.
            Since every $\xi_A$ is, in particular, right-pulling, it has a conjoint.
            Then, we have bijective correspondences among the cells and the modulations of the following forms:
            \begin{equation*}
                \begin{tikzcd}[scriptsize]
                    F\ar[d,Rightarrow,"\xi"']\lar[r,Rightarrow,"\comp{\xi}p"] & M\lar[r,path,"\tup{q}"] & M'\ar[dd,"j"] \\
                    \Xi\ar[d,"k"'] & & \\
                    L\lar[rr,phan,"r"'] & & L'
                    \cellsymb(\cdot){1-1}{3-3}
                \end{tikzcd}
                \Vline
                \begin{tikzcd}
                    \Xi\ar[d,"k"']\lar[r,Rightarrow,"\conj{\xi}"] & F\lar[r,Rightarrow,"\comp{\xi}p"] & M\lar[r,path,"\tup{q}"] & M'\ar[d,"j"] \\
                    L\lar[rrr,phan,"r"'] &&& L'
                    \cellsymb(\cdot){1-1}{2-4}
                \end{tikzcd}
            \end{equation*}
            \begin{equation*}
                \Vline
                \begin{tikzcd}
                    \Xi\ar[d,"k"']\lar[r,equal] & \Xi\lar[r,"p"] & M\lar[r,path,"\tup{q}"] & M'\ar[d,"j"] \\
                    L\lar[rrr,phan,"r"'] &&& L'
                    \cellsymb(\cdot){1-1}{2-4}
                \end{tikzcd}
                \Vline
                \begin{tikzcd}[scriptsize]
                    \Xi\ar[d,"k"']\lar[r,"p"] & M\lar[r,path,"\tup{q}"] & M'\ar[d,"j"] \\
                    L\lar[rr,phan,"r"'] & & L'
                    \cellsymb(\cdot){1-1}{2-3}
                \end{tikzcd}
            \end{equation*}
            The first correspondence is given by component-wise pasting with the canonical cells associated with the conjoints $\conj{\xi_A}$.
            The second one is given by \ref{axiom:cell_type3}.
            The third one is given by the universal property of the loose unit.
            Therefore \ref{axiom:cell_type1_left} follows.\qedhere
    \end{enumerate}
\end{proof}

\begin{proposition}\label{prop:M1_to_M3}
    Suppose that $\bL$ has extensions, $\xi_A$ is right-pulling for every $A\in\bK$.
    Then, \ref{axiom:cell_type1_left} implies \ref{axiom:cell_type3}.
\end{proposition}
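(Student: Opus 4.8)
The plan is to reduce \ref{axiom:cell_type3} to \ref{axiom:cell_type0_left}, which is available here: \ref{axiom:cell_type1_left} implies \ref{axiom:cell_type0_left} by \cref{prop:M1_and_M0}, and in particular each $\xi_A$ is left-pulling as well as right-pulling, so every $\xi_A$ is pulling and the modules $t\conj{\xi}$ and $\comp{\xi}s$ occurring in \ref{axiom:cell_type3} exist. Fix the boundary data $N\larr(t)\Xi\larr(s)M$ together with $N'\larr(\tup{q})[path]N$, $N'\arr(j)N''$, $M\larr(\tup{p})[path]M'$, $M'\arr(i)M''$ and $N''\larr(r)[phan]M''$. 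Since $r$ has length $\le 1$ and $\bL$ has extensions, I may form the loose arrow $P:=\extension{(\tup{q},t)}[j]{r}\colon\Xi\larr M''$ with its extending cell, and from $P$ the left $F$-module $\comp{\xi}P$ (vertex $M''$) by left-pulling.

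First I would transpose the modulations. Each component $\omega_A$ of a type-3 modulation has top boundary $(\tup{q},(t\conj{\xi})_A,(\comp{\xi}s)_A,\tup{p})$, whose initial segment $(\tup{q},(t\conj{\xi})_A)$ is a loose path from $N'$ ending at $FA$; transposing it along $j$ through the extending cell of $\extension{(\tup{q},(t\conj{\xi})_A)}[j]{r}$ produces a cell $\omega^\flat_A$ with top $((\comp{\xi}s)_A,\tup{p})$, trivial left boundary, right boundary $i$, and bottom $\extension{(\tup{q},(t\conj{\xi})_A)}[j]{r}$. Because this applies the extending universal property to a leading sub-path, no bending of cells is needed and the step is a clean bijection, component by component. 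The crucial computation is the identity
\[
    \extension{(\tup{q},(t\conj{\xi})_A)}[j]{r}=P(\xi_A,\id)=(\comp{\xi}P)_A,
\]
which, since extensions compose and $(t\conj{\xi})_A=t(\id,\xi_A)$, amounts to the assertion that restricting the codomain of an extension along $\xi_A$ commutes with forming the extension; I would prove it by exhibiting the paste of the cartesian cells $(t\conjcell{\xi})_A$ and $(\compcell{\xi}P)_A$ with the extending cell of $P$ as itself an extending cell. Granting the identity, $(\omega^\flat_A)_A$ is exactly a type-0 modulation $\comp{\xi}s\to\comp{\xi}P$ with path $\tup{p}$ and right leg $i$, and $\omega\mapsto\omega^\flat$ is a bijection onto such type-0 modulations.

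With this in hand the argument closes quickly: applying \ref{axiom:cell_type0_left} to $\omega^\flat$ yields a unique cell $\hat\rho$ with top $(s,\tup{p})$, trivial left boundary, bottom $P$ and right $i$; transposing $\hat\rho$ back along the extending cell of $P=\extension{(\tup{q},t)}[j]{r}$ returns a cell with top $(\tup{q},t,s,\tup{p})$, left $j$, bottom $r$ and right $i$, which is precisely the cell $\hat\omega$ demanded by \ref{axiom:cell_type3}. Composing the three bijections (transposition, \ref{axiom:cell_type0_left}, transposition) gives existence and uniqueness of $\hat\omega$, and a final diagram chase confirms that reconstructing $\omega$ from this $\hat\omega$ by pasting with $(t\conjcell{\xi})_A$ and $(\compcell{\xi}s)_A$ reproduces the defining equation of \ref{axiom:cell_type3}.

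I expect the main obstacle to be the first step: verifying that $\omega\mapsto\omega^\flat$ really sends type-3 modulations to type-0 modulations, i.e.\ that the modulation axioms are preserved. Compatibility of $\omega$ with the left module $\comp{\xi}s$ transfers directly, but compatibility with the right module $t\conj{\xi}$ must be shown to become the outgoing left-module compatibility of $\omega^\flat$ with $\comp{\xi}P$; this uses that both $t\conj{\xi}$ and $\comp{\xi}P$ are images of the cocone $\xi$ under $-\conj{\xi}$ and $\comp{\xi}-$, through the same cartesian/extending-cell manipulation as in the displayed identity. This naturality check, together with the identity itself, should be the most laborious part; it is essentially the loosewise-dual, extension-theoretic counterpart of the first correspondence in the proof of \cref{prop:M3_to_M1_to_M2}, with the loose unit used there replaced throughout by the extension $P=\extension{(\tup{q},t)}[j]{r}$.
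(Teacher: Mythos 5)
Your proof is correct and follows essentially the same route as the paper: both arguments pivot on the extension $P=\extension{(\tup{q},t)}[j]{r}$ and convert the type-3 universal property into the type-1/type-0 one by a chain of bijective transpositions, your ``key identity'' $\extension{(\tup{q},(t\conj{\xi})_A)}[j]{r}\cong(\comp{\xi}P)_A$ being precisely the composite of the paper's first three correspondences (cocartesian sandwich, extension, conjoint absorption). The only cosmetic difference is that you land on \ref{axiom:cell_type0_left} via \cref{prop:M1_and_M0} where the paper applies \ref{axiom:cell_type1_left} directly, which changes nothing of substance.
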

\begin{proof}
    Suppose \ref{axiom:cell_type1_left} and that $\bL$ has extensions and $\xi_A$ are right-pulling.
    By \cref{prop:consequence_from_ext_lift}, $\bL$ has companions (hence loose units).
    Thus, every $\xi_A$ has a conjoint.
    Then, we have bijective correspondences among the cells and the modulations of the following forms:
    \begin{equation*}
        \begin{tikzcd}[scriptsizecolumn]
            N'\ar[d,"j"']\lar[r,path,"\tup{q}"] & N\lar[r,Rightarrow,"t\conj{\xi}"] & F\lar[r,Rightarrow,"\comp{\xi}s"] & M\lar[r,path,"\tup{p}"] & M'\ar[d,"i"] \\
            N''\lar[rrrr,phan,"r"'] &&&& M''
            \cellsymb(\cdot){1-1}{2-5}
        \end{tikzcd}
        \Vline
        \begin{tikzcd}[scriptsizecolumn]
            N'\ar[d,"j"']\lar[r,path,"\tup{q}"] & N\lar[r,"t"] & \Xi\lar[r,Rightarrow,"\conj{\xi}"] & F\lar[r,Rightarrow,"\comp{\xi}s"] & M\lar[r,path,"\tup{p}"] & M'\ar[d,"i"] \\
            N''\lar[rrrrr,phan,"r"'] &&&&& M''
            \cellsymb(\cdot){1-1}{2-6}
        \end{tikzcd}
    \end{equation*}
    \begin{equation*}
        \Vline
        \begin{tikzcd}
            \Xi\ar[d,equal]\lar[r,Rightarrow,"\conj{\xi}"] & F\lar[r,Rightarrow,"\comp{\xi}s"] & M\lar[r,path,"\tup{p}"] & M'\ar[d,"i"] \\
            \Xi\lar[rrr,"{\extension{(\tup{q},t)}[j]{r}}"'] &&& M''
            \cellsymb(\cdot){1-1}{2-4}
        \end{tikzcd}
        \Vline
        \begin{tikzcd}
            F\ar[d,Rightarrow,"\xi"']\lar[r,Rightarrow,"\comp{\xi}s"] & M\lar[r,path,"\tup{p}"] & M'\ar[d,"i"] \\
            \Xi\lar[rr,"{\extension{(\tup{q},t)}[j]{r}}"'] && M''
            \cellsymb(\cdot){1-1}{2-3}
        \end{tikzcd}
    \end{equation*}
    \begin{equation*}
        \Vline
        \begin{tikzcd}
            \Xi\ar[d,equal]\lar[r,"s"] & M\lar[r,path,"\tup{p}"] & M'\ar[d,"i"] \\
            \Xi\lar[rr,"{\extension{(\tup{q},t)}[j]{r}}"'] && M''
            \cellsymb(\cdot){1-1}{2-3}
        \end{tikzcd}
        \Vline
        \begin{tikzcd}[scriptsizecolumn]
            N'\ar[d,"j"']\lar[r,path,"\tup{q}"] & N\lar[r,"t"] & \Xi\lar[r,"s"] & M\lar[r,path,"\tup{p}"] & M'\ar[d,"i"] \\
            N''\lar[rrrr,phan,"r"'] &&&& M''
            \cellsymb(\cdot){1-1}{2-5}
        \end{tikzcd}
    \end{equation*}
    At the level of components, the first correspondence follows from the canonical cocartesian cells as in \cref{prop:special_sandwich}.
    The second and the last ones follow from the universal property of the extension.
    The third one is given by component-wise pasting with the canonical cells associated with the conjoints $\conj{\xi_A}$.
    The fourth one is given by \ref{axiom:cell_type1_left}.
    Therefore \ref{axiom:cell_type3} follows.
\end{proof}

\begin{definition}[Versatile colimits]\label{def:versatile_colim}
    $\xi$ is called a \emph{versatile colimit} of $F$ if it satisfies the conditions \ref{axiom:tight_bij}\ref{axiom:loose_esssurj_left}\ref{axiom:loose_esssurj_right}\ref{axiom:cell_type1_left}\ref{axiom:cell_type1_right}\ref{axiom:cell_type2}\ref{axiom:cell_type3}.
\end{definition}

\begin{remark}
    Whenever we consider pseudo double categories, our notion of versatile colimits can be viewed as Grandis--Par\'{e}'s notion of \textit{(strict) double colimits} (\cite[5.2.3]{Grandis2020higher} or \cite[4.2]{GrandisPare1999limits}) equipped with extra conditions.
    Indeed, in the situation of \cref{rem:cocone_grandis_pare}, $\xi$ becomes a (strict) double colimit if and only if it satisfies the conditions \ref{axiom:tight_bij} and \ref{axiom:cell_type2}.
\end{remark}

\begin{corollary}\label{cor:simplification_to_M3}
    Suppose that $\Xi$ has a loose unit.
    Then, $\xi$ becomes a versatile colimit if and only if it satisfies \ref{axiom:tight_bij}\ref{axiom:loose_esssurj_left}\ref{axiom:loose_esssurj_right}\ref{axiom:cell_type3}.
\end{corollary}
\begin{proof}
    This follows from \cref{prop:M3_to_M1_to_M2}.
\end{proof}

\begin{corollary}\label{cor:simplification_to_M0}
    Suppose that $\bL$ has extensions and conjoints.
    Then, $\xi$ becomes a versatile colimit if and only if it satisfies \ref{axiom:tight_bij}\ref{axiom:loose_esssurj_left}\ref{axiom:loose_esssurj_right}\ref{axiom:cell_type0_left}.
\end{corollary}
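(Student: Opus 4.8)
The plan is to reduce the statement to \cref{cor:simplification_to_M3} and then trade the condition \ref{axiom:cell_type3} for \ref{axiom:cell_type0_left}. First I would unpack the standing hypotheses. Since $\bL$ has extensions, \cref{prop:consequence_from_ext_lift}\ref{prop:consequence_from_ext_lift-1} shows that $\bL$ has companions, and in particular $\bL$ has loose units (the companion of an identity is a loose unit). Since $\bL$ has both extensions and conjoints, \cref{prop:consequence_from_ext_lift}\ref{prop:consequence_from_ext_lift-2} shows that every restriction of the form $u(f,\id)$ exists, so \emph{every} tight arrow of $\bL$ is left-pulling. Consequently each $\xi_A$ has a companion and a conjoint (as $\bL$ has companions and conjoints), and $\Xi$ has its loose unit. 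Thus the standing hypotheses of \cref{cor:simplification_to_M3} are met, and that corollary reduces the claim to showing that, granting \ref{axiom:tight_bij}, \ref{axiom:loose_esssurj_left}, and \ref{axiom:loose_esssurj_right}, the conditions \ref{axiom:cell_type3} and \ref{axiom:cell_type0_left} are interchangeable.

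For this equivalence I would route through \ref{axiom:cell_type1_left} as an intermediary. Because $\bL$ has loose units and every tight arrow is left-pulling, \cref{prop:M1_and_M0}(2) gives \ref{axiom:cell_type0_left} $\Leftrightarrow$ \ref{axiom:cell_type1_left}. For the passage to \ref{axiom:cell_type3}, note that $\bL$ has extensions and that each $\xi_A$ is right-pulling; the latter is exactly the pulling half of the assumed condition \ref{axiom:loose_esssurj_right}. Hence \cref{prop:M1_to_M3} yields \ref{axiom:cell_type1_left} $\Rightarrow$ \ref{axiom:cell_type3}. Conversely, since each $\xi_A$ has a conjoint and $\Xi$ has its loose unit, \cref{prop:M3_to_M1_to_M2}(2) yields \ref{axiom:cell_type3} $\Rightarrow$ \ref{axiom:cell_type1_left}. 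Chaining these gives \ref{axiom:cell_type0_left} $\Leftrightarrow$ \ref{axiom:cell_type1_left} $\Leftrightarrow$ \ref{axiom:cell_type3}, so that, in the presence of \ref{axiom:loose_esssurj_right}, the system \ref{axiom:tight_bij}\ref{axiom:loose_esssurj_left}\ref{axiom:loose_esssurj_right}\ref{axiom:cell_type3} is equivalent to \ref{axiom:tight_bij}\ref{axiom:loose_esssurj_left}\ref{axiom:loose_esssurj_right}\ref{axiom:cell_type0_left}. Combined with \cref{cor:simplification_to_M3}, this is precisely the asserted characterization of versatile colimits.

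All the genuine mathematical content is already packaged in the cited propositions, so the only work is the bookkeeping of verifying that each invoked hypothesis is actually supplied in our setting. The one point that needs care is the origin of right-pulling for $\xi_A$: the global assumptions give left-pulling for every tight arrow and give conjoints, but they do \emph{not} give right-pulling in general (that would require lifts as well, cf.\ the dual of \cref{prop:consequence_from_ext_lift}\ref{prop:consequence_from_ext_lift-2}). Right-pulling of $\xi_A$ must therefore be drawn from the assumed condition \ref{axiom:loose_esssurj_right}, which is present on both sides of the equivalence, so \cref{prop:M1_to_M3} remains applicable throughout. I do not expect any serious obstacle beyond tracking these hypotheses correctly.
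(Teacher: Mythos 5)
Your proposal is correct and follows essentially the same route as the paper: reduce to \cref{cor:simplification_to_M3} via \cref{prop:consequence_from_ext_lift}, then trade \ref{axiom:cell_type0_left} for \ref{axiom:cell_type3} using \cref{prop:M1_and_M0} and \cref{prop:M1_to_M3}. Your explicit observation that the right-pullingness of $\xi_A$ needed for \cref{prop:M1_to_M3} must be sourced from \ref{axiom:loose_esssurj_right} rather than from the global hypotheses is a correct and worthwhile clarification of a point the paper leaves implicit.
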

\begin{proof}
    Since the existence of either extensions or conjoints implies that $\bL$ has loose units, we can apply \cref{cor:simplification_to_M3}.
    By \cref{prop:consequence_from_ext_lift}\cref{prop:consequence_from_ext_lift-2}, every tight arrow is left-pulling in $\bL$.
    Then, since \ref{axiom:loose_esssurj_right} includes the right-pullingness of $\xi_A$, \cref{prop:M1_and_M0,prop:M1_to_M3} show that \ref{axiom:cell_type3} follows from \ref{axiom:cell_type0_left}, which finishes the proof.
\end{proof}

\begin{theorem}\label{thm:strongest_simplification}
    Suppose that $\bL$ has extensions, lifts, and loose composites.
    Then, $\xi$ becomes a versatile colimit if and only if it satisfies the following conditions:
    \begin{itemize}
        \item
            The functor $\Homcat[\bL]\vvect{\Xi}{L}\arr(\xi\tcomp-)\Cone\vvect{F}{L}$ is an isomorphism of categories for any $L\in\bL$;
        \item
            The functors $\Homcat[\bL](\Xi,L)\arr(\comp{\xi}-)\Mdl{F}{L}$ and $\Homcat[\bL](L,\Xi)\arr(-\conj{\xi})\Mdl{L}{F}$ are equivalences of categories for any $L\in\bL$.
    \end{itemize}
\end{theorem}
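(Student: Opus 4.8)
The plan is to reduce the statement to the simplified criterion of \cref{cor:simplification_to_M3}. First I would unwind the standing hypotheses: since $\bL$ has extensions it has companions by the first part of \cref{prop:consequence_from_ext_lift}, dually the existence of lifts gives conjoints, and having both extensions and lifts it has restrictions by the third part of \cref{prop:consequence_from_ext_lift}; moreover loose composites include loose units. Consequently every tight arrow of $\bL$ is pulling, every $\xi_A$ has a companion and a conjoint, and $\Xi$ carries a loose unit. These are exactly the hypotheses required by \cref{cor:simplification_to_M3}, by the second part of \cref{prop:M1_and_M0}, and by \cref{prop:M1_to_M3}. By \cref{cor:simplification_to_M3} it therefore suffices to prove that the three conditions in the statement are together equivalent to the conjunction of \ref{axiom:tight_bij}, \ref{axiom:loose_esssurj_left}, \ref{axiom:loose_esssurj_right}, and \ref{axiom:cell_type3}.

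For the \textbf{necessity} of the three conditions, suppose $\xi$ is a versatile colimit. Condition \ref{axiom:tight_bij} says precisely that $\xi\tcomp-$ is bijective on objects, while \ref{axiom:cell_type2} makes it fully faithful by the proposition preceding \cref{prop:M1_and_M0}; hence $\xi\tcomp-$ is an isomorphism of categories. For $\comp{\xi}-$, essential surjectivity is part of \ref{axiom:loose_esssurj_left}, and full faithfulness holds because \ref{axiom:cell_type1_left} implies \ref{axiom:cell_type0_left} by the first part of \cref{prop:M1_and_M0}, which in turn yields full faithfulness by that same preceding proposition; thus $\comp{\xi}-$ is an equivalence. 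The case of $-\conj{\xi}$ is the loosewise dual.

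For the \textbf{sufficiency}, assume the three functor conditions. Then \ref{axiom:tight_bij} holds because $\xi\tcomp-$ is in particular bijective on objects, and \ref{axiom:loose_esssurj_left}, \ref{axiom:loose_esssurj_right} hold because $\comp{\xi}-$ and $-\conj{\xi}$ are in particular essentially surjective (left/right-pulling being automatic since $\bL$ has restrictions). It remains to derive \ref{axiom:cell_type3}. Using the second part of \cref{prop:M1_and_M0} and then \cref{prop:M1_to_M3}, it is enough to establish \ref{axiom:cell_type0_left}, since these give \ref{axiom:cell_type0_left}$\Rightarrow$\ref{axiom:cell_type1_left}$\Rightarrow$\ref{axiom:cell_type3}. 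So the whole theorem comes down to the following converse of the proposition cited above: full faithfulness of $\comp{\xi}-$ implies \ref{axiom:cell_type0_left}.

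This last implication is the main obstacle, and I would prove it by collapsing a general type-$0$ modulation to a single morphism in an $\Mdl{F}{M''}$. Given a modulation $\rho$ between $\comp{\xi}p$ and $\comp{\xi}p'$ with middle path $\tup{q}$ and right leg $j$, I would first set $q:=\lcomp\tup{q}$ and invoke the universal property of the loose composite to replace the path $\tup{q}$ by the single loose arrow $q$; then use the cartesian cell defining the restriction to absorb $j$, identifying the restriction of $\comp{\xi}p'$ along $j$ with $\comp{\xi}\bigl(p'(\id,j)\bigr)$ (both have $A$-component $p'(\xi_A,j)$). Forming the loose composite on the module side and invoking the structural identity $(\comp{\xi}p)\lcomp q\cong\comp{\xi}(p\lcomp q)$, which follows from \cref{lem:canonical_functor_hol_to_mdl} together with the fact that restriction along the outer foot distributes over loose composition, turns $\rho$ into a morphism $\comp{\xi}(p\lcomp q)\to\comp{\xi}\bigl(p'(\id,j)\bigr)$ in $\Mdl{F}{M''}$. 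By full faithfulness this is a unique cell $p\lcomp q\to p'(\id,j)$, and unwinding the reductions produces the unique $\hat{\rho}$ demanded by \ref{axiom:cell_type0_left}. The delicate point is to check that the composite of these successive bijections is compatible with the defining pasting equation of \ref{axiom:cell_type0_left}; this is a routine but lengthy diagram chase through the universal properties of restrictions, companions, conjoints, and loose composites.
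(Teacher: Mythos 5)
Your proposal is correct and follows essentially the same route as the paper: reduce to establishing \ref{axiom:cell_type0_left} via the earlier corollaries, then derive \ref{axiom:cell_type0_left} from full faithfulness of $\comp{\xi}-$ by a chain of bijections through loose composites (the paper also leaves the necessity direction and the compatibility checks implicit, as you do). The only minor variation is that you absorb the leg $j$ by restricting $p'$ along $j$ on the bottom, whereas the paper appends the companion $\comp{j}$ to the top path and takes the composite $w=\lcomp(p,\tup{q},\comp{j})$; both steps rest on the same compatibility of restrictions with loose composition that the paper cites.
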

\begin{proof}
    Let $\xi$ satisfy the conditions above.
    We will show that $\xi$ is a versatile colimit.
    By \cref{cor:simplification_to_M0}, it suffices to show \ref{axiom:cell_type0_left}.
    Now, we have bijective correspondences among the cells and the modulations of the following forms.
    In what follows, $w$ denotes the loose composite of $\Xi\larr(p)[][1]M\larr(\tup{q})[path]M''\larr(\comp{j})[][1]M'$.
    \begin{equation*}
        \begin{tikzcd}
            F\ar[d,equal]\lar[r,Rightarrow,"\comp{\xi}p"] & M\lar[r,path,"\tup{q}"] & M''\ar[d,"j"] \\
            F\lar[rr,Rightarrow,"\comp{\xi}p'"'] && M'
            \cellsymb(\cdot){1-1}{2-3}
        \end{tikzcd}
        \Vline
        \begin{tikzcd}[scriptsizecolumn]
            F\ar[d,equal]\lar[r,Rightarrow,"\comp{\xi}p"] & M\lar[r,path,"\tup{q}"] & M''\lar[r,"\comp{j}"] & M'\ar[d,equal] \\
            F\lar[rrr,Rightarrow,"\comp{\xi}p'"'] &&& M'
            \cellsymb(\cdot){1-1}{2-4}
        \end{tikzcd}
        \Vline
        \begin{tikzcd}
            F\ar[d,equal]\lar[r,Rightarrow,"\comp{\xi}w"] & M'\ar[d,equal] \\
            F\lar[r,Rightarrow,"\comp{\xi}p'"'] & M'
            \cellsymb(\cdot){1-1}{2-2}
        \end{tikzcd}
    \end{equation*}
    \begin{equation*}
        \Vline
        \begin{tikzcd}
            \Xi\ar[d,equal]\lar[r,"w"] & M'\ar[d,equal] \\
            \Xi\lar[r,"p'"'] & M'
            \cellsymb(\cdot){1-1}{2-2}
        \end{tikzcd}
        \Vline
        \begin{tikzcd}[scriptsizecolumn]
            \Xi\ar[d,equal]\lar[r,"p"] & M\lar[r,path,"\tup{q}"] & M''\lar[r,"\comp{j}"] & M'\ar[d,equal] \\
            \Xi\lar[rrr,"p'"'] &&& M'
            \cellsymb(\cdot){1-1}{2-4}
        \end{tikzcd}
        \Vline
        \begin{tikzcd}
            \Xi\ar[d,equal]\lar[r,"p"] & M\lar[r,path,"\tup{q}"] & M''\ar[d,"j"] \\
            \Xi\lar[rr,"p'"'] && M'
            \cellsymb(\cdot){1-1}{2-3}
        \end{tikzcd}
    \end{equation*}
    Here, the first and the last correspondences are given by component-wise pasting with the canonical cells associated with the companion $\comp{j}$.
    The second one follows from the universal property of the loose composite $\lcomp((\comp{\xi}p)_A,\tup{q},\comp{j})$ and from the fact that $(\comp{\xi}w)_A=w(\xi_A,\id)\cong\lcomp(p(\xi_A,\id),\tup{q},\comp{j})=\lcomp((\comp{\xi}p)_A,\tup{q},\comp{j})$ (see \cite[9.8.\ Lemma]{Koudenburg2020aug}).
    The third one follows from the full faithfulness of the functor $\comp{\xi}-$.
    The fourth one follows from the universal property of the loose composite $w$.
    Therefore \ref{axiom:cell_type0_left} follows.
\end{proof}

\begin{remark}
    A variant of \cref{thm:strongest_simplification} can be obtained by weakening the isomorphisms of categories to equivalences (\cref{thm:special_case_versbicolim}).
    Moreover, the variant enables us to compare our versatile colimits with the notion of colimits that Wood studied in \cite{Wood1985proarrowII}.
    The precise statement can be found in \cref{thm:proarr_equip_in_terms_of_versbicolim}.
\end{remark}

\begin{theorem}[Unitality theorem]\label{thm:unitality_theorem}
    Suppose \ref{axiom:loose_esssurj_left}\ref{axiom:cell_type1_left}\ref{axiom:cell_type2} and that $\xi_A$ has a companion for every $A\in\bK$.
    Then, $\Xi$ has a loose unit.
\end{theorem}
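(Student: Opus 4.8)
The plan is to realize the loose unit on $\Xi$ as the \emph{companion of $\id_\Xi$}. By the characterization of companions via a unit and a counit satisfying the snake equations (the proposition following the definition of restrictions, \cite[5.4.\ Lemma]{Koudenburg2020aug}), it suffices to produce a loose arrow $\Xi\larr(\ell)\Xi$ together with a counit $\nu$ (top $\ell$, bottom of length $0$, identity legs) and a unit $\eta$ (top of length $0$, bottom $\ell$, identity legs) satisfying the two snake identities. The counit $\nu$ of a companion of $\id_\Xi$ is then automatically a $0$-coary restriction, which is loosewise invertible and hence cartesian by \cref{prop:cart_and_loosewise_inv}; thus $\ell=\comp{\id_\Xi}=\Unit_\Xi$ is a loose unit.

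First I would produce $\ell$. Since each $\xi_A$ has a companion $\comp{\xi_A}$, these companions assemble into a left $F$-module $\comp{\xi}$ with vertex $\Xi$, as in the remark following \cref{lem:canonical_functor_hol_to_mdl}. Applying the essential-surjectivity condition in its equivalent form \ref{axiom:loose_esssurj_left-dash} to the module $\comp{\xi}$ yields a loose arrow $\Xi\larr(\ell)\Xi$ together with a type-1 modulation whose components are cartesian cells $c_A$ exhibiting $\comp{\xi_A}$ as the restriction $\ell(\xi_A,\id)$; that is, $\comp{\xi}\ell\cong\comp{\xi}$ with comparison cells $c_A$.

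Next I would extract the two structure cells. The companion counits $\epsilon_A\colon\comp{\xi_A}\to\Xi$ form a type-1 modulation with left leg $\xi$, trivial bottom, and top the module $\comp{\xi}\ell\cong\comp{\xi}$; feeding it into \ref{axiom:cell_type1_left} (with $k=\id_\Xi$ and trivial $\tup{q},r$) produces a unique $0$-coary cell $\nu$ of the unit-counit shape with $c_A\tcomp\nu=\epsilon_A$ for all $A$. Dually, the pasted cells $\eta_A\tcomp c_A$ (the companion unit of $\xi_A$ followed by $c_A$) form a type-2 modulation with both legs $\xi$ and bottom $\ell$; feeding it into \ref{axiom:cell_type2} (with $k=k'=\id_\Xi$ and $q=\ell$) produces a unique cell $\eta$ with top of length $0$ and bottom $\ell$, characterized by $\heq_{\xi_A}\tcomp\eta=\eta_A\tcomp c_A$ for all $A$. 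It remains to verify the snakes. For the first, $\eta\tcomp\nu=\heq_{\id_\Xi}$, I would use that \ref{axiom:cell_type2} makes $\Homcat[\bL]\vvect{\Xi}{L}\to\Cone\vvect{F}{L}$ fully faithful (the proposition following the list of axioms), so it is enough to compare the $\xi_A$-whiskered images via \cref{lem:canonical_functor_ver_to_cocone}: for each $A$, $\heq_{\xi_A}\tcomp\eta\tcomp\nu=\eta_A\tcomp c_A\tcomp\nu=\eta_A\tcomp\epsilon_A=\heq_{\xi_A}$, by the defining equations and the first companion snake for $\xi_A$. For the second snake, a cell $\delta\colon\ell\to\ell$ that must equal $\veq_\ell$, I would use that \ref{axiom:cell_type1_left} forces \ref{axiom:cell_type0_left}, hence makes $\Homcat[\bL](\Xi,\Xi)\to\Mdl{F}{\Xi}$ fully faithful; since by \cref{lem:canonical_functor_hol_to_mdl} the modulation $\comp{\xi}\delta$ has components determined by $(\comp{\xi}\delta)_A\tcomp c_A=c_A\tcomp\delta$, one has $\comp{\xi}\delta=\id$ iff $c_A\tcomp\delta=c_A$ for all $A$, which reduces by the defining equations of $\eta$ and $\nu$ to the second companion snake for $\xi_A$.

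I expect the main obstacle to be the two snake verifications, and in particular the second one: translating a global cell equation on $\ell$ into a component statement requires invoking the full faithfulness of $\comp{\xi}(-)$ and then chasing the pasting diagram through the cartesian cells $c_A$, the counit $\nu$, and the companion unit/counit of each $\xi_A$. A preliminary routine point that must nonetheless be discharged is that the families $(\epsilon_A)_A$ and $(\eta_A\tcomp c_A)_A$ genuinely satisfy the modulation axioms, so that \ref{axiom:cell_type1_left} and \ref{axiom:cell_type2} are applicable; this follows from naturality of the companion cells together with the cocone and module identities.
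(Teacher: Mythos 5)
Your proposal is correct and follows essentially the same route as the paper's proof: obtain $\ell$ (the paper's $u$) by applying \ref{axiom:loose_esssurj_left-dash} to the left module of companions, extract the counit via \ref{axiom:cell_type1_left} and the unit via \ref{axiom:cell_type2}, and verify the two snake identities componentwise using the uniqueness clauses of those axioms. The only cosmetic difference is that you phrase the conclusion through the unit/counit characterization of the companion of $\id_\Xi$ and route the second snake through \ref{axiom:cell_type0_left}, whereas the paper invokes \ref{axiom:cell_type1_left} directly; these amount to the same uniqueness argument.
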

\begin{proof}
    Let $\comp{\xi}$ denote the left $F$-module given by the companions $\comp{\xi_A}$.
    Then, the canonical cartesian cells $\compcell{\xi_A}$ on the right below form a modulation $\compcell{\xi}$ of type 1 on the left below:
    \begin{equation*}
        \begin{tikzcd}
            F\ar[d,Rightarrow,"\xi"']\lar[r,Rightarrow,"\comp{\xi}"] & \Xi\ar[dl,equal] \\
            \Xi
            \cellsymb(\compcell{\xi})[above left]{1-2}{2-1}
        \end{tikzcd}
        \Vline
        \begin{tikzcd}
            FA\ar[d,"\xi_A"']\lar[r,"\comp{\xi_A}"] & \Xi\ar[dl,equal] \\
            \Xi
            \cellsymb(\compcell{\xi_A})[above left=-2]{1-2}{2-1}
        \end{tikzcd}
        \colon\cart\incat{\bL}\quad (A\in\bK)
    \end{equation*}
    By \ref{axiom:loose_esssurj_left-dash}, we have a loose arrow $\Xi\larr(u)\Xi$ in $\bL$ and a modulation $\compcell{\xi}u$ of type 1 whose components are cartesian:
    \begin{equation*}
        \begin{tikzcd}
            F\ar[d,Rightarrow,"\xi"']\lar[r,Rightarrow,"\comp{\xi}"] & \Xi\ar[d,equal] \\
            \Xi\lar[r,"u"'] & \Xi
            \cellsymb(\compcell{\xi}u){1-1}{2-2}
        \end{tikzcd}
        \Vline
        \begin{tikzcd}
            FA\ar[d,"\xi_A"']\lar[r,"\comp{\xi_A}"] & \Xi\ar[d,equal] \\
            \Xi\lar[r,"u"'] & \Xi
            \cellsymb(\cart){1-1}{2-2}
        \end{tikzcd}\incat{\bL}\quad (A\in\bK)
    \end{equation*}
    By \ref{axiom:cell_type1_left}, there is a unique cell $\epsilon$ corresponding to the modulation $\compcell{\xi}$.
    The cell $\epsilon$ is uniquely determined by the following equations:
    \begin{equation*}
        \begin{tikzcd}[largecolumn]
            FA\ar[d,"\xi_A"']\lar[r,"\comp{\xi_A}"] & \Xi\ar[d,equal] \\
            \Xi\ar[d,equal]\lar[r,"u"] & \Xi\ar[dl,equal] \\
            \Xi &
            \cellsymb((\compcell{\xi}u)_A){1-1}{2-2}
            \cellsymb(\epsilon)[above left=1]{2-2}{3-1}
        \end{tikzcd}
        =
        \begin{tikzcd}
            FA\ar[d,"\xi_A"']\lar[r,"\comp{\xi_A}"] & \Xi\ar[dl,equal] \\
            \Xi
            \cellsymb(\compcell{\xi_A})[above left=-2]{1-2}{2-1}
        \end{tikzcd}\incat{\bL}\quad (A\in\bK).
    \end{equation*}
    Let us consider a modulation $\tau$ of type 2 given by the following:
    \begin{equation*}
        \begin{tikzcd}[tri]
            & F\ar[dl,Rightarrow,"\xi"']\ar[dr,Rightarrow,"\xi"] & \\
            \Xi\lar[rr,"u"'] &{}& \Xi
            \cellsymb(\tau){1-2}{2-2}
        \end{tikzcd}
        \Vline
        \begin{tikzcd}[tri]
            & FA\ar[dl,equal]\ar[dr,"\xi_A"] & \\
            FA\ar[d,"\xi_A"']\lar[rr,"\comp{\xi_A}"'] &{}& \Xi\ar[d,equal] \\
            \Xi\lar[rr,"u"'] & & \Xi
            \cellsymb(\delta_A){1-2}{2-2}
            \cellsymb((\compcell{\xi}u)_A){2-1}{3-3}
        \end{tikzcd}\incat{\bL}\quad (A\in\bK),
    \end{equation*}
    where $\delta_A$ denotes the canonical cell associated with the companion $\comp{\xi_A}$.
    By \ref{axiom:cell_type2}, there is a unique cell $\eta$ corresponding to $\tau$.
    The cell $\eta$ is uniquely determined by the following equations:
    \begin{equation*}
        \begin{tikzcd}[tri]
            & FA\ar[d,"\xi_A"{left},bend right=20]\ar[d,"\xi_A"{right},bend left=20] & \\
            & \Xi\ar[dl,equal]\ar[dr,equal] & \\
            \Xi\lar[rr,"u"'] &{}& \Xi
            \cellsymb(\heq){1-2}{2-2}
            \cellsymb(\eta){2-2}{3-2}
        \end{tikzcd}
        =
        \begin{tikzcd}[tri]
            & FA\ar[dl,equal]\ar[dr,"\xi_A"] & \\
            FA\ar[d,"\xi_A"']\lar[rr,"\comp{\xi_A}"'] &{}& \Xi\ar[d,equal] \\
            \Xi\lar[rr,"u"'] & & \Xi
            \cellsymb(\delta_A){1-2}{2-2}
            \cellsymb((\compcell{\xi}u)_A){2-1}{3-3}
        \end{tikzcd}\incat{\bL}\quad (A\in\bK).
    \end{equation*}
    Then, \ref{axiom:cell_type1_left}\ref{axiom:cell_type2} and the following calculations conclude that $u$ becomes a loose unit on $\Xi$:
    \begin{equation*}
        \begin{tikzcd}[tri]
            FA\ar[d,"\xi_A"']\lar[rr,"\comp{\xi_A}"] && \Xi\ar[d,equal] \\
            \Xi\ar[dr,equal]\lar[rr,"u"] &{}& \Xi\ar[dl,equal] \\
            & \Xi\ar[dl,equal]\ar[dr,equal] & \\
            \Xi\lar[rr,"u"'] &{}& \Xi
            \cellsymb((\compcell{\xi}u)_A){1-1}{2-3}
            \cellsymb(\epsilon)[above=-3]{2-2}{3-2}
            \cellsymb(\eta){3-2}{4-2}
        \end{tikzcd}
        =
        \begin{tikzcd}
            & FA\ar[d,"\xi_A"']\lar[r,"\comp{\xi_A}"] & \Xi\ar[dl,equal] \\
            & \Xi\ar[dl,equal]\ar[d,equal] & \\
            \Xi\lar[r,"u"'] & \Xi &
            \cellsymb(\compcell{\xi_A})[above left=-2]{1-3}{2-2}
            \cellsymb(\eta)[below right]{2-2}{3-1}
        \end{tikzcd}
        =
        \begin{tikzcd}
            & FA\ar[dl,equal]\ar[d,"\xi_A"{left=-3}]\lar[r,"\comp{\xi_A}"] & \Xi\ar[dl,equal] \\
            FA\ar[d,"\xi_A"']\lar[r,"\comp{\xi_A}"'] & \Xi\ar[d,equal] & \\
            \Xi\lar[r,"u"'] & \Xi &
            \cellsymb(\delta_A)[below right]{1-2}{2-1}
            \cellsymb(\compcell{\xi_A})[above left=-2]{1-3}{2-2}
            \cellsymb((\compcell{\xi}u)_A){2-1}{3-2}
        \end{tikzcd}
        =
        \begin{tikzcd}
            FA\ar[d,"\xi_A"']\lar[r,"\comp{\xi_A}"] & \Xi\ar[d,equal] \\
            \Xi\lar[r,"u"'] & \Xi
            \cellsymb((\compcell{\xi}u)_A){1-1}{2-2}
        \end{tikzcd}
    \end{equation*}
    \begin{equation*}
        \begin{tikzcd}[tri]
            & FA\ar[d,"\xi_A"{left},bend right=20]\ar[d,"\xi_A"{right},bend left=20] & \\
            & \Xi\ar[dl,equal]\ar[dr,equal] & \\
            \Xi\ar[dr,equal]\lar[rr,"u"] &{}& \Xi\ar[dl,equal] \\
            & \Xi &
            \cellsymb(\heq){1-2}{2-2}
            \cellsymb(\eta){2-2}{3-2}
            \cellsymb(\epsilon)[above=-3]{3-2}{4-2}
        \end{tikzcd}
        =
        \begin{tikzcd}
            & FA\ar[dl,equal]\ar[d,"\xi_A"] \\
            FA\ar[d,"\xi_A"']\lar[r,"\comp{\xi_A}"'] & \Xi\ar[d,equal] \\
            \Xi\ar[d,equal]\lar[r,"u"] & \Xi\ar[dl,equal] \\
            \Xi &
            \cellsymb(\delta_A)[below right]{1-2}{2-1}
            \cellsymb((\compcell{\xi}u)_A){2-1}{3-2}
            \cellsymb(\epsilon)[above left=1]{3-2}{4-1}
        \end{tikzcd}
        =
        \begin{tikzcd}
            & FA\ar[dl,equal]\ar[d,"\xi_A"] \\
            FA\ar[d,"\xi_A"']\lar[r,"\comp{\xi_A}"] & \Xi\ar[dl,equal] \\
            \Xi &
            \cellsymb(\delta_A)[below right]{1-2}{2-1}
            \cellsymb(\compcell{\xi_A})[above left=-2]{2-2}{3-1}
        \end{tikzcd}
        =
        \begin{tikzcd}[tri]
            FA\ar[d,"\xi_A"{left},bend right=20]\ar[d,"\xi_A"{right},bend left=20] \\
            \Xi
            \cellsymb(\heq){1-1}{2-1}
        \end{tikzcd}\incat{\bL}.
    \end{equation*}
\end{proof}

\begin{example}[Versatile coproducts]
    Consider the \ac{AVDC} $\Rel$ of relations as in \cref{eg:avdc_of_relations}.
    Let $(\zero{X}_0,\zero{X}_1)\colon\Ddbl\zero{2}\to\Rel$ be an \ac{AVD}-functor determined by two (large) sets $\zero{X}_0,\zero{X}_1\in\Rel$, where $\zero{2}$ denotes the two-element set.
    Then, the disjoint union $\zero{X}_0+\zero{X}_1$ gives a versatile colimit of $(\zero{X}_0,\zero{X}_1)$, which is an example of a \emph{versatile coproduct} defined later (\cref{def:specific_versatile_colimits}).
    
    For the reader's convenience, we now explain the concrete meaning of the conditions in the definition of versatile colimits through this simple example.
    A tight cocone from $(\zero{X}_0,\zero{X}_1)$ is precisely a tuple $(\zero{L},l_0,l_1)$ of a (large) set $\zero{L}$ and two maps $\zero{X}_k\arr(l_k)[][1]\zero{L}$ $(k=0,1)$.
    Hence, the condition \ref{axiom:tight_bij} states that for every (large) set $\zero{L}$, composition with the coprojections \mbox{$\zero{X}_k\arr(\xi_k)[][1]\zero{X}_0+\zero{X}_1$} $(k=0,1)$ gives a bijection between the maps $\zero{X}_0+\zero{X}_1\arr(f)[][1]\zero{L}$ and the pairs $(\xi_0\tcomp f,\xi_1\tcomp f)$, which is the standard definition of $\zero{X}_0+\zero{X}_1$ being a binary coproduct in the category of (large) sets.

    A left module is precisely a tuple $(\zero{M},R_0,R_1)$ of a (large) set $\zero{M}$ and two binary relations $R_k\subseteq\zero{X}_k\times\zero{M}$ $(k=0,1)$.
    For a binary relation $R\subseteq(\zero{X}_0+\zero{X}_1)\times\zero{M}$, the restriction $R(\xi_0,\id)$ is defined as
    \[
        R(\xi_0,\id) \coloneq \{ (x,m)\in \zero{X}_0\times\zero{M} \mid (\xi_0(x),m)\in R \},
    \]
    and $R(\xi_1,\id)$ is also defined similarly.
    Then, since any isomorphism between left modules should be identity in this example, the condition \ref{axiom:loose_esssurj_left} states that for every pair of binary relations $R_k\subseteq\zero{X}_k\times\zero{M}$ $(k=0,1)$, there exists $R\subseteq(\zero{X}_0+\zero{X}_1)\times\zero{M}$ satisfying $R(\xi_k,\id)=R_k$ for $k=0,1$.
    This follows from the isomorphism
    \[
        (\zero{X}_0+\zero{X}_1)\times\zero{M} \cong (\zero{X}_0\times\zero{M})+(\zero{X}_1\times\zero{M}).
    \]
    In general, the uniqueness of such relation $R$ is required only up to isomorphism (cf.\ \cref{prop:fully_faithfulness_from_M2_M0}\cref{prop:fully_faithfulness_from_M2_M0-module}), but in this example, $R$ is strictly unique.
    The loosewise dual \ref{axiom:loose_esssurj_right} is similar.

    Finally, we explain only the condition \ref{axiom:cell_type3} here, because the other conditions \ref{axiom:cell_type1_left}\ref{axiom:cell_type1_right}\ref{axiom:cell_type2} are similar and follow from \ref{axiom:cell_type3} by using \cref{cor:simplification_to_M3}.
    For simplicity, we only consider the 1-coary case below.
    Consider (large) sets $\zero{M}_0,\dots,\zero{M}_m,\zero{M},\zero{N}_0,\dots,\zero{N}_n,\zero{N}$, maps $f,g$, and binary relations $S,T,P_1,\dots,P_m,Q_1,\dots,Q_n,R$ of the following forms:
    \begin{equation}\label{eq:boundary_Rel_M3}
        \begin{tikzcd}[scriptsizecolumn]
            \zero{N}_0\ar[d,"g"']\lar[r,path,"\tup{Q}"] & \zero{N}_n\lar[r,"T"] & \zero{X}_0+\zero{X}_1\lar[r,"S"] & \zero{M}_0\lar[r,path,"\tup{P}"] & \zero{M}_m\ar[d,"f"] \\
            \zero{N}\lar[rrrr,"R"'] &&&& \zero{M}
        \end{tikzcd}\incat{\Rel}.
    \end{equation}
    Since $\Rel$ has at most one cell for each boundary, the condition \ref{axiom:cell_type3} states that the existence of the cell fitting into \cref{eq:boundary_Rel_M3} is equivalent to the existence of the cells of the following forms for $k=0,1$:
    \begin{equation*}
        \begin{tikzcd}
            \zero{N}_0\ar[d,"g"']\lar[r,path,"\tup{Q}"] &[-10pt] \zero{N}_n\lar[r,"{T(\id,\xi_k)}"] & \zero{X}_k\lar[r,"{S(\xi_k,\id)}"] & \zero{M}_0\lar[r,path,"\tup{P}"] &[-10pt] \zero{M}_m\ar[d,"f"] \\
            \zero{N}\lar[rrrr,"R"'] &&&& \zero{M}
            \cellsymb(\cdot){1-1}{2-5}
        \end{tikzcd}
        \incat{\Rel}
        \quad (k=0,1).
    \end{equation*}
    This equivalence directly follows from the structure of $\zero{X}_0+\zero{X}_1$ as the disjoint union.
\end{example}

\begin{example}[Versatile initial objects]
    Similarly to the above example, the empty set gives a versatile colimit of the unique \ac{AVD}-functor from the empty \ac{AVDC} to $\Rel$, which is an example of a \emph{versatile initial object} defined later (\cref{def:specific_versatile_colimits}).
\end{example}

\begin{example}[Versatile collages]\label{eg:collage_in_SetProf}
    A \emph{collage}, also called \emph{cograph}, of a profunctor \mbox{$\one{A}\larr(P)[][1.5]\one{B}$} between categories is the category $\one{X}$ whose class of objects is the disjoint union of $\Ob\one{A}$ and $\Ob\one{B}$ and where
    \begin{equation*}
        \one{X}(x,y)\coloneq
        \begin{cases}
            \one{A}(x,y) & \text{if }x,y\in\one{A}; \\
            \one{B}(x,y) & \text{if }x,y\in\one{B}; \\
            P(x,y) & \text{if }x\in\one{A},~y\in\one{B}; \\
            \varnothing & \text{if }x\in\one{B},~y\in\one{A}.
        \end{cases}
    \end{equation*}
    Let $\bJ$ denote the \ac{AVDC} consisting of just two objects $0,1$ and a unique loose arrow $0\larr[][1]1$, and let $\Prof[\Set]$ denote the \ac{AVDC} of locally small (large) categories.
    If the categories $\one{A}$ and $\one{B}$ are large and locally small and the profunctor $P$ is locally small, then $\one{X}$ gives a versatile colimit of $\const{P}$, the \ac{AVD}-functor $\bJ\arr[][1] \Prof[\Set]$ selecting $P$.

    We briefly explain the meaning of the conditions for $\one{X}$ being a versatile colimit. 
    A tight cocone from $\const{P}$ is simply a tuple $(\one{L},L_0,L_1,L_{x,y})$ of the following data: a locally small (large) category $\one{L}$; functors $\one{A}\arr(L_0)[][1]\one{L}\rra(L_1)[][1]\one{B}$; and maps $P(x,y)\arr(L_{x,y}) \one{L}(L_0x,L_1y)$ that are natural in $x\in\one{A}$ and $y\in\one{B}$.
    Then, it can be observed that the tight cocones from $\const{P}$ bijectively correspond to the functors from $\one{X}$, which is what the condition \ref{axiom:tight_bij} requires.

    A left $\const{P}$-module is a tuple $(\one{M},M_0,M_1,M_{x,y,z})$ of the following data: a locally small (large) category $\one{M}$; locally small profunctors $\one{A}\larr(M_0)[][1]\one{M}$ and $\one{B}\larr(M_1)[][1]\one{M}$; and maps
    \[
        P(x,y) \times M_1(y,z) \arr(M_{x,y,z})[][3] M_0(x,z)
    \]
    that are (extra)natural in $x\in\one{A}$, $y\in\one{B}$, and $z\in\one{M}$.
    Given a left $\const{P}$-module $(\one{M},M_0,M_1,M_{x,y,z})$, we can construct a (locally small) profunctor $\one{X}\larr(\hat{M})\one{M}$ as follows:
    \begin{equation*}
        \hat{M}(x,m)\coloneq
        \begin{cases}
            M_0(x,m) & \text{if }x\in\one{A},~m\in\one{M}; \\
            M_1(x,m) & \text{if }x\in\one{B},~m\in\one{M}.
        \end{cases}
    \end{equation*}
    The action of the morphisms in $\one{X}$ on $\hat{M}$ is defined by using the maps $M_{x,y,z}$, together with the actions of the morphisms in $\one{A}$ and $\one{B}$ on $M_0$ and $M_1$, respectively.
    The existence of such profunctor $\hat{M}$ is what the condition \ref{axiom:loose_esssurj_left} requires.
    We omit describing the loosewise dual \ref{axiom:loose_esssurj_right}, since it is similar.

    Consider a left $\const{P}$-module $M=(\one{M},M_0,M_1,M_{x,y,z})$, a right $\const{P}$-module $N=\linebreak(\one{N},N_0,N_1,N_{w,x,y})$, functors $\one{M}\arr(F)[][1]\one{M}'$ and $\one{N}\arr(G)[][1]\one{N}'$, and a locally small profunctor \mbox{$\one{N}'\larr(Q)[][1.5]\one{M}'$}.
    Then, a modulation of type 3 fitting into the boundary
    \begin{equation}\label{eq:SetProf_boundary_M3}
        \begin{tikzcd}
            \one{N}\ar[d,"G"']\lar[r,Rightarrow,"N"] & \const{P}\lar[r,Rightarrow,"M"] & \one{M}\ar[d,"F"] \\
            \one{N}'\lar[rr,"Q"'] && \one{M}'
        \end{tikzcd}
    \end{equation}
    is a pair $(\omega_0,\omega_1)$ of the cells
    \begin{equation*}
        \begin{tikzcd}[scriptsizecolumn]
            \one{N}\ar[d,"G"']\lar[r,"N_0"] & \one{A}\lar[r,"M_0"] & \one{M}\ar[d,"F"] \\
            \one{N}'\lar[rr,"Q"'] && \one{M}'
            \cellsymb(\omega_0){1-1}{2-3}
        \end{tikzcd}
        \quad
        \begin{tikzcd}[scriptsizecolumn]
            \one{N}\ar[d,"G"']\lar[r,"N_1"] & \one{B}\lar[r,"M_1"] & \one{M}\ar[d,"F"] \\
            \one{N}'\lar[rr,"Q"'] && \one{M}'
            \cellsymb(\omega_1){1-1}{2-3}
        \end{tikzcd}
        \incat{\Prof[\Set]}
    \end{equation*}
    such that the following commutes for any $w\in\one{N}$, $x\in\one{A}$, $y\in\one{B}$, and $z\in\one{M}$.
    \begin{equation*}
        \begin{tikzcd}[small]
            & N_0(w,x)\times P(x,y)\times M_1(y,z)\ar[dl,"N_{w,x,y}\times\id"']\ar[dr,"\id\times M_{x,y,z}"] & \\
            N_1(w,y)\times M_1(y,z)\ar[dr,"\omega_1"'] && N_0(w,x)\times M_0(x,z)\ar[dl,"\omega_0"] \\
            & Q(Gw,Fz) &
        \end{tikzcd}
    \end{equation*}
    Then, it can be shown that the modulations fitting into \cref{eq:SetProf_boundary_M3} bijectively correspond to the cells of the following form:
    \begin{equation*}
        \begin{tikzcd}
            \one{N}\ar[d,"G"']\lar[r,"\hat{N}"] & \one{X}\lar[r,"\hat{M}"] & \one{M}\ar[d,"F"] \\
            \one{N}'\lar[rr,"Q"'] && \one{M}'
            \cellsymb(\cdot){1-1}{2-3}
        \end{tikzcd}\incat{\Prof[\Set]}.
    \end{equation*}
    This is a special case of what the condition \ref{axiom:cell_type3} requires, and the general case also follows similarly.
    Since $\Prof[\Set]$ has loose units, the rest conditions \ref{axiom:cell_type1_left}\ref{axiom:cell_type1_right}\ref{axiom:cell_type2} follow from \cref{cor:simplification_to_M3}.
\end{example}

\begin{remark}
    The versatile colimit described in \cref{eg:collage_in_SetProf} also becomes a versatile colimit of another diagram.
    Let $\zero{2}=\{0,1\}$ denote the two-element set.
    Then, the \ac{AVD}-functor $\const{P}\colon\bJ\arr[][1] \Prof[\Set]$ as in \cref{eg:collage_in_SetProf} extends to an \ac{AVD}-functor $\Idimdbl\zero{2}\arr[][1] \Prof[\Set]$ by assigning the loose units to $!_{00}$ and $!_{11}$, the empty profunctor to $!_{10}$.
    Then, the category $\one{X}$ as in \cref{eg:collage_in_SetProf} also becomes a versatile colimit of this extended \ac{AVD}-functor, which provides an example of a \emph{versatile collage} defined later in \cref{def:specific_versatile_colimits}.
\end{remark}

\begin{remark}
    Let $\one{A}\larr(P)\one{B}$ be a locally small profunctor between (not necessarily locally small) large categories.
    Let $\Prof[(\Set,\SET)]$ denote the \ac{AVDC} of large categories and locally small profunctors \cite[{2.6.\ Example}]{Koudenburg2020aug}.
    Then, the category $\one{X}$ as in \cref{eg:collage_in_SetProf} still gives a versatile colimit of the \ac{AVD}-functor $\const{P}\colon\bJ\arr[][1] \Prof[(\Set,\SET)]$ selecting $P$, where $\bJ$ is the same as in \cref{eg:collage_in_SetProf}.
    Since $\one{X}$ is not necessarily locally small, this gives an example of a versatile colimit with no loose unit.
\end{remark}

\begin{remark}
    The disjoint union $\zero{X}_0+\zero{X}_1$ of two sets also gives a versatile coproduct in the diminished \ac{AVDC} $\dim{\Rel}$ as well as in $\Rel$.
    However, the empty set $\varnothing$ fails to be a versatile initial object in $\dim{\Rel}$, despite being so in $\Rel$.
    Indeed, if a versatile initial object exists in $\dim{\Rel}$, the unitality theorem (\cref{thm:unitality_theorem}) implies the existence of a loose unit on the vertex, which is a contradiction because $\dim{\Rel}$ is diminished.
    This can also be seen from the fact that, for instance, the condition \ref{axiom:cell_type1_left} ends up requiring for any map $\zero{X}\arr(f)[][1]\zero{Y}$, the existence of a cell of the following form:
        \begin{equation*}
        \begin{tikzcd}
            \varnothing\ar[d,"!"']\lar[r,"!"] & \zero{X}\ar[dl,"f"] \\
            \zero{Y} &
            \cellsymb(\cdot)[above left=2]{1-2}{2-1}
        \end{tikzcd}\incat{\dim{\Rel}}.
    \end{equation*}
    Here, the symbols ``$!$'' denote the unique map and relation from the empty set.
\end{remark}

To address the issue as explained above, we consider the following definition:
\begin{definition}
    $\xi$ is called a \emph{\ac{VD}-versatile colimit} of $F$ if it satisfies the conditions \ref{axiom:tight_bij}\ref{axiom:loose_esssurj_left}\ref{axiom:loose_esssurj_right}, and the other conditions \ref{axiom:cell_type1_left}\ref{axiom:cell_type1_right}\ref{axiom:cell_type2}\ref{axiom:cell_type3} hold only for 1-coary modulations, i.e., modulations whose bottom boundary is of length 1. 
\end{definition}

\begin{example}
    The empty set is a \ac{VD}-versatile initial object in $\dim{\Rel}$.
\end{example}

We now present sufficient conditions for the two notions to coincide:
\begin{theorem}\label{thm:VDverscolim_unital}
    Suppose that $\bL$ has loose units.
    Then, the following are equivalent:
    \begin{enumerate}
        \item
            $\xi$ is a versatile colimit.
        \item
            $\xi$ is a \ac{VD}-versatile colimit.
        \item
            $\xi$ satisfies \ref{axiom:tight_bij}\ref{axiom:loose_esssurj_left}\ref{axiom:loose_esssurj_right}, and the condition \ref{axiom:cell_type3} holds only for 1-coary modulations.
    \end{enumerate}
\end{theorem}
\begin{proof}
    From the existence of the loose unit, any 0-coary cells reduce to 1-coary cells in $\bL$.
    Combining this with \cref{cor:simplification_to_M3}, we obtain the equivalence.
\end{proof}

\begin{theorem}\label{thm:VDverscolim_diminished}
    Suppose that $\bK$ is non-empty and loosewise discrete, and that $\bL$ is diminished.
    Then, $\xi$ is a versatile colimit if and only if it is a \ac{VD}-versatile colimit.
\end{theorem}
\begin{proof}
    From the assumption that $\bK$ is non-empty, any 0-coary modulation cannot appear in the conditions \ref{axiom:cell_type1_left}\ref{axiom:cell_type1_right}\ref{axiom:cell_type2}\ref{axiom:cell_type3}.
    Indeed, if such a modulation were to exist, its component at some object in $\bK$ would be a non-trivial 0-coary cell in $\bL$, which contradicts the assumption that $\bL$ is diminished.
    Hence, the equivalence follows.
\end{proof}
\subsection{Rigidity of the concept of versatile colimits}
We will now study the uniqueness of (\ac{VD}-)versatile colimits up to an appropriate notion of isomorphism,
as well as their invariance under suitable equivalences between \acp{AVDC}.
\begin{definition}\label{def:admissible_equivalence}\quad
    \begin{enumerate}
        \item
            An invertible tight arrow in an \ac{AVDC} is called \emph{admissible} if it is pulling, and its inverse is also pulling.
            Such a tight arrow is also called an \emph{admissible isomorphism}.
            Two objects are called \emph{admissibly isomorphic} (to each other) if there is an admissible isomorphism between them.
        \item
            An invertible tight \ac{AVD}-transformation is called \emph{admissible} if every component is admissible.
        \item
            An equivalence in the 2-category $\AVDC$ is called \emph{admissible} if the associated invertible tight \ac{AVD}-transformations are admissible.
            Two \acp{AVDC} are called \emph{admissibly equivalent} (to each other) if there is an admissible equivalence between them.\qedhere
    \end{enumerate}
\end{definition}

\begin{definition}\label{def:isofibrancy}
    An \ac{AVDC} $\bL$ is called \emph{iso-fibrant} if every invertible tight arrow in $\bL$ is admissible.
    Clearly, every equivalence between iso-fibrant \acp{AVDC} is admissible.
\end{definition}

\begin{remark}\label{rem:intuition_of_admissible_iso}
    An invertible tight arrow $X\arr(k)[][1]Y$ in an \ac{AVDC} $\bL$ always induces isomorphisms between the categories of tight arrows:
    \begin{gather*}
        \Homcat[\bL]\vvect{L}{X}\cong\Homcat[\bL]\vvect{L}{Y}
        \colon f\mapsto f\tcomp k,\quad g\tcomp k^{-1}\mapsfrom g; \\
        \Homcat[\bL]\vvect{X}{L}\cong\Homcat[\bL]\vvect{Y}{L}
        \colon f\mapsto k^{-1}\tcomp f,\quad k\tcomp g\mapsfrom g.
    \end{gather*}
    If $k$ is admissible, it further induces equivalences between the categories of loose arrows:
    \begin{gather*}
        \Homcat[\bL](L,X)\simeq\Homcat[\bL](L,Y)
        \colon u\mapsto u(\id_L,k^{-1}),\quad v(\id_L,k)\mapsfrom v; \\
        \Homcat[\bL](X,L)\simeq\Homcat[\bL](Y,L)
        \colon u\mapsto u(k^{-1},\id_L),\quad v(k,\id_L)\mapsfrom v.
    \end{gather*}
    Moreover, under the assumption of admissibility, $k$ also induces bijections between the classes of cells.
    We now describe a special case.
    Let us consider the following data:
    \begin{equation*}
        \begin{tikzcd}
            Y\ar[d,"f"']\lar[r,"u"] & A_0\lar[r,path,"\tup{p}"] & A_n\ar[d,"g"] \\
            B\lar[rr,phan,"q"'] && C
        \end{tikzcd}\incat{\bL}.
    \end{equation*}
    Whenever $k$ is admissible, we can obtain the following restriction, denoted by $\comp{k}u$, with the cartesian cell denoted by $\compcell{k}u$:
    \begin{equation*}
        \begin{tikzcd}[largecolumn]
            X\ar[d,"k"']\lar[r,"\comp{k}u"] & A_0\ar[d,equal] \\
            Y\lar[r,"u"'] & A_0
            \cellsymb(\compcell{k}u\colon\cart){1-1}{2-2}
        \end{tikzcd}\incat{\bL}.
    \end{equation*}
    Since $k$ is invertible, the cell $\compcell{k}u$ automatically becomes loosewise invertible.
    Then, pre-composition of $\compcell{k}u$ gives the following bijection between the classes of cells:
    \begin{equation*}
        \Cells[\bL]{k\tcomp f}{g}{\comp{k}u,\tup{p}}{q}\cong\Cells[\bL]{f}{g}{u,\tup{p}}{q}
        \colon (\compcell{k}u,\veq_\tup{p})\tcomp\alpha~\mapsfrom~\alpha.
    \end{equation*}
    The general case also follows similarly.
\end{remark}

The following proposition shows that (\ac{VD}-)versatile colimits of a given diagram are unique up to admissible isomorphism.
\begin{proposition}\label{prop:vers_colim_is_up_to_admissible_iso}
    Let $\xi$ be a versatile (resp.\ \ac{VD}-versatile) colimit of an \ac{AVD}-functor \mbox{$F\colon\bK\to\bL$}, and let $\Xi\in\bL$ be the vertex of $\xi$.
    \begin{enumerate}
        \item\label{prop:vers_colim_is_up_to_admissible_iso-vertex}
            Let $\Xi\arr(k)[][1]L$ be a tight arrow in $\bL$.
            Then, the induced tight cocone $\xi\tcomp k$ is a versatile (resp.\ \ac{VD}-versatile) colimit if and only if $k$ is an admissible isomorphism.
        \item\label{prop:vers_colim_is_up_to_admissible_iso-diagram}
            Let $F'\arr(\rho)[Rightarrow][1]F$ be an invertible tight \ac{AVD}-transformation, and suppose that $\rho$ is admissible.
            Then, the induced tight cocone $\rho\tcomp\xi\coloneq(\rho_A\tcomp\xi_A)_{A\in\bK}$ becomes a versatile (resp.\ \ac{VD}-versatile) colimit of $F'$.
    \end{enumerate}
\end{proposition}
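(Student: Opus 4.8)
The plan is to obtain both parts by transporting the conditions of \cref{def:versatile_colim} along the isomorphisms of tight-hom categories, the equivalences of loose-hom categories, and the bijections of cell-classes that an admissible isomorphism induces, as recorded in \cref{rem:intuition_of_admissible_iso}. Only the converse direction of \cref{prop:vers_colim_is_up_to_admissible_iso-vertex} requires admissibility to be \emph{produced} rather than consumed, and there the decisive device will be \cref{prop:cart_and_loosewise_inv}, which says that over invertible left and right legs ``cartesian'' and ``loosewise invertible'' coincide.

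For the forward implication of \cref{prop:vers_colim_is_up_to_admissible_iso-vertex}, assume $k$ is an admissible isomorphism. Using $(\xi\tcomp k)\tcomp k'=\xi\tcomp(k\tcomp k')$ and, since $k$ is left-pulling so that restrictions compose, $\comp{(\xi\tcomp k)}p\cong\comp{\xi}\bigl(p(k,\id)\bigr)$ (together with their loosewise duals), each canonical functor attached to $\xi\tcomp k$ becomes the corresponding functor for $\xi$ composed with one of the induced (iso)equivalences of \cref{rem:intuition_of_admissible_iso}; as $(\xi\tcomp k)_A=\xi_A\tcomp k$ is pulling, every condition transfers from $\xi$ to $\xi\tcomp k$. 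For the converse, first observe that \ref{axiom:cell_type2} together with \ref{axiom:tight_bij} makes both $\xi\tcomp-$ and $(\xi\tcomp k)\tcomp-$ isomorphisms of categories on every tight-hom. Running $\xi$ through $(\xi\tcomp k)\tcomp-$ yields $l\colon L\to\Xi$ with $\xi\tcomp(k\tcomp l)=\xi$, whence $k\tcomp l=\id_\Xi$ by injectivity on objects; comparing $\id_L$ with $l\tcomp k$ under $\xi\tcomp-$ gives $l\tcomp k=\id_L$, so $k$ is invertible with inverse $l$.

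It remains to show $k$ is pulling, which I expect to be the main obstacle. Fix a loose arrow $L\larr(p)C$. Since $(\xi\tcomp k)_A$ is left-pulling, the module $\comp{(\xi\tcomp k)}p$ is defined, and \ref{axiom:loose_esssurj_left-dash} for $\xi$ supplies a loose arrow $\Xi\larr(q)C$ and a type-$1$ modulation $\sigma$ with cartesian components $\sigma_A$ exhibiting $\comp{(\xi\tcomp k)}p\cong\comp{\xi}q$. I then apply \ref{axiom:cell_type1_left} twice: once to the versatile colimit $\xi$ with the arrow $k$, producing a cell $\hat\sigma$ over $(k,\id_C)$ from $q$ to $p$ characterized by $(\compcell{(\xi\tcomp k)}p)_A=\sigma_A\tcomp\hat\sigma$; and once to the versatile colimit $\xi\tcomp k$ with the arrow $l$ (legitimate since $(\xi\tcomp k)\tcomp l=\xi$), producing a cell $\hat\sigma'$ over $(l,\id_C)$ from $p$ to $q$ characterized by $\sigma_A=(\compcell{(\xi\tcomp k)}p)_A\tcomp\hat\sigma'$. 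Pasting these identities and invoking the uniqueness clauses of \ref{axiom:cell_type1_left} (with $\kappa=\id_\Xi$ and $\kappa=\id_L$ respectively) forces $\hat\sigma\tcomp\hat\sigma'=\veq_q$ and $\hat\sigma'\tcomp\hat\sigma=\veq_p$, so $\hat\sigma$ is loosewise invertible. Because its legs $k$ and $\id_C$ are invertible, \cref{prop:cart_and_loosewise_inv} upgrades $\hat\sigma$ to a cartesian cell exhibiting $q=p(k,\id)$, so $k$ is left-pulling; simultaneously $\hat\sigma'$ exhibits $l$ as left-pulling. The loosewise duals, via \ref{axiom:loose_esssurj_right} and \ref{axiom:cell_type1_right}, give right-pulling, so $k$ (and $l$) are pulling and $k$ is admissible.

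For \cref{prop:vers_colim_is_up_to_admissible_iso-diagram} the argument is a uniform, obstacle-free transfer along $\rho$. Precomposition with the invertible tight transformation $\rho$ is an isomorphism $\Cone\vvect{F}{L}\cong\Cone\vvect{F'}{L}$, and, since $\rho$ is admissible, componentwise restriction along the $\rho_A$ is an equivalence $\Mdl{F}{M}\simeq\Mdl{F'}{M}$ together with its right-handed dual. From $(\rho\tcomp\xi)_A=\rho_A\tcomp\xi_A$ one reads off $(\rho\tcomp\xi)\tcomp-=(\rho\tcomp-)\circ(\xi\tcomp-)$ and $\comp{(\rho\tcomp\xi)}-\cong(\text{restriction along }\rho)\circ\comp{\xi}-$, with their duals, while $(\rho\tcomp\xi)_A$ is pulling because both $\rho_A$ and $\xi_A$ are. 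Hence the canonical functors for $\rho\tcomp\xi$ are those for $\xi$ postcomposed with the above (iso)equivalences, and admissibility of $\rho$ furnishes the componentwise cell-bijections that turn the type-$1$, type-$2$, and type-$3$ universal properties for $F$ into those for $F'$ verbatim. Each condition of \cref{def:versatile_colim} therefore holds for $\rho\tcomp\xi$, which is a versatile colimit of $F'$.
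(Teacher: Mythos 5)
Your proof is correct and follows essentially the same route as the paper's: the forward direction of (i) and all of (ii) are transported along the (iso)equivalences induced by an admissible isomorphism, and the converse of (i) first extracts invertibility of $k$ from \ref{axiom:tight_bij} and then produces a loosewise invertible --- hence, by \cref{prop:cart_and_loosewise_inv}, cartesian --- comparison cell by applying \ref{axiom:cell_type1_left} to both $\xi$ and $\xi\tcomp k$, exactly as in the paper (your version just spells out the mutual-inverse bookkeeping that the paper leaves implicit). The only blemish is a notational slip: the identity $l\tcomp k=\id_L$ must be read off under $(\xi\tcomp k)\tcomp-$ rather than under $\xi\tcomp-$, since $l\tcomp k$ is an endomorphism of $L$, not of $\Xi$.
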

\begin{proof}\quad
    \begin{enumerate}
        \item
            Suppose that $k$ is an admissible isomorphism.
            Then, by \cref{rem:intuition_of_admissible_iso}, the axioms of (\ac{VD}-)versatile colimits for $\xi$ imply those for $\xi\tcomp k$ directly.
            For example, the condition \ref{axiom:loose_esssurj_left} for $\xi\tcomp k$ can be derived as follows:
            \begin{gather*}
                \Homcat[\bL](L,M)
                \simeq\Homcat[\bL](\Xi,M)
                \simeq\Mdl{F}{M}
                \qquad (M\in\bL).
            \end{gather*}
            Here, the first equivalence comes from \cref{rem:intuition_of_admissible_iso}, and the second one comes from \ref{axiom:loose_esssurj_left} for $\xi$.
            The remaining conditions \ref{axiom:loose_esssurj_right}\ref{axiom:cell_type1_left}\ref{axiom:cell_type1_right}\ref{axiom:cell_type2}\ref{axiom:cell_type3} for $\xi\tcomp k$ also follow in a similar way.

            We now show the converse direction.
            Suppose that the tight cocone $\xi\tcomp k$ is a\linebreak (\ac{VD}-)versatile colimit.
            By \ref{axiom:tight_bij} for $\xi\tcomp k$ and $\xi$, the tight arrow $k$ becomes invertible.
            Although we are required to show that both $k$ and $k^{-1}$ are pulling, due to symmetry, it suffices to show that $k$ is left-pulling.
            Let $L\larr(u)M$ be a loose arrow in $\bL$.
            Since every $\xi_A\tcomp k$ is left-pulling, we have the left $F$-module $\comp{(\xi\tcomp k)}u$ as in \cref{const:canonical_functor_loose_to_mdl}.
            By \ref{axiom:loose_esssurj_left-dash} for $\xi$, we have a loose arrow $\Xi\larr(v)M$ in $\bL$ such that $\comp{\xi}v=\comp{(\xi\tcomp k)}u$.
            Then, by \ref{axiom:cell_type1_left} for $\xi$, there is a unique cell $\alpha$ satisfying the following equation:
            \begin{equation*}
                \begin{tikzcd}[hugecolumn]
                    FA\ar[d,"(\xi\tcomp k)_A"']\lar[r,"(\comp{(\xi\tcomp k)}u)_A"] & M\ar[d,equal] \\
                    L\lar[r,"u"'] & M
                    \cellsymb((\compcell{(\xi\tcomp k)}u)_A){1-1}{2-2}
                \end{tikzcd}
                =
                \begin{tikzcd}[hugecolumn]
                    FA\ar[d,"\xi_A"']\lar[r,"(\comp{(\xi\tcomp k)}u)_A"] & M\ar[d,equal] \\
                    \Xi\ar[d,"k"']\lar[r,"v"'] & M\ar[d,equal] \\
                    L\lar[r,"u"'] & M
                    \cellsymb((\compcell{\xi}v)_A){1-1}{2-2}
                    \cellsymb(\alpha){2-1}{3-2}
                \end{tikzcd}\incat{\bL}\quad (A\in\bK).
            \end{equation*}
            By using \ref{axiom:cell_type1_left} for $\xi\tcomp k$, we can obtain a loosewise inverse of $\alpha$.
            In particular, the cell $\alpha$ is cartesian.
        \item
            Since $\rho$ is invertible, it induces isomorphisms of categories $\Cone\vvect{F}{L}\cong\Cone\vvect{F'}{L}$ for any $L\in\bL$.
            Moreover, by the admissibility, $\rho$ induces equivalences of categories $\Mdl{F}{M}\simeq\Mdl{F'}{M}$ for any $M\in\bL$, and bijections among the classes of modulations of the same type.
            Thus, the conditions \ref{axiom:tight_bij}\ref{axiom:loose_esssurj_left}\ref{axiom:loose_esssurj_right}\ref{axiom:cell_type1_left}\ref{axiom:cell_type1_right}\ref{axiom:cell_type2}\ref{axiom:cell_type3} for $\rho\tcomp\xi$ follow directly from those for $\xi$.\qedhere
    \end{enumerate}
\end{proof}

\begin{lemma}\label{lem:replacement_admissible_adj_equiv}
    Every admissible equivalence between \acp{AVDC} can be replaced with an admissible adjoint equivalence.
\end{lemma}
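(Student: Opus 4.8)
The plan is to reduce the statement to the purely formal fact that, in any $2$-category, an equivalence can be upgraded to an adjoint equivalence by retaining one of its two invertible $2$-cells and correcting the other, and then to check that the correction stays inside the class of admissible isomorphisms. So suppose the given admissible equivalence consists of \ac{AVD}-functors $F\colon\bK\to\bL$ and $G\colon\bL\to\bK$ together with invertible tight \ac{AVD}-transformations $\eta\colon\Id_\bK\Rightarrow GF$ and $\epsilon\colon FG\Rightarrow\Id_\bL$, where admissibility means that every component $\eta_A$ $(A\in\bK)$ and $\epsilon_L$ $(L\in\bL)$ is an admissible isomorphism. Applying the standard promotion, I would keep $\eta$ untouched and replace $\epsilon$ by a new invertible tight \ac{AVD}-transformation $\epsilon'\colon FG\Rightarrow\Id_\bL$, assembled from $\eta$ and $\epsilon$ by whiskering with the functors $F$ and $G$ and taking finitely many vertical composites and inverses, in such a way that the two triangle identities hold. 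Since this construction is formal and valid in every $2$-category, I would invoke it for $\AVDC$ without reproving the triangle identities; the only genuinely new task is to verify that $\epsilon'$ is again admissible.

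The unit requires no work: we keep $\eta$, which is admissible by hypothesis. For the counit, I would isolate two closure properties of admissible isomorphisms. \textbf{First}, admissible isomorphisms are closed under composition and inversion: if $k$ and $l$ are composable admissible isomorphisms, then $k\tcomp l$ is invertible, and its pulling property follows by pasting the two defining cartesian cells (restrictions compose to restrictions), while the inverse $l^{-1}\tcomp k^{-1}$ is pulling for the same reason; identity tight arrows are admissible since the loose identity cell is cartesian. \textbf{Second}, the equivalence functors $F$ and $G$ preserve admissible isomorphisms. This I would deduce from \cref{thm:equiv_in_AVDC}: such a functor induces bijections on the classes of cells for every boundary and is essentially surjective on objects and on loose arrows up to loosewise invertible cells, so the universal property defining a cartesian cell transports along it. Hence $F$ and $G$ preserve (and reflect) cartesian cells, and in particular send each admissible isomorphism and its inverse to an admissible isomorphism.

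Granting these two facts, the conclusion is immediate: each component $\epsilon'_L$ is, by the explicit shape of the correction, a composite of components of $\eta$ pushed forward through $F$ or $G$ together with components of $\epsilon$ and their inverses, i.e.\ a composite of admissible isomorphisms. By the first closure property $\epsilon'_L$ is therefore admissible, so $\epsilon'$ is an admissible invertible tight \ac{AVD}-transformation, and $(F,G,\eta,\epsilon')$ is the desired admissible adjoint equivalence.

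The main obstacle I anticipate is precisely the preservation claim of the second step, namely that a functor participating in an equivalence preserves admissibility; this is where the genuine content lies, since admissibility is phrased through existence of restrictions (cartesian cells), which \emph{a priori} need not be respected by an arbitrary \ac{AVD}-functor. Once this is extracted cleanly from \cref{thm:equiv_in_AVDC}, everything else is formal bookkeeping, and I would take care only to confirm that the specific composite defining $\epsilon'$ uses $\eta$, $\epsilon$ solely through whiskering and vertical composition, so that the closure properties apply componentwise.
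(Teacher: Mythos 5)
Your overall skeleton matches the paper's: perform the standard $2$-categorical promotion keeping the unit and correcting the counit to $\epsilon':=\epsilon\circ(F\eta^{-1}G)\circ(\epsilon^{-1}FG)$, then check componentwise that the corrected counit is still admissible, using closure of admissible isomorphisms under composition and inversion. Where you diverge is in how the middle factor $F\eta^{-1}_{GL}$ is handled. The paper does \emph{not} prove that the equivalence functors preserve admissibility at this stage; instead it builds the required restriction $u(\epsilon'_L,\id)$ by hand as a vertical paste of three cells, and the reason this works is that the cells obtained by applying $F$ are images of \emph{loosewise invertible} cartesian cells (the restrictions along invertible tight arrows of \cref{prop:cart_and_loosewise_inv}), which are isomorphisms in $\Lphancat$ and hence preserved by \emph{any} \ac{AVD}-functor — and crucially, the loose arrow being restricted at the $F$-stage is in the image of $F$ by construction, so no essential-surjectivity transport is needed. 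Your route instead factors through a general claim that $F$ and $G$ preserve admissible isomorphisms, deduced from \cref{thm:equiv_in_AVDC}. That claim is true, but your justification glosses the real step: preservation of cartesian cells alone does not give preservation of pullingness, since pullingness quantifies over \emph{all} loose arrows of the target \ac{AVDC}, not just those in the image of the functor; you must invoke the essential-surjectivity clause of \cref{thm:equiv_in_AVDC} to replace an arbitrary loose arrow by one of the form $Fu'$ up to a loosewise invertible cell before the universal property transports. This is exactly the content of the paper's \cref{lem:admissible_equiv_preserve_pullingness} — which, note, is proved \emph{after} and \emph{using} the present lemma, so you cannot cite it; you would have to prove your preservation claim independently from \cref{thm:equiv_in_AVDC}, which is doable but costs roughly the same work the paper spends on its direct construction. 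With that step filled in, your argument is correct; the paper's version is shorter precisely because it never needs the general preservation statement.
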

\begin{proof}
    Let
    $
    \begin{tikzcd}
        \bL\ar[r,"\Phi",shift left=0.7] & \bL'\ar[l,"\Psi",shift left=0.7]
    \end{tikzcd}
    $
    form an admissible equivalence with invertible tight \ac{AVD}-transformations $\alpha\colon\Id\Rightarrow\Psi\circ\Phi$ and $\beta\colon\Phi\circ\Psi\Rightarrow\Id$.
    Since $\AVDC$ is a 2-category, we can obtain an adjoint equivalence $(\Phi,\Psi,\eta,\epsilon)$ by defining $\eta\coloneq\alpha$ and $\epsilon\coloneq\beta\circ(\Phi\eta^{-1}\Psi)\circ(\beta^{-1}\Phi\Psi)$.
    Now, we have to show that any components of $\epsilon$ and $\epsilon^{-1}$ are pulling in $\bL'$.
    
    Take an arbitrary loose arrow $X\larr(u)[][1]Y$ in $\bL'$.
    Since $\eta^{-1}_{\Psi X}=\alpha^{-1}_{\Psi X}$ is left-pulling, we obtain a cartesian (loosewise invertible) cell $\lambda$ on the left below.
    By the pullingness of $\beta^{-1}$ and \cref{lem:pulling_and_restriction}, we also obtain the top cartesian cell on the right below.
    \begin{equation*}
        \begin{tikzcd}
            \Psi\Phi\Psi X\ar[d,"\eta^{-1}_{\Psi X}"']\lar[r,"p"] & \Psi Y\ar[d,equal] \\
            \Psi X\lar[r,"\Psi u"'] & \Psi Y
            \cellsymb(\lambda\colon\cart){1-1}{2-2}
        \end{tikzcd}\incat{\bL}
        \qquad\qquad
        \begin{tikzcd}
            \Phi\Psi X\ar[d,"\beta^{-1}_{\Phi\Psi X}"']\lar[r] & Y\ar[d,"\beta^{-1}_Y"] \\
            \Phi\Psi\Phi\Psi X\ar[d,"\Phi\eta^{-1}_{\Psi X}"']\lar[r,"\Phi p"] & \Phi\Psi Y\ar[d,equal] \\
            \Phi\Psi X\ar[d,"\beta_X"']\lar[r,"\Phi\Psi u"] & \Phi\Psi Y\ar[d,"\beta_Y"] \\
            X\lar[r,"u"'] & Y
            \cellsymb(\cart){1-1}{2-2}
            \cellsymb(\Phi\lambda){2-1}{3-2}
            \cellsymb(\beta_u){3-1}{4-2}
        \end{tikzcd}\incat{\bL'}
    \end{equation*}
    Then, any cells on the right above are cartesian.
    Indeed, $\Phi\lambda$ and $\beta_u$ are cartesian since they are loosewise invertible.
    Thus, the composite of the cells on the right above gives the desired restriction $u(\epsilon_X,\id)$, hence $\epsilon_X$ is left-pulling.
    The loosewise dual argument shows that $\epsilon_X$ is right-pulling.
    Furthermore, using the pullingness of $\eta_{\Psi X}$, we can also show that $\epsilon^{-1}_X$ is pulling in a similar way.
\end{proof}

\begin{remark}
    Let $\Phi\colon\bL\to\bL'$ be an equivalence in the 2-category $\AVDC$.
    For any objects $X,Y\in\bL$, $\Phi$ induces an isomorphism between the categories of tight arrows:
    \begin{equation*}
        \Homcat[\bL]\vvect{X}{Y}\cong\Homcat[\bL']\vvect{\Phi X}{\Phi Y}
        \colon f\mapsto\Phi f.
    \end{equation*}
    This follows from 2-functoriality of the assignment $\bL\mapsto\Ttwocat\bL$.

    On the other hand, the functor $\Homcat[\bL](X,Y)\arr(\Phi-)\Homcat[\bL'](\Phi X,\Phi Y)$ is not necessarily an equivalence.
    It is fully faithful, but not essentially surjective in general.
    A counterexample is given by the following \ac{AVD}-functor:
    \begin{equation*}
        \bL\coloneq
        \left\{
            \begin{tikzcd}[small]
                & Y\ar[d,"k","\cong"'] \\
                X\lar[r,"u"'] & Z
            \end{tikzcd}
        \right\}
        \arr(\Phi)[][3]
        \left\{
            \begin{tikzcd}[small]
                0\lar[r,"!"] & 1
            \end{tikzcd}
        \right\}
        =:\bL'.
    \end{equation*}
    Here, both of $\bL$ and $\bL'$ have no non-trivial cells, $k$ is invertible as a tight arrow, and $\Phi$ is a unique \ac{AVD}-functor from $\bL$ to $\bL'$, which is described by $\Phi X=0,\Phi Y=\Phi Z=1$.
    Then, $\Phi$ is an equivalence in $\AVDC$.
    However, the categories $\Homcat[\bL](X,Y)$ and $\Homcat[\bL'](\Phi X,\Phi Y)=\Homcat[\bL'](0,1)$ are far from being equivalent.
    This pathological phenomenon is caused by the fact that the invertible tight arrow $k$ is not admissible.
    In fact, \cref{lem:intuition_of_admissible_equivalence} shows that admissible equivalences are a good enough concept to solve this problem.
\end{remark}

\begin{lemma}\label{lem:intuition_of_admissible_equivalence}
    Let $\Phi\colon\bL\to\bL'$ be an admissible equivalence between \acp{AVDC}.
    Then, the following functor becomes an equivalence of categories for any $X,Y\in\bL$:
    \begin{equation*}
        \Homcat[\bL](X,Y)\arr(\Phi-)\Homcat[\bL'](\Phi X,\Phi Y).
    \end{equation*}
\end{lemma}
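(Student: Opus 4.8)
The plan is to establish the two conditions for an equivalence of categories separately: full faithfulness of $\Homcat[\bL](X,Y)\arr(\Phi-)\Homcat[\bL'](\Phi X,\Phi Y)$, and essential surjectivity. Full faithfulness will turn out to need nothing beyond $\Phi$ being an equivalence, whereas essential surjectivity is where the admissibility hypothesis does the work.

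Full faithfulness is immediate. A morphism $u\to v$ in $\Homcat[\bL](X,Y)$ is by definition an element of $\Cells[\bL]{\id_X}{\id_Y}{u}{v}$, and a morphism $\Phi u\to\Phi v$ in $\Homcat[\bL'](\Phi X,\Phi Y)$ is an element of $\Cells[\bL']{\id_{\Phi X}}{\id_{\Phi Y}}{\Phi u}{\Phi v}$. Since $\Phi$ is part of an equivalence in $\AVDC$, \cref{thm:equiv_in_AVDC} gives that $\alpha\mapsto\Phi\alpha$ is a bijection between these cell-classes, and because $\Phi\id_X=\id_{\Phi X}$ and $\Phi\id_Y=\id_{\Phi Y}$, this bijection is exactly the action of $\Phi-$ on hom-sets.

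For essential surjectivity I would first apply \cref{lem:replacement_admissible_adj_equiv} to replace $\Phi$ by an admissible adjoint equivalence $(\Phi,\Psi,\eta,\epsilon)$ with unit $\eta\colon\Id_\bL\Rightarrow\Psi\circ\Phi$ and counit $\epsilon\colon\Phi\circ\Psi\Rightarrow\Id_{\bL'}$, both admissible. Given a loose arrow $\Phi X\larr(w)\Phi Y$ in $\bL'$, I would form $\Psi w\colon\Psi\Phi X\larr\Psi\Phi Y$ and restrict it along the unit components, setting $u:=\Psi w(\eta_X,\eta_Y)\colon X\larr Y$; this restriction exists precisely because admissibility of $\eta$ makes $\eta_X$ and $\eta_Y$ pulling. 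The claim to verify is $\Phi u\cong w$. Since $\Phi$ is simultaneously a left and a right adjoint in an equivalence, \cref{prop:radj_preserves_cart} shows $\Phi$ preserves cartesian cells, so $\Phi$ carries the defining cartesian cell of $u$ to one exhibiting $\Phi u\cong(\Phi\Psi w)(\Phi\eta_X,\Phi\eta_Y)$. The triangle identity $\Phi\eta_X\tcomp\epsilon_{\Phi X}=\id_{\Phi X}$ (and its counterpart at $Y$) identifies $\Phi\eta_X=\epsilon_{\Phi X}^{-1}$ and $\Phi\eta_Y=\epsilon_{\Phi Y}^{-1}$. On the other hand, as $\epsilon$ is an invertible tight \ac{AVD}-transformation, its component cell $\epsilon_w$ is an isomorphism in $\Lphancat\bL'$, hence loosewise invertible and therefore cartesian by \cref{prop:cart_and_loosewise_inv}; it exhibits $w\cong(\Phi\Psi w)(\epsilon_{\Phi X}^{-1},\epsilon_{\Phi Y}^{-1})$. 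Combining the two identifications yields $\Phi u\cong w$.

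The main obstacle I expect is purely bookkeeping: keeping the variances of left- and right-restriction consistent with the direction conventions of $\eta$ and $\epsilon$, checking that $\Psi w(\eta_X,\eta_Y)$ genuinely exists (this is the single place the admissibility of $\eta$ is indispensable), and confirming that the triangle-identity isomorphisms line up the two restrictions of $\Phi\Psi w$ along the \emph{same} invertible legs so that the two $\cong$'s compose. None of these is conceptually deep, but the proof stands or falls on getting these invertible restriction cells and their directions to match exactly.
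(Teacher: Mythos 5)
Your proposal is correct and follows essentially the same route as the paper: full faithfulness from the cell-bijection of \cref{thm:equiv_in_AVDC}, then essential surjectivity by upgrading to an admissible adjoint equivalence via \cref{lem:replacement_admissible_adj_equiv}, restricting $\Psi w$ along the (pulling) unit components, and composing the resulting loosewise invertible cell with $\epsilon_w$ using the triangle identities. The only cosmetic difference is that the paper observes directly that the restriction cell $\lambda$ is loosewise invertible by \cref{prop:cart_and_loosewise_inv} (so $\Phi\lambda$ is too), rather than invoking \cref{prop:radj_preserves_cart}; both give the same isomorphism $\Phi u\cong w$.
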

\begin{proof}
    The only non-trivial part is essential surjectivity, which is shown as follows.
    Let \mbox{$\Phi X\larr(u)[][1]\Phi Y$} be a loose arrow in $\bL'$.
    By \cref{lem:replacement_admissible_adj_equiv}, $\Phi$ can extend to an admissible adjoint equivalence with $\Psi\colon\bL'\to\bL$, a unit $\eta\colon\Id\Rightarrow\Psi\circ\Phi$, and a counit $\epsilon\colon\Phi\circ\Psi\Rightarrow\Id$.
    By the pullingness of the unit $\eta$ and \cref{lem:pulling_and_restriction}, we can take the following restriction:
    \begin{equation*}
        \begin{tikzcd}
            X\ar[d,"\eta_X"']\lar[r,"v"] & Y\ar[d,"\eta_Y"] \\
            \Psi\Phi X\lar[r,"\Psi u"'] & \Psi\Phi Y
            \cellsymb(\lambda\colon\cart){1-1}{2-2}
        \end{tikzcd}\incat{\bL}.
    \end{equation*}
    By \cref{prop:cart_and_loosewise_inv}, the cell $\lambda$ becomes loosewise invertible.
    Then, the composite of the following cells gives an isomorphism $\Phi v\cong u$ in the category $\Homcat[\bL'](\Phi X,\Phi Y)$:
    \begin{equation*}
        \begin{tikzcd}
            \Phi X\ar[dd,bend right=60,equal]\ar[d,"\Phi\eta_X"]\lar[r,"\Phi v"] & \Phi Y\ar[d,"\Phi\eta_Y"']\ar[dd,bend left=60,equal] \\
            \Phi\Psi\Phi X\ar[d,"\epsilon_{\Phi X}"]\lar[r,"\Phi\Psi u"] & \Phi\Psi\Phi Y\ar[d,"\epsilon_{\Phi Y}"'] \\
            \Phi X\lar[r,"u"'] & \Phi Y
            \cellsymb(\Phi\lambda){1-1}{2-2}
            \cellsymb(\epsilon_u){2-1}{3-2}
            \cellsymb(\heq)[left=13]{1-1}{3-1}
            \cellsymb(\heq)[right=13]{1-2}{3-2}
        \end{tikzcd}\incat{\bL'}.
    \end{equation*}
    This shows the essential surjectivity.
\end{proof}

\begin{lemma}\label{lem:admissible_equiv_preserve_pullingness}
    Every admissible equivalence between \acp{AVDC} preserves left-pullingness and right-pullingness of tight arrows.
\end{lemma}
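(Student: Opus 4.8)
The plan is to reduce to an adjoint equivalence and then transport restrictions across it, using that both functors of an adjoint equivalence preserve cartesian cells. First I would apply \cref{lem:replacement_admissible_adj_equiv} to replace the given admissible equivalence $\Phi\colon\bL\to\bL'$ by an admissible adjoint equivalence $(\Phi,\Psi,\eta,\epsilon)$, where $\eta\colon\Id\Rightarrow\Psi\circ\Phi$ and $\epsilon\colon\Phi\circ\Psi\Rightarrow\Id$ are both admissible. Since an adjoint equivalence can be run in the opposite direction (so that $\Psi\dashv\Phi$ as well), both $\Phi$ and $\Psi$ are right adjoints in $\AVDC$, and hence by \cref{prop:radj_preserves_cart} both preserve cartesian cells; in particular each preserves restrictions up to loosewise invertible cells. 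It suffices to treat left-pullingness, the right-pulling case being its loosewise dual.

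Second, fix a left-pulling tight arrow $f\colon A\to B$ in $\bL$ and an arbitrary loose arrow $q$ from $\Phi B$ to an object $C'$ of $\bL'$; the goal is to produce the restriction $q(\Phi f,\id_{C'})$. I would transport $q$ back to $\bL$: form $\Psi q$ from $\Psi\Phi B$ to $\Psi C'$ and, since the unit component $\eta_B\colon B\to\Psi\Phi B$ is admissible (hence left-pulling and invertible), take the restriction $p:=(\Psi q)(\eta_B,\id)$, whose defining cartesian cell is loosewise invertible by \cref{prop:cart_and_loosewise_inv}. Because $f$ is left-pulling, the restriction $p(f,\id)$ then exists in $\bL$, with an accompanying cartesian cell $\kappa$.

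Third, I would apply $\Phi$ to $\kappa$. As $\Phi$ preserves cartesian cells, the result is a cartesian cell in $\bL'$ exhibiting $\Phi\bigl(p(f,\id)\bigr)$ as the restriction of $\Phi p$ along $\Phi f$ on the left and $\id$ on the right. It then remains to identify $\Phi p$ with a restriction of $q$. For this I would use that $\epsilon$ is an invertible tight \ac{AVD}-transformation, so its naturality cell $\epsilon_q$ is loosewise invertible (its inverse transformation supplies an inverse cell) and hence exhibits $\Phi\Psi q\cong q(\epsilon_{\Phi B},\epsilon_{C'})$. Combining this with $\Phi p\cong(\Phi\Psi q)(\Phi\eta_B,\id)$, the triangle identity $\Phi\eta_B\tcomp\epsilon_{\Phi B}=\id_{\Phi B}$, and the composability of restrictions, I obtain $\Phi p\cong q(\id_{\Phi B},\epsilon_{C'})$. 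Consequently the cartesian cell above exhibits $\Phi\bigl(p(f,\id)\bigr)$ as $q(\Phi f,\epsilon_{C'})$.

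Finally, since $\epsilon_{C'}$ is admissible, its inverse $\epsilon_{C'}^{-1}$ is right-pulling, so I may further restrict along $\epsilon_{C'}^{-1}$ on the right; by functoriality of restriction this gives $q(\Phi f,\epsilon_{C'})(\id,\epsilon_{C'}^{-1})\cong q(\Phi f,\epsilon_{C'}^{-1}\tcomp\epsilon_{C'})=q(\Phi f,\id_{C'})$, which is the required restriction, so $\Phi f$ is left-pulling. The main obstacle I anticipate is the bookkeeping in the last two steps: correctly tracing $q$ through $\Psi$ and the unit, identifying $\Phi p$ with a restriction of $q$, and then stripping the counit off the codomain. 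Each of these relies crucially on \emph{admissibility} rather than mere invertibility of $\eta$ and $\epsilon$ — it is what guarantees that the restrictions $(\Psi q)(\eta_B,\id)$ and $q(\Phi f,\epsilon_{C'})(\id,\epsilon_{C'}^{-1})$ actually exist, while invertibility alone secures the loosewise invertibility of the relevant naturality cells.
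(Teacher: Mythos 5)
Your proposal is correct and follows essentially the same route as the paper's proof: replace the equivalence by an admissible adjoint equivalence via \cref{lem:replacement_admissible_adj_equiv}, pull the loose arrow back along $\Psi$, restrict along the admissible unit component and then along $f$, push forward with $\Phi$ (which preserves cartesian cells by \cref{prop:radj_preserves_cart}), and finally use the loosewise invertible naturality cell of $\epsilon$, the triangle identity, and the right-pullingness of $\epsilon^{-1}$ to strip the counit off the codomain. The only difference is cosmetic — the paper assembles all of these cartesian cells into a single vertical pasting at once, whereas you identify $\Phi p$ with $q(\id,\epsilon_{C'})$ first and compose afterwards.
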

\begin{proof}
    Let
    $
    \begin{tikzcd}
        \bL\ar[r,"\Phi",shift left=0.7] & \bL'\ar[l,"\Psi",shift left=0.7]
    \end{tikzcd}
    $
    form an admissible equivalence with invertible tight \ac{AVD}-transformations $\eta\colon\Id\Rightarrow\Psi\circ\Phi$ and $\epsilon\colon\Phi\circ\Psi\Rightarrow\Id$.
    By \cref{lem:replacement_admissible_adj_equiv}, we can suppose that these form an (admissible) adjoint equivalence without loss of generality.
    
    Let $A\arr(f)[][1]B$ be a left-pulling tight arrow in $\bL$.
    To show that $\Phi f$ is left-pulling in $\bL'$, let us take an arbitrary loose arrow $\Phi B\larr(u)[][1]L'$ in $\bL'$.
    By assumption, we can take the restrictions on the left below.
    Then, the composite of the cells on the right below gives the desired restriction $u(\Phi f,\id)$ by \cref{prop:radj_preserves_cart}.
    \begin{equation*}
        \begin{tikzcd}
            A\ar[d,"f"']\lar[r,"q"] & \Psi L'\ar[d,equal] \\
            B\ar[d,"\eta_B"']\lar[r,"p"] & \Psi L'\ar[d,equal] \\
            \Psi\Phi B\lar[r,"\Psi u"'] & \Psi L'
            \cellsymb(\mu\colon\cart){1-1}{2-2}
            \cellsymb(\lambda\colon\cart){2-1}{3-2}
        \end{tikzcd}\incat{\bL}
        \qquad\qquad
        \begin{tikzcd}[largecolumn]
            \Phi A\ar[d,equal]\lar[r,"{(\Phi q)(\id,\epsilon^{-1}_{L'})}"] & L'\ar[d,"\epsilon^{-1}_{L'}"] \\
            \Phi A\ar[d,"\Phi f"']\lar[r,"\Phi q"] & \Phi\Psi L'\ar[d,equal] \\
            \Phi B\ar[dd,equal,bend right=60,shift right=1]\ar[d,"\Phi\eta_B"{right=-1}]\lar[r,"\Phi p"] & \Phi\Psi L'\ar[d,equal] \\
            \Phi\Psi\Phi B\ar[d,"\epsilon_{\Phi B}"]\lar[r,"\Phi\Psi u"] & \Phi\Psi L'\ar[d,"\epsilon_{L'}"] \\
            \Phi B\lar[r,"u"'] & L'
            \cellsymb(\cart){1-1}{2-2}
            \cellsymb(\Phi\mu\colon\cart){2-1}{3-2}
            \cellsymb(\Phi\lambda\colon\cart)[right=-8]{3-1}{4-2}
            \cellsymb(\epsilon_u\colon)[below=-3]{4-1}{5-2}
            \cellsymb(\linv)[right=6]{4-1}{5-2}
            \cellsymb(\heq)[left=15]{3-1}{5-1}
        \end{tikzcd}\incat{\bL'}
    \end{equation*}
    This shows that $\Phi$ preserves left-pullingness.
    The preservation of right-pullingness also follows from the loosewise dual argument.
\end{proof}

\begin{corollary}
    The composite of two admissible equivalences is again admissible.
    In particular, admissible equivalences yield an equivalence relation among \acp{AVDC}.
\end{corollary}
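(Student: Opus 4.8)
The plan is to prove the substantive claim (transitivity, i.e.\ closure of admissible equivalences under composition) and then read off the ``equivalence relation'' statement, since reflexivity and symmetry are immediate. Reflexivity is witnessed by the identity equivalence $(\Id,\Id)$ with identity structure transformations: an identity tight arrow is an admissible isomorphism because $u(\id,\id)=u$ via the loose identity cell $\veq_u$, which is cartesian by \cref{prop:cart_and_loosewise_inv}. Symmetry is also immediate: reversing an admissible equivalence $(\Phi,\Psi,\alpha,\beta)$ to $(\Psi,\Phi,\beta^{-1},\alpha^{-1})$ yields admissible structure transformations, since admissibility of an invertible tight arrow is symmetric under $k\leftrightarrow k^{-1}$. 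So I would reduce everything to the composition claim.

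Two closure facts drive the argument. First, admissible isomorphisms are closed under composition: given admissible $A\arr(k)[][1]B$ and $B\arr(l)[][1]C$ and any loose arrow $C\larr(u)\cdot$, the restriction $u(l,\id)$ exists since $l$ is left-pulling, and then $\bigl(u(l,\id)\bigr)(k,\id)$ exists since $k$ is left-pulling; pasting the two defining cartesian cells vertically gives a cartesian cell exhibiting $u(k\tcomp l,\id)$, so $k\tcomp l$ is left-pulling, and dually right-pulling. Applying the same to $(k\tcomp l)^{-1}=l^{-1}\tcomp k^{-1}$ shows $k\tcomp l$ is admissible. Second, any functor $\Theta$ that is part of an admissible equivalence sends admissible isomorphisms to admissible isomorphisms: by \cref{lem:admissible_equiv_preserve_pullingness} it preserves left- and right-pullingness, and by functoriality $\Theta(k^{-1})=(\Theta k)^{-1}$, so if $k$ and $k^{-1}$ are pulling then so are $\Theta k$ and $(\Theta k)^{-1}$. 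Here I would note that if $(\Phi,\Psi,\eta,\epsilon)$ is an admissible equivalence then $\Psi$ is one as well (via the reversed data), so this preservation applies to all of $\Phi,\Psi,\Phi',\Psi'$ below.

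Now let $\Phi\colon\bL\to\bL'$ and $\Phi'\colon\bL'\to\bL''$ be admissible equivalences. By \cref{lem:replacement_admissible_adj_equiv} I may assume both are admissible adjoint equivalences, say $(\Phi,\Psi,\eta,\epsilon)$ and $(\Phi',\Psi',\eta',\epsilon')$. Their composite is the adjoint equivalence $(\Phi'\circ\Phi,\;\Psi\circ\Psi',\;\tilde\eta,\;\tilde\epsilon)$ with the standard composite unit and counit, whose components are $\tilde\eta_A=\eta_A\tcomp\Psi(\eta'_{\Phi A})$ and $\tilde\epsilon_C=\Phi'(\epsilon_{\Psi'C})\tcomp\epsilon'_C$. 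Each is a composite of an admissible isomorphism ($\eta_A$, resp.\ $\epsilon'_C$) with the image under an admissible functor ($\Psi$, resp.\ $\Phi'$) of an admissible isomorphism ($\eta'_{\Phi A}$, resp.\ $\epsilon_{\Psi'C}$). By the second closure fact these images are admissible, and by the first they compose to admissible isomorphisms; hence $\tilde\eta$ and $\tilde\epsilon$ are admissible and $\Phi'\circ\Phi$ is an admissible equivalence. This establishes transitivity, and together with reflexivity and symmetry we obtain an equivalence relation on \acp{AVDC}.

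The genuinely delicate points are the two closure facts, both of which become routine once \cref{lem:admissible_equiv_preserve_pullingness} is available: the first is a pasting of cartesian cells and the second is pure functoriality. I expect the main place requiring care to be the bookkeeping of the composite adjoint-equivalence data together with the observation that the right adjoints $\Psi,\Psi'$ are themselves admissible equivalences, so that \cref{lem:admissible_equiv_preserve_pullingness} may legitimately be applied to them.
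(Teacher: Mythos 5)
Your proof is correct and follows essentially the same route as the paper: reduce to admissible adjoint equivalences, observe that the composite unit is $\eta_A\tcomp\Psi(\eta'_{\Phi A})$ (and dually for the counit), and conclude admissibility from \cref{lem:admissible_equiv_preserve_pullingness} together with closure of pulling arrows under composition. You merely spell out in more detail the two closure facts that the paper leaves implicit.
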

\begin{proof}
    Consider two admissible equivalences
    \begin{equation*}
        \begin{tikzcd}
            \bL\ar[rr,"\Phi",shift left=5pt,bend left=10] & \hspace{11pt}\perp_{\eta,\epsilon} & \bL'\ar[ll,"\Psi",shift left=5pt,bend left=10]\ar[rr,"\Phi'",shift left=5pt,bend left=10] & \hspace{11pt}\perp_{\eta',\epsilon'} & \bL''\ar[ll,"\Psi'",shift left=5pt,bend left=10]
        \end{tikzcd}
    \end{equation*}
    with the units $\eta,\eta'$ and the counits $\epsilon,\epsilon'$.
    Then, at each object $L\in\bL$, the unit of the composite equivalence is given by $\eta_L\tcomp\Psi\eta'_{\Phi L}$, which is still admissible as an invertible tight arrow by \cref{lem:admissible_equiv_preserve_pullingness}.
    In other words, the unit of the composite equivalence is admissible.
    The same argument also works for the counit.
\end{proof}

\begin{theorem}\label{thm:admissible_equiv_preserve_versatile_colim}
    (\Ac{VD}-)versatile colimits are preserved by any admissible equivalence.
\end{theorem}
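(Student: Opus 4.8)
The plan is to reduce to an admissible adjoint equivalence and then verify each of the seven defining axioms of a versatile colimit by transporting it across the equivalence. First I would invoke \cref{lem:replacement_admissible_adj_equiv} to replace the given admissible equivalence by an admissible adjoint equivalence $(\Phi,\Psi,\eta,\epsilon)$ with $\Phi\colon\bL\to\bL'$. Writing $\Phi\xi$ for the tight cocone of $\Phi\circ F$ obtained by applying $\Phi$ componentwise---which is a tight cocone with vertex $\Phi\Xi$ by mere functoriality of $\Phi$---the goal becomes to show that $\Phi\xi$ satisfies \ref{axiom:tight_bij}\ref{axiom:loose_esssurj_left}\ref{axiom:loose_esssurj_right}\ref{axiom:cell_type1_left}\ref{axiom:cell_type1_right}\ref{axiom:cell_type2}\ref{axiom:cell_type3}.

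The second step is to assemble the transport package that $\Phi$ and the counit $\epsilon$ provide. Since $\Phi$ is an equivalence in $\AVDC$, \cref{thm:equiv_in_AVDC} gives that $\Phi$ induces isomorphisms on the tight hom-categories and bijections between the classes of cells (hence between modulations of each type with boundary data in the image of $\Phi$). Since $\Phi$ is a right adjoint inside the adjoint equivalence, \cref{prop:radj_preserves_cart} shows $\Phi$ preserves cartesian cells; in particular each $\Phi\xi_A=\Phi(\xi_A)$ is pulling by \cref{lem:admissible_equiv_preserve_pullingness}, and $\Phi$ carries each left (resp.\ right) $F$-module to a left (resp.\ right) $\Phi F$-module and each tight cocone of $F$ to one of $\Phi F$. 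Combined with the loose-hom equivalences of \cref{lem:intuition_of_admissible_equivalence}, this yields an isomorphism $\Cone\vvect{F}{M}\cong\Cone\vvect{\Phi F}{\Phi M}$ and an equivalence $\Mdl{F}{M}\simeq\Mdl{\Phi F}{\Phi M}$ (with the right-module analogue), compatible up to canonical isomorphism with $\xi\tcomp-$, $\comp{\xi}-$, and $-\conj{\xi}$.

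The third step verifies each axiom for $\Phi\xi$ over an arbitrary $L'\in\bL'$ by pulling it back along the admissible isomorphism $\epsilon_{L'}\colon\Phi\Psi L'\to L'$ to the object $\Psi L'\in\bL$. For \ref{axiom:tight_bij} one forms the chain
\begin{align*}
    \Homcat[\bL']\vvect{\Phi\Xi}{L'}
    &\cong\Homcat[\bL']\vvect{\Phi\Xi}{\Phi\Psi L'}
    \cong\Homcat[\bL]\vvect{\Xi}{\Psi L'} \\
    &\xrightarrow{\ \xi\tcomp-\ }\Cone\vvect{F}{\Psi L'}
    \cong\Cone\vvect{\Phi F}{\Phi\Psi L'}
    \cong\Cone\vvect{\Phi F}{L'},
\end{align*}
where the outer isomorphisms come from the admissible isomorphism $\epsilon_{L'}$ and the inner one from $\Phi$; checking that the composite agrees with $(\Phi\xi)\tcomp-$ then forces bijectivity on objects for $\Phi\xi$ from that for $\xi$. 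Conditions \ref{axiom:loose_esssurj_left}\ref{axiom:loose_esssurj_right} follow by the identical pattern, using the loose-hom equivalences and the module-category equivalences in place of the tight-hom isomorphisms, so that essential surjectivity of $\comp{\xi}-$ passes to $\comp{(\Phi\xi)}-$. The unique-existence statements \ref{axiom:cell_type1_left}\ref{axiom:cell_type1_right}\ref{axiom:cell_type2}\ref{axiom:cell_type3} follow because $\Phi$ induces bijections on cells and on modulations, while $\epsilon_{L'}$ induces bijections on cells by admissibility (\cref{rem:intuition_of_admissible_iso}), reducing each universal property for $\Phi\xi$ to the corresponding one for $\xi$.

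The main obstacle I anticipate is the bookkeeping hidden in the second step: checking that the transported isomorphisms and equivalences genuinely commute, up to a specified natural isomorphism, with the canonical functors $\comp{\xi}-$ and $-\conj{\xi}$. These functors (\cref{lem:canonical_functor_hol_to_mdl}) are built from chosen cartesian cells, so I must track how $\Phi$ and the counit $\epsilon$ act on restrictions, companions, and conjoints. Here the preservation of cartesian cells---hence of companions, conjoints, and loose units---by $\Phi$ is exactly what guarantees that $\comp{(\Phi\xi)}u$ is canonically the image under $\Phi$ of $\comp{\xi}v$ for a suitable $v$ (chosen via \cref{lem:intuition_of_admissible_equivalence}), and likewise for the $\epsilon_{L'}$-transport via \cref{rem:intuition_of_admissible_iso}. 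Once this compatibility is pinned down, every axiom transports mechanically and the theorem follows.
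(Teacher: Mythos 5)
Your proposal is correct and follows essentially the same route as the paper: replace the given equivalence by an admissible adjoint equivalence, then transport each axiom of \cref{def:versatile_colim} across the chains of hom-isomorphisms and hom-equivalences furnished by the adjunction, the admissibility of the counit (\cref{rem:intuition_of_admissible_iso}, \cref{lem:intuition_of_admissible_equivalence}), and the preservation of cartesian cells by the right adjoint (\cref{prop:radj_preserves_cart}). The compatibility bookkeeping you flag in your last paragraph is real but is handled at the same level of detail (i.e., left implicit) in the paper's own proof.
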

\begin{proof}
    Let
    $
    \begin{tikzcd}
        \bL\ar[r,"\Phi",shift left=0.7] & \bL'\ar[l,"\Psi",shift left=0.7]
    \end{tikzcd}
    $
    be an admissible adjoint equivalence with a unit $\eta\colon\Id\Rightarrow\Psi\circ\Phi$ and a counit $\epsilon\colon\Phi\circ\Psi\Rightarrow\Id$.
    Let $\xi$ be a (\ac{VD}-)versatile colimit of an \ac{AVD}-functor $F\colon\bK\to\bL$, and let $\Xi\in\bL$ be the vertex of $\xi$.
    We have to show that the induced tight cocone $\Phi\xi\coloneq(\Phi\xi_A)_{A\in\bK}$ is a (\ac{VD}-)versatile colimit of $\Phi\circ F$.
    The condition \ref{axiom:tight_bij} for $\Phi\xi$ can be verified by the following isomorphisms for $L'\in\bL'$:
    \begin{equation*}
        \Homcat[\bL']\vvect{\Phi\Xi}{L'}
        \cong\Homcat[\bL]\vvect{\Xi}{\Psi L'}
        \cong\Cone\vvect{F}{\Psi L'}
        \cong\Cone\vvect{\Phi\circ F}{L'}.
    \end{equation*}
    Here, the first and third isomorphisms come from the adjunction, and the second one follows from the universal property of $\xi$.

    Furthermore, the condition \ref{axiom:loose_esssurj_left} for $\Phi\xi$ can be verified by the following equivalences for $L'\in\bL'$:
    \begin{align*}
        \Homcat[\bL'](\Phi\Xi,L')
            &\simeq\Homcat[\bL'](\Phi\Xi,\Phi\Psi L')
                && \text{(by admissibility of $\epsilon_{L'}$ and \cref{rem:intuition_of_admissible_iso})} \\
            &\simeq\Homcat[\bL](\Xi,\Psi L')
                && \text{(by \cref{lem:intuition_of_admissible_equivalence})} \\
            &\simeq\Mdl{F}{\Psi L'}
                && \text{(by the universal property of $\xi$)} \\
            &\simeq\Mdl{\Phi\circ F}{\Phi\Psi L'}
                && \\
            &\simeq\Mdl{\Phi\circ F}{L'}
                && \text{(by admissibility of $\epsilon_{L'}$ and \cref{rem:intuition_of_admissible_iso})}
    \end{align*}
    We now describe the fourth equivalence.
    By \cref{prop:radj_preserves_cart}, $\Phi$ preserves the cartesian cells associated with left modules.
    Thus, $\Phi$ induces a functor $\Mdl{F}{\Psi L'}\arr[][1]\Mdl{\Phi\circ F}{\Phi\Psi L'}$, which becomes an equivalence by \cref{lem:intuition_of_admissible_equivalence}.
    
    The condition \ref{axiom:loose_esssurj_right} for $\Phi\xi$ also follows from the loosewise dual argument, and the remaining conditions directly follow from the argument in \cref{rem:intuition_of_admissible_iso}.
\end{proof}

\begin{theorem}\label{thm:admissible_equiv_and_versatile_cocompleteness}
    Let $\mathcal{S}$ be a class of \acp{AVDC}.
    Let $\bL$ and $\bL'$ be \acp{AVDC} that are admissibly equivalent to each other.
    Then, $\bL$ has (\ac{VD}-)versatile colimits of all shapes in $\mathcal{S}$ if and only if so does $\bL'$.
\end{theorem}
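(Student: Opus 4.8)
The plan is to reduce the statement to two facts established just above: that versatile colimits are preserved by admissible equivalences (\cref{thm:admissible_equiv_preserve_versatile_colim}), and that versatile colimits are stable under precomposition by an admissible invertible tight \ac{AVD}-transformation (\cref{prop:vers_colim_is_up_to_admissible_iso}\cref{prop:vers_colim_is_up_to_admissible_iso-diagram}). Since being admissibly equivalent is symmetric in $\bL$ and $\bL'$, it suffices to prove one implication: assuming $\bL$ has versatile colimits of all shapes in $\S$, I would show the same for $\bL'$.

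First I would fix an admissible equivalence between $\bL$ and $\bL'$ and, using \cref{lem:replacement_admissible_adj_equiv}, replace it by an admissible adjoint equivalence $\Phi\colon\bL\to\bL'$, $\Psi\colon\bL'\to\bL$ with unit $\eta\colon\Id\Rightarrow\Psi\circ\Phi$ and counit $\epsilon\colon\Phi\circ\Psi\Rightarrow\Id$ whose components are admissible isomorphisms. Given a shape $\bK\in\S$ and an \ac{AVD}-functor $G\colon\bK\to\bL'$, I would transport the diagram to $\bL$ by forming $\Psi\circ G\colon\bK\to\bL$. By hypothesis this diagram admits a versatile colimit $\xi$ with vertex $\Xi\in\bL$.

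Next, \cref{thm:admissible_equiv_preserve_versatile_colim} applied to $\Phi$ shows that $\Phi\xi$ is a versatile colimit of $\Phi\circ\Psi\circ G$ in $\bL'$, so it remains to move from $\Phi\circ\Psi\circ G$ back to $G$ along the counit. The whiskered transformation $\epsilon G\colon\Phi\circ\Psi\circ G\Rightarrow G$ is invertible, and since each component $\epsilon_{GA}$ is admissible (admissibility of an invertible tight arrow passes to its inverse), the inverse $(\epsilon G)^{-1}\colon G\Rightarrow\Phi\circ\Psi\circ G$ is an admissible invertible tight \ac{AVD}-transformation. Applying \cref{prop:vers_colim_is_up_to_admissible_iso}\cref{prop:vers_colim_is_up_to_admissible_iso-diagram} in $\bL'$, with the versatile colimit $\Phi\xi$ of $\Phi\circ\Psi\circ G$ and $\rho:=(\epsilon G)^{-1}$, then yields that the induced tight cocone $(\epsilon G)^{-1}\tcomp\Phi\xi$ is a versatile colimit of $G$. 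Hence $\bL'$ has versatile colimits of every shape in $\S$, and by symmetry the two conditions are equivalent.

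There is no genuine obstacle once the preparatory results are in place; the only points requiring care are bookkeeping. I would need to check that the relation ``admissibly equivalent'' is symmetric so that a single implication suffices (immediate, since the two structure transformations of an equivalence are admissible simultaneously), and that admissibility is inherited by whiskered transformations and by inverses — both reducing to the component-wise definition of admissibility together with the fact that an admissible isomorphism has an admissible inverse.
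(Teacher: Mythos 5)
Your proposal is correct and follows essentially the same route as the paper: replace the equivalence by an admissible adjoint equivalence, transport the diagram along $\Psi$, take the versatile colimit there, push it forward with \cref{thm:admissible_equiv_preserve_versatile_colim}, and correct along the whiskered inverse counit via \cref{prop:vers_colim_is_up_to_admissible_iso}\cref{prop:vers_colim_is_up_to_admissible_iso-diagram}. The bookkeeping points you flag (symmetry of admissible equivalence, admissibility of whiskered and inverse transformations) are exactly the details the paper leaves implicit.
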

\begin{proof}
    Let
    $
    \begin{tikzcd}
        \bL\ar[r,"\Phi",shift left=0.7] & \bL'\ar[l,"\Psi",shift left=0.7]
    \end{tikzcd}
    $
    be an admissible adjoint equivalence with a unit $\eta\colon\Id\Rightarrow\Psi\circ\Phi$ and a counit $\epsilon\colon\Phi\circ\Psi\Rightarrow\Id$.
    Let $\bK$ be an \ac{AVDC}, and suppose that $\bL$ has (\ac{VD}-)versatile colimits of the shape $\bK$.
    Now, we are required to show that $\bL'$ also has (\ac{VD}-)versatile colimits of the shape $\bK$.
    Let $F\colon\bK\to\bL'$ be an \ac{AVD}-functor.
    By assumption, there is a (\ac{VD}-)versatile colimit $\xi$ of $\Psi\circ F$.
    By \cref{thm:admissible_equiv_preserve_versatile_colim}, $\Phi\xi$ is a (\ac{VD}-)versatile colimit of $\Phi\circ\Psi\circ F$.
    Then, \cref{prop:vers_colim_is_up_to_admissible_iso}\cref{prop:vers_colim_is_up_to_admissible_iso-diagram} implies that the tightwise composite of $\Phi\xi$ with $\epsilon^{-1} F$ gives the desired (\ac{VD}-)versatile colimit of $F$, which finishes the proof.
\end{proof}
\subsection{The case of loosewise indiscrete shapes}\label{subsec:loosewise_indiscrete_shapes}
In this subsection, we study versatile colimits in the special case when the shape is loosewise indiscrete (\cref{def:loosewise_indiscrete}).
Let us fix an \ac{AVD}-functor $F\colon\bK\to\bL$ from a loosewise indiscrete \ac{AVDC} $\bK$.
\begin{proposition}\label{prop:cocones_loosewise_indiscrete}
    A tight cocone from $F$ with a vertex $L\in\bL$ is the same as the following data:
    \begin{itemize}
        \item
            For each object $A\in\bK$, a tight arrow $FA\arr(l_A)L$ in $\bL$.
        \item
            For objects $A,B\in\bK$, a cell $l_{AB}$ of the following form:
            \begin{equation*}
                \begin{tikzcd}[tri]
                    FA\ar[dr,"l_A"']\lar[rr,"F!_{AB}"] &{}& FB\ar[dl,"l_B"] \\
                    & L &
                    \cellsymb(l_{AB})[above=-3]{1-2}{2-2}
                \end{tikzcd}\incat{\bL}.
            \end{equation*}
    \end{itemize}
    These are required to satisfy the following conditions:
    \begin{itemize}
        \item
            For $A\arr(f)B$ in $\bK$, the cell
            \begin{equation*}
                \begin{tikzcd}
                    & FA\ar[dl,"Ff"']\ar[d,equal] \\
                    FB\ar[dr,"l_B"']\lar[r,"F!_{BA}"] & FA\ar[d,"l_A"] \\
                    & L
                    \cellsymb(F!)[below right]{1-2}{2-1}
                    \cellsymb(l_{BA})[above right]{2-1}{3-2}
                \end{tikzcd}
            \end{equation*}
            becomes the tight identity cell.
        \item
            For $A_0,A_1,A_2\in\bK$,
            \begin{equation*}
                \begin{tikzcd}
                    FA_0\ar[d,equal]\lar[r,"F!_{A_0A_1}"] & FA_1\lar[r,"F!_{A_1A_2}"] & FA_2\ar[d,equal] \\
                    FA_1\ar[dr,"l_{A_0}"']\lar[rr,"F!_{A_0A_2}"] &{}& FA_2\ar[dl,"l_{A_2}"] \\
                    & L &
                    \cellsymb(F!){1-1}{2-3}
                    \cellsymb(l_{A_0A_2})[above=-3]{2-2}{3-2}
                \end{tikzcd}
                =
                \begin{tikzcd}[huge]
                    FA_0\ar[dr,"l_{A_0}"']\lar[r,"F!_{A_0A_1}"] & FA_1\ar[d,"l_{A_1}"{right=-1,pos=0.6}]\lar[r,"F!_{A_1A_2}"] & FA_2\ar[dl,"l_{A_2}"] \\
                    & L &
                    \cellsymb(l_{A_0A_1})[above right=0]{1-1}{2-2}
                    \cellsymb(l_{A_1A_2})[above left=-1]{1-3}{2-2}
                \end{tikzcd}\incat{\bL}.
            \end{equation*}
    \end{itemize}
\end{proposition}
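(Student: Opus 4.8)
The plan is to establish the claimed correspondence in two directions: the forward direction (a genuine tight cocone yields the reduced data) is essentially a specialization, while the reverse direction (the reduced data reconstructs a tight cocone) carries all the content.

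First I would treat the forward direction. Given a tight cocone $l$ from $F$, every loose arrow of $\bK$ is of the form $!_{AB}$, so I set $l_{AB} := l_{!_{AB}}$ and keep the tight arrows $l_A$ unchanged. The two displayed conditions are then precisely two instances of the cocone compatibility axiom. The first condition is that axiom applied to the unique cell of $\bK$ whose top path is empty, whose legs are $f$ and $\id$, and whose bottom is $!_{BA}$; its right-hand side $l_{\tup{u}}$ is the tight identity because the top path has length $0$, which is exactly the assertion that the relevant pasting equals $\heq$. The second condition is the axiom applied to the unique cell with top $(!_{AB},!_{BC})$, identity legs, and bottom $!_{AC}$. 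Since these cells exist and are unique by loosewise indiscreteness, nothing beyond unwinding definitions is required.

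Then I would turn to the reverse direction, which is the substance. Given reduced data $(l_A,l_{AB})$ satisfying the two conditions, I define $l_{!_{AB}} := l_{AB}$; as $!_{AB}$ is the only loose arrow from $A$ to $B$, this defines $l_u$ for every loose arrow $u$. The tight-arrow axiom $(Ff)\tcomp l_B = l_A$ is forced by the first condition: demanding that the pasting of $F(!)$ with $l_{BA}$ be a tight identity cell $\heq$ already requires its two boundary composites $(Ff)\tcomp l_B$ and $l_A$ to coincide. What remains is the general compatibility axiom for an arbitrary cell $\alpha$ of $\bK$ with top path $\tup{u} = (!_{A_0A_1},\dots,!_{A_{n-1}A_n})$, legs $f\colon A_0\to X$ and $g\colon A_n\to Y$, and bottom $!_{XY}$; I must show $F\alpha$ pasted with $l_{XY}$ equals $l_{\tup{u}}$.

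The key tool is that in a loosewise indiscrete $\bK$ every $1$-coary cell is the \emph{unique} cell with its boundary (and $0$-coary cells are unique when they exist), so $\alpha$ coincides with any pasting of basic cells sharing its boundary. I would factor $\alpha$ as a pasting of three kinds of pieces: leg-insertion cells for $f$ and $g$ of the shape used in the first condition, loose identities $\veq_{!_{A_{i-1}A_i}}$, and iterated merging cells of the shape used in the second condition. Applying $F$ — which preserves such pastings, loose identities, and tight identities — and composing with $l_{XY}$, the two conditions rewrite each basic piece, and an induction on the length $n$ collapses the expression to $l_{\tup{u}}$. Concretely this splits into a length-one lemma that absorbs the nontrivial legs $f,g$ into $l_{A_0A_1}$ using the first condition and the unique merging cell, and a purely combinatorial identity that collapses the composite of the $l_{A_{i-1}A_i}$ through iterated merging cells using the second condition repeatedly. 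I expect the main obstacle to be exactly this factorization-and-induction bookkeeping: selecting the correct pasting of basic cells, invoking uniqueness to identify it with $F\alpha$, and keeping the associativity and unit laws for cell composition straight so that the two conditions apply locally. I would dispose of the degenerate cases first — $n=0$, and cells with a length-$0$ bottom, where one uses that $0$-coary cells of $\bK$ are unique when they exist — since these anchor the base of the induction.
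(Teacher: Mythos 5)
Your proposal is correct and follows essentially the same route as the paper: both derive the tight-arrow axiom and the dual leg-insertion identity from the first condition, extend the second condition to paths of arbitrary length, and then use the uniqueness of cells in the loosewise indiscrete shape to factor an arbitrary cell of $\bK$ into basic pieces that the two conditions handle locally. The paper simply writes out the resulting chain of pasting equalities explicitly rather than phrasing it as an induction, but the content is the same.
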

\begin{proof}
    The second condition extends to arbitrary finite families $A_0,A_1,\cdots,A_n\in\bK$, rather than being restricted to the case $n=2$.
    Indeed, the case $n=0$ is a special case of the first condition with $f$ taken to be the identity, while the case $n\ge 3$ follows by iterated application of the case $n=2$.
    Furthermore, we have the loosewise dual to the first condition for $A\arr(f)B$ in $\bK$ as follows:
    \begin{equation*}
        \begin{tikzcd}[large]
            & FA\ar[dl,equal]\ar[d,"Ff"] \\
            FA\ar[dr,"l_A"']\lar[r,"F!_{AB}"] & FB\ar[d,"l_B"] \\
            & L
            \cellsymb(F!)[below right]{1-2}{2-1}
            \cellsymb(l_{AB})[above right]{2-1}{3-2}
        \end{tikzcd}
        =
        \begin{tikzcd}[large]
            & FA\ar[dl,equal]\ar[d,"Ff"{right=-1}]\ar[dr,equal] & \\
            FA\ar[dr,"l_A"']\lar[r,"F!_{AB}"] & FB\ar[d,"l_B"]\lar[r,"F!_{BA}"'] & FA\ar[dl,"l_A"] \\
            & L &
            \cellsymb(F!)[below right]{1-2}{2-1}
            \cellsymb(F!)[below left]{1-2}{2-3}
            \cellsymb(l_{AB})[above right]{2-1}{3-2}
            \cellsymb(l_{BA})[above left]{2-3}{3-2}
        \end{tikzcd}
        =
        \begin{tikzcd}
            & FA\ar[dl,equal]\ar[d,"Ff"{right=-1}]\ar[dr,equal] & \\
            FA\ar[d,equal]\lar[r,"F!_{AB}"'] & FB\lar[r,"F!_{BA}"'] & FA\ar[d,equal] \\
            FA\ar[dr,"l_A"']\lar[rr,"F!_{AA}"'] &{}& FA\ar[dl,"l_A"] \\
            & L &
            \cellsymb(F!)[below right]{1-2}{2-1}
            \cellsymb(F!)[below left]{1-2}{2-3}
            \cellsymb(F!){2-1}{3-3}
            \cellsymb(l_{AA}){3-2}{4-2}
        \end{tikzcd}
    \end{equation*}
    \begin{equation*}
        =
        \begin{tikzcd}[tri]
            & FA\ar[dl,equal]\ar[dr,equal] & \\
            FA\ar[dr,"l_A"']\lar[rr,"F!_{AA}"'] &{}& FA\ar[dl,"l_A"] \\
            & L &
            \cellsymb(F!){1-2}{2-2}
            \cellsymb(l_{AA}){2-2}{3-2}
        \end{tikzcd}
        =
        \begin{tikzcd}
            FA\ar[d,bend right=20,"l_A"{left}]\ar[d,bend left=20,"l_A"{right}] \\
            L
            \cellsymb(\heq){1-1}{2-1}
        \end{tikzcd}\incat{\bL}.
    \end{equation*}
    Then, we have
    \begin{equation*}
        \begin{tikzcd}[tri]
            FA_0\ar[d,"Ff"']\lar[rr,path,"F\tup{!}"] & & FA_n\ar[d,"Fg"] \\
            FB\ar[dr,"l_B"']\lar[rr,"F!_{BC}"] &{}& FC\ar[dl,"l_C"] \\
            & L &
            \cellsymb(F!){1-1}{2-3}
            \cellsymb(l_{BC})[above=-3]{2-2}{3-2}
        \end{tikzcd}
        =
        \begin{tikzcd}[large]
            & FA_0\ar[dl,equal]\ar[d,"Ff"{right=-1}]\lar[rr,path,"F\tup{!}"] &[-20pt] &[-20pt] FA_n\ar[d,"Fg"{left=-1}]\ar[dr,equal] & \\
            FA_0\ar[drr,"l_{A_0}"']\lar[r,"F!_{A_0B}"] & FB\ar[dr,"l_B"{right}]\lar[rr,"F!_{BC}"] &{}& FC\ar[dl,"l_C"{left}]\lar[r,"F!_{CA_n}"] & FA_n\ar[dll,"l_{A_n}"] \\
            && L &&
            \cellsymb(F!)[right=8]{1-2}{2-1}
            \cellsymb(F!){1-2}{2-4}
            \cellsymb(F!)[left=9]{1-4}{2-5}
            \cellsymb(l_{A_0B})[above right=-3]{2-1}{3-3}
            \cellsymb(l_{BC})[above]{2-3}{3-3}
            \cellsymb(l_{CA_n})[above=3]{2-5}{3-3}
        \end{tikzcd}
    \end{equation*}
    \begin{equation*}
        =
        \begin{tikzcd}[large]
            & FA_0\ar[dl,equal]\ar[d,"Ff"{right=-1}]\lar[rr,path,"F\tup{!}"] &[-20pt] &[-20pt] FA_n\ar[d,"Fg"{left=-1}]\ar[dr,equal] & \\
            FA_0\ar[d,equal]\lar[r,"F!_{A_0B}"] & FB\lar[rr,"F!_{BC}"] &{}& FC\lar[r,"F!_{CA_n}"] & FA_n\ar[d,equal] \\
            FA_0\ar[drr,"l_{A_0}"']\lar[rrrr,"F!_{A_0A_n}"] &&{}&& FA_n\ar[dll,"l_{A_n}"] \\
            && L &&
            \cellsymb(F!)[right=8]{1-2}{2-1}
            \cellsymb(F!){1-2}{2-4}
            \cellsymb(F!)[left=8]{1-4}{2-5}
            \cellsymb(F!){2-1}{3-5}
            \cellsymb(l_{A_0A_n})[above=-6]{3-3}{4-3}
        \end{tikzcd}
        =
        \begin{tikzcd}[tri]
            FA_0\ar[d,equal]\lar[rr,path,"F\tup{!}"] & & FA_n\ar[d,equal] \\
            FA_0\ar[dr,"l_{A_0}"']\lar[rr,"F!_{A_0A_n}"] &{}& FA_n\ar[dl,"l_{A_n}"] \\
            & L &
            \cellsymb(F!){1-1}{2-3}
            \cellsymb(l_{A_0A_n})[above=-3]{2-2}{3-2}
        \end{tikzcd}
    \end{equation*}
    \begin{equation*}
        =
        \begin{tikzcd}[hugecolumn]
            FA_0\ar[dr,"l_{A_0}"',bend right=20]\lar[r,"F!_{A_0A_1}"] & \cdots\lar[r,"F!_{A_{n-1}A_n}"] & FA_n\ar[dl,"l_{A_n}",bend left=20] \\[20pt]
            & L &
            \cellsymb(l_{A_0A_1})[above=5]{1-1}{2-2}
            \cellsymb(\cdots)[above=5]{1-2}{2-2}
            \cellsymb(l_{A_{n-1}A_n})[above=4]{1-3}{2-2}
        \end{tikzcd}\incat{\bL},
    \end{equation*}
    which shows the compatibility with 1-coary cells.
    The compatibility with 0-coary cells can be shown similarly.
\end{proof}

\begin{proposition}\label{prop:modules_loosewise_indiscrete}
    A left $F$-module with a vertex $M\in\bL$ is the same as the following data:
    \begin{itemize}
        \item
            For each object $A\in\bK$, a loose arrow $FA\larr(m_A)M$ in $\bL$.
        \item
            For objects $A,B\in\bK$, a cell $m_{AB}$ of the following form:
            \begin{equation*}
                \begin{tikzcd}
                    FA\ar[d,equal]\lar[r,"F!_{AB}"] & FB\lar[r,"m_B"] & M\ar[d,equal] \\
                    FA\lar[rr,"m_A"'] && M
                    \cellsymb(m_{AB}){1-1}{2-3}
                \end{tikzcd}\incat{\bL}.
            \end{equation*}
    \end{itemize}
    These are required to satisfy the following:
    \begin{itemize}
        \item
            For each $A\in\bK$,
            \begin{equation*}
                \begin{tikzcd}
                    FA\ar[d,equal]\ar[dr,equal]\lar[rr,"m_A"] && M\ar[d,equal] \\
                    FA\ar[d,equal]\lar[r,"F!_{AA}"'] & FA\lar[r,"m_A"'] & M\ar[d,equal] \\
                    FA\lar[rr,"m_A"'] && M
                    \cellsymb(F!)[below left]{1-1}{2-2}
                    \cellsymb(\veq)[left]{1-3}{2-2}
                    \cellsymb(m_{AA}){2-1}{3-3}
                \end{tikzcd}
                =
                \begin{tikzcd}
                    FA\ar[d,equal]\lar[r,"m_A"] & M\ar[d,equal] \\
                    FA\lar[r,"m_A"'] & M
                    \cellsymb(\veq){1-1}{2-2}
                \end{tikzcd}\incat{\bL}.
            \end{equation*}
        \item
            For $A,B,C\in\bK$,
            \begin{equation*}
                \begin{tikzcd}
                    FA\ar[d,equal]\lar[r,"F!_{AB}"] & FB\lar[r,"F!_{BC}"] & FC\ar[d,equal]\lar[r,"m_C"] & M\ar[d,equal] \\
                    FA\ar[d,equal]\lar[rr,"F!_{AC}"'] && FC\lar[r,"m_C"'] & M\ar[d,equal] \\
                    FA\lar[rrr,"m_A"'] &&& M
                    \cellsymb(F!){1-1}{2-3}
                    \cellsymb(\veq){1-3}{2-4}
                    \cellsymb(m_{AC}){2-1}{3-4}
                \end{tikzcd}
                =
                \begin{tikzcd}
                    FA\ar[d,equal]\lar[r,"F!_{AB}"] & FB\ar[d,equal]\lar[r,"F!_{BC}"] & FC\lar[r,"m_C"] & M\ar[d,equal] \\
                    FA\ar[d,equal]\lar[r,"F!_{AB}"'] & FB\lar[rr,"m_B"'] && M\ar[d,equal] \\
                    FA\lar[rrr,"m_A"'] &&& M
                    \cellsymb(\veq){1-1}{2-2}
                    \cellsymb(m_{BC}){1-2}{2-4}
                    \cellsymb(m_{AB}){2-1}{3-4}
                \end{tikzcd}\incat{\bL}.
            \end{equation*}
    \end{itemize}
\end{proposition}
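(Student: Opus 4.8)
The plan is to prove the asserted bijection by the same method as \cref{prop:cocones_loosewise_indiscrete}, carried out now in the loose direction. Throughout, I would write $\gamma_f$ for the unique $1$-coary cell of $\bK$ with empty top, left leg $f\colon A\to B$, right leg $\id_A$, and bottom $!_{BA}$; write $\gamma^A$ for the unique such cell with empty top, both legs $\id_A$, and bottom $!_{AA}$; and write $\gamma_{ABC}$ for the unique cell sending the length-two path $(!_{AB},!_{BC})$ to $!_{AC}$. All of these exist and are unique by loosewise indiscreteness. The idea is that a left $F$-module assigns data to every object, tight arrow and loose arrow of $\bK$, but loosewise indiscreteness forces the loose arrows to be the $!_{AB}$ and forces every cell of $\bK$ to be determined by its boundary, so the whole structure collapses onto the data $(m_A,m_{AB})$.

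First I would treat the forward direction. Given a left $F$-module $m$ with vertex $M$, I set its components to be $m_A$ and put $m_{AB}:=m_{!_{AB}}$. Condition~1 is then obtained by applying the module compatibility axiom to $F\gamma^A$ and using $m_{\id_A}=\veq$ together with the convention $m_{\varnothing}=\veq$; Condition~2 is obtained by applying the same axiom to $F\gamma_{ABC}$ and unfolding the definition of $m_{(!_{AB},!_{BC})}$ as the prescribed paste of $m_{AB}$ and $m_{BC}$. Both are routine pasting identities, exactly parallel to the two conditions derived in \cref{prop:cocones_loosewise_indiscrete}.

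For the reverse direction I would reconstruct the full module from $(m_A,m_{AB})$ subject to Conditions~1 and~2. I set $m_{!_{AB}}:=m_{AB}$, and, since the value on a tight arrow $f$ is forced by the compatibility axiom applied to $\gamma_f$, I \emph{define} the would-be cartesian cell $m_f:=F\gamma_f\tcomp m_{BA}$. The module laws (functoriality of $f\mapsto m_f$ under composition and identities, and the general compatibility with $1$-coary cells of $\bK$) then reduce to Conditions~1 and~2: every cell of $\bK$ equals a canonical paste of the $\gamma$'s, so the required equalities in $\bL$ are obtained by applying $F$ and pasting, just as in the displayed computation of \cref{prop:cocones_loosewise_indiscrete}. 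Finally I would check that the two passages are mutually inverse, which is immediate because $m_{!_{AB}}=m_{AB}$ on the nose and the reconstruction of $m_f$ reproduces its forced value.

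The main obstacle is the one feature absent in the cocone case: I must verify that each reconstructed $m_f$ is genuinely \emph{cartesian}, since this is part of the definition of a left module whereas the tight legs of a cocone carry no such requirement. This is where Condition~1 does its essential work. The cells $F!_{f,\id_A}$ (top $F!_{AA}$, bottom $F!_{BA}$, legs $Ff,\id$) are $F$-images of the cells of $\bK$ that are split, hence absolutely cartesian by \cref{lem:split_cell_is_cartesian} and its corollary; thus each $F!_{f,\id_A}$ is cartesian in $\bL$. Verifying the compatibility axiom for $!_{f,\id_A}$ yields the identity $m_{\gamma^A}\tcomp m_f=(F!_{f,\id_A},\veq_{m_A})\tcomp m_{BA}$ relating $m_f$ to the cartesian cell $F!_{f,\id_A}$ and to the multiplication $m_{AA}$, while Condition~1 says precisely that $m_{AA}$ is a right unit for this multiplication and so renders the insertion of $F!_{AA}$ invertible on $m_A$. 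Combining these, I would show that $m_f$ has the universal factorisation property, i.e. that it exhibits $m_A$ as the restriction $m_B(Ff,\id_M)$: a test cell is factored through $m_f$ by transporting it across the cartesian $F!_{f,\id_A}$ and then cancelling the unit via Condition~1, and uniqueness follows from the same cancellation. I expect this cancellation-and-transport diagram chase to be the most delicate step, and it is the only place where anything beyond the bookkeeping of \cref{prop:cocones_loosewise_indiscrete} is required.
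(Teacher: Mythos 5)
Your proposal is correct and follows essentially the same route as the paper: restrict a module to the data $(m_A,m_{AB})$, observe that the compatibility axiom applied to the unique $0$-coary cell of $\bK$ with legs $f,\id_A$ forces the formula $m_f=[F\gamma_f,\veq_{m_A}]\tcomp m_{BA}$, and reduce the module axioms to the two stated conditions. The only cosmetic difference is in the cartesianness of $m_f$: the paper exhibits explicit splitting data $(p_0=F!_{AB},\,q_0=F!_{BB},\,\gamma=m_{AB},\,\sigma=m_{BB},\dots)$ and invokes \cref{lem:split_cell_is_cartesian} once, whereas your transport-and-cancel chase (using the absolute cartesianness of the canonical cells of $\bK$ together with Condition~1 as a unit law) amounts to unwinding the proof of that lemma in this special case — both work, but citing the lemma is shorter.
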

\begin{proof}
    We have to show that the above data $(m_A,m_{AB})$ uniquely extend to a left $F$-module.
    If such an extension exists, for each tight arrow $f$ in $\bK$, the cell $m_f$ must be defined as follows:
    \begin{equation*}
        \begin{tikzcd}
            FA\ar[d,"Ff"']\lar[r,"m_A"] & M\ar[d,equal] \\
            FB\lar[r,"m_B"'] & M
            \cellsymb(m_f){1-1}{2-2}
        \end{tikzcd}
        \coloneq
        \begin{tikzcd}
            FA\ar[d,"Ff"']\ar[dr,equal]\lar[rr,"m_A"] & & M\ar[d,equal] \\
            FB\ar[d,equal]\lar[r,"F!_{BA}"'] & FA\lar[r,"m_A"'] & M\ar[d,equal] \\
            FB\lar[rr,"m_B"'] && M
            \cellsymb(F!)[below left]{1-1}{2-2}
            \cellsymb(\veq)[left]{1-3}{2-2}
            \cellsymb(m_{BA}){2-1}{3-3}
        \end{tikzcd}\incat{\bL}.
    \end{equation*}
    Let us define several cells in $\bL$ as follows:
    \begin{equation*}
        \beta_0\coloneq
        \begin{tikzcd}
            & FA\ar[dl,equal]\ar[d,"Ff"] \\
            FA\lar[r,"F!_{AB}"'] & FB
            \cellsymb(F!)[below right]{1-2}{2-1}
        \end{tikzcd}
        \qquad
        \delta_0\coloneq
        \begin{tikzcd}
            FA\ar[d,"Ff"']\lar[r,"F!_{AB}"] & FB\ar[d,equal] \\
            FB\lar[r,"F!_{BB}"'] & FB
            \cellsymb(F!){1-1}{2-2}
        \end{tikzcd}
        \qquad
        \eta_0\coloneq
        \begin{tikzcd}[tri]
            & FB\ar[dl,equal]\ar[dr,equal] & \\
            FB\lar[rr,"F!_{BB}"'] &{}& FB
            \cellsymb(F!){1-2}{2-2}
        \end{tikzcd}
    \end{equation*}
    \begin{equation*}
        \gamma\coloneq m_{AB}
        \qquad
        \sigma\coloneq m_{BB}
        \qquad
        \beta_1=\delta_1=\eta_1\coloneq
        \begin{tikzcd}
            M\ar[d,equal,bend right=20]\ar[d,equal,bend left=20] \\
            M
            \cellsymb(\heq){1-1}{2-1}
        \end{tikzcd}
    \end{equation*}
    Since the above cells make $m_f$ split, $m_f$ becomes cartesian by \cref{lem:split_cell_is_cartesian}.
    What remains to be shown is the compatibility of the data $(m_A,m_{AB},m_f)$ with the cells in $\bK$.
    This verification is straightforward and is therefore omitted.
\end{proof}

\begin{proposition}\label{prop:modulations_loosewise_indiscrete}
    When the shape $\bK$ of the diagram \ac{AVD}-functor $F$ is loosewise indiscrete, the condition in each definition of modulations requiring compatibility with tight arrows in $\bK$ automatically follows from the condition requiring compatibility with loose arrows in $\bK$.
\end{proposition}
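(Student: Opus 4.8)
The plan is to reuse the device by which a tight arrow of a loosewise indiscrete $\bK$ is recorded by a single cell. Fix a modulation of one of the types 0--3; its entire data is a family of component cells $(\sigma_A)_{A\in\bK}$, and the tight-arrow condition at a tight arrow $A\arr(f)B$ is a pasting equality that transports $\sigma_A$ and $\sigma_B$ across the structural $f$-data of the ambient cocones and modules. The essential point is that this $f$-data is not free: in $\bK$ there is a unique cell
\begin{equation*}
\begin{tikzcd}
A\ar[d,"f"']\ar[dr,equal] & \\
B\lar[r,"!_{BA}"'] & A
\cellsymb(\kappa_f){1-1}{2-2}
\end{tikzcd}
\end{equation*}
and, by \cref{prop:cocones_loosewise_indiscrete} and \cref{prop:modules_loosewise_indiscrete}, every structural cell attached to $f$ is manufactured from $F\kappa_f$ and the loose-indexed datum at the unique loose arrow $!_{BA}$. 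Concretely, for a left module the defining equation of \cref{prop:modules_loosewise_indiscrete} expresses $m_f$ as $F\kappa_f$ pasted with $\veq_{m_A}$ and $m_{BA}$, while for a tight cocone the relation $(Ff)\tcomp l_B=l_A$ is the one obtained by pasting $F\kappa_f$ against $l_{BA}$ in the proof of \cref{prop:cocones_loosewise_indiscrete}.

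First I would unfold the tight-arrow condition by substituting these descriptions of the structural $f$-cells ($m_f$, $n_f$, and the cocone relations), so that both sides of the equation become pastings in $F\kappa_f$, the loose-indexed cells $m_{BA}$, $n_{BA}$, $l_{BA}$, and the component $\sigma_B$. Next I would insert the loose-arrow condition of the modulation applied to the single loose arrow $!_{BA}\colon B\to A$; since its source is $B$ and its target is $A$, this condition is precisely the identity that slides $\sigma_B$ past $m_{BA}$ (and $n_{BA}$) in exchange for $l_{BA}$ and $\sigma_A$. Finally the residual $F\kappa_f$ factors are absorbed using functoriality of $F$ together with the unit and identity normalizations already recorded in the proofs of \cref{prop:cocones_loosewise_indiscrete} and \cref{prop:modules_loosewise_indiscrete}, whereupon the two sides coincide.

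The four modulation types differ only in which outer boundaries are loose paths, cocone legs, or module legs, and never in the local manner in which a component is carried across an $f$-cell; the substitution-and-reduction above is therefore insensitive to these boundaries. It suffices to run it once on a representative type (say type 1) and to observe that reattaching the remaining boundaries changes nothing, the type-0, type-2, and type-3 cases being the evident variants. The step I expect to be the main obstacle is the bookkeeping around $m_f$: by \cref{prop:modules_loosewise_indiscrete} it is a split, hence cartesian, cell, and one must check that transporting $\sigma_B$ through its split factorization is exactly what the loose-arrow condition at $!_{BA}$ supplies, with the correct orientation of the factors and the correct unit normalization. I expect this to close cleanly, since that condition is engineered to provide precisely this compatibility, but confirming the orientation in each type is where the care lies.
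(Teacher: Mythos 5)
Your proposal is correct and follows the same route as the paper, which proves this proposition in one line by appealing to \cref{prop:cocones_loosewise_indiscrete,prop:modules_loosewise_indiscrete}; your substitution of the split description of $m_f$ (and the cocone identity for $F\kappa_f$ pasted with $l_{BA}$) followed by the loose-arrow axiom at $!_{BA}$ is exactly the intended computation, just written out in full.
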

\begin{proof}
    We can prove this by using \cref{prop:cocones_loosewise_indiscrete}, \cref{prop:modules_loosewise_indiscrete}, and its loosewise dual.
\end{proof}

\begin{theorem}[Strongness theorem]\label{thm:strongness_theorem}
    Let $F\colon\bK\to\bL$ be an \ac{AVD}-functor between \acp{AVDC}, and let $\bK$ be loosewise indiscrete.
    Suppose that we are given a tight cocone $\xi$ from $F$ to a vertex $\Xi\in\bL$ that satisfies the conditions \ref{axiom:loose_esssurj_left}\ref{axiom:cell_type1_left}.
    Then, $\xi_A$ has a conjoint for every $A\in\bK$, and $\xi$ becomes strong.
\end{theorem}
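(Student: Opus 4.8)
The plan is to handle both conclusions simultaneously, using the candidate conjoints supplied by \ref{axiom:loose_esssurj_left}. Fix $A_0\in\bK$. Since $\bK$ is loosewise indiscrete, the assignments $A\mapsto F!_{AA_0}$ (a loose arrow $FA\larr FA_0$) together with the cells $m_{AB}:=F(\,\cdot\,)$ obtained from the unique cells of $\bK$ filling the evident boundaries satisfy, by functoriality of $F$ and uniqueness of cells in $\bK$, the two axioms of \cref{prop:modules_loosewise_indiscrete}; hence they assemble into a left $F$-module $\mathbf{m}^{A_0}$ with vertex $FA_0$. Applying \ref{axiom:loose_esssurj_left-dash} I obtain a loose arrow $\Xi\larr(q)FA_0$ and a type-1 modulation with cartesian components $\sigma^{A_0}_A$ exhibiting $F!_{AA_0}\cong q(\xi_A,\id)$; in other words $\comp{\xi}q\cong\mathbf{m}^{A_0}$. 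I will show $q$ is the conjoint $\conj{\xi_{A_0}}$.

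Writing $\xi_{AB}$ for the cocone cells $\xi_{!_{AB}}$ of \cref{prop:cocones_loosewise_indiscrete}, the family $(\xi_{AA_0})_A$ of $0$-coary cells (top $F!_{AA_0}$, legs $\xi_A$ and $\xi_{A_0}$, bottom of length $0$ at $\Xi$) forms a type-1 modulation from $\comp{\xi}q\cong\mathbf{m}^{A_0}$ and $\xi$ into the length-$0$ path at $\Xi$: its modulation axiom for loose arrows is exactly the cocone-compatibility of $\xi$ from \cref{prop:cocones_loosewise_indiscrete}, and the axiom for tight arrows is then automatic by \cref{prop:modulations_loosewise_indiscrete}. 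Feeding this modulation into \ref{axiom:cell_type1_left} produces a unique $0$-coary cell $\epsilon$, with top $q$, legs $\id_\Xi$ and $\xi_{A_0}$, characterized by $\sigma^{A_0}_A\cdot\epsilon=\xi_{AA_0}$ for every $A$. This $\epsilon$ is my candidate for the counit of the conjoint. Next I build the unit: by loosewise indiscreteness $\bK$ has a unique $1$-coary cell with empty top at $A_0$, legs $\id_{A_0},\id_{A_0}$, and bottom $!_{A_0A_0}$; its image under $F$ is a cell into $F!_{A_0A_0}$, and since $\sigma^{A_0}_{A_0}$ is cartesian (with left leg $\xi_{A_0}$) this factors uniquely through it to give a cell $\eta$ with empty top at $FA_0$, legs $\xi_{A_0},\id_{FA_0}$, and bottom $q$.

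The main obstacle is verifying the two triangle identities for the pair $(\eta,\epsilon)$; once they hold, the loosewise dual of the companion characterization \cite[5.4.\ Lemma]{Koudenburg2020aug} shows that $q=\conj{\xi_{A_0}}$ and that $\epsilon$ is precisely the cartesian cell defining the conjoint. I expect to discharge these identities by reflecting them into $\bK$: both $\eta$ and $\epsilon$ are defined through cartesian cells $\sigma^{A_0}_A$ and through the $F$-image of unique cells of $\bK$, so after pre-composing with the appropriate $\sigma^{A_0}_A$ (which, being cartesian, detect equality of the cells it factors) each triangle identity becomes an equation between $F$-images of $1$-coary cells of $\bK$ with fixed boundary; by loosewise indiscreteness such cells are unique, so the equations hold automatically. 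The uniqueness clause of \ref{axiom:cell_type1_left} handles the identity involving $\epsilon$ (whose left leg $\id_\Xi$ prevents a direct reflection), by checking that both composites satisfy the defining equation of $\epsilon$.

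Granting the conjoints, strongness is immediate. For arbitrary $A,B\in\bK$ the cell $\xi_{AB}$ equals $\sigma^{B}_A\cdot\epsilon_B$ by the very equation characterizing $\epsilon_B$, where $\sigma^{B}_A$ (a $1$-coary restriction) and $\epsilon_B$ (the conjoint cell $\conj{\xi_B}$, a $0$-coary restriction) are both cartesian. Since cartesian cells are closed under vertical pasting, $\xi_{AB}$ exhibits $F!_{AB}\cong\Xi(\xi_A,\xi_B)$ and is cartesian; as every loose arrow of the loosewise indiscrete $\bK$ is some $!_{AB}$, this proves that $\xi$ is strong.
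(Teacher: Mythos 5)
Your proposal follows the paper's proof essentially step for step: the left $F$-module $A\mapsto F!_{AA_0}$ assembled via \cref{prop:modules_loosewise_indiscrete}, the loose arrow $q$ obtained from \ref{axiom:loose_esssurj_left}, the counit $\epsilon$ obtained from \ref{axiom:cell_type1_left} applied to the modulation $(\xi_{AA_0})_A$, the unit $\eta$ given by pasting the $F$-image of the unit cell of $\bK$ onto the cartesian cell $\sigma^{A_0}_{A_0}$ (this is a composition rather than a factorization, but it yields the same cell), and the concluding factorization $\xi_{AB}=\sigma^{B}_{A}\tcomp\epsilon_B$ of each cocone cell into two cartesian cells. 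The only detail to tighten is your sketch of the triangle identities: the identity $\eta\tcomp\epsilon=\heq_{\xi_{A_0}}$ follows from the defining equation of $\epsilon$ together with the tight-cocone compatibility axiom for the $0$-coary unit cell of $\bK$ (not merely from uniqueness of cells in $\bK$), while the other identity is handled exactly as you indicate, by showing both sides induce the same left module morphism and invoking the uniqueness clause of \ref{axiom:cell_type1_left} (equivalently, full faithfulness of $\comp{\xi}-$).
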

\begin{proof}
    Fix $K\in\bK$.
    Let us define a left $F$-module $m$ with the vertex $FK$ as follows:
    \begin{itemize}
        \item
            For each $A\in\bK$, $m_A\coloneq F!_{AK}\colon FA\larr[][1] FK$ in $\bL$.
        \item
             For $A,B\in\bK$, $m_{AB}$ is defined as the following cell:
             \begin{equation*}
                 \begin{tikzcd}
                     FA\ar[d,equal]\lar[r,"F!_{AB}"] & FB\lar[r,"F!_{BK}"] & FK\ar[d,equal] \\
                     FA\lar[rr,"F!_{AK}"'] & & FK
                     \cellsymb(F!_{ABK}){1-1}{2-3}
                 \end{tikzcd}\incat{\bL}.
             \end{equation*}
             Here, $!_{ABK}$ is a unique cell in $\bK$.
    \end{itemize}
    By \ref{axiom:loose_esssurj_left}, we have a loose arrow $\Xi\larr(q)FK$ in $\bL$ and a modulation $\compcell{\xi}q$ of type 1 whose components are cartesian as follows:
    \begin{equation*}
        \begin{tikzcd}
            F\ar[d,Rightarrow,"\xi"']\lar[r,Rightarrow,"m"] & FK\ar[d,equal] \\
            \Xi\lar[r,"q"'] & FK
            \cellsymb(\compcell{\xi}q){1-1}{2-2}
        \end{tikzcd}
        \Vline
        \begin{tikzcd}[hugecolumn]
            FA\ar[d,"\xi_A"']\lar[r,"m_A=F!_{AK}"] & FK\ar[d,equal] \\
            \Xi\lar[r,"q"'] & FK
            \cellsymb((\compcell{\xi}q)_A\colon\cart){1-1}{2-2}
        \end{tikzcd}\incat{\bL}\quad (A\in\bK).
    \end{equation*}
    We can define a modulation $\sigma$ of type 1 by $\sigma_A\coloneq\xi_{AK}$:
    \begin{equation*}
        \begin{tikzcd}
            F\ar[d,Rightarrow,"\xi"']\lar[r,Rightarrow,"m"] & FK\ar[dl,"\xi_K"] \\
            \Xi &
            \cellsymb(\sigma)[above left=1]{1-2}{2-1}
        \end{tikzcd}
        \Vline
        \begin{tikzcd}
            FA\ar[d,"\xi_A"']\lar[r,"F!_{AK}"] & FK\ar[dl,"\xi_K"] \\
            \Xi &
            \cellsymb(\xi_{AK})[above left=-1]{1-2}{2-1}
        \end{tikzcd}\incat{\bL}\quad (A\in\bK).
    \end{equation*}
    By \ref{axiom:cell_type1_left}, we have a cell $\epsilon$ corresponding to the modulation $\sigma$:
    \begin{equation*}
        \begin{tikzcd}
            \Xi\ar[d,equal]\lar[r,"q"] & FK\ar[dl,"\xi_K"] \\
            \Xi &
            \cellsymb(\epsilon)[above left=1]{1-2}{2-1}
        \end{tikzcd}\incat{\bL}.
    \end{equation*}
    Now, we shall show that $\epsilon$ is cartesian.
    Equivalently, we shall show that $q$ is a conjoint of $\xi_K$.
    To show that, let us consider the following cell $\eta$:
    \begin{equation*}
        \begin{tikzcd}[tri]
            & FK\ar[dl,"\xi_K"']\ar[dr,equal] & \\
            \Xi\lar[rr,"q"'] &{}& FK
            \cellsymb(\eta)[above=-4]{1-2}{2-2}
        \end{tikzcd}
        \coloneq
        \begin{tikzcd}[tri]
            & FK\ar[dl,equal]\ar[dr,equal] & \\
            FK\ar[d,"\xi_K"']\lar[rr,"m_K=F!_{KK}"'] &{}& FK\ar[d,equal] \\
            \Xi\lar[rr,"q"'] && FK
            \cellsymb(F!)[above=-4]{1-2}{2-2}
            \cellsymb((\compcell{\xi}q)_K){2-1}{3-3}
        \end{tikzcd}\incat{\bL}.
    \end{equation*}
    Then, one of the triangle identities can be shown as follows:
    \begin{equation*}
        \begin{tikzcd}
            & FK\ar[dl,"\xi_K"']\ar[d,equal] \\
            \Xi\ar[d,equal]\lar[r,"q"] & FK\ar[dl,"\xi_K"] \\
            \Xi &
            \cellsymb(\eta)[below right]{1-2}{2-1}
            \cellsymb(\epsilon)[above left]{2-2}{3-1}
        \end{tikzcd}
        =
        \begin{tikzcd}
            & FK\ar[dl,equal]\ar[d,equal] \\
            FK\ar[d,"\xi_K"']\lar[r,"F!_{KK}"'] & FK\ar[d,equal] \\
            \Xi\ar[d,equal]\lar[r,"q"] & FK\ar[dl,"\xi_K"] \\
            \Xi &
            \cellsymb(F!)[below right]{1-2}{2-1}
            \cellsymb((\compcell{\xi}q)_K){2-1}{3-2}
            \cellsymb(\epsilon)[above left]{3-2}{4-1}
        \end{tikzcd}
        =
        \begin{tikzcd}
            & FK\ar[dl,equal]\ar[d,equal] \\
            FK\ar[d,"\xi_K"']\lar[r,"F!_{KK}"'] & FK\ar[dl,"\xi_K"] \\
            \Xi &
            \cellsymb(F!)[below right]{1-2}{2-1}
            \cellsymb(\xi_{!_{KK}})[above left=-2]{2-2}{3-1}
        \end{tikzcd}
        \overset{(\xi)}{=}
        \begin{tikzcd}
            FK\ar[d,bend right=30,"\xi_K"{left}]\ar[d,bend left=30,"\xi_K"{right}] \\[2em]
            \Xi
            \cellsymb(\heq){1-1}{2-1}
        \end{tikzcd}\incat{\bL}.
    \end{equation*}
    We next prove the other triangle identity.
    The following calculation shows that a cell $q\to q$, which appears in the triangle identity, is sent to the identity modulation on $m=\comp{\xi}q$ by the functor $\comp{\xi}-\colon\Homcat[\bL](\Xi,FK)\arr\Mdl{F}{FK}$:
    \begin{equation*}
        \begin{tikzcd}[hugecolumn]
            FA\ar[d,"\xi_A"']\lar[r,"m_A=F!_{AK}"] & FK\ar[d,equal] \\
            \Xi\ar[d,equal]\lar[r,"q"] & FK\ar[dl,"\xi_K"{right}]\ar[d,equal] \\
            \Xi\lar[r,"q"'] & FK
            \cellsymb((\compcell{\xi}q)_A){1-1}{2-2}
            \cellsymb(\epsilon)[above left=3]{2-2}{3-1}
            \cellsymb(\eta)[below right=3]{2-2}{3-1}
        \end{tikzcd}
        =
        \begin{tikzcd}
            FA\ar[d,"\xi_A"']\lar[r,"F!_{AK}"] & FK\ar[dl,"\xi_K"{right}]\ar[d,equal] \\
            \Xi\lar[r,"q"'] & FK
            \cellsymb(\xi_{AK})[above left]{1-2}{2-1}
            \cellsymb(\eta)[below right=4]{1-2}{2-1}
        \end{tikzcd}
        =
        \begin{tikzcd}
            FA\ar[d,equal]\lar[r,"F!_{AK}"] & FK\ar[d,equal]\ar[dr,equal] & \\
            FA\ar[dr,"\xi_A"']\lar[r,"F!_{AK}"] & FK\ar[d,"\xi_K"{right=-3}]\lar[r,"F!_{KK}"'] & FK\ar[d,equal] \\
            & \Xi\lar[r,"q"'] & FK
            \cellsymb(\veq){1-1}{2-2}
            \cellsymb(F!)[below left]{1-2}{2-3}
            \cellsymb(\xi_{AK})[above right]{2-1}{3-2}
            \cellsymb((\compcell{\xi}q)_K){2-2}{3-3}
        \end{tikzcd}
    \end{equation*}
    \begin{equation*}
        \overset{(\compcell{\xi}q)}{=}
        \begin{tikzcd}
            FA\ar[d,equal]\lar[r,"F!_{AK}"] & FK\ar[d,equal]\ar[dr,equal] & \\
            FA\ar[d,equal]\lar[r,"F!_{AK}"] & FK\lar[r,"F!_{KK}"'] & FK\ar[d,equal] \\
            FA\ar[dr,"\xi_A"']\lar[rr,"F!_{AK}"] & & FK\ar[d,equal] \\
            & \Xi\lar[r,"q"'] & FK
            \cellsymb(\veq){1-1}{2-2}
            \cellsymb(F!)[below left]{1-2}{2-3}
            \cellsymb(F!_{AKK}){2-1}{3-3}
            \cellsymb((\compcell{\xi}q)_A)[left]{3-3}{4-2}
        \end{tikzcd}
        =
        \begin{tikzcd}
            FA\ar[d,"\xi_A"']\lar[r,"F!_{AK}"] & FK\ar[d,equal] \\
            \Xi\lar[r,"q"'] & FK
            \cellsymb((\compcell{\xi}q)_A){1-1}{2-2}
        \end{tikzcd}\incat{\bL}.
    \end{equation*}
    Since the functor $\comp{\xi}-$ is fully faithful, we have
    \begin{equation*}
        \begin{tikzcd}
            \Xi\ar[d,equal]\lar[r,"q"] & FK\ar[dl,"\xi_K"{right}]\ar[d,equal] \\
            \Xi\lar[r,"q"'] & FK
            \cellsymb(\epsilon)[above left=3]{1-2}{2-1}
            \cellsymb(\eta)[below right=3]{1-2}{2-1}
        \end{tikzcd}
        =
        \begin{tikzcd}
            \Xi\ar[d,equal]\lar[r,"q"] & FK\ar[d,equal] \\
            \Xi\lar[r,"q"'] & FK
            \cellsymb(\veq){1-1}{2-2}
        \end{tikzcd}\incat{\bL}.
    \end{equation*}
    Thus $q=\conj{\xi_K}$, and the cell $\epsilon$ is cartesian.

    Consequently, we have the following for any $A\in\bK$:
    \begin{equation*}
        \begin{tikzcd}
            FA\ar[d,"\xi_A"']\lar[r,"F!_{AK}"] & FK\ar[dl,"\xi_K"] \\
            \Xi &
            \cellsymb(\xi_{AK})[above left=-2]{1-2}{2-1}
        \end{tikzcd}
        =
        \begin{tikzcd}[hugecolumn]
            FA\ar[d,"\xi_A"']\lar[r,"m_A=F!_{AK}"] & FK\ar[d,equal] \\
            \Xi\ar[d,equal]\lar[r,"q"] & FK\ar[dl,"\xi_K"] \\
            \Xi &
            \cellsymb((\compcell{\xi}q)_A\colon\cart){1-1}{2-2}
            \cellsymb(\epsilon\colon\cart)[yshift=8,xshift=-8]{2-2}{3-1}
        \end{tikzcd}\colon\cart\incat{\bL}.
    \end{equation*}
    This proves that $\xi_{AK}$ is cartesian.
\end{proof}

\begin{corollary}\label{cor:unitality}
    Let $F\colon\bK\to\bL$ be an \ac{AVD}-functor between \acp{AVDC}, and let $\bK$ be loosewise indiscrete.
    Then, a vertex of a tight cocone $\xi$ from $F$ has a loose unit in $\bL$ if $\xi$ satisfies the conditions \ref{axiom:loose_esssurj_left}\ref{axiom:loose_esssurj_right}\ref{axiom:cell_type1_left}\ref{axiom:cell_type1_right}\ref{axiom:cell_type2}.
\end{corollary}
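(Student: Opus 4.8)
The plan is to reduce the statement to the Unitality theorem (\cref{thm:unitality_theorem}), whose hypotheses are \ref{axiom:loose_esssurj_left}\ref{axiom:cell_type1_left}\ref{axiom:cell_type2} together with the existence of a companion $\comp{\xi_A}$ for every $A\in\bK$. Three of these four inputs, namely \ref{axiom:loose_esssurj_left}, \ref{axiom:cell_type1_left}, and \ref{axiom:cell_type2}, already appear among the hypotheses of the corollary, so the whole content of the proof lies in manufacturing the companions $\comp{\xi_A}$. Once they are available, \cref{thm:unitality_theorem} applies verbatim and delivers a loose unit on the vertex $\Xi$.

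To produce these companions I would invoke the loosewise dual of the Strongness theorem (\cref{thm:strongness_theorem}). That theorem, applied to a tight cocone over a loosewise indiscrete shape satisfying \ref{axiom:loose_esssurj_left}\ref{axiom:cell_type1_left}, produces a conjoint $\conj{\xi_A}$ for every $A\in\bK$. Its loosewise dual therefore produces, from the dual hypotheses \ref{axiom:loose_esssurj_right}\ref{axiom:cell_type1_right}, the loosewise-dual output, namely a companion $\comp{\xi_A}$ for every $A\in\bK$. Two observations make this legitimate: first, loosewise indiscreteness of $\bK$ is a self-dual condition, so the dualized shape is again an admissible input for the theorem; and second, a tight cocone involves only tight arrows $\xi_A$ and $0$-coary cells $\xi_u$, so it is transported without change under loose reversal and may be fed to the dual theorem. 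Since \ref{axiom:loose_esssurj_right} and \ref{axiom:cell_type1_right} are assumed in the corollary, this step supplies exactly the companions required above.

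With the companions $\comp{\xi_A}$ in hand, the proof closes by a direct application of \cref{thm:unitality_theorem}. The only genuinely delicate point, and the one I would spell out most carefully, is the duality bookkeeping of the middle step: one must verify that reversing loose arrows interchanges companions with conjoints and sends the left-handed conditions \ref{axiom:loose_esssurj_left}\ref{axiom:cell_type1_left} to the right-handed ones \ref{axiom:loose_esssurj_right}\ref{axiom:cell_type1_right} (which is precisely how the primed and unprimed variants were defined), so that the conjoint-producing Strongness theorem, read loosewise-dually, indeed manufactures the companions needed by the Unitality theorem. Everything else is a straight citation, and I do not expect any further obstacle.
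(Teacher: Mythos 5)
Your proof is correct and is essentially the paper's own argument in mirror image: the paper applies the strongness theorem directly (using \ref{axiom:loose_esssurj_left}\ref{axiom:cell_type1_left}) to obtain conjoints and then invokes the loosewise dual of the unitality theorem, whereas you dualize the strongness theorem (using \ref{axiom:loose_esssurj_right}\ref{axiom:cell_type1_right}) to obtain companions and invoke the unitality theorem as stated. The duality bookkeeping you flag does go through---loosewise indiscreteness and \ref{axiom:cell_type2} are self-dual, and loosewise dualization interchanges companions with conjoints---so there is no gap.
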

\begin{proof}
    Combine the strongness theorem (\cref{thm:strongness_theorem}) and the loosewise dual of the unitality theorem (\cref{thm:unitality_theorem}).
\end{proof}

\begin{example}[Versatile collapses]
    Let $A\coloneq(A^0\larr(A^1)A^0,A^e,A^m)$ be a monoid in an \ac{AVDC} $\bX$.
    Suppose that $A^0$ has a loose unit in $\bX$.
    Let $UA^0$ denote the monoid in $\bX$ induced by the loose unit on $A^0$, let $UA^0\larr(UA^1)UA^0$ denote the module in $\bX$ induced by $A^1$, and let $UA^e$ and $UA^m$ denote the cells in $\Mod(\bX)$ induced by $A^e$ and $A^m$, respectively.
    Now, we have a monoid $UA\coloneq(UA^0,UA^1,UA^e,UA^m)$ in $\Mod(\bX)$ and the corresponding \ac{AVD}-functor $F\colon\Idimdbl\zero{1}\to\Mod(\bX)$, where $\zero{1}$ denotes the singleton.
    Then, the monoid $A$ gives a versatile colimit of $F$, which is strong.
    This is an example of a \emph{versatile collapse} (\cref{def:specific_versatile_colimits}), and is restated in \cref{thm:every_obj_is_versatile_colim}\cref{thm:every_obj_is_versatile_colim-collapse}.
\end{example}

\begin{example}
    Consider the \ac{AVDC} $\Rel$ of relations as in \cref{eg:avdc_of_relations}.
    Let $R\subseteq\zero{X}\times\zero{X}$ be an equivalence relation on a (large) set $\zero{X}$.
    Since a monoid in $\Rel$ is simply a (large) preordered set, we have an \ac{AVD}-functor $F\colon\Idimdbl\zero{1}\to\Rel$ corresponding to $R$.
    Then, the quotient set $\zero{X}/R$ becomes a versatile colimit (collapse) of $F$.
    However, such a versatile colimit does not exist in general unless the relation $R$ is an equivalence relation.
    Indeed, given a preorder $\le$ on $\zero{X}$, we can consider the smallest equivalence relation $\langle\le\rangle$ containing $\le$, but for the quotient $\zero{X}\arr(\pi)[][1]\zero{X}/\langle\le\rangle$ to be a versatile collapse, the strongness theorem (\cref{thm:strongness_theorem}) requires that the following cell is cartesian, which is equivalent to the equality $\le = \langle\le\rangle$:
    \begin{equation*}
        \begin{tikzcd}[tri]
            \zero{X}\ar[dr,"\pi"']\lar[rr,"\le"] &{}& \zero{X}\ar[dl,"\pi"] \\
            & \zero{X}/\langle\le\rangle &
            \cellsymb(\cdot)[above=-2]{1-2}{2-2}
        \end{tikzcd}\incat{\Rel}.
    \end{equation*}
    Hence, a preorder on a (large) set admits a versatile collage in $\Rel$ if and only if it is an equivalence relation.
\end{example}
\section{Axiomatization of double categories of profunctors}\label{sec:axioma}
This section is devoted to our main theorem: the characterization of the \acp{AVDC} of the forms $\Prof[\bX]$, $\Mod(\bX)$, and $\Mat[\bX]$.
After discussing the existence of several versatile colimits in \cref{subsec:formal_const} and ``density'' with respect to them in \cref{subsec:density}, we show in \cref{subsec:characterization} that these two properties characterize the \acp{AVDC} $\Prof[\bX]$, $\Mod(\bX)$, and $\Mat[\bX]$.
Finally, in \cref{subsec:slicing}, we apply the characterization to show that the classes of \acp{AVDC} $\Prof[\bX]$ and $\Mod(\bX)$ are closed under slicing.
\subsection{The formal construction of enriched categories}\label{subsec:formal_const}
\begin{notation}\label{note:avd_functor_induced_by_cat_mon_coloredset}
    Let $\bX$ be an \ac{AVDC} with loose units, and let $\one{A}$ be an $\bX$-enriched large category.
    We now regard $\one{A}$ as an \ac{AVD}-functor $\one{A}\colon\Idimdbl(\Ob\one{A})\to\bX$ as in \cref{prop:enriched_cat_is_AVD_functor}, where $\Ob\one{A}$ denotes the large set of objects in $\one{A}$.
    Then, we obtain an \ac{AVD}-functor $F_\one{A}\colon\Idimdbl(\Ob\one{A})\to\Prof[\bX]$ by composing the embedding $Z$ as in \cref{note:AVD_functor_Z}:
    \begin{equation*}
        \begin{tikzcd}
            \Idimdbl(\Ob\one{A})\ar[dr,"F_\one{A}"']\ar[r,"\one{A}"] & \bX\ar[d,"Z"] \\
            & \Prof[\bX]
        \end{tikzcd}
    \end{equation*}
    Similarly to \cref{prop:enriched_cat_is_AVD_functor}, every object in $\Mod(\bX)$ or $\Mat[\bX]$ can also be regarded as an \ac{AVD}-functor to the \ac{AVDC} $\bX$.
    Indeed, a monoid $M$ in $\bX$ is the same as an \ac{AVD}-functor $M\colon\Idimdbl\zero{1}\to\bX$.
    An $\bX$-colored large set $\zero{A}$ can be regarded as an \ac{AVD}-functor $\abs{\cdot}_\zero{A}\colon\Ddbl\zero{A}\to\bX$, which represents the coloring map.
    Then, we obtain an \ac{AVD}-functor $F_M$ by composing the embedding $U$ if $\bX$ has loose units, and also obtain $F_\zero{A}$ by composing the embedding $Y$:
    \begin{equation*}
        \begin{tikzcd}
            \Idimdbl\zero{1}\ar[dr,"F_M"']\ar[r,"M"] & \bX\ar[d,"U"] \\
            & \Mod(\bX)
        \end{tikzcd}
        \qquad
        \begin{tikzcd}
            \Ddbl\zero{A}\ar[dr,"F_\zero{A}"']\ar[r,"\abs{\cdot}_\zero{A}"] & \dim{\bX}\ar[d,"Y"] \\
            & \Mat[\bX]
        \end{tikzcd}
    \end{equation*}
\end{notation}

It will be shown in \cref{thm:every_obj_is_versatile_colim} that $\bX$-colored sets, monoids in $\bX$, and $\bX$-enriched categories are (\ac{VD}-)versatile colimits of their corresponding diagrams considered in \cref{note:avd_functor_induced_by_cat_mon_coloredset}.
We now give special names to these versatile colimits:
\begin{definition}\label{def:specific_versatile_colimits}\quad
    \begin{enumerate}
        \item
            A \emph{(\ac{VD}-)versatile coproduct} is a (\ac{VD}-)versatile colimit of an \ac{AVD}-functor from $\Ddbl\zero{S}$ for some set $\zero{S}$.
            It is called \emph{large} if the set $\zero{S}$ is large, and it is called a \emph{(\ac{VD}-)versatile initial object} when $\zero{S}$ is the empty set.
        \item
            A \emph{(\ac{VD}-)versatile collapse} is a (\ac{VD}-)versatile colimit of an \ac{AVD}-functor from $\Idimdbl\zero{1}$, where $\zero{1}$ denotes the singleton.
        \item
            A \emph{(\ac{VD}-)versatile collage} is a (\ac{VD}-)versatile colimit of an \ac{AVD}-functor from $\Idimdbl\zero{S}$ for some set $\zero{S}$.
            It is called \emph{large} if the set $\zero{S}$ is large.\qedhere
    \end{enumerate}
\end{definition}

\begin{remark}
    The term ``collapse'' has been used for similar concepts in a virtual equipment.
    For a monoid $M$ in a virtual equipment, a tight cocone from $M$ satisfying \ref{axiom:tight_bij} is called a ``collapse'' in \cite{Schultz2015regular}.
    The same term is also used in \cite{Arkor2024nervetheorem} for a tight cocone from the monoid satisfying a stronger condition, which coincides with our term ``versatile collapse.''
\end{remark}

\begin{lemma}\label{lem:Mat_has_versatile_coproducts}
    For any \ac{AVDC} $\bX$, $\Mat[\bX]$ has all large \ac{VD}-versatile coproducts.
\end{lemma}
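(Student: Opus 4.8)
The plan is to take $\Xi$ to be the disjoint union of the given family together with its coproduct inclusions, and then to verify directly the seven conditions of \cref{def:versatile_colim}, all of which collapse to the single fact that a cell of $\Mat[\bX]$ touching a disjoint union splits blockwise.

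First I would unwind the data. Since $\Ddbl\zero{S}$ is loosewise discrete---it has no loose arrows, and only identity tight arrows together with their tight identity cells---an \ac{AVD}-functor $F\colon\Ddbl\zero{S}\to\Mat[\bX]$ is nothing but a family $(\zero{A}_s)_{s\in\zero{S}}$ of $\bX$-colored large sets. I set $\Xi:=\bigsqcup_{s\in\zero{S}}\zero{A}_s$, a large $\bX$-colored set with coloring inherited from the summands (largeness uses closure of the universe under indexed coproducts), and let $\xi_s\colon\zero{A}_s\to\Xi$ be the evident coproduct inclusion, which is the identity on colors in the sense that $\xi_s^1 x=\id_{\abs{x}}$ for every $x$. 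Because $\Ddbl\zero{S}$ has no nonidentity tight arrows and no loose arrows, the family $(\xi_s)_s$ is automatically a tight cocone $\xi$ from $F$ with vertex $\Xi$, with no further cocone data to specify.

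Second, I would isolate the one structural observation that drives everything. Since each $\xi_s$ is the identity on colors, every restriction along $\xi_s$ exists and is computed by mere reindexing of matrix entries, with the loose identity cells serving as cartesian witnesses (these are cartesian by \cref{prop:cart_and_loosewise_inv}); hence each $\xi_s$ is pulling. More generally, recalling that a cell of $\Mat[\bX]$ is a family of cells of $\bX$ indexed by tuples of elements of its boundary objects, any cell one of whose boundary objects is $\Xi$ decomposes \emph{uniquely} as a family of cells, one for each way of specializing the coordinate ranging over $\Xi$ to an element of a summand $\zero{A}_s$; and precomposing or restricting by the color-identity cartesian cells of $\xi_s$ is exactly this specialization. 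By \cref{rem:variation_axiom_hor_esssurj} the conditions are insensitive to the choice of cartesian cells, so I may use these reindexing witnesses throughout. With this in hand the conditions follow in turn. Condition \ref{axiom:tight_bij} is the universal property of the disjoint union: a morphism of families $\Xi\to L$ is a family of morphisms of families $\zero{A}_s\to L$, which is the same as a tight cocone from $F$ to $L$. For \ref{axiom:loose_esssurj_left} and \ref{axiom:loose_esssurj_right}, a left (resp.\ right) $F$-module with vertex $L$ is, again by loosewise discreteness, merely a family of matrices $m_s$ from $\zero{A}_s$ to $L$; assembling them into the single matrix on $\Xi$ whose $s$-block is $m_s$ yields a loose arrow $p$ with $\comp{\xi}p=m$ on the nose, so the functor $\comp{\xi}-$ is even split surjective on objects (pullingness having been established above). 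Finally, for \ref{axiom:cell_type1_left}, \ref{axiom:cell_type1_right}, \ref{axiom:cell_type2}, and \ref{axiom:cell_type3}, each modulation out of $F$ is---by loosewise discreteness---just a family of component cells with no compatibility constraints, and the required unique factoring cell $\hat{\sigma}$ (resp.\ $\hat{\tau}$, $\hat{\omega}$) is precisely the reassembly of this family into one cell of $\Mat[\bX]$ over $\Xi$; the defining equations hold because the inserted cartesian cells of $\xi_s$ implement the blockwise specialization of the previous paragraph. Where a modulation's bottom boundary has length $0$, diminishedness of $\Mat[\bX]$ forces the relevant cells to be tight identities, so those cases are automatic.

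The main obstacle is bookkeeping rather than anything conceptual: because $\Mat[\bX]$ is diminished it has neither loose units nor companions, so none of the simplifications such as \cref{cor:simplification_to_M3} apply, and all seven conditions must be handled directly. The only genuine care needed is to confirm, in each modulation axiom, that the cartesian witnesses occurring in the defining equations are exactly the reindexing (loose-identity) cells, so that the unique factorization asserted by the axiom is literally the blockwise decomposition of cells established above.
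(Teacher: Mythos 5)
Your proposal is correct and follows essentially the same route as the paper: form the disjoint union with its coprojections, observe that these are the identity on colors and hence pulling, get \ref{axiom:tight_bij} from the universal property of the coproduct of sets, and derive the remaining conditions from the blockwise decomposition of matrices and cells over $\Xi$. The paper states this last step only as ``follows directly from the structure of $\Xi$ as a disjoint union,'' so your write-up is simply a more explicit version of the same argument.
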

\begin{proof}
    Let $(\zero{A}_i)_{i\in\zero{S}}$ be $\bX$-colored large sets indexed by a large set $\zero{S}$.
    Let $\Xi$ be a (large) disjoint union of $(\zero{A}_i)_{i\in\zero{S}}$, and let $\zero{A}_i\arr(\xi_i)[][1]\Xi$ denote the coprojections.
    We write $\ie{i}{x}$ for an element of $\Xi$, where $x\in\zero{A}_i$, and define its color by $\abs{\ie{i}{x}}\coloneq\abs{x}$.

    We have to show that $\Xi$ is a versatile coproduct of $(\zero{A}_i)_{i\in\zero{S}}$.
    The condition \ref{axiom:tight_bij} follows clearly by the construction.
    Since the tight arrow part of $\xi_i(x)$ for each $x\in \zero{A}_i$ is the identity, $\xi_i$ is pulling in $\Mat[\bX]$.
    The remaining conditions \ref{axiom:loose_esssurj_left}\ref{axiom:loose_esssurj_right}\ref{axiom:cell_type1_left}\ref{axiom:cell_type1_right}\ref{axiom:cell_type2}\ref{axiom:cell_type3} follow directly from the structure of $\Xi$ as a disjoint union.
\end{proof}

\begin{theorem}\label{thm:construction_versatile_colim}
    Let $\bX$ be an \ac{AVDC}, and let $\one{C}$ be a category.
    If $\bX$ has \ac{VD}-versatile colimits of all \ac{AVD}-functors $\Ddbl\one{C}\arr[][1]\bX$, then $\Mod(\bX)$ has versatile colimits of all \ac{AVD}-functors $\Idimdbl\one{C}\arr[][1]\Mod(\bX)$.
\end{theorem}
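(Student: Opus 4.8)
The plan is to lift a versatile colimit of the \emph{underlying} diagram in $\bX$ up to $\Mod(\bX)$. Let $F\colon\Idimdbl\one{C}\to\Mod(\bX)$ be an \ac{AVD}-functor, and write $F(c)=(F(c)^0,F(c)^1,F(c)^e,F(c)^m)$ for the monoid over $c\in\one{C}$ and $F(!_{cc'})^1\colon F(c)^0\larr F(c')^0$ for the underlying loose arrow of the bimodule over the unique loose arrow $c\larr c'$. Since $\Ddbl\one{C}$ is loosewise discrete, the assignments $c\mapsto F(c)^0$ and $f\mapsto F(f)^0$ define an \ac{AVD}-functor $\bar F\colon\Ddbl\one{C}\to\bX$, namely the composite of $F$ with the underlying-object functor $\Tcat(\Mod(\bX))\to\Tcat\bX$. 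By hypothesis $\bar F$ has a versatile colimit $\zeta\colon\bar F\Rightarrow\Xi^0$ in $\bX$, and I would equip the vertex $\Xi^0$ with a monoid structure to serve as the vertex of the desired colimit in $\Mod(\bX)$.

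First I would construct the loose part $\Xi^1$. The crucial observation is that, because $\Idimdbl\one{C}$ is loosewise indiscrete, the comparison cells $!_{f,\id}$ and $!_{\id,g}$ are split, hence absolutely cartesian (the corollary to \cref{lem:split_cell_is_cartesian}); applying $F$ and reading off underlying data in $\bX$ — legitimate, since cells of $\Mod(\bX)$ are cells of $\bX$ and are composed there — exhibits each $F(!_{cc'})^1$ as an $\bX$-restriction of its neighbours along the arrows $F(f)^0$, with split, hence cartesian, comparison cells. Thus, for fixed $c$, the family $\{F(!_{cc'})^1\}_{c'}$ is a right $\bar F$-module with vertex $F(c)^0$, so \ref{axiom:loose_esssurj_right} represents it by a loose arrow $P_c\colon F(c)^0\larr\Xi^0$ with $P_c(\id,\zeta_{c'})\cong F(!_{cc'})^1$; the $\{P_c\}_c$ in turn assemble into a left $\bar F$-module with vertex $\Xi^0$, whose representing loose arrow $\Xi^1\colon\Xi^0\larr\Xi^0$ (from \ref{axiom:loose_esssurj_left}) satisfies $\Xi^1(\zeta_c,\zeta_{c'})\cong F(!_{cc'})^1$. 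The cartesian cells required for these module structures come from the split cells above together with the essential uniqueness built into \ref{axiom:loose_esssurj_left}\ref{axiom:loose_esssurj_right}.

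Next I would produce the unit cell $\Xi^e$ and the multiplication cell $\Xi^m$. These I would obtain by transporting the cells $F(c)^e$, $F(c)^m$ and the images under $F$ of the unique multi-ary cells of $\Idimdbl\one{C}$ (which encode the two bimodule actions and the associativity of their composition) across the colimit: each packages into a modulation whose unique descent is pinned down by the cell-level conditions \ref{axiom:cell_type1_left}\ref{axiom:cell_type1_right}\ref{axiom:cell_type2}\ref{axiom:cell_type3} for $\zeta$, and the monoid axioms for $\Xi=(\Xi^0,\Xi^1,\Xi^e,\Xi^m)$ then follow from the faithfulness contained in those same conditions. Using the cocone cells of $\zeta$ together with the comparison isomorphisms above, the legs $\xi_c\colon F(c)\to\Xi$, with underlying tight arrow $\zeta_c$, assemble into a tight cocone $\xi\colon F\Rightarrow\Xi$ in $\Mod(\bX)$ lying over $\zeta$.

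Finally I would verify that $\xi$ is a versatile colimit. Since the shape is loosewise indiscrete, cocones, modules and modulations from $F$ are determined by their loose-arrow-indexed components (\cref{prop:cocones_loosewise_indiscrete,prop:modules_loosewise_indiscrete,prop:modulations_loosewise_indiscrete}); passing to underlying loose arrows and cells converts each such datum with vertex a monoid $L$ into the corresponding datum for $\bar F$ with vertex $L^0$, the $\Xi$- and $L$-actions being uniquely reconstructed from the monoid and module structure. Each of \ref{axiom:tight_bij}\ref{axiom:loose_esssurj_left}\ref{axiom:loose_esssurj_right}\ref{axiom:cell_type1_left}\ref{axiom:cell_type1_right}\ref{axiom:cell_type2}\ref{axiom:cell_type3} then reduces to its already-established instance for $\zeta$; the work is streamlined because, once \ref{axiom:loose_esssurj_left}\ref{axiom:cell_type1_left} hold, the strongness theorem (\cref{thm:strongness_theorem}) supplies conjoints of the legs and makes $\xi$ strong, its dual supplies companions, \cref{cor:unitality} supplies a loose unit on $\Xi$ (as it must, by \cref{prop:restriction_in_Mod}), and \cref{cor:simplification_to_M3} then leaves only \ref{axiom:tight_bij}\ref{axiom:loose_esssurj_left}\ref{axiom:loose_esssurj_right}\ref{axiom:cell_type3} to check. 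The hard part will be exactly this transfer of the algebraic structure through the colimit: confirming that the bimodule actions descend to a genuine monoid $\Xi$, and that a type-$3$ modulation in $\Mod(\bX)$ matches its underlying datum in $\bX$ so that \ref{axiom:cell_type3} for $\xi$ follows from the universal property of $\zeta$ — all of which rests on the cartesianness in $\bX$ of the split comparison cells.
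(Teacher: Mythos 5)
Your proposal follows essentially the same route as the paper's proof: take the versatile colimit $\Xi^0$ of the underlying object-diagram in $\bX$, construct $\Xi^1$ by representing the hom-families first as right modules and then as a left module via (L-r)/(L-l) (with the cartesian comparison cells supplied by the split cells of the loosewise indiscrete shape), descend the unit and multiplication cells through the modulation conditions, and reduce each axiom for the resulting cocone to the corresponding axiom for $\Xi^0$. The only inessential difference is in the endgame, where the paper verifies (T), (L-l), (L-r) directly and notes that the remaining conditions follow from those for $\Xi^0$, whereas you route through the strongness and unitality theorems and \cref{cor:simplification_to_M3}.
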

\begin{proof}
    Let $A\colon \Idimdbl\one{C}\to\Mod(\bX)$ be an \ac{AVD}-functor.
    Now, $A$ assigns to each object $i\in\one{C}$, a monoid $A_i=(A_i^0\larr(A_i^1)A_i^0, A_i^e, A_i^m)$ in $\bX$, where $A_i^e$ is the unit and $A_i^m$ is the multiplication.
    $A$ also assigns to each morphism $i\arr(f)[][1]j$ in $\one{C}$, a monoid homomorphism $A_f=(A_i^0\arr(A_f^0)[][1]A_j^0,A_f^1)$; to each pair $(i,j)$ of $i,j\in\one{C}$, a bimodule $A_{ij}=(A_i^0\larr(A_{ij}^1)A_j^0, A_{ij}^l, A_{ij}^r)$ in $\bX$, where $A_{ij}^l$ and $A_{ij}^r$ are the left action and the right action, respectively.

    Let $F\colon\Idimdbl\one{C}\arr[][1]\bX$ denote the \ac{AVD}-functor given by composing $A$ with the forgetful functor $\dim{\Mod(\bX)}\arr[][1]\bX$.
    Let $G\colon\Ddbl\one{C}\arr[][1]\bX$ denote the \ac{AVD}-functor given by composing $F$ with the inclusion $\Ddbl\one{C}\arr[][1]\Idimdbl\one{C}$.
    Let us take a \ac{VD}-versatile colimit $(A_i^0\arr(\xi_i^0)\Xi^0)_i$ in $\bX$ of $G$.
    For each $i\in\one{C}$, the families $(A_i^0\larr(A_{ij}^1)A_j^0)_j$ and $(A_j^0\larr(A_{ji}^1)A_i^0)_j$ yield a right $G$-module and a left $G$-module, respectively.
    Here, \cref{cor:abs_cart_in_loosewise_indiscrete} is used to show that the underlying cells of these modules associated to tight arrows are cartesian.
    By \ref{axiom:loose_esssurj_right} and \ref{axiom:loose_esssurj_left}, there exist two loose arrows $A_i^0\larr(\hq_i)[][1]\Xi^0\larr(\hp_i)[][1]A_i^0$ in $\bX$ and modulations $\hq_i\conjcell{\xi^0}$ and $\compcell{\xi^0}\hp_i$ of type 1 whose components are cartesian:
    \begin{equation*}
        \begin{tikzcd}[xhugecolumn]
            A_i^0\ar[d,equal]\lar[r,"A_{ij}^1"] &[5pt] A_j^0\ar[d,"\xi_j^0"{description}]\lar[r,"A_{ji}^1"] &[5pt] A_i^0\ar[d,equal] \\
            A_i^0\lar[r,"\hq_i"'] & \Xi^0\lar[r,"\hp_i"'] & A_i^0
            \cellsymb((\hq_i\conjcell{\xi^0})_j\colon\cart){1-1}{2-2}
            \cellsymb((\compcell{\xi^0}\hp_i)_j\colon\cart){1-2}{2-3}
        \end{tikzcd}\incat{\bX}\quad (i,j\in\one{C}).
    \end{equation*}
    By \ref{axiom:cell_type0_right} for $\Xi^0$, there exist, for each $i,j\in\one{C}$, a unique cell $\hq_{ij}$ in $\bX$ corresponding to a modulation of type 0 with components on the right below:
    \begin{equation*}
        \begin{tikzcd}
            A_i^0\ar[d,equal]\lar[r,"A_{ij}^1"] & A_j^0\lar[r,"\hq_j"] & \Xi^0\ar[d,equal] \\
            A_i^0\lar[rr,"\hq_i"'] & & \Xi^0
            \cellsymb(\hq_{ij}){1-1}{2-3}
        \end{tikzcd}\incat{\bX}
        \Vline
        \begin{tikzcd}
            A_i^0\ar[d,equal]\lar[r,"A_{ij}^1"] & A_j^0\lar[r,"A_{jk}^1"] & A_k^0\ar[d,equal] \\
            A_i^0\lar[rr,"A_{ik}^1"'] & & A_k^0
            \cellsymb(A!){1-1}{2-3}
        \end{tikzcd}\incat{\bX}\quad (k\in\one{C})
    \end{equation*}
    Then, $(\hq_i,\hq_{ij})$ uniquely extends to a left $F$-module $\hq$ by \cref{prop:modules_loosewise_indiscrete} and \ref{axiom:cell_type0_right} for $\Xi^0$.
    In particular, $\hq$ is also a left $G$-module.
    Thus, by \ref{axiom:loose_esssurj_left} for $\Xi^0$, we obtain a unique loose arrow $\Xi^1$ in $\bX$ and a modulation $\compcell{\xi^0}\Xi^1$ of type 1 whose components are cartesian:
    \begin{equation*}
        \begin{tikzcd}[xhugecolumn]
            A_i^0\ar[d,"\xi_i^0"']\lar[r,"\hq_i"] & \Xi^0\ar[d,equal] \\
            \Xi^0\lar[r,"\Xi^1"'] & \Xi^0
            \cellsymb((\compcell{\xi^0}\Xi^1)_i\colon\cart){1-1}{2-2}
        \end{tikzcd}\incat{\bX}\quad (i\in\one{C}).
    \end{equation*}
    In the same way, we can construct a right $F$-module $\hp=(\hp_i,\hp_{ij})$, a loose arrow ${\Xi^1}'$, and a modulation ${\Xi^1}'\conjcell{\xi^0}$ of type 1 whose components are cartesian.
    By replacing $\hp_i$ appropriately, we can assume $\Xi^1={\Xi^1}'$ without loss of generality, using \cref{prop:pasting_lemma_cartesian}.
    We now have cartesian cells as follows:
    \begin{equation}\label{eq:restriction_of_Xi^1}
        \begin{tikzcd}
            A_i^0\ar[d,"\xi_i^0"']\lar[r,"A_{ij}^1"] & A_j^0\ar[d,"\xi_j^0"] \\
            \Xi^0\lar[r,"\Xi^1"'] & \Xi^0
            \cellsymb(\cart){1-1}{2-2}
        \end{tikzcd}
        =
        \begin{tikzcd}[xhugecolumn]
            A_i^0\ar[d,equal]\lar[r,"A_{ij}^1"] & A_j^0\ar[d,"\xi_j^0"] \\
            A_i^0\ar[d,"\xi_i^0"']\lar[r,"\hq_i"] & \Xi^0\ar[d,equal] \\
            \Xi^0\lar[r,"\Xi^1"'] & \Xi^0
            \cellsymb((\hq_i\conjcell{\xi^0})_j\colon\cart){1-1}{2-2}
            \cellsymb((\compcell{\xi^0}\Xi^1)_i\colon\cart){2-1}{3-2}
        \end{tikzcd}
        =
        \begin{tikzcd}[xhugecolumn]
            A_i^0\ar[d,"\xi_i^0"']\lar[r,"A_{ij}^1"] & A_j^0\ar[d,equal] \\
            \Xi^0\ar[d,equal]\lar[r,"\hp_j"] & A_j^0\ar[d,"\xi_j^0"] \\
            \Xi^0\lar[r,"\Xi^1"'] & \Xi^0
            \cellsymb((\compcell{\xi^0}\hp_j)_i\colon\cart){1-1}{2-2}
            \cellsymb((\Xi^1\conjcell{\xi^0})_j\colon\cart){2-1}{3-2}
        \end{tikzcd}\incat{\bX}\quad (i,j\in\one{C}).
    \end{equation}

    By \ref{axiom:cell_type2} for $\Xi^0$, we have a unique cell $\Xi^e$ below:
    \begin{equation*}
        \begin{tikzcd}[tri]
            & A_i^0\ar[d,"\xi_i^0"{left},bend right=20]\ar[d,"\xi_i^0"{right},bend left=20] & \\
            & \Xi^0\ar[dl,equal]\ar[dr,equal] & \\
            \Xi^0\lar[rr,"\Xi^1"'] &{}& \Xi^0
            \cellsymb(\heq){1-2}{2-2}
            \cellsymb(\Xi^e){2-2}{3-2}
        \end{tikzcd}
        =
        \begin{tikzcd}[tri]
            & A_i^0\ar[dl,equal]\ar[dr,equal] & \\
            A_i^0\ar[d,"\xi_i^0"']\lar[rr,"A_{ii}^1"'] &{}& A_i^0\ar[d,"\xi_i^0"] \\
            \Xi^0\lar[rr,"\Xi^1"'] && \Xi^0
            \cellsymb(A!){1-2}{2-2}
            \cellsymb(\cart){2-1}{3-3}
        \end{tikzcd}\incat{\bX}\quad (i\in\one{C}).
    \end{equation*}
    By \ref{axiom:cell_type3}, \ref{axiom:cell_type0_left}, and \ref{axiom:cell_type0_right} for $\Xi^0$, we have a unique cell $\Xi^m$ below:
    \begin{equation*}
        \begin{tikzcd}
            A_i^0\ar[d,"\xi_i^0"']\lar[r,"A_{ij}^1"] & A_j^0\ar[d,"\xi_j^0"']\lar[r,"A_{jk}^1"] & A_k^0\ar[d,"\xi_k^0"] \\
            \Xi^0\ar[d,equal]\lar[r,"\Xi^1"'] & \Xi^0\lar[r,"\Xi^1"'] & \Xi^0\ar[d,equal] \\
            \Xi^0\lar[rr,"\Xi^1"'] && \Xi^0
            \cellsymb(\cart){1-1}{2-2}
            \cellsymb(\cart){1-2}{2-3}
            \cellsymb(\Xi^m){2-1}{3-3}
        \end{tikzcd}
        =
        \begin{tikzcd}
            A_i^0\ar[d,equal]\lar[r,"A_{ij}^1"] & A_j^0\lar[r,"A_{jk}^1"] & A_k^0\ar[d,equal] \\
            A_i^0\ar[d,"\xi_i^0"']\lar[rr,"A_{ik}^1"'] && A_k^0\ar[d,"\xi_k^0"] \\
            \Xi^0\lar[rr,"\Xi^1"'] && \Xi^0
            \cellsymb(A!){1-1}{2-3}
            \cellsymb(\cart){2-1}{3-3}
        \end{tikzcd}\incat{\bX}\quad (i,j,k\in\one{C}).
    \end{equation*}
    Using the functoriality of $A$ and the universal property of \ac{VD}-versatile colimits, we can verify that $(\Xi^0,\Xi^1,\Xi^e,\Xi^m)$ becomes a monoid $\Xi$ in $\bX$.

    By the naturality axiom of cells in $\Mod(\bX)$, the following two composites of cells coincide:
    \begin{equation*}
        \begin{tikzcd}[tri]
            & A_i^0\ar[dl,equal]\lar[rr,"A_i^1"] & & A_i^0\ar[dl,equal]\ar[dr,equal] & \\
            A_i^0\ar[d,equal]\lar[rr,"A_i^1"'] &{}& A_i^0\lar[rr,"A_{ii}^1"'] &{}& A_i^0\ar[d,equal] \\
            A_i^0\lar[rrrr,"A_{ii}^1"'] &&&& A_i^0
            \cellsymb(\veq){1-2}{2-3}
            \cellsymb(A!){1-4}{2-4}
            \cellsymb(A_{ii}^l){2-1}{3-5}
        \end{tikzcd}
        =
        \begin{tikzcd}[tri]
            & A_i^0\ar[dl,equal]\ar[dr,equal]\lar[rr,"A_i^1"] & & A_i^0\ar[dr,equal] & \\
            A_i^0\ar[d,equal]\lar[rr,"A_{ii}^1"'] &{}& A_i^0\lar[rr,"A_i^1"'] &{}& A_i^0\ar[d,equal] \\
            A_i^0\lar[rrrr,"A_{ii}^1"'] &&&& A_i^0
            \cellsymb(A!){1-2}{2-2}
            \cellsymb(\veq){1-4}{2-3}
            \cellsymb(A_{ii}^r){2-1}{3-5}
        \end{tikzcd}\incat{\bX}.
    \end{equation*}
    Let $\xi_i^1$ be a cell obtained by the tightwise composite of the above cell and the cell \cref{eq:restriction_of_Xi^1} with $i=j$.
    Then, we can verify that $(\xi_i^0,\xi_i^1)$ becomes a tight arrow $A_i\arr(\xi_i)[][1]\Xi$ in $\Mod(\bX)$ for each $i\in\one{C}$.

    Since restrictions in $\Mod(\bX)$ inherit from 1-coary ones in $\bX$, for objects $i,j\in\one{C}$, the cell \cref{eq:restriction_of_Xi^1} yields a cartesian cell $\xi_{ij}$ in $\Mod(\bX)$ of the following form:
    \begin{equation*}
        \begin{tikzcd}[tri]
            A_i\ar[dr,"\xi_i"']\lar[rr,"A_{ij}"] &{}& A_j\ar[dl,"\xi_j"] \\
            & \Xi &
            \cellsymb(\xi_{ij})[above=-5]{1-2}{2-2}
        \end{tikzcd}\colon\cart\incat{\Mod(\bX)}.
    \end{equation*}
    Then, the data $(\xi_i,\xi_{ij})_{i,j}$ yield a tight cocone $\xi$ from $A$ with the vertex $\Xi\in\Mod(\bX)$ by \cref{prop:cocones_loosewise_indiscrete}.
    Indeed, the second condition required by \cref{prop:cocones_loosewise_indiscrete} follows from the construction of the cell $\Xi^m$, and the first one, the compatibility with morphisms in $\one{C}$, follows from the construction of $\Xi^e$ and the compatibility of the modulations $\compcell{\xi^0}\hp_j$ (or $\hq_i\conjcell{\xi^0}$) with them.

    We should show that $\xi$ is a versatile colimit of $A$.
    By \cref{prop:restriction_in_Mod}, $\Mod(\bX)$ has loose units, and we can apply \cref{thm:VDverscolim_unital}.
    Let us begin with the verification of \ref{axiom:tight_bij} for $\xi$.
    Let $l=(l_i,l_{ij})_{i,j}$ be a tight cocone from $A$ with a vertex $L\in\Mod(\bX)$.
    By \ref{axiom:tight_bij} for the versatile colimit $\Xi^0$, there is a unique tight arrow $\Xi^0\arr(k^0)[][1]L^0$ in $\bX$ such that, for all $i$, $\xi_i^0\tcomp k^0=l_i^0$.
    By \ref{axiom:cell_type1_left} and \ref{axiom:cell_type1_right} for $\Xi^0$, there is a unique cell $k^1$ as follows:
    \begin{equation*}
        \begin{tikzcd}
            A_i^0\ar[d,"\xi_i^0"']\lar[r,"A_{ij}^1"] & A_j^0\ar[d,"\xi_j^0"] \\
            \Xi^0\ar[d,"k^0"']\lar[r,"\Xi^1"] & \Xi^0\ar[d,"k^0"] \\
            L^0\lar[r,"L^1"'] & L^0
            \cellsymb(\xi_{ij}\colon\cart){1-1}{2-2}
            \cellsymb(k^1){2-1}{3-2}
        \end{tikzcd}
        =
        \begin{tikzcd}
            A_i^0\ar[d,"l_i^0"']\lar[r,"A_{ij}^1"] & A_j^0\ar[d,"l_j^0"] \\
            L^0\lar[r,"L^1"'] & L^0
            \cellsymb(l_{ij}){1-1}{2-2}
        \end{tikzcd}\incat{\bX}\quad (i,j\in\one{C}).
    \end{equation*}
    Using \ref{axiom:cell_type2}\ref{axiom:cell_type1_left}\ref{axiom:cell_type1_right}\ref{axiom:cell_type3} for $\Xi^0$, we can verify that $(k^0,k^1)$ becomes a tight arrow $\Xi\arr(k)[][1]L$ in $\Mod(\bX)$ and that it is a unique one satisfying $\xi\tcomp k=l$.

    We next show \ref{axiom:loose_esssurj_left} for $\xi$.
    Since $\xi^0_i$ are pulling in $\bX$ and since $\Mod(\bX)$ inherits 1-coary restrictions from $\bX$ by \cref{prop:restriction_in_Mod}, $\xi_i$ become pulling in $\Mod(\bX)$.
    Let $m=(m_i,m_{ij})_{i,j}$ be a left $A$-module with a vertex $M\in\Mod(\bX)$.
    By \ref{axiom:loose_esssurj_left} for $\Xi^0$, there are loose arrow $p^1$ and cartesian cells $\sigma_i$ in $\bX$ being a modulation of type 1:
    \begin{equation*}
        \begin{tikzcd}
            A_i^0\ar[d,"\xi_i^0"']\lar[r,"m_i^1"] & M^0\ar[d,equal] \\
            \Xi^0\lar[r,"p^1"'] & M^0
            \cellsymb(\sigma_i\colon\cart){1-1}{2-2}
        \end{tikzcd}\incat{\bX}\quad (i\in\one{C}).
    \end{equation*}
    By \ref{axiom:cell_type3} and \ref{axiom:cell_type0_left} for $\Xi^0$, there exists a unique cell $p^l$ in $\bX$ satisfying the following:
    \begin{equation*}
        \begin{tikzcd}
            A_i^0\ar[d,"\xi_i^0"']\lar[r,"A_{ij}^1"] & A_j^0\ar[d,"\xi_j^0"]\lar[r,"m_j^1"] & M^0\ar[d,equal] \\
            \Xi^0\ar[d,equal]\lar[r,"\Xi^1"] & \Xi^0\lar[r,"p^1"] & M^0\ar[d,equal] \\
            \Xi^0\lar[rr,"p^1"'] & & M^0
            \cellsymb(\xi_{ij}){1-1}{2-2}
            \cellsymb(\sigma_j){1-2}{2-3}
            \cellsymb(p^l){2-1}{3-3}
        \end{tikzcd}
        =
        \begin{tikzcd}
            A_i^0\ar[d,equal]\lar[r,"A_{ij}^1"] & A_j^0\lar[r,"m_j^1"] & M^0\ar[d,equal] \\
            A_i^0\ar[d,"\xi_i^0"']\lar[rr,"m_i^1"] & & M^0\ar[d,equal] \\
            \Xi^0\lar[rr,"p^1"'] & & M^0
            \cellsymb(m_{ij}){1-1}{2-3}
            \cellsymb(\sigma_i){2-1}{3-3}
        \end{tikzcd}\incat{\bX}\quad (i,j\in\one{C}).
    \end{equation*}
    By \ref{axiom:cell_type0_left} for $\Xi^0$, there exists a unique cell $p^r$ in $\bX$ corresponding to a modulation of type 0 on the right below:
    \begin{equation*}
        \begin{tikzcd}
            \Xi^0\ar[d,equal]\lar[r,"p^1"] & M^0\lar[r,"M^1"] & M^0\ar[d,equal] \\
            \Xi^0\lar[rr,"p^1"'] & & M^0
            \cellsymb(p^r){1-1}{2-3}
        \end{tikzcd}\incat{\bX}
        \Vline
        \begin{tikzcd}
            A_i^0\ar[d,equal]\lar[r,"m_i^1"] & M^0\lar[r,"M^1"] & M^0\ar[d,equal] \\
            A_i^0\lar[rr,"m_i^1"'] & & M^0
            \cellsymb(m_i^r){1-1}{2-3}
        \end{tikzcd}\incat{\bX}\quad (i\in\one{C})
    \end{equation*}
    Then, $p\coloneq(p^1,p^l,p^r)$ and the cells $\sigma_i$ form a loose arrow and cells in $\Mod(\bX)$.
    Then, we can verify that the cells $\sigma_i$ become a modulation (of type 1), which shows \ref{axiom:loose_esssurj_left} for $\xi$.
    The loosewise dual \ref{axiom:loose_esssurj_right} also follows similarly.
    What remains to show is that $\xi$ satisfies the condition \ref{axiom:cell_type3} for 1-coary modulations, which follows from the corresponding condition of $\Xi^0$ directly.
\end{proof}

\begin{corollary}
    For any \ac{AVDC} $\bX$, $\Mod(\bX)$ has all versatile collapses.
\end{corollary}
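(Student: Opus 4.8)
The plan is to obtain the statement directly from \cref{thm:construction_versatile_colim} by specializing the shape category. Recall from \cref{def:specific_versatile_colimits} that a versatile collapse in $\Mod(\bX)$ is precisely a versatile colimit of an \ac{AVD}-functor $\Idimdbl\zero{1}\to\Mod(\bX)$, where $\zero{1}$ is the singleton viewed as the terminal category. Taking $\one{C}=\zero{1}$ in \cref{thm:construction_versatile_colim}, we have $\Ddbl\one{C}=\Ddbl\zero{1}$ and $\Idimdbl\one{C}=\Idimdbl\zero{1}$, so it suffices to show that $\bX$ admits versatile colimits of all \ac{AVD}-functors $\Ddbl\zero{1}\to\bX$.

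Such a functor is nothing but a choice of an object $c\in\bX$, and I claim that $c$ is a versatile colimit of itself via the identity tight cocone $\xi$ with vertex $\Xi:=c$ and single component $\xi_{\ast}:=\id_c$. The point is that the shape $\Ddbl\zero{1}$ is loosewise discrete: it has one object, only the identity tight arrow, no loose arrows, and no cells apart from the tight identity. Hence a tight cocone from $c$ to $L$ is merely a tight arrow $c\to L$, a left (resp.\ right) $F$-module with vertex $M$ is merely a loose arrow $c\larr[][1] M$ (resp.\ $M\larr[][1] c$) in $\bX$, and each modulation of type $1$, $2$, or $3$ reduces to a single cell subject to no compatibility constraint. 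Since $\xi_{\ast}=\id_c$ is pulling, the canonical comparison functors $\xi\tcomp-$, $\comp{\xi}-$, and $-\conj{\xi}$ of \cref{lem:canonical_functor_ver_to_cocone,lem:canonical_functor_hol_to_mdl} are all given by restriction along an identity and are therefore isomorphisms or equivalences of categories; this yields \ref{axiom:tight_bij}\ref{axiom:loose_esssurj_left}\ref{axiom:loose_esssurj_right}. The cell conditions \ref{axiom:cell_type1_left}\ref{axiom:cell_type1_right}\ref{axiom:cell_type2}\ref{axiom:cell_type3} hold because the cartesian cells $(\compcell{\xi}p)_{\ast}$, $(t\conjcell{\xi})_{\ast}$ attached to the identity are loosewise invertible, so the required cell $\hat\sigma$ (resp.\ $\hat\tau$, $\hat\omega$) is simply the unique given component composed with these invertible cells. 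Thus $\xi$ is a versatile colimit, and $\bX$ has all versatile colimits of shape $\Ddbl\zero{1}$.

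Feeding this base case into \cref{thm:construction_versatile_colim} then produces versatile colimits of every \ac{AVD}-functor $\Idimdbl\zero{1}\to\Mod(\bX)$, which are exactly the versatile collapses. I expect no genuine obstacle in this argument: all of the real work is already carried out in \cref{thm:construction_versatile_colim}, and the sole remaining task is to confirm the triviality of the one-object discrete diagram, which is routine precisely because each comparison functor collapses to a restriction along the identity.
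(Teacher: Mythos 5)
Your proposal is correct and matches the paper's own proof, which likewise specializes \cref{thm:construction_versatile_colim} to $\one{C}=\zero{1}$ after observing that versatile colimits of the shape $\Ddbl\zero{1}$ are trivial. Your extra verification that the identity cocone on a single object is a versatile colimit (identities are pulling, the comparison functors collapse to restrictions along identities, and the attached cartesian cells are loosewise invertible) is exactly the content the paper leaves implicit in the word ``trivial.''
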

\begin{proof}
    Since versatile colimits for the shape $\Ddbl\zero{1}$ are trivial, this follows from \cref{thm:construction_versatile_colim}.
\end{proof}

\begin{corollary}\label{cor:prof_has_collage}
    For any \ac{AVDC} $\bX$, $\Prof[\bX]$ has all large versatile collages.
\end{corollary}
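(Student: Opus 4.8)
The plan is to read the statement off from the two preceding results by specializing the shape category to a discrete one. Recall that, by definition, $\Prof[\bX]=\Mod(\Mat[\bX])$, and that a large versatile collage is a versatile colimit of an \ac{AVD}-functor out of $\Idimdbl\zero{S}$ for some large set $\zero{S}$, which we regard as a discrete category.

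First I would invoke \cref{thm:construction_versatile_colim} with its base \ac{AVDC} specialized to $\Mat[\bX]$ and its shape category $\one{C}$ specialized to the discrete category $\zero{S}$. That theorem then reduces the existence of versatile colimits of shape $\Idimdbl\zero{S}$ in $\Mod(\Mat[\bX])=\Prof[\bX]$ to the existence of versatile colimits of shape $\Ddbl\zero{S}$ in $\Mat[\bX]$. But a versatile colimit of an \ac{AVD}-functor out of $\Ddbl\zero{S}$ is precisely a large versatile coproduct in the sense of \cref{def:specific_versatile_colimits}, and \cref{lem:Mat_has_versatile_coproducts} guarantees that $\Mat[\bX]$ has all of these. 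Chaining these two facts together immediately yields the claim.

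Since everything here is a direct specialization of already-established results, there is no genuine obstacle; the only point requiring care is the bookkeeping—namely, verifying that the discrete category $\zero{S}$ is an admissible instance of the general shape $\one{C}$ appearing in \cref{thm:construction_versatile_colim}, and that the shapes $\Ddbl\zero{S}$ and $\Idimdbl\zero{S}$ match the definitions of large versatile coproducts and large versatile collages, respectively. Once this alignment is confirmed, the corollary follows.
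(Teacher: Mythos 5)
Your proposal is correct and is exactly the paper's argument: the paper proves this corollary by combining \cref{lem:Mat_has_versatile_coproducts} with \cref{thm:construction_versatile_colim}, specializing the shape to a large discrete category just as you describe. The bookkeeping you flag (that $\Ddbl\zero{S}$ and $\Idimdbl\zero{S}$ match the definitions of large versatile coproducts and collages) is indeed immediate from \cref{def:specific_versatile_colimits}.
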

\begin{proof}
    Combine \cref{lem:Mat_has_versatile_coproducts,thm:construction_versatile_colim}.
\end{proof}

\begin{theorem}\label{thm:every_obj_is_versatile_colim}
    Let $\bX$ be an \ac{AVDC}.
    \begin{enumerate}
        \item
            Every $\bX$-colored large set $\zero{A}$ is a \ac{VD}-versatile coproduct of \mbox{$F_\zero{A}\colon\Ddbl\zero{A}\to\Mat[\bX]$} in \cref{note:avd_functor_induced_by_cat_mon_coloredset}.
        \item\label{thm:every_obj_is_versatile_colim-collapse}
            If $\bX$ has loose units, then every monoid $M$ in $\bX$ is a versatile collapse of \mbox{$F_M\colon\Idimdbl\zero{1}\to\Mod(\bX)$} in \cref{note:avd_functor_induced_by_cat_mon_coloredset}.
        \item
            If $\bX$ has loose units, then every $\bX$-enriched large category $\one{A}$ is a versatile collage of \mbox{$F_\one{A}\colon\Idimdbl(\Ob\one{A})\to\Prof[\bX]$} in \cref{note:avd_functor_induced_by_cat_mon_coloredset}.
    \end{enumerate}
\end{theorem}
\begin{proof}
    These are special cases of the construction in the proof of \cref{lem:Mat_has_versatile_coproducts,thm:construction_versatile_colim}.
\end{proof}
\subsection{Density}\label{subsec:density}
In this subsection, we study a key property of the embeddings $\dim{\bX}\arr(Y)[hook][1]\Mat[\bX]$, $\bX\arr(U)[hook][1]\Mod(\bX)$, and $\bX\arr(Z)[hook][1]\Prof[\bX]$, which we call ``density.''
This property consists of two conditions: first, every object in each codomain \ac{AVDC} can be written as a corresponding versatile colimit of objects from $\bX$, as shown in the previous subsection; second, every object from $\bX$ satisfies a suitable ``atomicity'' condition with respect to such colimits.
\begin{definition}
    Let $\bL$ be an \ac{AVDC}.
    An object $A\in\bL$ is called \emph{collage-atomic} (resp.\ \emph{coproduct-atomic}) if, for any large versatile collage (resp.\ large \ac{VD}-versatile coproduct) $\Xi\in\bL$ of $F\colon\Idimdbl\zero{S}\to\bL$ (resp.\ $\Ddbl\zero{S}\to\bL$), every tight arrow $A\arr(f)[][1]\Xi$ in $\bL$ uniquely factors through a unique coprojection $Fc\arr(\xi_c)[][1]\Xi$:
    \begin{equation*}
        \begin{tikzcd}[small]
            & A\ar[dl,"\exists !"']\ar[dd,"f"] \\
            Fc\ar[dr,"\xi_c"'] & {} \\
            & \Xi
            \cellsymb(\heq){2-1}{2-2}
        \end{tikzcd}\incat{\bL}\quad (\exists ! c\in\zero{S}).
    \end{equation*}
\end{definition}

\begin{definition}
    Let $\bL$ be an \ac{AVDC}.
    An object $A\in\bL$ is called \emph{collapse-atomic} if, for any versatile collapse $\Xi\in\bL$ of a monoid $B=(B^0,B^1,B^e,B^m)$ in $\bL$, every tight arrow $A\arr(f)[][1]\Xi$ in $\bL$ uniquely factors through the coprojection $B^0\arr(\xi)[][1]\Xi$:
    \begin{equation*}
        \begin{tikzcd}[small]
            & A\ar[dl,"\exists !"']\ar[dd,"f"] \\
            B^0\ar[dr,"\xi"'] & {} \\
            & \Xi
            \cellsymb(\heq){2-1}{2-2}
        \end{tikzcd}\incat{\bL}.
    \end{equation*}
\end{definition}

\begin{proposition}\label{prop:atomic_obj_in_Mat}
    Let $\bX$ be an \ac{AVDC}.
    Then, $\zero{A}\in\Mat[\bX]$ is coproduct-atomic if and only if it is tightwise isomorphic to $\zero{Y}_c$ for some $c\in\bX$.
\end{proposition}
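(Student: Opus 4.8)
The plan is to prove both implications by exploiting the explicit description of tight arrows out of the singleton $\zero{Y}_c$ together with the fact (\cref{lem:Mat_has_versatile_coproducts}) that versatile coproducts in $\Mat[\bX]$ are computed as disjoint unions. I first record that coproduct-atomicity is invariant under tightwise isomorphism: if $h\colon\zero{A}\to\zero{B}$ is an invertible tight arrow and $\zero{B}$ is coproduct-atomic, then any $f\colon\zero{A}\to\Xi$ can be rewritten as $f=h\tcomp(h^{-1}\tcomp f)$, and factoring $h^{-1}\tcomp f$ through a coprojection yields a factorization of $f$, with uniqueness transported along $h$. Hence for the implication $(\Leftarrow)$ it suffices to show that $\zero{Y}_c$ itself is coproduct-atomic. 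The key observation is that, since $\zero{Y}_c$ is the singleton colored by $c$, a tight arrow $\zero{Y}_c\to\zero{B}$ is exactly a pair $(b,\phi)$ consisting of an element $b\in\zero{B}$ and a tight arrow $c\to\abs{b}$ in $\bX$.

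For $(\Leftarrow)$ I would then take a versatile coproduct $\Xi$ of an \ac{AVD}-functor $F\colon\Ddbl\zero{S}\to\Mat[\bX]$, writing $\zero{A}_i:=Fi$ with coprojections $\xi_i$. First I reduce to the standard disjoint union: by \cref{lem:Mat_has_versatile_coproducts} the disjoint union $\coprod_{i}\zero{A}_i$ carries a versatile-coproduct cocone, and applying \ref{axiom:tight_bij} to both cocones produces mutually inverse comparison tight arrows between the two vertices that match up the coprojections, so without loss of generality $\Xi=\coprod_i\zero{A}_i$ with the evident coprojections. Then a tight arrow $f\colon\zero{Y}_c\to\Xi$ is a pointed element $(b,\phi)$; the element $b$ lies in a unique summand $\zero{A}_{i_0}$, and since the coprojection $\xi_{i_0}$ is the inclusion (identity on colors), $f$ factors uniquely through $\xi_{i_0}$, with $i_0$ forced by the tagging of the disjoint union. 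This gives coproduct-atomicity of $\zero{Y}_c$, hence of anything tightwise isomorphic to it.

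For $(\Rightarrow)$ I would apply coproduct-atomicity to the \emph{self-decomposition} of $\zero{A}$: take $\zero{S}:=\zero{A}$ and $F\colon\Ddbl\zero{A}\to\Mat[\bX]$ sending $x\mapsto\zero{Y}_{\abs{x}}$. By \cref{lem:Mat_has_versatile_coproducts}, $\coprod_{x\in\zero{A}}\zero{Y}_{\abs{x}}$ is a versatile coproduct, and it is canonically tightwise isomorphic to $\zero{A}$ via the identity-on-colors map $\iota$. Feeding $\iota$ into the atomicity hypothesis yields a unique $x_0\in\zero{A}$ and a factorization of $\iota$ through the coprojection $\xi_{x_0}$; since the image of $\xi_{x_0}$ is the single element indexed by $x_0$, the composite sends every element of $\zero{A}$ to that one element. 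But $\iota$, being a tight isomorphism, is a bijection on underlying large sets, which forces $\zero{A}=\{x_0\}$ and therefore $\zero{A}\cong\zero{Y}_{\abs{x_0}}$.

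The genuine content is entirely in this self-decomposition observation; the remaining work is bookkeeping about morphisms of families. The only edge case worth flagging is $\zero{A}=\varnothing$: its self-decomposition is the empty disjoint union, which has no coprojections at all, so the identity on $\varnothing$ cannot factor through any coprojection and the empty colored set fails to be coproduct-atomic—consistent with its exclusion from the statement. The main (but mild) technical point is the reduction of an arbitrary versatile coproduct to the standard disjoint union in $(\Leftarrow)$, which I resolve purely with the bijection of cocones supplied by \ref{axiom:tight_bij}; since tight arrows and their factorizations are tight-categorical data, no appeal to the finer admissibility machinery of \cref{prop:vers_colim_is_up_to_admissible_iso} is needed here.
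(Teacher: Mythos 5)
Your proof is correct. The $(\Leftarrow)$ direction is essentially the paper's: reduce an arbitrary versatile coproduct to the disjoint union of \cref{lem:Mat_has_versatile_coproducts} and use that $\zero{Y}_c$ is a singleton; you are merely more explicit about the comparison isomorphism obtained from \ref{axiom:tight_bij} and about transporting atomicity along tightwise isomorphisms, both of which the paper elides. The $(\Rightarrow)$ direction starts from the same self-decomposition of $\zero{A}$ into singletons (this is exactly \cref{thm:every_obj_is_versatile_colim} for $\Mat[\bX]$), but you finish with a concrete cardinality argument on underlying sets, whereas the paper factors $\id_{\zero{A}}$ through a coprojection $x\colon\zero{Y}_c\to\zero{A}$ to get $K\tcomp x=\id$ and then reuses the coproduct-atomicity of $\zero{Y}_c$ (i.e., the already-proved direction) to force $x\tcomp K=\id$, so that $x$ is invertible. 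The paper's retract argument is purely formal, which is what licenses the remark that ``a similar proof works'' for \cref{prop:atomic_obj_in_Mod,prop:atomic_obj_in_Prof}; your counting argument is more elementary but tied to the concrete description of $\Mat[\bX]$ and would need rethinking in those settings. Your explicit treatment of the empty colored set, and your observation that only the tight-categorical part of the universal property is needed for the reduction, are both sound and fill in details the paper leaves implicit.
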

\begin{proof}
    Let $\Xi$ be a \ac{VD}-versatile coproduct of a large family $\zero{A}_i\in\Mat[\bX]$.
    By \cref{lem:Mat_has_versatile_coproducts}, $\Xi$ is a disjoint union of $(\zero{A}_i)_i$.
    Thus, it immediately follows that $\zero{Y}_c$ is coproduct-atomic since the underlying set of $\zero{Y}_c$ is the singleton.

    To prove the converse direction, take a coproduct-atomic $\bX$-colored large set $\zero{A}$ arbitrarily.
    By \cref{thm:every_obj_is_versatile_colim}, $\zero{A}$ can be regarded as a large \ac{VD}-versatile coproduct of objects of the form $\zero{Y}_c$ $(c\in\bX)$.
    Since $\zero{A}$ is coproduct-atomic, the identity tight arrow on $\zero{A}$ factors through some coprojection $\zero{Y}_c\arr(x)[][1]\zero{A}$:
    \begin{equation*}
        \begin{tikzcd}[small]
            & \zero{A}\ar[dd,equal]\ar[dl,"\exists! K"'] \\
            \zero{Y}_c\ar[dr,"x"'] & {} \\
            & \zero{A}
            \cellsymb(\heq){2-1}{2-2}
        \end{tikzcd}\incat{\Mat[\bX]}.
    \end{equation*}
    Since $\zero{Y}_c$ is also coproduct-atomic, the tight arrow $x$ must uniquely factor through itself.
    Thus we have $x\tcomp K=\id$ and $\zero{A}\cong\zero{Y}_c$.
\end{proof}

A similar proof to \cref{prop:atomic_obj_in_Mat} works for the following propositions:
\begin{proposition}\label{prop:atomic_obj_in_Mod}
    Let $\bX$ be an \ac{AVDC} with loose units.
    Then, $A\in\Mod(\bX)$ is collapse-atomic if and only if it is tightwise isomorphic to the trivial monoid $U_c$ as in \cref{note:AVD_functor_U} for some $c\in\bX$.
\end{proposition}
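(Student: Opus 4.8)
The plan is to transcribe the proof of \cref{prop:atomic_obj_in_Mat}, replacing the singleton underlying set of $\zero{Y}_c$ with the analogous ``single-point'' behaviour of the trivial monoid $U_c$. The only genuinely new input is a classifier property for $U_c$: for any $L\in\Mod(\bX)$, tight arrows $U_c\to L$ in $\Mod(\bX)$ correspond bijectively, and naturally in $L$, to tight arrows $c\to L^0$ in $\bX$, where $L^0$ is the underlying object of the monoid $L$. I would prove this exactly as \cref{thm:object_classifiers_vdc-enriched}: a monoid homomorphism $(f^0,f^1)\colon U_c\to L$ is forced by compatibility with the unit $U_c^e$ to have its loose component $f^1$ reconstructed uniquely from $f^0$ through the loosewise invertible cell witnessing $\Unit_c$ as the loose unit on $c$, and compatibility with the multiplication is then automatic.

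For the forward direction I would show that $U_c$ is collapse-atomic. Let $\Xi$ be a versatile collapse of a monoid $B$ in $\Mod(\bX)$, with coprojection $\xi\colon B^0\to\Xi$. First I would check that the underlying tight arrow $\xi^0$ of $\xi$ is an isomorphism in $\bX$: by the construction in \cref{thm:construction_versatile_colim} specialised to the shape $\Idimdbl\zero{1}$, the underlying object $\Xi^0$ is a versatile colimit of a diagram of shape $\Ddbl\zero{1}$, which is trivial, so the constructed collapse has $\xi^0=\id$; and by \cref{prop:vers_colim_is_up_to_admissible_iso}\cref{prop:vers_colim_is_up_to_admissible_iso-vertex} any versatile collapse of $B$ differs from it by an admissible isomorphism of vertices, whose underlying tight arrow is invertible in $\bX$, so $\xi^0$ is an isomorphism in every case. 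Now a tight arrow $U_c\to\Xi$ is, by the classifier property, the same as a tight arrow $c\to\Xi^0$, and, using naturality, factoring it through $\xi$ amounts to factoring its $\bX$-component through the isomorphism $\xi^0$; this can be done in exactly one way, so $U_c$ is collapse-atomic.

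For the converse I would copy the retraction argument from \cref{prop:atomic_obj_in_Mat}. Let $A\in\Mod(\bX)$ be collapse-atomic. By \cref{thm:every_obj_is_versatile_colim}, $A$ is a versatile collapse of $F_A\colon\Idimdbl\zero{1}\to\Mod(\bX)$, whose single coprojection is $\xi\colon U_{A^0}\to A$. Collapse-atomicity of $A$ applied to $\id_A$ yields a unique $K\colon A\to U_{A^0}$ with $K\tcomp\xi=\id_A$. Applying collapse-atomicity of $U_{A^0}$ (the forward direction) to the coprojection $\xi$ itself, both $\id_{U_{A^0}}$ and $\xi\tcomp K$ factor $\xi$ through $\xi$, since $(\xi\tcomp K)\tcomp\xi=\xi\tcomp\id_A=\xi$; uniqueness forces $\xi\tcomp K=\id_{U_{A^0}}$. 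Hence $\xi$ is an invertible tight arrow and $A$ is tightwise isomorphic to $U_{A^0}=U_c$ with $c=A^0$.

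The main obstacle I anticipate is the forward direction, and specifically the claim that the underlying coprojection $\xi^0$ is an isomorphism: this is where the $\Mod$-construction must be unwound against \cref{thm:construction_versatile_colim} and the admissible-uniqueness of versatile colimits, in contrast to the backward direction, which is purely formal once $U_c$ is known to be atomic. The classifier property for $U_c$, while indispensable, is routine given the argument of \cref{thm:object_classifiers_vdc-enriched}.
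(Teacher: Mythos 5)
Your proposal is correct and follows exactly the route the paper intends: the paper's own proof is just the remark that ``a similar proof to \cref{prop:atomic_obj_in_Mat} works,'' and your argument is precisely that adaptation, with the two ingredients the paper leaves implicit (the classifier property of $U_c$, proved as in \cref{thm:object_classifiers_vdc-enriched}, and the invertibility of the underlying coprojection of any versatile collapse, via \cref{thm:construction_versatile_colim} and \cref{prop:vers_colim_is_up_to_admissible_iso}) correctly supplied. The retraction argument for the converse matches the paper's verbatim.
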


\begin{proposition}\label{prop:atomic_obj_in_Prof}
    Let $\bX$ be an \ac{AVDC} with loose units.
    An $\bX$-enriched large category is collage-atomic in $\Prof[\bX]$ if and only if it is tightwise isomorphic to a preobject classifier $\one{Z}_c$ for some $c\in\bX$.
\end{proposition}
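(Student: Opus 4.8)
The plan is to follow the template of \cref{prop:atomic_obj_in_Mat} verbatim, with the r\^ole of $\zero{Y}_c$ played by the preobject classifier $\one{Z}_c$ and with coproduct-atomicity replaced by collage-atomicity. The whole argument is driven by \cref{thm:object_classifiers_vdc-enriched}, which converts every question about $\bX$-functors out of $\one{Z}_c$ into a question about preobjects, and by the fact that the object set of a versatile collage is a disjoint union.

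First I would prove the ``if'' direction, that each $\one{Z}_c$ is collage-atomic. Let $\Xi$ be a versatile collage of an \ac{AVD}-functor $F\colon\Idimdbl\zero{S}\to\Prof[\bX]$. Since versatile colimits are unique up to admissible isomorphism (\cref{prop:vers_colim_is_up_to_admissible_iso}) and tight factorizations through coprojections are transported along such isomorphisms, it suffices to verify the factorization property for the explicit collage built in \cref{thm:construction_versatile_colim}. There the underlying object set $\Ob\Xi$ is the versatile coproduct in $\Mat[\bX]$ of the object sets $\Ob(Fs)$, hence their disjoint union by \cref{lem:Mat_has_versatile_coproducts}, and each coprojection $\xi_s$ acts on objects as the coproduct inclusion. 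Now by \cref{thm:object_classifiers_vdc-enriched} an $\bX$-functor $\one{Z}_c\to\Xi$ is the same as a preobject $(w, c\to\abs{w})$ in $\Xi$; its underlying object $w$ lies in exactly one summand $\Ob(Fs)$, which pins down a unique index $s\in\zero{S}$. Because the color $\abs{w}$ is computed identically in $Fs$ and in $\Xi$, the same pair is a preobject in $Fs$ colored with $c$, and therefore defines the unique $\bX$-functor $\one{Z}_c\to Fs$ whose postcomposition with $\xi_s$ recovers the original; uniqueness of both $s$ and the factor follows from disjointness and from the injectivity of $\xi_s$ on objects. Hence $\one{Z}_c$ is collage-atomic.

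For the converse, suppose $\one{A}$ is collage-atomic. By \cref{thm:every_obj_is_versatile_colim} the category $\one{A}$ is itself a versatile collage of $F_\one{A}\colon\Idimdbl(\Ob\one{A})\to\Prof[\bX]$, whose coprojections have sources $\one{Z}_{\abs{x}}$ for $x\in\Ob\one{A}$. Applying collage-atomicity of $\one{A}$ to the tight arrow $\id_\one{A}$ yields a unique object $x$, say with $c:=\abs{x}$, and a unique $K\colon\one{A}\to\one{Z}_c$ satisfying $K\tcomp\xi_x=\id_\one{A}$, where $\xi_x\colon\one{Z}_c\to\one{A}$ is the coprojection at $x$. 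It then remains to check $\xi_x\tcomp K=\id_{\one{Z}_c}$, exactly as in \cref{prop:atomic_obj_in_Mat}: the endomorphism $\xi_x\tcomp K$ obeys $(\xi_x\tcomp K)\tcomp\xi_x=\xi_x\tcomp(K\tcomp\xi_x)=\xi_x$, and so does $\id_{\one{Z}_c}$; since $\one{Z}_c$ is collage-atomic by the first part, the factorization of $\xi_x$ through the coprojection $\xi_x$ is unique, forcing $\xi_x\tcomp K=\id_{\one{Z}_c}$. Thus $K$ is an invertible tight arrow and $\one{A}$ is tightwise isomorphic to $\one{Z}_c$.

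I expect the only genuine obstacle to be in the first direction: one must read off from the construction in \cref{thm:construction_versatile_colim} that, although the collage $\Xi$ has hom-objects entangling the different summands, its object set is literally a disjoint union and the coprojections are inclusions on objects. Once that object-level picture is secured, \cref{thm:object_classifiers_vdc-enriched} reduces the factorization to the purely set-theoretic statement already used in \cref{prop:atomic_obj_in_Mat}, and the converse is then formal.
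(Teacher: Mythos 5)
Your proof is correct and follows exactly the route the paper intends: the paper's own justification is literally ``a similar proof to \cref{prop:atomic_obj_in_Mat} works,'' and you carry that out, using \cref{thm:object_classifiers_vdc-enriched} to reduce tight arrows out of $\one{Z}_c$ to preobjects and the disjoint-union description of the object set of the constructed collage (via \cref{lem:Mat_has_versatile_coproducts}, \cref{thm:construction_versatile_colim}, and uniqueness up to admissible isomorphism) for the atomicity of $\one{Z}_c$, followed by the same retraction argument for the converse. If anything, you supply more detail than the paper does, in particular the explicit reduction to the canonical collage.
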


\begin{definition}
    Let $\bL$ be an \ac{AVDC}.
    A full sub-\ac{AVDC} $\bX\subseteq\bL$ is called \emph{collage-dense} (resp.\ \emph{coproduct-dense}; \emph{collapse-dense}) if it satisfies following:
    \begin{itemize}
        \item
            Every object in $\bX$ is collage-atomic (resp.\ coproduct-atomic; collapse-atomic) in $\bL$.
        \item
            Every object in $\bL$ can be written as a large versatile collage (resp.\ a large \ac{VD}-versatile coproduct; a versatile collapse) of objects from $\bX$.\qedhere
    \end{itemize}
\end{definition}

\begin{proposition}\label{prop:dense_fullsub}
    Let $\bX$ be an \ac{AVDC}.
    \begin{enumerate}
        \item
            The full sub-\ac{AVDC} given by $\dim{\bX}\arr(Y)[hook]\Mat[\bX]$ as in \cref{note:AVD_functor_Y} is coproduct-dense.
        \item
            If $\bX$ has loose units, the full sub-\ac{AVDC} given by $\bX\arr(U)[hook]\Mod(\bX)$ as in \cref{note:AVD_functor_U} is collapse-dense.
        \item
            If $\bX$ has loose units, the full sub-\ac{AVDC} given by $\bX\arr(Z)[hook]\Prof[\bX]$ as in \cref{note:AVD_functor_Z} is collage-dense.
    \end{enumerate}
\end{proposition}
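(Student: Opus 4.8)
The plan is to prove all three density statements in parallel, since each reduces at once to combining two families of results already in hand: the fact that every object of the ambient \ac{AVDC} is canonically a versatile colimit of a diagram valued in the relevant full sub-\ac{AVDC} (\cref{thm:every_obj_is_versatile_colim}), together with the characterizations of atomic objects (\cref{prop:atomic_obj_in_Mat,prop:atomic_obj_in_Mod,prop:atomic_obj_in_Prof}). For each of the three cases I must verify the two clauses in the definition of density: that every object of the sub-\ac{AVDC} is atomic of the appropriate kind, and that every object of the ambient \ac{AVDC} is a versatile colimit, of the appropriate shape, of objects drawn from the sub-\ac{AVDC}.

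For the atomicity clause, I would first observe that the objects of the full sub-\ac{AVDC} determined by $Y$ (resp.\ $U$; $Z$) are precisely those of the form $\zero{Y}_c$ (resp.\ $U_c$; $\one{Z}_c$) for $c\in\bX$. Each of \cref{prop:atomic_obj_in_Mat,prop:atomic_obj_in_Mod,prop:atomic_obj_in_Prof} states that the coproduct-atomic (resp.\ collapse-atomic; collage-atomic) objects are exactly those tightwise isomorphic to an object of this form; in particular the objects $\zero{Y}_c$, $U_c$, and $\one{Z}_c$ are themselves atomic. This disposes of the first clause in all three cases simultaneously.

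For the generation clause, I would invoke \cref{thm:every_obj_is_versatile_colim} directly. In case (i) every $\bX$-colored large set $\zero{A}$ is a versatile colimit of $F_\zero{A}\colon\Ddbl\zero{A}\to\Mat[\bX]$; since this diagram is defined by post-composing the coloring with $Y$ (\cref{note:avd_functor_induced_by_cat_mon_coloredset}), its objects are the $\zero{Y}_{\abs{x}}$ for $x\in\zero{A}$, and its domain $\Ddbl\zero{A}$ makes this exactly a large versatile coproduct of objects of the sub-\ac{AVDC}, as required by \cref{def:specific_versatile_colimits}. Cases (ii) and (iii) have identical form when $\bX$ has loose units: every monoid $M$ is a versatile colimit of $F_M\colon\Idimdbl\zero{1}\to\Mod(\bX)$, that is, a versatile collapse of the single object $U_{M^0}$; and every $\bX$-enriched large category $\one{A}$ is a versatile colimit of $F_\one{A}\colon\Idimdbl(\Ob\one{A})\to\Prof[\bX]$, that is, a large versatile collage of the objects $\one{Z}_{\abs{x}}$ for $x\in\Ob\one{A}$. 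In each case the diagram functor factors through the embedding $Y$, $U$, or $Z$ by construction, so all of its objects lie in the sub-\ac{AVDC}, and the domain $\Ddbl$ versus $\Idimdbl$ aligns precisely with the coproduct/collapse/collage terminology.

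There is essentially no genuine obstacle here, as all the substantive work has been carried out in the cited theorem and propositions. The only point requiring attention is the bookkeeping: confirming in each case that the diagram of \cref{thm:every_obj_is_versatile_colim} takes values in the chosen full sub-\ac{AVDC}—which is immediate from the definition of $F_\zero{A}$, $F_M$, $F_\one{A}$ as post-composites with the respective embeddings—and that the shape of that diagram matches the kind of versatile colimit named in \cref{def:specific_versatile_colimits}. Once these identifications are made, both clauses of density follow in each of the three cases.
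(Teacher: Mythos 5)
Your proposal is correct and follows exactly the paper's own argument, which likewise derives all three statements by combining \cref{thm:every_obj_is_versatile_colim} with \cref{prop:atomic_obj_in_Mat,prop:atomic_obj_in_Mod,prop:atomic_obj_in_Prof}. The additional bookkeeping you spell out (that the diagrams $F_\zero{A}$, $F_M$, $F_\one{A}$ factor through the respective embeddings and have the right shapes) is exactly what the paper leaves implicit.
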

\begin{proof}
    These follow from \cref{thm:every_obj_is_versatile_colim,prop:atomic_obj_in_Prof,prop:atomic_obj_in_Mod,prop:atomic_obj_in_Mat}.
\end{proof}

\begin{remark}
    In an \ac{AVDC} with restrictions, the density theorem (\cref{thm:density}) shows that the collage-density can be captured by the ``canonical'' tight cocones defined in \cref{def:canonical_tight_cocone}.
    In particular, if $\bX$ has restrictions, every $\bX$-category can be written as a versatile colimit in $\Prof[\bX]$ of all of its preobjects.
\end{remark}

\begin{lemma}\label{lem:admissible_equiv_preserve_atomicity}
    Coproduct-atomicity, collapse-atomicity, and collage-atomicity of objects are preserved by any admissible equivalence between \acp{AVDC}.
\end{lemma}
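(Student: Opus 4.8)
The plan is to prove all three atomicity notions at once, since the argument is uniform across the shapes $\Ddbl\zero{S}$, $\Idimdbl\zero{1}$, and $\Idimdbl\zero{S}$ defining coproduct-, collapse-, and collage-atomicity. Let $\Phi\colon\bL\to\bL'$ be an admissible equivalence; by \cref{lem:replacement_admissible_adj_equiv} I may assume it is part of an admissible adjoint equivalence $(\Phi,\Psi,\eta,\epsilon)$ with $\eta\colon\Id\Rightarrow\Psi\circ\Phi$ and $\epsilon\colon\Phi\circ\Psi\Rightarrow\Id$. Fixing one of these shapes, call it $\bK$, I take an atomic object $A\in\bL$ and show $\Phi A\in\bL'$ is atomic. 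So let $\Xi'\in\bL'$ be a versatile colimit of some $F'\colon\bK\to\bL'$ with coprojections $\xi'_c$ (a single coprojection in the collapse case), and let $f'\colon\Phi A\to\Xi'$ be an arbitrary tight arrow; I must factor $f'$ uniquely through a unique coprojection.

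First I would transport the situation into $\bL$ via $\Psi$. By \cref{thm:admissible_equiv_preserve_versatile_colim}, the tight cocone $\Psi\xi'=(\Psi\xi'_c)_c$ is a versatile colimit of $\Psi\circ F'\colon\bK\to\bL$, which is again of the required shape $\bK$. Applying the atomicity of $A$ to the tight arrow $\eta_A\tcomp\Psi f'\colon A\to\Psi\Xi'$, I obtain a unique index $c_0$ and a unique tight arrow $g\colon A\to\Psi(F'c_0)$ with $g\tcomp\Psi\xi'_{c_0}=\eta_A\tcomp\Psi f'$.

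Next I would transport the factorization back, setting $h:=\Phi g\tcomp\epsilon_{F'c_0}\colon\Phi A\to F'c_0$. The identity $h\tcomp\xi'_{c_0}=f'$ follows by a routine manipulation: naturality of $\epsilon$ along $\xi'_{c_0}$ gives $\epsilon_{F'c_0}\tcomp\xi'_{c_0}=\Phi\Psi\xi'_{c_0}\tcomp\epsilon_{\Xi'}$, whence $h\tcomp\xi'_{c_0}=\Phi(g\tcomp\Psi\xi'_{c_0})\tcomp\epsilon_{\Xi'}=\Phi(\eta_A\tcomp\Psi f')\tcomp\epsilon_{\Xi'}$; naturality of $\epsilon$ along $f'$ rewrites this as $\Phi\eta_A\tcomp\epsilon_{\Phi A}\tcomp f'$, and the triangle identity $\Phi\eta_A\tcomp\epsilon_{\Phi A}=\id_{\Phi A}$ collapses it to $f'$. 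This proves existence of the factorization through $\xi'_{c_0}$.

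For uniqueness, suppose $h'\colon\Phi A\to F'c_1$ also satisfies $h'\tcomp\xi'_{c_1}=f'$. Then $\eta_A\tcomp\Psi h'\colon A\to\Psi(F'c_1)$ is a factorization of $\eta_A\tcomp\Psi f'$ through $\Psi\xi'_{c_1}$, so the uniqueness clause of atomicity for $A$ forces $c_1=c_0$ and $\eta_A\tcomp\Psi h'=g$; applying $\Phi(-)\tcomp\epsilon_{F'c_0}$ and using the same naturality-plus-triangle computation recovers $h'=h$, finishing the proof for this shape. Since the inverse equivalence $\Psi$ is again admissible, this also yields the reflected statement. I expect the only real difficulty to be bookkeeping: keeping the unit/counit insertions, their naturality squares, and the two triangle identities aligned in the diagrammatic (left-to-right) order $f\tcomp g$, and matching coprojections and indices correctly across $\Phi$ and $\Psi$. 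Admissibility itself is used only through \cref{lem:replacement_admissible_adj_equiv,thm:admissible_equiv_preserve_versatile_colim}; everything else is generic $2$-categorical manipulation in $\AVDC$.
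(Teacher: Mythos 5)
Your argument is correct and is exactly the intended unpacking of the paper's one-line proof, which simply cites \cref{thm:admissible_equiv_preserve_versatile_colim}: transport the versatile colimit along the inverse equivalence $\Psi$, factor using atomicity in $\bL$, and push the factorization back with the counit and triangle identities. The bookkeeping with $\eta$, $\epsilon$, and naturality is all in order, so nothing is missing.
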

\begin{proof}
    This follows from \cref{thm:admissible_equiv_preserve_versatile_colim}.
\end{proof}

\begin{theorem}\label{thm:admissible_equiv_preserve_density}
    Let $\Phi\colon\bL\to\bL'$ be an admissible equivalence, and let $\bX\subseteq\bL$ be a collage-dense (resp.\ coproduct-dense; collapse-dense) full sub-\ac{AVDC}.
    Then, the image of $\bX$ under $\Phi$ is still collage-dense (resp.\ coproduct-dense; collapse-dense).
\end{theorem}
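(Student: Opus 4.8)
The plan is to show that density transfers across an admissible equivalence $\Phi\colon\bL\to\bL'$ by combining two facts already established: admissible equivalences preserve versatile colimits (\cref{thm:admissible_equiv_preserve_versatile_colim}) and they preserve atomicity of objects (\cref{lem:admissible_equiv_preserve_atomicity}). Let me treat the collage-dense case explicitly; the coproduct-dense and collapse-dense cases are entirely parallel, differing only in the shape of the diagrams involved. Write $\bX'$ for the full sub-\ac{AVDC} of $\bL'$ whose objects are $\{\Phi c \mid c\in\bX\}$ (so $\bX'$ is the image of $\bX$ under $\Phi$). There are two conditions in the definition of collage-density to verify for $\bX'\subseteq\bL'$: every object of $\bX'$ is collage-atomic in $\bL'$, and every object of $\bL'$ is a large versatile collage of objects from $\bX'$.

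First I would dispatch the atomicity condition. Each object of $\bX'$ has the form $\Phi c$ for some $c\in\bX$, and $c$ is collage-atomic in $\bL$ by hypothesis; \cref{lem:admissible_equiv_preserve_atomicity} then immediately gives that $\Phi c$ is collage-atomic in $\bL'$. This step is essentially automatic once the atomicity-preservation lemma is in place.

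Next I would handle the covering condition, which is the genuinely substantive part. Let $L'\in\bL'$ be arbitrary. Using \cref{lem:replacement_admissible_adj_equiv} I may assume $\Phi$ is part of an admissible adjoint equivalence with pseudo-inverse $\Psi$, unit $\eta$, and counit $\epsilon$. Apply $\Psi$ to obtain $\Psi L'\in\bL$; since $\bX$ is collage-dense in $\bL$, there is a large versatile collage presentation $\Psi L'$ of some $F\colon\Idimdbl\zero{S}\to\bL$ factoring through $\bX$, i.e.\ a tight cocone $\xi$ from $F$ with vertex $\Psi L'$ that is a versatile colimit and whose coprojections land at objects of $\bX$. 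By \cref{thm:admissible_equiv_preserve_versatile_colim}, $\Phi\xi$ is a versatile colimit of $\Phi\circ F$, with vertex $\Phi\Psi L'$; note that $\Phi\circ F$ factors through $\bX'$, so $\Phi\Psi L'$ is a large versatile collage of objects from $\bX'$. It remains to move from $\Phi\Psi L'$ to $L'$ itself: the counit gives an admissible isomorphism $\epsilon_{L'}\colon\Phi\Psi L'\to L'$, and by \cref{prop:vers_colim_is_up_to_admissible_iso}\cref{prop:vers_colim_is_up_to_admissible_iso-vertex} the induced tight cocone $(\Phi\xi)\tcomp\epsilon_{L'}$ is again a versatile colimit, now with vertex $L'$. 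Hence $L'$ is a large versatile collage of objects from $\bX'$, as required.

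**The main obstacle** I anticipate is purely bookkeeping rather than conceptual: I must confirm that the diagram $\Phi\circ F$ (and then its composite with the counit) really does have its coprojections landing in the full sub-\ac{AVDC} $\bX'$, so that the presentation genuinely witnesses collage-density with respect to $\bX'$ and not merely with respect to some larger collection of objects. Since the coprojections of $\Phi\xi$ are the $\Phi\xi_c$, whose domains are $\Phi(Fc)$ with $Fc\in\bX$, this is immediate; composing with the admissible isomorphism $\epsilon_{L'}$ only post-composes the vertex map and does not change the domains of the coprojections. Thus every step reduces to the two transfer results already proved, and the proof is a short assembly of \cref{thm:admissible_equiv_preserve_versatile_colim}, \cref{lem:admissible_equiv_preserve_atomicity}, \cref{lem:replacement_admissible_adj_equiv}, and \cref{prop:vers_colim_is_up_to_admissible_iso}. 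The coproduct-dense and collapse-dense cases follow verbatim after replacing $\Idimdbl\zero{S}$ by $\Ddbl\zero{S}$ and by $\Idimdbl\zero{1}$ respectively.
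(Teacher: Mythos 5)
Your proposal is correct and follows essentially the same route as the paper: atomicity via \cref{lem:admissible_equiv_preserve_atomicity}, preservation of the versatile collage presentation via \cref{thm:admissible_equiv_preserve_versatile_colim}, and transfer of the vertex along the admissible (counit) isomorphism via \cref{prop:vers_colim_is_up_to_admissible_iso}\cref{prop:vers_colim_is_up_to_admissible_iso-vertex}. The only difference is that you make the counit explicit through \cref{lem:replacement_admissible_adj_equiv}, whereas the paper simply observes that every object of $\bL'$ is admissibly isomorphic to one in the image of $\bL$.
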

\begin{proof}
    We only show the case of collage-density since the other cases follow in the same way.
    Let $\Phi\bX\subseteq\bL'$ denote the image of $\bX$ under $\Phi$, and let $\Phi\bL\subseteq\bL'$ denote the image of $\bL$ under $\Phi$.
    By \cref{lem:admissible_equiv_preserve_atomicity}, every object in $\Phi\bX$ is collage-atomic.
    We now show the second condition in the definition of collage-density.
    By the collage-density of $\bX$, every object in $\bL$ is a large versatile collage of objects from $\bX$.
    Thus, by \cref{thm:admissible_equiv_preserve_versatile_colim}, every object in $\Phi\bL$ is a large versatile collage of objects from $\Phi\bX$.
    Since every object in $\bL'$ is admissibly isomorphic to some object in $\Phi\bL$, \cref{prop:vers_colim_is_up_to_admissible_iso}\cref{prop:vers_colim_is_up_to_admissible_iso-vertex} concludes that every object in $\bL'$ is a large versatile collage of objects from $\Phi\bX$.
\end{proof}
\subsection{Characterization theorems}\label{subsec:characterization}
We now show the main theorem: the existence of a full sub-\ac{AVDC} with the density property considered in the previous section characterizes the \acp{AVDC} of the forms $\Prof[\bX]$, $\Mod(\bX)$, and $\Mat[\bX]$.
We first introduce the notion of \textit{$C$-discreteness}, which characterizes the shapes of ordinary coproducts up to final functors.
This notion is used to construct the equivalence between \acp{AVDC} in the main theorem.
\begin{definition}\label{def:maximal_objects}
    Let $\one{C}$ be a category.
    An object $m\in\one{C}$ is called \emph{maximal} if every pair of parallel morphisms
    $
    \begin{tikzcd}[small]
        m\ar[r,shift left=1]\ar[r,shift right=1] & \cdot
    \end{tikzcd},
    $
    not necessarily distinct, has a common retraction.
    Let \mbox{$\Max(\one{C})\subseteq\one{C}$} denote the full subcategory of all maximal objects in $\one{C}$.
\end{definition}

\begin{remark}
    The category $\Max(\one{C})$ always becomes a ``simply connected groupoid.''
    That is, $\Max(\one{C})$ has at most one morphism between any two objects, and such a morphism is an isomorphism.
\end{remark}

\begin{definition}
    A category $\one{C}$ is called \emph{$C$-discrete}\footnote{The letter ``$C$'' stands for ``colimit'' and is inspired by the related notion of \textit{$L$-finiteness} in \cite{Pare1990simply}.} if:
    \begin{itemize}
        \item
            The isomorphism classes of $\Max(\one{C})$ form a large set;
        \item
            The inclusion functor $\Max(\one{C})\arr[hook][1]\one{C}$ is final.\qedhere
    \end{itemize}
\end{definition}

\begin{lemma}\label{lem:C_discrete}
    The following are equivalent for a category $\one{C}$:\footnote{The equivalence between \cref{lem:C_discrete-final} and \cref{lem:C_discrete-rightadjoint} is also noted in \cite[3.7.\ Lemma]{Clarke2024lifting}.}
    \begin{enumerate}
        \item\label{lem:C_discrete-def}
            $\one{C}$ is $C$-discrete.
        \item\label{lem:C_discrete-final}
            There is a final functor $\zero{S}\to\one{C}$ from a large discrete category $\zero{S}$.
        \item\label{lem:C_discrete-rightadjoint}
            There is a right adjoint functor $\zero{S}\to\one{C}$ from a large discrete category $\zero{S}$.
        \item\label{lem:C_discrete-concrete}
            There is a large set $\zero{S}$ of objects in $\one{C}$ such that any object in $\one{C}$ has a unique morphism from itself whose codomain lies in $\zero{S}$.
    \end{enumerate}
    Moreover, if these conditions are satisfied, the large set $\zero{S}$ above becomes isomorphic to a skeleton of $\Max(\one{C})$.
\end{lemma}
\begin{proof}
    \proofdirection{\cref{lem:C_discrete-def}}{\cref{lem:C_discrete-final}}
    Since $\Max(\one{C})$ is a simply connected groupoid, the skeleton $\zero{S}$ of $\Max(\one{C})$ is a discrete category.
    Then, the inclusion functor $\zero{S}\arr[hook][1]\Max(\one{C})$ is final because it is an equivalence.
    Since finality is closed under composition, the composite of the inclusions\linebreak \mbox{$\zero{S}\arr[hook][1]\Max(\one{C})\arr[hook][1]\one{C}$} gives the desired final functor.
    
    \proofdirection{\cref{lem:C_discrete-final}}{\cref{lem:C_discrete-concrete}}
    Let $\Phi\colon\zero{S}\to\one{C}$ be a final functor from a large discrete category.
    By the finality, $\Phi$ becomes injective on objects.
    Then, the image of $\Phi$ gives a desired class of objects in $\one{C}$.
    
    \proofdirection{\cref{lem:C_discrete-concrete}}{\cref{lem:C_discrete-def}}
    Let $\zero{S}\subseteq\Ob\one{C}$ be the large set in the condition \cref{lem:C_discrete-concrete}.
    Let $s\in\zero{S}$, and let $f,g\colon s\rightrightarrows c$ be morphisms in $\one{C}$.
    By the assumption, there is a morphism $h\colon c\to s'$ such that $s'\in\zero{S}$.
    By the uniqueness, we have $f\tcomp h=\id=g\tcomp h$, which shows that $s$ is maximal in $\one{C}$.
    Thus, the inclusion $\zero{S}\arr[hook][1]\one{C}$ factors through $\Max(\one{C})\subseteq\one{C}$, where $\zero{S}$ is regarded as a large discrete category.
    Then, $\zero{S}$ gives a large skeleton of $\Max(\one{C})$.
    Since $\zero{S}\arr[hook][1]\one{C}$ is final and the inclusion $\Max(\one{C})\arr[hook][1]\one{C}$ is full (and faithful), the functor $\zero{S}\to\Max(\one{C})$ becomes final.
    Then, the cancellation property shows that $\Max(\one{C})\arr[hook][1]\one{C}$ is final.

    \proofdirection{\cref{lem:C_discrete-concrete}}{\cref{lem:C_discrete-rightadjoint}}
    Let $\zero{S}\subseteq\Ob\one{C}$ be the large set in the condition \cref{lem:C_discrete-concrete}, and let $\Phi\colon\zero{S}\to\one{C}$ be the functor induced from the inclusion.
    Then, the unique morphisms in the condition \cref{lem:C_discrete-concrete} yield a unit of a desired adjunction, whose right adjoint is $\Phi$.

    \proofdirection{\cref{lem:C_discrete-rightadjoint}}{\cref{lem:C_discrete-final}}
    This is immediate since every right adjoint functor is final.
\end{proof}

\begin{notation}\label{note:comma_category}
    Let $\bL$ be an \ac{AVDC}, and let $\bX\subseteq\bL$ be a full sub-\ac{AVDC}.
    For an object $L\in\bL$, let $\slice{\bX}{L}$ denote the comma category:
    \begin{itemize}
        \item
            An object is a pair $(X,x)$ of an object $X\in\bX$ and a tight arrow $X\arr(x)[][1]L$ in $\bL$.
        \item
            A morphism $(X,x)\to (X',x')$ is a tight arrow $X\arr(f)[][1]X'$ in $\bL$ such that $f\tcomp x'=x$.
    \end{itemize}
    Given $(X,x)\in\slice{\bX}{L}$, we write $Dx$ for $X$ and identify $x$ with $(Dx,x)\in\slice{\bX}{L}$.
\end{notation}

\begin{construction}[Nerve construction]\label{const:nerve_construction}
    Let $\bX\subseteq\bL$ be a full sub-\ac{AVDC} of an \ac{AVDC}.
    Suppose that the following conditions hold for every $L\in\bL$:
    \begin{itemize}
        \item
            The category $\slice{\bX}{L}$ is $C$-discrete;
        \item
            $\Max(\slice{\bX}{L})$ has a skeleton whose elements are pulling in $\bL$.
    \end{itemize}
    Then, we can construct an \ac{AVD}-functor $N\colon\dim{\bL}\to\Mat[\bX]$ as follows:
    \begin{enumerate}
        \item
            Fix $L\in\bL$.
            We choose a skeleton $\zero{S}_L$ of $\Max(\slice{\bX}{L})$ whose elements are pulling in $\bL$ and define $\zero{N}L\coloneq\zero{S}_L$. For $x\in\zero{N}L$, its color is defined by $\abs{x}\coloneq Dx$.
        \item
            For a tight arrow $A\arr(f)[][1]B$ in $\bL$, we write $Nf$ for the morphism $\zero{N}A\arr[][1]\zero{N}B$ defined as follows:
            Let $x\in\zero{N}A$; since $\slice{\bX}{B}$ is $C$-discrete, the tight arrow $x\tcomp f$ uniquely factors through a unique element in $\zero{N}B$, denoted by $(Nf)^0x$:
            \begin{equation*}
                \begin{tikzcd}[tinytri]
                    & \abs{x}\ar[dl,"x"']\ar[dr,"(Nf)^1x"] & \\
                    A\ar[dr,"f"'] & & \abs{y}\ar[dl,"(Nf)^0x"]\\
                    & B &
                    \cellsymb(\heq){2-1}{2-3}
                \end{tikzcd}\incat{\bL},
            \end{equation*}
            which gives the assignment $x\mapsto (Nf)x$.
        \item
            For a loose arrow $A\larr(u)[][1]B$ in $\bL$, we write $Nu$ for a matrix $\zero{N}A\larr[][1]\zero{N}B$ over $\bX$ defined as follows:
            For $x\in\zero{N}A$ and $y\in\zero{N}B$, the loose arrow $(Nu)(x,y)$ is defined as a restriction:
            \begin{equation*}
                \begin{tikzcd}[hugecolumn]
                    \abs{x}\ar[d,"x"']\lar[r,"{(Nu)(x,y)}"] & \abs{y}\ar[d,"y"] \\
                    A\lar[r,"u"'] & B
                    \cellsymb(\cart){1-1}{2-2}
                \end{tikzcd}\incat{\bL}.
            \end{equation*}
        \item
            For a cell
            \begin{equation*}
                \begin{tikzcd}
                    A_0\ar[d,"f"']\lar[r,path,"\tup{u}"] & A_n\ar[d,"g"] \\
                    B\lar[r,"v"'] & C
                    \cellsymb(\alpha){1-1}{2-2}
                \end{tikzcd}\incat{\bL},
            \end{equation*}
            we write $N\alpha$ for a cell in $\Mat[\bX]$ defined by the following:
            \begin{equation*}
                \begin{tikzcd}[hugecolumn]
                    \abs{x_0}\ar[d,"(Nf)^1x_0"']\lar[r,"{Nu_1(x_0,x_1)}"] & \abs{x_1}\lar[r,"{Nu_2(x_1,x_2)}"] & \cdots\lar[r,"{Nu_n(x_{n-1},x_n)}"] & \abs{x_n}\ar[d,"(Ng)^1x_n"] \\
                    \abs{(Nf)^0x_0}\ar[d,"(Nf)^0x_0"']\lar[rrr,"{Nv((Nf)^0x_0,(Ng)^0x_n)}"'] & & & \abs{(Ng)^0x_n}\ar[d,"(Ng)^0x_n"] \\
                    B\lar[rrr,"v"'] & & & C
                    \cellsymb((N\alpha)_{x_0x_1\dots x_n}){1-1}{2-4}
                    \cellsymb(\cart){2-1}{3-4}
                \end{tikzcd}
            \end{equation*}
            \begin{equation*}
                =
                \begin{tikzcd}[hugecolumn]
                    \abs{x_0}\ar[d,"x_0"']\lar[r,"{Nu_1(x_0,x_1)}"] & \abs{x_1}\ar[d,"x_1"']\lar[r,"{Nu_2(x_1,x_2)}"] & \cdots\lar[r,"{Nu_n(x_{n-1},x_n)}"] & \abs{x_n}\ar[d,"x_n"] \\
                    A_0\ar[d,"f"']\lar[r,"u_1"'] & A_1\lar[r,"u_2"'] & \cdots\lar[r,"u_n"'] & A_n\ar[d,"g"] \\
                    B\lar[rrr,"v"'] & & & C
                    \cellsymb(\cart){1-1}{2-2}
                    \cellsymb(\cart){1-2}{2-3}
                    \cellsymb(\cdots){1-3}{2-3}
                    \cellsymb(\cart){1-4}{2-3}
                    \cellsymb(\alpha){2-1}{3-4}
                \end{tikzcd}\incat{\bL}.
            \end{equation*}
            Here, $x_0\in\zero{N}A_0,x_1\in\zero{N}A_1,\dots,x_n\in\zero{N}A_n$.
            \qedhere
    \end{enumerate}
\end{construction}

The notions of admissible equivalence and iso-fibrancy appearing in the following statement are defined in \cref{def:admissible_equivalence,def:isofibrancy}, respectively.
\begin{theorem}\label{thm:characterization_prof}
    The following are equivalent for an \ac{AVDC} $\bL$:
    \begin{enumerate}
        \item\label{thm:characterization_prof-1}
            $\bL$ is admissibly equivalent to $\Prof[\bX]$ for some iso-fibrant \ac{AVDC} $\bX$ with loose units.
        \item\label{thm:characterization_prof-2}
            $\bL$ has large versatile collages and an iso-fibrant collage-dense full sub-\ac{AVDC}.\qedhere
    \end{enumerate}
\end{theorem}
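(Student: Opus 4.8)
The argument proceeds by proving the two implications separately. The implication from the first statement to the second is a transport argument. Assuming $\bL$ is admissibly equivalent to $\Prof[\bX]$ with $\bX$ iso-fibrant and unital, I would first invoke \cref{cor:prof_has_collage} to get that $\Prof[\bX]$ has all large versatile collages, and then \cref{thm:admissible_equiv_and_versatile_cocompleteness} to transport this property to $\bL$. For the distinguished sub-\ac{AVDC}, \cref{prop:dense_fullsub} gives that $Z\colon\bX\hookrightarrow\Prof[\bX]$ is collage-dense, and this density is carried to the full sub-\ac{AVDC} of $\bL$ spanned by the image of $\bX$ via \cref{thm:admissible_equiv_preserve_density}. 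It remains to check that this image is iso-fibrant: an equivalence in $\AVDC$ is locally bijective on tight arrows and an admissible one preserves pullingness (\cref{lem:admissible_equiv_preserve_pullingness}), so every invertible tight arrow in the image corresponds to an admissible invertible tight arrow in $\bX$ and is therefore itself admissible.

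For the converse, let $\bX\subseteq\bL$ be an iso-fibrant collage-dense full sub-\ac{AVDC}; the goal is to build an admissible equivalence $\bL\simeq\Prof[\bX]$. First I would record two unitality facts. Every object of $\bL$ is a large versatile collage, i.e.\ a versatile colimit of a loosewise indiscrete shape, so by \cref{cor:unitality} it has a loose unit; thus $\bL$ is unital. Moreover $\bX$ is unital: the loose unit of an object $A\in\bX$ computed in $\bL$ is a loose arrow between objects of $\bX$, hence lies in the full sub-\ac{AVDC} $\bX$, and its defining cartesian property, being universal over cells, survives restriction to the fewer cells of $\bX$. Next I would verify the hypotheses of the nerve construction \cref{const:nerve_construction} for $\bX\subseteq\bL$: writing a given $L\in\bL$ as a versatile collage of objects of $\bX$, collage-atomicity says precisely that the coprojections form a large set of objects of $\slice{\bX}{L}$ through which every object factors uniquely, so $\slice{\bX}{L}$ is $C$-discrete by \cref{lem:C_discrete}, and a skeleton of its maximal objects is given by the coprojections, which are pulling since they are the components of a versatile colimit. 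The nerve then yields $N\colon\dim{\bL}\to\Mat[\bX]$, and, $\bL$ being unital, \cref{thm:universal_property_of_Mod} extends it to an \ac{AVD}-functor $\bar{N}\colon\bL\to\Mod(\Mat[\bX])=\Prof[\bX]$.

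The heart of this direction is to prove $\bar{N}$ an admissible equivalence through \cref{thm:equiv_in_AVDC}. Essential surjectivity is the most conceptual part: an $\bX$-category $\one{A}$ is literally an \ac{AVD}-functor $\Idimdbl(\Ob\one{A})\to\bX\hookrightarrow\bL$ by \cref{prop:enriched_cat_is_AVD_functor}, so taking its large versatile collage $L\in\bL$ and then computing the nerve recovers $\one{A}$ up to isomorphism, mirroring the construction underlying \cref{thm:every_obj_is_versatile_colim,thm:construction_versatile_colim}; the uniqueness of versatile colimits up to admissible isomorphism (\cref{prop:vers_colim_is_up_to_admissible_iso}) then supplies the loosewise invertible comparison cells that \cref{thm:equiv_in_AVDC} requires, and the analogous statement for loose arrows follows from the restriction-along-coprojection description of $N$. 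Local bijectivity on tight arrows follows from \ref{axiom:tight_bij} for the collage presentations, identifying tight arrows out of a collage with tight cocones and hence with $\bX$-functor data, while local bijectivity on cells follows by feeding the definition of $N\alpha$ as a restriction through the universal properties \ref{axiom:cell_type1_left}\ref{axiom:cell_type1_right}\ref{axiom:cell_type2}\ref{axiom:cell_type3} of the versatile collages presenting the relevant objects.

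The step I expect to be the main obstacle is exactly this verification that $\bar{N}$ is fully faithful at the level of cells: one must translate each versatile-colimit axiom through the nerve, checking that a cell of $\Prof[\bX]$ (a compatible family of cells of $\bX$ indexed by maximal preobjects) assembles to a unique cell of $\bL$ and conversely, and then confirm that the resulting equivalence is \emph{admissible} and not merely an equivalence in $\AVDC$. For admissibility I would exploit that the nerve is assembled from pulling coprojections together with the iso-fibrancy of $\bX$, so that the comparison isomorphisms produced by \cref{thm:equiv_in_AVDC} are pulling with pulling inverses, passing to an adjoint equivalence via \cref{lem:replacement_admissible_adj_equiv} where convenient.
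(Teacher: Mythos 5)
Your proposal follows the paper's proof essentially step for step: the forward direction via \cref{cor:prof_has_collage,prop:dense_fullsub,thm:admissible_equiv_and_versatile_cocompleteness,thm:admissible_equiv_preserve_density}, and the converse via the nerve construction, \cref{thm:universal_property_of_Mod}, \cref{thm:equiv_in_AVDC}, and \cref{prop:vers_colim_is_up_to_admissible_iso}. The only ingredient you leave implicit is the strongness theorem (\cref{thm:strongness_theorem}), which the paper invokes explicitly both to identify $I\circ\one{N}L$ with $F_L$ and to build the comparison $\bX$-functor in the essential-surjectivity step; everything else, including your explicit checks of iso-fibrancy of the image and unitality of $\bX$, matches or harmlessly supplements the paper's argument.
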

\begin{proof}
    \proofdirection{\cref{thm:characterization_prof-1}}{\cref{thm:characterization_prof-2}}
    $\Prof[\bX]$ has large versatile collages and a collage-dense full sub-\ac{AVDC} by \cref{cor:prof_has_collage,prop:dense_fullsub}.
    Furthermore, \cref{thm:admissible_equiv_and_versatile_cocompleteness,thm:admissible_equiv_preserve_density} show that this property is preserved by any admissible equivalence.

    \proofdirection{\cref{thm:characterization_prof-2}}{\cref{thm:characterization_prof-1}}
    Let $\bX\subseteq\bL$ be an iso-fibrant collage-dense full sub-\ac{AVDC}.
    We first show that the conditions of \cref{const:nerve_construction} are satisfied for every $L\in\bL$.
    By the collage-density, there are a large set $\zero{S}_L$, an \ac{AVD}-functor $F_L\colon\Idimdbl\zero{S}_L\to\bL$ factoring through $\bX$, and a tight cocone $\xi^L$ exhibiting $L$ as a versatile colimit of $F_L$.
    Then, by the collage-atomicity, the assignment $s\mapsto\xi^L_s$ yields a final functor $\zero{S}_L\to\slice{\bX}{L}$, which implies $C$-discreteness.
    Moreover, the large set $\zero{S}_L\cong\{\xi^L_s\mid s\in\zero{S}_L\}$ gives a skeleton of $\Max(\slice{\bX}{L})$ whose elements are pulling in $\bL$.
    Thus, we obtain the \ac{AVD}-functor $N\colon\dim{\bL}\to\Mat[\bX]$ of \cref{const:nerve_construction}.
    By \cref{cor:unitality}, $\bL$ has all loose units, hence we have the \ac{AVD}-functor $\N\colon\bL\to\Mod(\Mat[\bX])=\Prof[\bX]$ corresponding to $N$ under \cref{thm:universal_property_of_Mod}.

    Let $L\in\bL$, and consider the $\bX$-enriched category $\one{N}L\coloneq\N(L)$.
    By construction, the hom-loose arrows of $\one{N}L$ are given by the restrictions on the left below for $s,t\in\zero{S}_L$:
    \begin{equation*}
        \begin{tikzcd}[scriptsizecolumn]
            F_L s\ar[d,"\xi^L_s"']\lar[rr,"{\one{N}L(\xi^L_s,\xi^L_t)}"] &{}& F_L t\ar[d,"\xi^L_t"] \\
            L\lar[rr,"\Unit_L"'] &{}& L
            \cellsymb(\cart)[above=-2]{1-2}{2-2}
        \end{tikzcd}
        \qquad
        \begin{tikzcd}[tri]
            F_L s\ar[dr,"\xi^L_s"']\lar[rr,"F_L(!_{st})"] &{}& F_L t\ar[dl,"\xi^L_t"] \\
            & L &
            \cellsymb(\xi^L_{st})[above=-4]{1-2}{2-2}
        \end{tikzcd}\incat{\bL}.
    \end{equation*}
    Here, $\Unit_L$ denotes the loose unit on $L$.
    By the strongness theorem (\cref{thm:strongness_theorem}), the underlying cells $\xi^L_{st}$ of the tight cocone $\xi^L$ are cartesian, hence the loose arrows ${\one{N}L(\xi^L_s,\xi^L_t)}$ and $F_L(!_{st})$ are isomorphic to each other.
    Therefore, we can assume ${\one{N}L(\xi^L_s,\xi^L_t)}=F_L(!_{st})$ without loss of generality.
    In particular, we identify $\one{N}L$ with $F_L$ by using \cref{prop:enriched_cat_is_AVD_functor}.

    Since $\bL$ has all loose units as already seen, $\bL$ can fully be embedded into the \ac{AVDC} $\Prof[\bL]$ through $Z$ as in \cref{note:AVD_functor_Z}.
    In what follows, we will omit $Z$ from the notation and regard $\bL$ as a full sub-\ac{AVDC} of $\Prof[\bL]$.
    Then, under identification of $\bL$-enriched categories with \ac{AVD}-functors, it follows from the universal property of loose units that tight arrows in $\Prof[\bL]$ whose codomain lies in $\bL$ are the same as tight cocones; loose arrows in $\Prof[\bL]$ whose domain or codomain lies in $\bL$ are the same as modules; and a similar correspondence holds between cells and modulations.
    Since, in addition to $\bL$, $\Prof[\bX]$ is also a full sub-\ac{AVDC} of $\Prof[\bL]$, we will work inside $\Prof[\bL]$ below.
    
    To show that $\N$ is an equivalence, we will use \cref{thm:equiv_in_AVDC}.
    We first show that $\N$ is bijective on tight arrows.
    For a tight arrow $A\arr(f)[][1]B$ in $\bL$, the $\bX$-functor $\N f$ makes the following diagram commute by construction of $\N$:
    \begin{equation*}
        \begin{tikzcd}[tinytri]
            & F_A\ar[dl,"\xi^A"']\ar[dr,"\N f"] & \\
            A\ar[dr,"f"'] && F_B\ar[dl,"\xi^B"] \\
            & B &
            \cellsymb(\heq){1-2}{3-2}
        \end{tikzcd}
        \incat{\Prof[\bL]}.
    \end{equation*}
    This induces the following commutative diagram of maps:
    \begin{equation}\label{eq:triangle_bij_on_tightarrows}
        \begin{tikzcd}[tri]
            \Homset[{\Prof[\bL]}]\vvect{A}{B}\ar[rr,"\N"]\ar[rd,"\xi^A\tcomp-"'] &{}& \Homset[{\Prof[\bL]}]\vvect{F_A}{F_B}\ar[dl,"-\tcomp\xi^B"] \\
             & \Homset[{\Prof[\bL]}]\vvect{F_A}{B} &
        \end{tikzcd}
    \end{equation}
    Since $\Homset[{\Prof[\bL]}]\vvect{F_A}{B}$ is isomorphic to the set of tight cocones from $F_A$ with the vertex $B$, the condition \ref{axiom:tight_bij} for $\xi^A$ implies that the map $\xi^A\tcomp-$ above is a bijection.
    In fact, the map $-\tcomp\xi^B$ is also a bijection, which follows from the claim below:
    \begin{claim}\label{claim:bij_from_atomicity}
        Let $\one{A},\one{B}\in\Prof[\bX]$, and let $\xi$ be a versatile collage of $\one{B}$, regarded as an \ac{AVD}-functor, with a vertex $\Xi\in\bL$.
        Then, the following map is a bijection:
        \begin{equation*}
            \Homset[{\Prof[\bL]}]\vvect{\one{A}}{\one{B}}\arr(-\tcomp\xi)\Homset[{\Prof[\bL]}]\vvect{\one{A}}{\Xi}
        \end{equation*}
    \end{claim}
    \begin{since}
        We now construct from a tight cocone $l$ from $\one{A}$ with the vertex $\Xi$, a unique $\bX$-functor $H$ satisfying $H\tcomp\xi=l$ as follows.
        For each object $s\in\Ob\one{A}$, the collage-atomicity of $\abs{s}_\one{A}$ implies that there exist a unique object in $\one{B}$, denoted by $H^0s$, and a unique tight arrow, denoted by $H^1s$, that satisfy $(H^1s) \tcomp \xi_{H^0s} = l_s$, as depicted on the left below.
        Then, since the cell $\xi_{H^0sH^0t}$ is cartesian for $s,t\in\Ob\one{A}$, there is a unique cell $H_{st}$ satisfying the equality on the right below.
        \begin{equation*}
            \begin{tikzcd}[small]
                & \abs{s}_\one{A}\ar[dl,"H^1s"']\ar[dd,"l_s"] \\
                \abs{H^0s}_\one{B}\ar[dr,"\xi_{H^0s}"'] & {} \\
                & \Xi
                \cellsymb(\heq){2-1}{2-2}
            \end{tikzcd}
            \qquad
            \begin{tikzcd}[largetri]
                \abs{s}_\one{A}\ar[d,"H^1s"']\lar[rr,"{\one{A}(s,t)}"] & & \abs{t}_\one{A}\ar[d,"H^1t"] \\
                \abs{H^0s}_\one{B}\ar[dr,"\xi_{H^0s}"']\lar[rr,"{\one{B}(H^0s,H^0t)}"] &{}& \abs{H^0t}_\one{B}\ar[dl,"\xi_{H^0t}"] \\
                & \Xi &
                \cellsymb(H_{st}){1-1}{2-3}
                \cellsymb(\xi_{H^0sH^0t})[above=-4]{2-2}{3-2}
            \end{tikzcd}
            =
            \begin{tikzcd}[tri]
                \abs{s}_\one{A}\ar[dr,"l_s"']\lar[rr,"{\one{A}(s,t)}"] &{}& \abs{t}_\one{A}\ar[dl,"l_t"] \\
                & \Xi &
                \cellsymb(l_{st})[above=-4]{1-2}{2-2}
            \end{tikzcd}\incat{\bL}
        \end{equation*}
        Since the underlying cells of $\xi$ are cartesian by \cref{thm:strongness_theorem}, using universal property of them, we can verify that the tuple $(H^0s,H^1s,H_{st})_{s,t}$ becomes an $\bX$-functor $\one{A}\arr(H)\one{B}$, which satisfies $H\tcomp\xi=l$.
        Moreover, the uniqueness of $H^0s$, $H^1s$, and $H_{st}$ ensures that $H$ is unique, which shows that the map $-\tcomp\xi$ is a bijection.
    \end{since}\noindent
    Combining \cref{claim:bij_from_atomicity} with the commutativity of the diagram \cref{eq:triangle_bij_on_tightarrows}, we conclude that $\N$ is bijective on tight arrows.

    We next show that $\N$ is bijective on cells.
    Since both $\bL$ and $\Prof[\bX]$ have loose units, it suffices to consider 1-coary cells.
    Take arbitrary data on the left below.
    Note that for any loose arrow $A\larr(p)A'$ in $\bL$, the $\bX$-profunctor $\N p$ is a restriction of $p$ along $\xi^A$ and $\xi^{A'}$ on the right below, where the associated cartesian cell is denoted by $\xi^p$.
    \begin{equation*}
        \begin{tikzcd}
            A_0\ar[d,"f"']\lar[r,path,"\tup{u}"] & A_n\ar[d,"g"] \\
            B\lar[r,"v"'] & C
        \end{tikzcd}\incat{\bL}
        \qquad
        \begin{tikzcd}[largecolumn]
            F_A\ar[d,"\xi^A"']\lar[r,"\N p"] & F_{A'}\ar[d,"\xi^{A'}"] \\
            A\lar[r,"p"'] & A'
            \cellsymb(\xi^p\colon\cart){1-1}{2-2}
        \end{tikzcd}
        \incat{\Prof[\bL]}
    \end{equation*}
    Then, by construction of $\N$, we obtain the following commutative diagram of maps:
    \begin{equation*}
        \begin{tikzcd}[tri]
            \Cells[{\Prof[\bL]}]{f}{g}{\tup{u}}{v}\ar[rr,"\N"]\ar[dr,"{(\xi^{u_1},\dots,\xi^{u_n})\tcomp-}"'] && \Cells[{\Prof[\bL]}]{\N f}{\N g}{\N\tup{u}}{\N v}\ar[dl,"-\tcomp\xi^v"] \\
            & \Cells[{\Prof[\bL]}]{\xi^{A_0}\tcomp f}{\xi^{A_n}\tcomp g}{\N\tup{u}}{v} &
        \end{tikzcd}
    \end{equation*}
    Using \ref{axiom:cell_type1_left}\ref{axiom:cell_type1_right}\ref{axiom:cell_type2}\ref{axiom:cell_type3} for the versatile collages $\xi^{A_i}$ $(0\le i\le n)$, we can straightforwardly show that the map $(\xi^{u_1},\dots,\xi^{u_n})\tcomp-$ is a bijection.
    Since the cell $\xi^v$ is cartesian, the map $-\tcomp\xi^v$ is also bijective, which shows that $\N$ is bijective on cells.
    
    Take $\one{A}\in\Prof[\bX]$ arbitrarily.
    Regarding $\one{A}$ as an \ac{AVD}-functor, we can take a versatile collage $\zeta$ with a vertex $Z\in\bL$ from $\one{A}$.
    Applying \cref{claim:bij_from_atomicity} to the versatile collages $\xi^Z$ and $\zeta$, we obtain unique $\bX$-functors $Q$ and $Q'$ satisfying $Q\tcomp\xi^Z=\zeta$ and $Q'\tcomp\zeta=\xi^Z$.
    Using \cref{claim:bij_from_atomicity} again, we can show that these $\bX$-functors are mutually inverses.

    Let $\one{A}\arr(\zeta)Z$ and $\one{B}\arr(\xi)W$ in $\Prof[\bL]$ be versatile collages of $\one{A},\one{B}\in\Prof[\bX]$.
    Let $Q\colon\one{A}\arr(\cong)[][1]F_Z$ and $R\colon\one{B}\arr(\cong)[][1]F_W$ be the invertible $\bX$-functors constructed above.
    Let $\one{A}\larr(P)\one{B}$ be an $\bX$-profunctor.
    Applying \ref{axiom:loose_esssurj_left} to $\zeta$ and the left $\one{A}$-modules $P(-,y)\colon\one{A}\larr[Rightarrow]\abs{y}_\one{B}$ for $y\in\Ob\one{B}$, we obtain a loose arrow $Z\larr(m_y)\abs{y}_\one{B}$ in $\bL$.
    For $y,y'\in\Ob\one{B}$, the underlying cells of the $\bX$-profunctor $P$ induces a modulation $P^r_{y,y'}$ on the left below.
    Applying \ref{axiom:cell_type0_left} to $\zeta$ and the modulation $P^r_{y,y'}$, we obtain a cell $m_{yy'}$ on the right below.
    \begin{equation*}
        \begin{tikzcd}
            \one{A}\ar[d,equal]\lar[r,Rightarrow,"{P(-,y)}"] & \abs{y}_\one{B}\lar[r,"{\one{B}(y,y')}"] & \abs{y'}_\one{B}\ar[d,equal] \\
            \one{A}\lar[rr,Rightarrow,"{P(-,y')}"'] && \abs{y'}_\one{B}
            \cellsymb(P^r_{y,y'}){1-1}{2-3}
        \end{tikzcd}
        \Vline
        \begin{tikzcd}
            Z\ar[d,equal]\lar[r,"m_y"] & \abs{y}_\one{B}\lar[r,"{\one{B}(y,y')}"] & \abs{y'}_\one{B}\ar[d,equal] \\
            Z\lar[rr,"m_{y'}"'] && \abs{y'}_\one{B}
            \cellsymb(m_{yy'}){1-1}{2-3}
        \end{tikzcd}\incat{\bL}.
    \end{equation*}
    Using \ref{axiom:cell_type0_left} for $\zeta$ and \cref{prop:modules_loosewise_indiscrete}, we can verify that the tuple $m\coloneq (m_y,m_{yy'})_{y,y'}$ yields a right $\one{B}$-module with the vertex $Z$.
    Then, applying \ref{axiom:loose_esssurj_right} to $\xi$ and $m$, we obtain a loose arrow $Z\larr(p)W$ in $\bL$.
    By construction, we also obtain a cartesian cell on the left below.
    Since the cell $\xi^p$ is cartesian, there is a unique cell $\theta$ satisfying the following equality.
    \begin{equation*}
        \begin{tikzcd}
            \one{A}\ar[d,"\zeta"']\lar[r,"P"] & \one{B}\ar[d,"\xi"] \\
            Z\lar[r,"p"'] & W
            \cellsymb(\cart){1-1}{2-2}
        \end{tikzcd}
        =
        \begin{tikzcd}[largecolumn]
            \one{A}\ar[d,"Q"',"\cong"]\lar[r,"P"] & \one{B}\ar[d,"\cong"',"R"] \\
            F_Z\ar[d,"\xi^Z"']\lar[r,"\N p"] & F_W\ar[d,"\xi^W"] \\
            Z\lar[r,"p"'] & W
            \cellsymb(\theta){1-1}{2-2}
            \cellsymb(\xi^p\colon\cart){2-1}{3-2}
        \end{tikzcd}\incat{\Prof[\bL]}.
    \end{equation*}
    Since the left and right boundary are invertible, the cell $\theta$ becomes loosewise invertible automatically.

    What remains to be shown is that the equivalence $\N$ is admissible.
    In what follows, we regard $\N$ as a right adjoint part of the equivalence.
    Since $\bX$ is iso-fibrant, so is $\Prof[\bX]$, which follows from the proofs of \cref{prop:Mat_has_1-coary_restrictions} and \cref{prop:restriction_in_Mod}\cref{prop:restriction_in_Mod-1coaryrest}.
    In particular, the invertible $\bX$-functor $Q$ (or $R$) above is clearly admissible, which shows the unit part.
    To show the counit part, let us take an object $L\in\bL$.
    Then, $\N(L)=\one{N}L$ is sent to a versatile collage of $F_L$.
    Since $L$ is also a versatile collage of the same diagram $F_L$, \cref{prop:vers_colim_is_up_to_admissible_iso}\cref{prop:vers_colim_is_up_to_admissible_iso-vertex} induces an admissible isomorphism between them.
    This shows the counit part and finishes the proof.
\end{proof}

We can also prove the following theorems in a similar way to \cref{thm:characterization_prof}:
\begin{theorem}\label{thm:characterization_mat}
    The following are equivalent for an \ac{AVDC} $\bL$:
    \begin{enumerate}
        \item\label{thm:characterization_mat-1}
            $\bL$ is admissibly equivalent to $\Mat[\bX]$ for some iso-fibrant \ac{AVDC} $\bX$.
        \item\label{thm:characterization_mat-2}
            $\bL$ is diminished and has large \ac{VD}-versatile coproducts and an iso-fibrant coproduct-dense full sub-\ac{AVDC}.\qedhere
    \end{enumerate}
\end{theorem}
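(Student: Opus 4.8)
The plan is to run the argument of \cref{thm:characterization_prof} almost verbatim, replacing collages by coproducts and the loosewise \ac{VD}-indiscrete shapes $\Idimdbl\zero{S}$ by the loosewise discrete shapes $\Ddbl\zero{S}$. The payoff of this substitution is that the diagram shapes now carry no loose arrows at all, so the nerve of \cref{const:nerve_construction} lands directly in $\Mat[\bX]$ (there is no passage through $\Mod$, and $\bL$ need not have loose units), and the reconstruction of hom-data that forced the appeal to the strongness theorem (\cref{thm:strongness_theorem}) in the profunctor case becomes vacuous.

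For the implication from \cref{thm:characterization_mat-1} to \cref{thm:characterization_mat-2}, I would first record that $\Mat[\bX]=\Mat[\dim{\bX}]$ and that $\dim{\bX}$ is again iso-fibrant: pullingness concerns only restrictions of loose arrows, i.e.\ $1$-coary restrictions, and these (together with the cartesian cells witnessing them) are untouched by diminishing, so every invertible tight arrow of $\dim{\bX}$ remains admissible. Then \cref{lem:Mat_has_versatile_coproducts} gives all large versatile coproducts in $\Mat[\bX]$, and by \cref{prop:dense_fullsub} the image of $Y$, being isomorphic as an \ac{AVDC} to the iso-fibrant $\dim{\bX}$, is an iso-fibrant coproduct-dense full sub-\ac{AVDC}. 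Finally \cref{thm:admissible_equiv_and_versatile_cocompleteness} and \cref{thm:admissible_equiv_preserve_density} transport both properties along any admissible equivalence, yielding \cref{thm:characterization_mat-2}.

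For the converse, fix an iso-fibrant coproduct-dense full sub-\ac{AVDC} $\bX\subseteq\bL$. I would verify the hypotheses of \cref{const:nerve_construction} for each $L\in\bL$: coproduct-density supplies a large set $\zero{S}_L$, a diagram $F_L\colon\Ddbl\zero{S}_L\to\bL$ factoring through $\bX$, and a versatile-coproduct cocone $\xi^L$; coproduct-atomicity of the objects $F_L(s)\in\bX$ says each $(X,x)\in\slice{\bX}{L}$ has a unique arrow to a unique $\xi^L_s$, which by \cref{lem:C_discrete} makes $\slice{\bX}{L}$ $C$-discrete with $\{\xi^L_s\}$ a skeleton of $\Max(\slice{\bX}{L})$ whose elements are pulling by \ref{axiom:loose_esssurj_left}\ref{axiom:loose_esssurj_right}. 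Since $\bL$ is diminished, this produces an \ac{AVD}-functor $N\colon\bL\to\Mat[\bX]$, and under the identification $\zero{S}_L\cong\{\xi^L_s\}$ the colored set $N(L)$ is literally the family $F_L$; this is exactly the step where the profunctor proof needed strongness to recover hom-arrows, which here are absent.

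It then remains to show $N$ is an admissible equivalence via \cref{thm:equiv_in_AVDC}. By \ref{axiom:tight_bij} for the coproduct $A$, tight arrows $A\to B$ correspond to tight cocones from $F_A$—which over a discrete shape are just families of tight arrows—and coproduct-atomicity converts these into morphisms of families $N A\to N B$; the bijection on cells follows from \ref{axiom:cell_type1_left}\ref{axiom:cell_type1_right}\ref{axiom:cell_type2}\ref{axiom:cell_type3} applied to each $A_i$, exactly as in \cref{thm:characterization_prof}. For the last clause of \cref{thm:equiv_in_AVDC}, every $\zero{A}\in\Mat[\bX]$ is realized as $N(Z)$ for a versatile coproduct $Z$ of its colors, and a matrix $\zero{A}\larr(P)\zero{B}$ lifts by \ref{axiom:loose_esssurj_left}\ref{axiom:loose_esssurj_right} to a loose arrow of $\bL$ together with a loosewise-invertible comparison cell. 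Admissibility is handled as before: $\Mat[\bX]$ is iso-fibrant because an invertible morphism of families has admissible components, hence admits the restrictions of \cref{prop:Mat_has_1-coary_restrictions} (giving the counit), while \cref{prop:vers_colim_is_up_to_admissible_iso}\cref{prop:vers_colim_is_up_to_admissible_iso-vertex} gives the unit. The only genuinely non-clerical points, rather than transcription from the profunctor case, are confirming that deleting the loose-arrow data of the shape really renders the strongness step empty, and that iso-fibrancy of $\bX$ propagates to both $\dim{\bX}$ and $\Mat[\bX]$.
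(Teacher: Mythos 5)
Your proposal is correct and is essentially the paper's own argument: the paper proves this theorem only by the remark that it follows ``in a similar way'' to \cref{thm:characterization_prof}, and your write-up is a faithful instantiation of that proof with the right substitutions (coproducts for collages, $\Ddbl\zero{S}$ for $\Idimdbl\zero{S}$, no passage through $\Mod$). You also correctly flag the only points needing separate verification — that the strongness step becomes vacuous over a loose-arrow-free shape, and that iso-fibrancy passes to $\dim{\bX}$ and to $\Mat[\bX]$ via componentwise restrictions — and your justifications for these are sound.
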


\begin{theorem}\label{thm:characterization_mod}
    The following are equivalent for an \ac{AVDC} $\bL$:
    \begin{enumerate}
        \item\label{thm:characterization_mod-1}
            $\bL$ is admissibly equivalent to $\Mod(\bX)$ for some iso-fibrant \ac{AVDC} $\bX$ with loose units.
        \item\label{thm:characterization_mod-2}
            $\bL$ has versatile collapses and an iso-fibrant collapse-dense full sub-\ac{AVDC}.\qedhere
    \end{enumerate}
\end{theorem}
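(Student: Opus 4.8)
The plan is to transcribe the proof of \cref{thm:characterization_prof} almost verbatim, replacing collages by collapses and the ambient \ac{AVDC} $\Prof[\bX]=\Mod(\Mat[\bX])$ by $\Mod(\bX)$ itself. The one structural input that makes this work is that the collapse shape $\Idimdbl\zero{1}$ is loosewise indiscrete, so that both the strongness theorem (\cref{thm:strongness_theorem}) and the unitality corollary (\cref{cor:unitality}) are available.

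\proofdirection{(i)}{(ii)}: By \cref{thm:construction_versatile_colim}, together with the triviality of $\Ddbl\zero{1}$-shaped versatile colimits, $\Mod(\bX)$ has all versatile collapses; and by \cref{prop:dense_fullsub} the inclusion $U\colon\bX\hookrightarrow\Mod(\bX)$ exhibits $\bX$ as a collapse-dense full sub-\ac{AVDC}, which is iso-fibrant because $\bX$ is. Since $\bL$ is admissibly equivalent to $\Mod(\bX)$, \cref{thm:admissible_equiv_and_versatile_cocompleteness} transports the existence of versatile collapses to $\bL$ and \cref{thm:admissible_equiv_preserve_density} transports collapse-density of the sub-\ac{AVDC}, while iso-fibrancy of the image is preserved by \cref{lem:admissible_equiv_preserve_pullingness}.

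\proofdirection{(ii)}{(i)}: Let $\bX\subseteq\bL$ be an iso-fibrant collapse-dense full sub-\ac{AVDC}, with inclusion $I$. First I would verify the hypotheses of \cref{const:nerve_construction}. Writing each $L\in\bL$ as a versatile collapse $\xi$ of a monoid $B$ in $\bX$ with coprojection $B^0\arr(\xi)[][1]L$, collapse-atomicity of every object of $\bX$ forces each tight arrow $X\arr[][1]L$ with $X\in\bX$ to factor uniquely through $\xi$; hence $(B^0,\xi)$ is a terminal object of $\slice{\bX}{L}$, and it is pulling in $\bL$ because the coprojection of a versatile colimit is (by \ref{axiom:cell_type3}). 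Consequently $\slice{\bX}{L}$ is $C$-discrete with a one-point skeleton, each $\zero{N}L$ is a singleton, and the nerve $N$ of \cref{const:nerve_construction} factors through $Y\colon\dim{\bX}\hookrightarrow\Mat[\bX]$, yielding an \ac{AVD}-functor $N\colon\dim{\bL}\to\bX$. Since the collapse shape is loosewise indiscrete, \cref{cor:unitality} shows that $\bL$ has all loose units, so \cref{thm:universal_property_of_Mod} produces the \ac{AVD}-functor $\N\colon\bL\to\Mod(\bX)$ corresponding to $N$. I then check that $\N$ is an admissible equivalence using \cref{thm:equiv_in_AVDC}: here the strongness theorem (\cref{thm:strongness_theorem}) is essential, since it makes the coprojection cells cartesian, so that $I\circ\N(L)$ is isomorphic to the diagram monoid $F_L$, admissibly because $\bX$ is iso-fibrant. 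The required bijections on tight arrows, loose arrows, and cells, and essential surjectivity, then follow from the universal properties \ref{axiom:tight_bij}, \ref{axiom:loose_esssurj_left}, \ref{axiom:loose_esssurj_right}, \ref{axiom:cell_type1_left}, \ref{axiom:cell_type1_right}, \ref{axiom:cell_type2}, \ref{axiom:cell_type3} of the versatile collapses presenting the objects of $\bL$, just as in \cref{thm:characterization_prof}; admissibility of the unit comes from \cref{prop:vers_colim_is_up_to_admissible_iso} and of the counit from iso-fibrancy of $\Mod(\bX)$.

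The main obstacle I anticipate is the bookkeeping needed to recover the monoid structure of $\N(L)$ and to confirm that the single-object nerve $N$ preserves composition of cells — that is, to check that collapsing each slice to its terminal object yields a genuinely functorial nerve into $\bX$ whose image under the $\Mod$-construction reproduces the versatile-collapse data. Compared with \cref{thm:characterization_prof} this is a simplification rather than a new difficulty, since the slice over each object is now terminal instead of merely $C$-discrete; the analogous arguments for \cref{thm:characterization_mat} (using coproducts, \cref{lem:Mat_has_versatile_coproducts}, and \cref{prop:atomic_obj_in_Mat}) run along the same lines.
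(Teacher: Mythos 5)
Your proposal is correct and follows essentially the same route as the paper, which itself only says that \cref{thm:characterization_mod} is proved ``in a similar way'' to \cref{thm:characterization_prof}; your adaptation (terminal object in each slice $\slice{\bX}{L}$ via collapse-atomicity, nerve landing in $\bX$ rather than $\Mat[\bX]$, loose units from \cref{cor:unitality}, strongness theorem for the cartesian coprojection cells) is exactly the intended specialization. The only point worth stating carefully is that the pullingness of the coprojection is part of the *hypotheses* built into conditions \ref{axiom:loose_esssurj_left}--\ref{axiom:cell_type3}, hence holds for any versatile colimit by definition, which is what you use.
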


\begin{remark}
    In \cref{thm:characterization_mat}\cref{thm:characterization_mat-2}, the assumption that $\bL$ is diminished is necessary.
    Indeed, the \ac{AVDC} $\Rel$ as in \cref{eg:avdc_of_relations} has large versatile coproducts, which are in particular \ac{VD}-versatile coproducts, and the full sub-\ac{AVDC} spanned by the singleton is iso-fibrant and coproduct-dense.
    However, $\Rel$ cannot be equivalent to any $\Mat[\bX]$ because $\Rel$ is not diminished.
\end{remark}

\begin{remark}
    In spite of the fact that the $\Prof$-construction can be split into two constructions as $\Prof[\bX]=\Mod(\Mat[\bX])$, the characterization theorem of $\Prof[\bX]$ (\cref{thm:characterization_prof}) does not directly follow from the characterization theorems of the others (\cref{thm:characterization_mat,thm:characterization_mod}).
    This is because $\Mat[\bX]$ does not have loose units in general.
\end{remark}
\subsection{Closedness under slicing}\label{subsec:slicing}
In this subsection, we prove that the \acp{AVDC} of profunctors are closed under ``slicing'' as a direct consequence of our characterization theorems.
We first generalize to \acp{AVDC}, the notion of slice double categories \cite{Pare2011yoneda}, which has been denoted by the double slash ``$\dblslash$.''
\begin{definition}
    Let $\bL$ be an \ac{AVDC}, and let $L\in\bL$.
    The \emph{slice} \ac{AVDC}, denoted by $\bL/L$, is the \ac{AVDC} defined by the following:
    \begin{itemize}
        \item
            The tight category is $\slice{\bL}{L}$;
        \item
            A loose arrow $x\larr(u) y$ in $\bL/L$ is a pair $(Du,u)$ of a loose arrow $Du$ and a cell $u$
            \begin{equation*}
                \begin{tikzcd}[tri]
                    Dx\ar[dr,"x"']\lar[rr,"Du"] &{}& Dy\ar[dl,"y"] \\
                    & L &
                    \cellsymb(u)[above=-2]{1-2}{2-2}
                \end{tikzcd}\incat{\bL};
            \end{equation*}
        \item
            A cell $\alpha\in\Cells[\bL/L]{f}{g}{\tup{u}}{v}$ is a cell in $\bL$ satisfying the following:
            \begin{equation*}
                \begin{tikzcd}
                    Dx_0\ar[d,"f"']\lar[r,"Du_1"] & \cdots\lar[r,"Du_n"] & Dx_n\ar[d,"g"] \\
                    Dy\ar[dr,"y"']\lar[rr,phan,"Dv"] &{}& Dz\ar[dl,"z"] \\
                    & L &
                    \cellsymb(\alpha){1-1}{2-3}
                    \cellsymb(v)[above=-3]{2-2}{3-2}
                \end{tikzcd}
                =
                \begin{tikzcd}
                    Dx_0\ar[dr,"x_0"']\lar[r,"Du_1"] & \cdots\lar[r,"Du_n"] & Dx_n\ar[dl,"x_n"] \\
                    & L &
                    \cellsymb(u_1)[left=6]{1-2}{2-2}
                    \cellsymb(\cdots){1-2}{2-2}
                    \cellsymb(u_n)[right=6]{1-2}{2-2}
                \end{tikzcd}\incat{\bL}.
            \end{equation*}
    \end{itemize}
    We write $D_L\colon\bL/L\to\bL$ for the canonical \ac{AVD}-functor defined by $x\mapsto Dx$.
    For a full sub-\ac{AVDC} $\bX\subseteq\bL$ and $L\in\bL$, we write $\bX/L\subseteq\bL/L$ for the full sub-\ac{AVDC} consisting of objects $x\in\bL/L$ such that $Dx\in\bX$. 
\end{definition}

\begin{remark}
    The slice \ac{AVDC} $\bL/L$ defined above is not a comma object in the 2-category $\AVDC$ with respect to the \ac{AVD}-functor $\Ddbl\zero{1}\arr(\const{L})\bL$ that chooses the object $L\in\bL$.
    This is because the ``walking object'' $\Ddbl\zero{1}$ is not a 2-terminal object in $\AVDC$.
    On the other hand, if the object $L\in\bL$ is unital, i.e., admits a loose unit, then the slice \ac{AVDC} $\bL/L$ is a comma object with respect to the \ac{AVD}-functor $\Idbl\zero{1}\arr(\const{L})\bL$, because $\Idbl\zero{1}$ is a ``walking unital object'' and is a 2-terminal object in $\AVDC$.
\end{remark}

\begin{lemma}
    Let $F\colon\bK\to\bL$ be an \ac{AVD}-functor between \acp{AVDC}.
    Then, a tight cocone from $F$ with a vertex $L\in\bL$ is the same as an \ac{AVD}-functor $\bK\to\bL/L$ whose composite with $D_L\colon\bL/L\to\bL$ is $F$.
    \begin{equation*}
        \begin{tikzcd}[hugecolumn]
            \bK\ar[dr,"F"']\ar[r] & \bL/L\ar[d,"D_L"] \\
            & \bL
        \end{tikzcd}
    \end{equation*}
\end{lemma}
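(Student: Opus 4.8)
The plan is to prove the identification by unwinding both definitions and matching the data component by component; there is no genuine difficulty beyond bookkeeping, so the argument will be a dictionary together with one verification that the functor axioms impose nothing new. First I would exploit the strictness of the constraint $D_L\circ G=F$ to observe that an \ac{AVD}-functor $G\colon\bK\to\bL/L$ lying over $F$ carries no freedom at the level of underlying $\bL$-data: since $D_L$ forgets down to $\bL$, we must have $D(GA)=FA$ for each object $A$, $D(Gu)=Fu$ for each loose arrow $u$, the underlying $\bL$-cell of $G\alpha$ must be $F\alpha$ for each cell $\alpha$, and $Gf$ must have underlying tight arrow $Ff$. Consequently the only free data in such a $G$ are, for each $A\in\bK$, the second component of the object $GA\in\bL/L$, namely a tight arrow $FA\arr(l_A)L$, and, for each loose arrow $A\larr(u)B$, the second component of $Gu=(Fu,l_u)$, namely a $0$-coary cell $l_u$ with apex $L$ and top boundary $Fu$. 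These are exactly the object- and loose-arrow-components of a tight cocone, so I would define the correspondence by $l_A:=GA$ and $l_u:=Gu$ in one direction, and conversely build $G$ from a tight cocone by the same formulas.

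With this dictionary in place, the remaining content is to check that functoriality of $G$ together with the slice-membership conditions is equivalent to the cocone axioms. The condition that $Gf$ be a morphism in $\slice{\bL}{L}$ reads $(Ff)\tcomp l_B=l_A$, which is precisely the first cocone axiom. The condition that the underlying cell $F\alpha$ actually define a cell of $\bL/L$ is, by the very definition of cells in the slice, the equation obtained by pasting $F\alpha$ with the $0$-coary cell $l_v$ attached to the target loose arrow $Gv$ and requiring the result to equal the composite $l_{\tup u}$ of the $0$-coary cells $l_{u_i}$; this is exactly the second cocone axiom, in both the $1$-coary case (where $Dv$ has length $1$) and the $0$-coary case (where $Dv$ has length $0$ and $l_v$ is the corresponding tight identity). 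Thus the two defining constraints on a tight cocone are recovered precisely as the two conditions witnessing that $G$ is well defined on tight arrows and on cells.

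The one place I would actually argue, rather than merely translate, is that the genuine functor axioms for $G$ — preservation of cell composition, of loose identity cells, and of tight identity cells — impose no further conditions. The key point is that composition and identities in $\bL/L$ are computed on underlying $\bL$-cells, so $G(\tup\alpha\tcomp\beta)$, $G\veq_u$, and $G\heq_f$ have underlying cells $F(\tup\alpha\tcomp\beta)$, $\veq_{Fu}$, and $\heq_{Ff}$, which coincide with the underlying cells of the corresponding slice composites and identities exactly because $F$ is itself an \ac{AVD}-functor. Hence these axioms hold automatically once $F$ is a functor and the membership conditions above are met, contributing nothing beyond the cocone axioms. Assembling these observations yields mutually inverse assignments between tight cocones from $F$ with vertex $L$ and \ac{AVD}-functors $\bK\to\bL/L$ over $F$, which is the claimed identification.
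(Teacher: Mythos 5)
Your proof is correct; the paper states this lemma without proof, treating it as a routine unwinding of the definitions, and your component-by-component dictionary (objects and loose arrows of $\bK$ supplying exactly the $l_A$ and $l_u$, the slice-morphism and slice-cell conditions reproducing the two cocone axioms, and the functor axioms holding automatically because composition and identities in $\bL/L$ are computed on underlying $\bL$-cells) is precisely the intended verification.
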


\begin{lemma}\label{lem:cocompleteness_of_slice}
    Let $\bL$ be an \ac{AVDC}, and let $L\in\bL$.
    Let $G\colon\bK\to\bL/L$ be an \ac{AVD}-functor from an \ac{AVDC}.
    Suppose that we are given a (\ac{VD}-)versatile colimit $\xi$ of $D_LG$ with a vertex $\Xi\in\bL$.
    Then, there is a (\ac{VD}-)versatile colimit of $G$, which is sent to $\xi$ by $D_L$.
\end{lemma}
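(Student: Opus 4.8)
The plan is to first extract from $G$ the underlying ``structure cocone'' into $L$, use it together with \ref{axiom:tight_bij} to pin down the structure map of the vertex, and then transport each versatile-colimit axiom from $\xi$ to the lifted cocone by working fibrewise over $D_L$.

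First I would observe that the slice data of $G$ assemble into a tight cocone from $D_LG$ with vertex $L$. Concretely, writing $GA=(D_LGA,g_A)$ with $g_A\colon D_LGA\to L$, and writing $g_u$ for the $0$-coary structure cell of the loose arrow $Gu$ in $\bL/L$, the family $(g_A,g_u)$ satisfies the tight-cocone axioms (these are exactly the functoriality conditions making $G$ an AVD-functor into $\bL/L$); call this tight cocone $g\colon D_LG\Rightarrow L$. By \ref{axiom:tight_bij} for $\xi$ there is then a unique tight arrow $\Xi\arr(\ell)[][1]L$ in $\bL$ with $\xi\tcomp\ell=g$. I take $(\Xi,\ell)\in\bL/L$ as the candidate vertex and define $\hat\xi$ by $\hat\xi_A:=\xi_A$ and $\hat\xi_u:=\xi_u$. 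The point is that $\xi_A$ lifts to a tight arrow $GA\to(\Xi,\ell)$ because $\xi_A\tcomp\ell=g_A$, and $\xi_u$ lifts to a cell in $\bL/L$ because the slice-compatibility condition for this cell reads precisely $(\xi\tcomp\ell)_u=g_u$, which holds by construction. Thus $D_L\hat\xi=\xi$, and the cocone axioms for $\hat\xi$ are inherited from those of $\xi$ since $D_L$ is an AVD-functor and the underlying cells agree.

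Next I would set up a dictionary describing the categories appearing in the axioms for $\hat\xi$ in terms of those for $\xi$. For a fixed object $(M,\mu)\in\bL/L$, a tight cocone (resp.\ left $G$-module; modulation of a given type) from $G$ with vertex $(M,\mu)$ is the same as the corresponding datum over $D_LG$ in $\bL$ with vertex $M$, together with a compatibility witness into $L$: for cocones this witness is the equation ``$(\text{cocone})\tcomp\mu=g$''; for left modules it is a modulation of type $1$ from the underlying $(D_LG)$-module to $g$ with lower boundary $\mu$; and similarly the slice-compatibility cells of modulations of types $0$ through $3$ assemble into modulations of $\xi$-data over $L$. In parallel I would record that $D_L$ creates the cartesian cells and restrictions needed here: a cell of $\bL/L$ is cartesian exactly when its underlying cell of $\bL$ is cartesian, and whenever the restriction of the underlying data exists in $\bL$ it lifts uniquely to a restriction in $\bL/L$ by pasting the structure cell with the cartesian cell downstairs. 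In particular, since each $\xi_A$ is pulling in $\bL$ (as $\xi$ is a versatile colimit, by \ref{axiom:cell_type3}), each $\hat\xi_A$ is pulling in $\bL/L$, and the comparison functors $\hat\xi\tcomp-$, $\comp{\hat\xi}-$, $-\conj{\hat\xi}$ are computed by $\xi\tcomp-$, $\comp{\xi}-$, $-\conj{\xi}$ on underlying data.

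With the dictionary in hand, I would verify the seven conditions \ref{axiom:tight_bij}\ref{axiom:loose_esssurj_left}\ref{axiom:loose_esssurj_right}\ref{axiom:cell_type1_left}\ref{axiom:cell_type1_right}\ref{axiom:cell_type2}\ref{axiom:cell_type3} for $\hat\xi$ one by one, each time restricting the already-established axiom for $\xi$ to the relevant compatibility fibre. For instance, for \ref{axiom:tight_bij} a tight arrow $k\colon\Xi\to M$ satisfies $k\tcomp\mu=\ell$ iff $\xi\tcomp k$ is a cocone compatible with $\mu$ (using the uniqueness clause of \ref{axiom:tight_bij} for $\xi$ together with $\xi\tcomp\ell=g$), so the bijection $\Cone\vvect{D_LG}{M}\cong\Homcat[\bL]\vvect{\Xi}{M}$ restricts to the desired bijection $\Cone\vvect{G}{(M,\mu)}\cong\Homcat[\bL/L]\vvect{(\Xi,\ell)}{(M,\mu)}$; for \ref{axiom:loose_esssurj_left} the essential surjectivity of $\comp{\xi}-$ produces a loose arrow in $\bL$ realising the underlying module, which then lifts uniquely to $\bL/L$ via its compatibility modulation; and the unique-cell conditions \ref{axiom:cell_type1_left}\ref{axiom:cell_type1_right}\ref{axiom:cell_type2}\ref{axiom:cell_type3} follow because the unique cell furnished by the $\xi$-axiom automatically satisfies slice-compatibility, again by the uniqueness clause of that same axiom. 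I expect the main obstacle to be exactly this fibrewise bookkeeping: verifying cleanly that cartesian cells and pulling arrows transfer between $\bL/L$ and $\bL$ along $D_L$, and that the slice-compatibility cells of modules and modulations organise into genuine $\xi$-modulations over $L$, so that ``restricting to the compatibility fibre'' is legitimate. Once these transfer statements are established, each axiom is a mechanical specialisation of the corresponding axiom for $\xi$.
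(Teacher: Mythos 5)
Your construction of the lifted cocone $\hat\xi$ and the overall strategy of verifying each axiom fibrewise over $D_L$ is exactly the paper's route: the paper also extracts the structure cocone from $G$, produces the canonical tight arrow $\Xi\to L$ via \ref{axiom:tight_bij}, identifies $\bL/\Xi$ with $(\bL/L)/k$ to get the lifted cocone, and then declares the verification of the remaining axioms ``straightforward.'' You supply considerably more detail than the paper does, and most of it is sound.

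There is, however, one claim in your dictionary that is stated too strongly, and the false direction is load-bearing. You assert that a cell of $\bL/L$ is cartesian \emph{exactly when} its underlying cell of $\bL$ is cartesian. The reflection direction (underlying cell cartesian in $\bL$ $\Rightarrow$ cartesian in $\bL/L$) is correct: the unique factorization produced in $\bL$ automatically satisfies the slice-compatibility equation, by pasting with the $0$-coary structure cells; this is all you need to create restrictions in $\bL/L$ and to conclude that each $\hat\xi_A$ is pulling, and it also covers \ref{axiom:tight_bij} and the modulation axioms \ref{axiom:cell_type1_left}--\ref{axiom:cell_type3}, since the modules $\comp{\hat\xi}p$ and $q\conj{\hat\xi}$ occurring there are built from restrictions created from $\bL$. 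But the preservation direction (cartesian in $\bL/L$ $\Rightarrow$ underlying cell cartesian in $\bL$) does not follow: the universal property in $\bL$ quantifies over arbitrary loose paths and test cells of $\bL$, and a loose arrow of $\bL$ between objects of $\bL/L$ carries no canonical $0$-coary cell into $L$, so the test data cannot in general be transported into $\bL/L$ where your hypothesis applies. This matters precisely where you invoke it: a left $G$-module $m$ with vertex $(M,\mu)$ has its cells $m_f$ cartesian only in $\bL/L$, so its underlying data $(Dm_A,Dm_f,Dm_u)$ is not known to be a left $D_LG$-module in $\bL$, and you cannot feed it to \ref{axiom:loose_esssurj_left} for $\xi$ as written (likewise for \ref{axiom:loose_esssurj_right}). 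You need either a direct argument that these particular underlying cells are cartesian in $\bL$ (for instance by exhibiting them as split, or by re-choosing them via restrictions when the relevant tight arrows are pulling), or a reformulation of the module-realization step that does not pass through an underlying module in $\bL$.
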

\begin{proof}
    Let $l$ denote the tight cocone from $D_LG$ associated with $G$, and let $L\in\bL$ be its vertex.
    By \ref{axiom:tight_bij} for the (\ac{VD}-)versatile colimit $\xi$, we obtain the canonical tight arrow $\Xi\arr(k)[][1]L$ in $\bL$.
    Then, the \ac{AVD}-functor $H\colon\bK\to\bL/\Xi$ corresponding to $\xi$ makes the following diagram commute:
    \begin{equation*}
        \begin{tikzcd}[hugecolumn]
            \bK\ar[r,"H"]\ar[drr,"G"'] & \bL/\Xi\ar[r,phantom,"\cong"] &[-30pt] (\bL/L)/k\ar[d,"D_k"] \\
            && \bL/L
        \end{tikzcd}
    \end{equation*}
    This gives a tight cocone from $G$ with the vertex $k$, which becomes a (\ac{VD}-)versatile colimit of $G$ straightforwardly.
\end{proof}

\begin{lemma}
    Let $\bX\subseteq\bL$ be a collage-dense (resp.\ collapse-dense) full sub-\ac{AVDC} of an \ac{AVDC}, and let $L\in\bL$.
    Then, $\bX/L\subseteq\bL/L$ also becomes collage-dense (resp.\ collapse-dense).
\end{lemma}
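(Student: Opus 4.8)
The plan is to verify the two defining clauses of collage-density (resp.\ collapse-density) for $\bX/L\subseteq\bL/L$: that every object of $\bX/L$ is collage-atomic (resp.\ collapse-atomic) in $\bL/L$, and that every object of $\bL/L$ is a large versatile collage (resp.\ versatile collapse) of objects from $\bX/L$. I will carry out the collage case; the collapse case is verbatim after replacing the shape $\Idimdbl\zero{S}$ by a monoid on $\Idimdbl\zero{1}$. Throughout I use the canonical $D_L\colon\bL/L\to\bL$, the identification of a tight cocone with vertex $L$ with an \ac{AVD}-functor into $\bL/L$, and the isomorphism $(\bL/L)/x\cong\bL/Dx$ already exploited in \cref{lem:cocompleteness_of_slice}.

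For the density clause, fix $(Dx,x)\in\bL/L$. By collage-density of $\bX$ in $\bL$ there are an \ac{AVD}-functor $F\colon\Idimdbl\zero{S}\to\bL$ factoring through $\bX$ and a tight cocone $\xi$ exhibiting $Dx$ as a versatile collage of $F$. Regarding $\xi$ as an \ac{AVD}-functor $H\colon\Idimdbl\zero{S}\to\bL/Dx$ with $D_{Dx}\circ H=F$ and post-composing with $D_x\colon\bL/Dx\cong(\bL/L)/x\to\bL/L$, I obtain $G\colon\Idimdbl\zero{S}\to\bL/L$ sending $s$ to $(Fs,\ \xi_s\tcomp x)$, which factors through $\bX/L$ and satisfies $D_L\circ G=F$. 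Since $F=D_LG$ has the versatile colimit $\xi$, \cref{lem:cocompleteness_of_slice} yields a versatile colimit of $G$ lying over $\xi$; its vertex is the tight arrow $k\colon Dx\to L$ obtained from the cocone $s\mapsto\xi_s\tcomp x$ by \ref{axiom:tight_bij}, and the bijection in \ref{axiom:tight_bij} forces $k=x$. Hence $(Dx,x)$ is a versatile collage of the $\bX/L$-valued diagram $G$.

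For atomicity, fix $(X,x)\in\bX/L$ with $X\in\bX$, together with a versatile collage $\Theta$ of some $G\colon\Idimdbl\zero{T}\to\bL/L$ in $\bL/L$, with coprojections $\zeta_t$ and structure map $\theta\colon D_L\Theta\to L$. The reduction I intend to use is that $D_L\Theta$ is a versatile collage of $D_LG$ in $\bL$ with coprojections $D_L\zeta_t$. Granting this, a tight arrow $f\colon(X,x)\to\Theta$ in $\bL/L$ is a tight arrow $D_Lf\colon X\to D_L\Theta$ with $D_Lf\tcomp\theta=x$; collage-atomicity of $X\in\bX$ in $\bL$ factors it uniquely as $D_Lf=h\tcomp D_L\zeta_{t_0}$ through a single coprojection. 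The relation $D_L\zeta_{t_0}\tcomp\theta=g_{t_0}$, where $g_{t_0}$ is the structure map of $Gt_0$, then gives $h\tcomp g_{t_0}=D_Lf\tcomp\theta=x$, so $h$ lifts to a tight arrow $(X,x)\to Gt_0$ in $\bL/L$ with $f=h\tcomp\zeta_{t_0}$; uniqueness of $t_0$ and of $h$ descends from $\bL$ since $D_L$ is faithful on tight arrows.

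The main obstacle is precisely the reduction just invoked: that $D_L$ carries a versatile collage (resp.\ collapse) in $\bL/L$ to one in $\bL$. This is a genuine preservation statement rather than a formality, and it is sensitive to the shape: the analogous assertion fails for coproduct-shaped diagrams, which is exactly why the lemma, and the corresponding clauses of \cref{thm:characterization_prof,thm:characterization_mod}, are stated only for collage- and collapse-density. I plan to establish it by checking the conditions \ref{axiom:tight_bij}\ref{axiom:loose_esssurj_left}\ref{axiom:loose_esssurj_right}\ref{axiom:cell_type1_left}\ref{axiom:cell_type1_right}\ref{axiom:cell_type2}\ref{axiom:cell_type3} for $(D_L\Theta,\ D_L\zeta)$ directly, transporting each universal property across the faithful, fibration-like $D_L$. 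The decisive point is that the shape $\Idimdbl\zero{T}$ is loosewise indiscrete, so that by the strongness theorem (\cref{thm:strongness_theorem}) and \cref{cor:unitality} the coprojections $\zeta_t$ acquire conjoints and companions and $\Theta$ acquires a loose unit in $\bL/L$; these let one rebuild, from a competitor in $\bL$ together with its module and modulation data, the comparison needed to descend the universal property to $\bL$ — the connecting loose arrows $G!_{tt'}$ supply exactly the glue that is unavailable in the disconnected coproduct case. Once this preservation sub-lemma is secured, both halves close and the collapse case is identical.
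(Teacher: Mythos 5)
Your first half (every object of $\bL/L$ is a versatile collage of objects from $\bX/L$) is correct and is exactly the paper's argument: present $Dx$ as a versatile collage of a diagram through $\bX$, transport the presenting cocone to a diagram $G$ into $\bX/L$ via $(\bL/L)/x\cong\bL/Dx$, and apply \cref{lem:cocompleteness_of_slice}; the identification of the resulting vertex with $x$ via \ref{axiom:tight_bij} is as you say. The paper in fact derives the \emph{entire} lemma from \cref{lem:cocompleteness_of_slice}, i.e.\ from the \emph{lifting} direction ($\bL\rightsquigarrow\bL/L$) only.

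The gap is in your atomicity half, and it is exactly where you flag it: the sub-lemma that $D_L$ carries an arbitrary versatile collage in $\bL/L$ to one in $\bL$ is not proved, and the strategy you announce for it --- ``checking the conditions \ref{axiom:tight_bij}\ref{axiom:loose_esssurj_left}\ref{axiom:loose_esssurj_right}\dots\ directly, transporting each universal property across $D_L$'' --- does not go through as described. The universal properties constituting a versatile colimit of $G$ in $\bL/L$ quantify only over data lying over $L$: a competing tight cocone from $D_LG$ with vertex an arbitrary $M\in\bL$ carries no tight arrow $M\to L$, so it cannot be fed into \ref{axiom:tight_bij} for $\Theta$ in $\bL/L$ to produce the required comparison $D_L\Theta\to M$; likewise a loose arrow $D_L\Theta\larr M$ in $\bL$ need not underlie any loose arrow of $\bL/L$, so \ref{axiom:loose_esssurj_left} and its companions face the same obstruction. (The phenomenon is already visible for ordinary slice categories: $\one{C}/L\to\one{C}$ need not preserve a colimit that exists in the slice unless $\one{C}$ itself has the corresponding colimit.) The conjoints, companions, and loose units you propose to extract from \cref{thm:strongness_theorem} and \cref{cor:unitality} are produced \emph{in $\bL/L$}, and nothing in the proposal shows they descend to $\bL$, so the ``glue'' you invoke is not yet available where you need it. To close the atomicity half you must either prove this preservation statement for loosewise indiscrete shapes honestly, or rearrange the argument so that the only versatile collages of $\bL/L$ you ever test against are the ones \emph{constructed} by lifting along \cref{lem:cocompleteness_of_slice} from collages that already exist in $\bL$ (using the uniqueness of versatile colimits up to admissible isomorphism, \cref{prop:vers_colim_is_up_to_admissible_iso}, to reach the rest); as it stands, neither route is carried out.
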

\begin{proof}
    This follows from \cref{lem:cocompleteness_of_slice} directly.
\end{proof}

By the characterization theorems (\cref{thm:characterization_prof,thm:characterization_mod}), we now have the following:
\begin{corollary}\label{cor:slices_of_prof_and_mod}
    Let $\bX$ be an iso-fibrant \ac{AVDC} with loose units.
    \begin{enumerate}
        \item\label{cor:slices_of_prof_and_mod-prof}
            For an $\bX$-enriched category $\one{A}$, there is an admissible equivalence $\Prof[\bX]/\one{A}\simeq\Prof[(\bX/\one{A})]$.
        \item\label{cor:slices_of_prof_and_mod-mod}
            For a monoid $M$ in $\bX$, there is an admissible equivalence $\Mod(\bX)/M\simeq\Mod(\bX/M)$.
    \end{enumerate}
\end{corollary}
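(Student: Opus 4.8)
The plan is to deduce both parts directly from the characterization theorems \cref{thm:characterization_prof} and \cref{thm:characterization_mod}, applied to the slice \acp{AVDC} $\Prof[\bX]/\one{A}$ and $\Mod(\bX)/M$ together with the full sub-\acp{AVDC} $\bX/\one{A}$ and $\bX/M$ respectively. For \cref{cor:slices_of_prof_and_mod-prof} I would verify that $\Prof[\bX]/\one{A}$ satisfies condition \cref{thm:characterization_prof-2}, with the iso-fibrant collage-dense full sub-\ac{AVDC} being precisely $\bX/\one{A}$; since the proof of \proofdirection{\cref{thm:characterization_prof-2}}{\cref{thm:characterization_prof-1}} produces an admissible equivalence onto $\Prof$ of the \emph{chosen} full sub-\ac{AVDC}, this yields $\Prof[\bX]/\one{A}\simeq\Prof[(\bX/\one{A})]$. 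Part \cref{cor:slices_of_prof_and_mod-mod} is entirely parallel, using \cref{thm:characterization_mod}.

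First I would check that $\Prof[\bX]/\one{A}$ has all large versatile collages. By \cref{cor:prof_has_collage}, $\Prof[\bX]$ has large versatile collages, and by \cref{lem:cocompleteness_of_slice} the existence of versatile colimits of any fixed shape is inherited by the slice; hence $\Prof[\bX]/\one{A}$ has large versatile collages. Next, \cref{prop:dense_fullsub} shows that $\bX$ is collage-dense in $\Prof[\bX]$, and the preceding lemma on the slicing of density then gives that $\bX/\one{A}$ is collage-dense in $\Prof[\bX]/\one{A}$. (The collapse case uses the corollary that $\Mod(\bX)$ has all versatile collapses together with \cref{prop:dense_fullsub}.) These transfers are purely formal.

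The main obstacle is verifying that $\bX/\one{A}$ is iso-fibrant. Here the key observation is that the forgetful \ac{AVD}-functor $D_{\one{A}}\colon\Prof[\bX]/\one{A}\to\Prof[\bX]$, restricted to $\bX/\one{A}\to\bX$, reflects invertibility of tight arrows and creates restrictions: a restriction in the slice of a loose arrow $(Du,u)$ along a tight arrow $\phi$ lying over $\one{A}$ is obtained by restricting $Du$ in $\bX$ along $D_{\one{A}}\phi$ and pasting the resulting cartesian cell with $u$, and a routine check of the universal property shows that this lifted cell is again cartesian in the slice. Consequently, if $\phi$ is an invertible tight arrow in $\bX/\one{A}$, then $D_{\one{A}}\phi$ is invertible in $\bX$, hence admissible by iso-fibrancy of $\bX$; the pullingness of $D_{\one{A}}\phi$ and of its inverse then lift to pullingness of $\phi$ and $\phi^{-1}$ in $\bX/\one{A}$, so $\phi$ is admissible and $\bX/\one{A}$ is iso-fibrant. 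I expect this transfer of pullingness through the slice to be the only point requiring genuine (if still routine) work.

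Finally I would assemble these facts: $\Prof[\bX]/\one{A}$ has large versatile collages and the iso-fibrant collage-dense full sub-\ac{AVDC} $\bX/\one{A}$, so \cref{thm:characterization_prof} applies and produces an admissible equivalence $\Prof[\bX]/\one{A}\simeq\Prof[(\bX/\one{A})]$, the requisite loose units being supplied automatically in the course of the theorem's proof via \cref{cor:unitality}. The same four-step argument, with collapse-density, versatile collapses, and \cref{thm:characterization_mod} in place of their collage counterparts, gives $\Mod(\bX)/M\simeq\Mod(\bX/M)$.
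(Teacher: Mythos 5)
Your proposal follows the paper's own route exactly: the paper derives this corollary by combining \cref{lem:cocompleteness_of_slice} and the lemma on slicing of dense sub-\acp{AVDC} with the characterization theorems \cref{thm:characterization_prof,thm:characterization_mod}, which is precisely your four-step argument. The only difference is that you explicitly verify iso-fibrancy of $\bX/\one{A}$ by lifting restrictions along the forgetful \ac{AVD}-functor, a point the paper leaves implicit; your argument for it is sound.
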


\begin{remark}
    \cref{cor:slices_of_prof_and_mod}\cref{cor:slices_of_prof_and_mod-prof} is a double categorical refinement of \cite[4.5.\ Theorem]{FujiiLack2024oplax}, which treats the (strict) slice 2-category of the 2-category of categories and functors enriched in a bicategory.
\end{remark}

\appendix
\section{Comparison with proarrow equipments}\label[appendix]{sec:proarrow_equipments}
For readers who are not familiar with the notion of \textit{proarrow equipment}, we recall it based on \cite{Wood1985proarrowII}.
Note that for the sake of notational consistency with our terminology, we replace the codomain bicategory with its 1-cell dual.
\begin{definition}[\cite{Wood1985proarrowII}]
    A \emph{proarrow equipment} is a pseudo-functor $\conj{(\cdot)}\colon\bi{K}\to\bi{M}^\op$ between bicategories that satisfies the following conditions:
    \begin{itemize}
        \labeleditem{(Ax.1)}
            $\bi{K}$ and $\bi{M}$ have the same class of objects, and $\conj{(\cdot)}$ is the identity on objects.
        \labeleditem{(Ax.2)}\label{proarrow:locally_ff}
            $\conj{(\cdot)}$ is locally fully faithful.
        \labeleditem{(Ax.3)}
            For every 1-cell $f$ in $\bi{K}$, $\conj{f}$ has a left adjoint $\comp{f}$ in $\bi{M}$.
    \end{itemize}
    A 1-cell $A\arr(u)[][1]B$ in $\bi{M}$ is called \emph{representable} if $u\cong\conj{f}$ for some 1-cell $B\arr(f)[][1]A$ in $\bi{K}$.
    Note that the assignment $f\mapsto\comp{f}$ yields an identity-on-objects pseudo-functor $\comp{(\cdot)}\colon\bi{K}\to\bi{M}^\co$, where $\bi{M}^\co$ denotes the 2-cell dual of $\bi{M}$.
\end{definition}

\begin{remark}
    For a bicategory $\bi{W}$, a monoid in the diminished \ac{AVDC} $\Ldbl\bi{W}$ is the same as a \textit{monad} $t=(t^0,t^1,t^e,t^m)$ in $\bi{W}$ in the sense of \cite{Benabou1967introduction}.
    \begin{equation*}
        \begin{tikzcd}[hugecolumn]
            t^0\ar[r,bend left=30,equal]\ar[r,bend right=30,"t^1"{below}] & t^0 \\
            \dtwocell(t^e){1-1}{1-2}
        \end{tikzcd}
        \qquad
        \begin{tikzcd}
            & t^0\ar[rd,"t^1"] & \\
            t^0\ar[ru,"t^1"]\ar[rr,"t^1"'] &{}& t^0
            \dtwocell(t^m){1-2}{2-2}
        \end{tikzcd}\incat{\bi{W}}
    \end{equation*}
    For an \ac{AVD}-functor $\const{t}\colon\Idimdbl\zero{1}\to\Ldbl\bi{W}$ corresponding to a monad $t$ in $\bi{W}$, a right (resp.\ left) $\const{t}$-module coincides with what has historically been called \textit{$t$-algebra} (resp.\ \textit{$t$-opalgebra}) \cite{Street1972formal}.
    Thus, for an object $X\in\bi{W}$, we also write $\Alg(X,t)\coloneq\Mdl{X}{\const{t}}$ and $\Alg(t,X)\coloneq\Mdl{\const{t}}{X}$.
\end{remark}

\begin{notation}[The mate $t$-opalgebra]
    Let $t=(t^0,t^1,t^e,t^m)$ be a monad in a bicategory $\bi{W}$.
    Let $m=(m^1,m^2)\in\Alg(X,t)$ be a $t$-algebra whose underlying 1-cell $m^1$ has a left adjoint $\mate{m}^1$ in $\bi{W}$.
    Then, the left adjoint curries a canonical $t$-opalgebra structure; the resulting $t$-opalgebra will be denoted by $\mate{m}=(\mate{m}^1,\mate{m}^2)$, where $\mate{m}^2$ is obtained by taking the mate of the 2-cell $m^2$.\vspace{-0.6em}
    \begin{equation*}
        \begin{tikzcd}[smallcolumn]
            X\ar[rr,"m^1"',shift right=1,bend right=10] & {\scriptstyle\perp} & t^0\ar[ll,"\mate{m}^1"',shift right=1,bend right=10]
        \end{tikzcd}
        \qquad
        \begin{tikzcd}
            & t^0\ar[rd,"t^1"] & \\
            X\ar[ru,"m^1"]\ar[rr,"m^1"'] &{}& t^0
            \dtwocell(m^2){1-2}{2-2}
        \end{tikzcd}
        \qquad
        \begin{tikzcd}
            & t^0\ar[dr,"\overline{m}^1"] & \\
            t^0\ar[ru,"t^1"]\ar[rr,"\mate{m}^1"'] &{}& X
            \dtwocell(\mate{m}^2){1-2}{2-2}
        \end{tikzcd}\incat{\bi{W}}
    \end{equation*}
\end{notation}

In \cite{Wood1985proarrowII}, the following additional conditions on a proarrow equipment $\conj{(\cdot)}\colon\bi{K}\to\bi{M}^\op$ are considered:
\begin{itemize}
    \labeleditem{(Ax.4)}\label{proarrow:coproduct}
        $\bi{K}$ has finite bicoproducts, and the pseudo-functors $\conj{(\cdot)}\colon\bi{K}\to\bi{M}^\op$ and $\comp{(\cdot)}\colon\bi{K}\to\bi{M}^\co$ preserve them.
    \labeleditem{(Ax.5)}\label{proarrow:EM}
        For every monad $t=(t^0,t^1,t^e,t^m)$ in the bicategory $\bi{M}$, there are an object $\Xi\in\bi{K}$ and a $t$-algebra $e=(e^1,e^2)\in\Alg(\Xi,t)$ with $e^1$ representable that satisfies the following conditions:
        \begin{itemize}
            \item
                For every $X\in\bi{M}$, the functor $\Homcat[\bi{M}](X,\Xi)\arr(-\lcomp e) \Alg(X,t)$ induced by composition with $e$ is an equivalence of categories.
            \item
                For every $X\in\bi{M}$, the functor $\Homcat[\bi{M}](\Xi,X)\arr(\mate{e}\lcomp -) \Alg(t,X)$ induced by composition with $\mate{e}$ is an equivalence of categories.
            \item
                If the composite $u\lcomp e^1$ with a 1-cell $u$ is representable, then $u$ is representable.
                \begin{equation*}
                    \begin{tikzcd}
                        &[10pt] &[-10pt] t^0\ar[dr,"t^1"] &[-10pt] \\
                        \cdot\ar[r,"u"] & \Xi\ar[ru,"e^1"]\ar[rr,"e^1"'] &{}& t^0
                        \dtwocell(e^2){1-3}{2-3}
                    \end{tikzcd}\incat{\bi{M}}
                \end{equation*}
        \end{itemize}
\end{itemize}

\begin{notation}
    As shown in \cite[Appendix C]{Shulman2008framed}, a proarrow equipment whose domain is a (strict) 2-category is essentially the same concept as a pseudo double category with restrictions, and the latter can be regarded as an \ac{AVDC} with loose composites and restrictions as described in \cref{rem:pseudo_double_category}.
    Given a proarrow equipment $\conj{(\cdot)}\colon\bi{K}\to\bi{M}^\op$ such that $\bi{K}$ is a 2-category, we write $\dbl{F}_\conj{(\cdot)}$ for the corresponding \ac{AVDC} with loose composites and restrictions.
    For clarity, we describe the \ac{AVDC} $\dbl{F}_\conj{(\cdot)}$ explicitly as follows:
    \begin{itemize}
        \item
            Objects in $\dbl{F}_\conj{(\cdot)}$ are those of $\bi{K}$ (and $\bi{M}$).
        \item
            Tight arrows in $\dbl{F}_\conj{(\cdot)}$ are the 1-cells of $\bi{K}$.
        \item
            Loose arrows $\dbl{F}_\conj{(\cdot)}$ are the 1-cells of $\bi{M}$.
        \item
            Cells
            \begin{equation*}
                \begin{tikzcd}
                    A\ar[d,"f"']\lar[r,path,"\tup{u}"] & B\ar[d,"g"] \\
                    X\lar[r,phan,"v"'] & Y
                    \cellsymb(\alpha){1-1}{2-2}
                \end{tikzcd}\incat{\dbl{F}_\conj{(\cdot)}}
            \end{equation*}
            are the 2-cells $\conj{f}\lcomp(\lcomp\tup{u})\arr(\alpha)[Rightarrow] (\lcomp v)\lcomp\conj{g}$ in $\bi{M}$.
            Here, for a path of 1-cells $\tup{p}=(p_1,\dots,p_n)$, we use the notation $\lcomp\tup{p}\coloneq(\cdots((p_1\lcomp p_2)\lcomp p_3)\cdots)\lcomp p_n$.
            If $n=0$, $\lcomp\tup{p}$ is defined as the identity 1-cell.\qedhere
    \end{itemize}
\end{notation}

\begin{definition}
    An \ac{AVDC} $\bK$ is called \emph{tightwise discrete} if the tight category $\Tcat\bK$ is discrete.
\end{definition}

\begin{lemma}\label{lem:representability_of_module}
    Let $F\colon\bK\to\bL$ be an \ac{AVD}-functor between \acp{AVDC} with $\bK$ tightwise discrete.
    For a left $F$-module $F\larr(m)[Rightarrow]M$, the following are equivalent:
    \begin{enumerate}
        \item\label{lem:representability_of_module-global}
            $m\cong\comp{l}$ in $\Mdl{F}{M}$ for some tight cocone $F\arr(l)[Rightarrow]M$ such that the companion $\comp{l_A}$ exists for every $A\in\bK$.
            Here, $\comp{l}$ is the left $F$-module described in \cref{rem:companion_of_tight_cocone}.
        \item\label{lem:representability_of_module-componentwise}
            For every $A\in\bK$, $m_A$ is a companion of some tight arrow.
    \end{enumerate}
\end{lemma}
\begin{proof}
    \proofdirection{\cref{lem:representability_of_module-global}}{\cref{lem:representability_of_module-componentwise}}
    This follows from the construction of $\comp{l}$.

    \proofdirection{\cref{lem:representability_of_module-componentwise}}{\cref{lem:representability_of_module-global}}
    For each $A\in\bK$, we can take a tight arrow $FA\arr(l_A)M$ in $\bL$ and suppose $m_A=\comp{l_A}$ without loss of generality.
    For $A\larr(u)B$ in $\bK$, composition with the associated cells to the companions $\comp{l_A}$ and $\comp{l_B}$ induces a bijective correspondence between the cells of the following forms:\vspace{-1.0em}
    \begin{equation*}
        \begin{tikzcd}[tri]
            FA\ar[dr,"l_A"']\lar[rr,"Fu"] &{}& FB\ar[dl,"l_B"] \\
            & M &
            \cellsymb(\cdot)[above=-2]{1-2}{2-2}
        \end{tikzcd}
        \Vline
        \begin{tikzcd}[largecolumn]
            FA\ar[d,equal]\lar[r,"Fu"] & FB\lar[r,"m_B (=\comp{l_B})"] &[20pt] M\ar[d,equal] \\
            FA\lar[rr,"m_A (=\comp{l_A})"'] & & M
            \cellsymb(\cdot){1-1}{2-3}
        \end{tikzcd}\incat{\bL}.
    \end{equation*}
    Define $l_u$ as the cell that corresponds to the cell $m_u$ under this correspondence.
    Then it also follows directly from this bijective correspondence that the tuple $l=(l_A,l_u)_{A,u}$ is compatible with cells in $\bK$, which is sufficient for $l$ to become a tight cocone from $F$ since $\bK$ has no non-trivial tight arrow.
    The isomorphism $m\cong\comp{l}$ is obvious.
\end{proof}

\begin{definition}
    Let $F\colon\bK\to\bL$ be an \ac{AVD}-functor between \acp{AVDC} with $\bK$ tightwise discrete.
    For a tight cocone $\xi$ from $F$ with a vertex $\Xi\in\bL$, we consider a weakened version of the condition \ref{axiom:tight_bij}:\vspace{-0.2em}
    \begin{itemize}
        \labeleditem{(wT)}\label{axiom:tight_esssurj}
            The canonical functor $\Homcat[\bL]\vvect{\Xi}{L}\arr(\xi\tcomp-)\Cone\vvect{F}{L}$ of \cref{const:canonical_functor_tight_to_cocone} is essentially surjective on objects for any $L\in\bL$.
    \end{itemize}
    Then, the tight cocone $\xi$ is called a \emph{versatile bicolimit} of $F$ if it satisfies the conditions \ref{axiom:tight_esssurj}\ref{axiom:loose_esssurj_left}\ref{axiom:loose_esssurj_right}\ref{axiom:cell_type1_left}\ref{axiom:cell_type1_right}\ref{axiom:cell_type2}\ref{axiom:cell_type3}.
\end{definition}

\begin{remark}
    For a general shape $\bK$, which is not necessarily tightwise discrete, we refrain from defining the notion of versatile bicolimit here.
    The reason is that, in the general case, we have to replace tight cocones in \cref{lem:representability_of_module} with \textit{pseudo tight cocones}---tight cocones whose compatibility with tight arrows in $\bK$ is weakened up to isomorphism---and so, to define general versatile bicolimits, it would be necessary to reformulate the definition of versatile colimits in terms of pseudo tight cocones.
\end{remark}

\begin{theorem}\label{thm:special_case_versbicolim}
    Let $\bL$ be an \ac{AVDC} with extensions, lifts, and loose composites.
    Let $F\colon\bK\to\bL$ be an \ac{AVD}-functor with $\bK$ tightwise discrete.
    Then, a tight cocone $\xi$ from $F$ with a vertex $\Xi\in\bL$ becomes a versatile bicolimit if and only if it satisfies the following conditions:
    \begin{itemize}
        \item
            The functor $\Homcat[\bL]\vvect{\Xi}{L}\arr(\xi\tcomp-)\Cone\vvect{F}{L}$ is an equivalence of categories for any $L\in\bL$;
        \item
            The functors $\Homcat[\bL](\Xi,L)\arr(\comp{\xi}-)\Mdl{F}{L}$ and $\Homcat[\bL](L,\Xi)\arr(-\conj{\xi})\Mdl{L}{F}$ are equivalences of categories for any $L\in\bL$.
    \end{itemize}
\end{theorem}
\begin{proof}
    \cref{prop:fully_faithfulness_from_M2_M0} implies that these two conditions are necessary.
    Sufficiency also follows from an argument similar to the proof of \cref{thm:strongest_simplification}.
\end{proof}

\begin{definition}\quad
    \begin{enumerate}
        \item
            A \emph{finite versatile bicoproduct} is a versatile bicolimit of an \ac{AVD}-functor from $\Ddbl\zero{S}$ for some finite set $\zero{S}$.
        \item
            A \emph{versatile bicollapse} is a versatile bicolimit of an \ac{AVD}-functor from $\Idimdbl\zero{1}$, where $\zero{1}$ denotes the singleton.\qedhere
    \end{enumerate}
\end{definition}

In \cite{Wood1982proarrowI} (but not in \cite{Wood1985proarrowII}), the codomain bicategories of proarrow equipments are required to be \textit{biclosed}, i.e., to have right Kan extensions and lifts.
Under this extra hypothesis, Wood's axioms \ref{proarrow:coproduct} and \ref{proarrow:EM} can be interpreted as asserting the existence of specific versatile bicolimits:
\begin{theorem}\label{thm:proarr_equip_in_terms_of_versbicolim}
    Let $\conj{(\cdot)}\colon\bi{K}\to\bi{M}^\op$ be a proarrow equipment such that $\bi{K}$ is a 2-category and $\bi{M}$ is biclosed.
    \begin{enumerate}
        \item\label{thm:proarr_equip_in_terms_of_versbicolim-axiom4}
            $\conj{(\cdot)}$ satisfies \ref{proarrow:coproduct} if and only if $\dbl{F}_\conj{(\cdot)}$ has finite versatile bicoproducts.
        \item\label{thm:proarr_equip_in_terms_of_versbicolim-axiom5}
            $\conj{(\cdot)}$ satisfies \ref{proarrow:EM} if and only if $\dbl{F}_\conj{(\cdot)}$ has versatile bicollapses.
    \end{enumerate}
\end{theorem}
\begin{proof}\quad
    \begin{enumerate}
        \item
            The biclosedness of $\bi{M}$ implies that $\dbl{F}_\conj{(\cdot)}$ has extensions and lifts, hence we can use \cref{thm:special_case_versbicolim}.
            Since the pseudo-functors $\conj{(\cdot)}\colon\bi{K}\to\bi{M}^\op$ and $\comp{(\cdot)}\colon\bi{K}\to\bi{M}^\co$ are compatible with conjunctions and companions in $\dbl{F}_\conj{(\cdot)}$, the equivalence \cref{thm:proarr_equip_in_terms_of_versbicolim-axiom4} follows.
        \item
            Note that monads in $\bi{M}$ are the same as \ac{AVD}-functors $\Idimdbl\zero{1}\to\dbl{F}_\conj{(\cdot)}$.
            Let $t$ be a monad in $\bi{M}$, and let $\const{t}\colon\Idimdbl\zero{1}\to\dbl{F}_\conj{(\cdot)}$ be the corresponding \ac{AVD}-functor.
            By the dual of \cref{lem:representability_of_module}, giving a $t$-algebra in $\bi{M}$ whose underlying 1-cell is representable is equivalent to giving a tight cocone from $\const{t}$ in $\dbl{F}_\conj{(\cdot)}$.
            Thus, it suffices to show that a tight cocone $\xi$ from $\const{t}$ with a vertex $\Xi$ is a versatile bicolimit if and only if the induced $t$-algebra $\conj{\xi}$, which is obtained by the dual of the construction explained in \cref{rem:companion_of_tight_cocone}, satisfies the three conditions in \ref{proarrow:EM}.
            The first and second conditions in \ref{proarrow:EM} are clearly equivalent to that the functors
            \[
                \Homcat[\dbl{F}_\conj{(\cdot)}](X,\Xi)\arr(-\conj{\xi})\Mdl{X}{\const{t}},
                \quad
                \Homcat[\dbl{F}_\conj{(\cdot)}](\Xi,X)\arr(\comp{\xi}-)\Mdl{\const{t}}{X}
            \]
            are equivalences for any $X\in\dbl{F}_\conj{(\cdot)}$.

            We now suppose the first and second conditions in \ref{proarrow:EM} for $\conj{\xi}$.
            Take $X\in\dbl{F}_\conj{(\cdot)}$ arbitrarily, and consider the following diagram of functors:
            \begin{equation}\label{eq:pseudo_comm_proarrow_equip}
                \begin{tikzcd}[hugecolumn]
                    \Homcat[\bi{K}](\Xi,X) & \Cone\vvect{\const{t}}{X} \\
                    \Homcat[\bi{M}](X,\Xi)_\mathrm{rep} & \Alg(X,t)_\mathrm{rep} \\
                    \Homcat[\bi{M}](X,\Xi) & \Alg(X,t)
                    \arrow[from=1-1,to=1-2,"\xi\tcomp-"]
                    \arrow[from=2-1,to=2-2,"-\lcomp\conj{\xi}"']
                    \arrow[from=3-1,to=3-2,"-\lcomp\conj{\xi}"',"\simeq"]
                    \arrow[from=1-1,to=2-1,"\conj{(\cdot)}"',"\simeq"]
                    \arrow[from=1-2,to=2-2,"\conj{(\cdot)}","\simeq"']
                    \arrow[from=2-1,to=3-1,hook]
                    \arrow[from=2-2,to=3-2,hook]
                    \cellsymb(\rotatebox{45}{$\cong$}){1-1}{2-2}
                    \cellsymb(\rotatebox{45}{$=$}){2-1}{3-2}
                \end{tikzcd}
            \end{equation}
            Here, $\Homcat[\bi{M}](X,\Xi)_\mathrm{rep}$ denotes the full subcategory of $\Homcat[\bi{M}](X,\Xi)$ spanned by the representable 1-cells, and $\Alg(X,t)_\mathrm{rep}$ denotes the full subcategory of $\Alg(X,t)$ spanned by the $t$-algebras whose underlying 1-cell is representable.
            The functor $\conj{(\cdot)}$ in the left column of \cref{eq:pseudo_comm_proarrow_equip} is an equivalence by \ref{proarrow:locally_ff}.
            The functor $\conj{(\cdot)}$ in the right column is essentially surjective by the dual of \cref{lem:representability_of_module}; since full faithfulness is immediate, it is an equivalence.
            Since the functor $-\lcomp\conj{\xi}$ in the bottom row of \cref{eq:pseudo_comm_proarrow_equip} is an equivalence, it can be observed that the functor $\xi\tcomp-$ in the top row is an equivalence if and only if the lower commutative square in \cref{eq:pseudo_comm_proarrow_equip} exhibits a (strict) pullback, and the latter is equivalent to the third condition in \ref{proarrow:EM}.
            Combining this with \cref{thm:special_case_versbicolim} shows that $\xi$ is a versatile bicolimit if and only if the $t$-algebra $\conj{\xi}$ satisfies the three conditions in \ref{proarrow:EM}, which finishes the proof.\qedhere
    \end{enumerate}
\end{proof}

\begin{remark}
    A similar statement to \cref{thm:proarr_equip_in_terms_of_versbicolim}\cref{thm:proarr_equip_in_terms_of_versbicolim-axiom5} can be found in \cite[Theorem 5.8]{Schultz2015regular}, where the codomain bicategory $\bi{M}$ is not necessarily biclosed.
\end{remark}

\section{Finality of augmented virtual double functors}\label[appendix]{sec:final_functors}
\begin{definition}\label{def:coslice_category}
    Let $\Phi\colon\bJ\to\bK$ be an \ac{AVD}-functor between \acp{AVDC}.
    For a loose path $A\larr(\tup{u})[path]B$ in $\bK$, we define a category $\coslice{\tup{u}}{\Phi}$ as follows:
    \begin{itemize}
        \item
            An object in $\coslice{\tup{u}}{\Phi}$ is a tuple $(X^0,X^1,X,\phi^0,\phi^1,\phi)$ of the following form:\vspace{-0.5em}
            \begin{equation}\label{eq:object_in_coslice}
                \begin{tikzcd}
                    A\ar[d,"\phi^0"']\lar[r,path,"\tup{u}"] & B\ar[d,"\phi^1"] \\
                    \Phi X^0\lar[r,phan,"\Phi X"'] & \Phi X^1
                    \cellsymb(\phi){1-1}{2-2}
                \end{tikzcd}\incat{\bK}.
            \end{equation}
            We also write $(X,\phi)$ for such an object $(X^0,X^1,X,\phi^0,\phi^1,\phi)$.
        \item
            A morphism $(X,\phi)\arr(\theta)[][1](Y,\psi)$ in $\coslice{\tup{u}}{\Phi}$ is a tuple $(\theta^0,\theta^1,\theta)$ such that
            \begin{equation*}
                \begin{tikzcd}
                    A\ar[d,"\phi^0"']\lar[r,path,"\tup{u}"] & B\ar[d,"\phi^1"] \\
                    \Phi X^0\ar[d,"\Phi\theta^0"']\lar[r,phan,"\Phi X"'] & \Phi X^1\ar[d,"\Phi\theta^1"] \\
                    \Phi Y^0\lar[r,phan,"\Phi Y"'] & \Phi Y^1
                    \cellsymb(\phi){1-1}{2-2}
                    \cellsymb(\Phi\theta){2-1}{3-2}
                \end{tikzcd}
                =
                \begin{tikzcd}
                    A\ar[d,"\psi^0"']\lar[r,path,"\tup{u}"] & B\ar[d,"\psi^1"] \\
                    \Phi Y^0\lar[r,phan,"\Phi Y"'] & \Phi Y^1
                    \cellsymb(\psi){1-1}{2-2}
                \end{tikzcd}\incat{\bK}.
            \end{equation*}
    \end{itemize}
    When $A=B$ and $\tup{u}$ is of length 0, the category $\coslice{\tup{u}}{\Phi}$ is also denoted by $\coslice{A}{\Phi}$.
\end{definition}

\begin{remark}\label{rem:ordinary_coslise_is_retract}
    In the situation of \cref{def:coslice_category}, the assignments $(X,\phi)\mapsto (X^i,\phi^i)$ \mbox{$(i=0,1)$} yield two functors to the comma categories: \mbox{$(-)^0\colon\coslice{\tup{u}}{\Phi}\to A/(\Tcat\Phi)$} and \mbox{$(-)^1\colon\coslice{\tup{u}}{\Phi}\to B/(\Tcat\Phi)$}.
    If $A=B$ and $\tup{u}$ is of length 0, both functors $(-)^0$ and $(-)^1$ have a common section:
    \begin{equation*}
        \begin{tikzcd}[scriptsize]
            & A/(\Tcat\Phi)\ar[dl,equal]\ar[d]\ar[dr,equal] & \\
            A/(\Tcat\Phi) & \ar[l,"(-)^0"]\coslice{A}{\Phi}\ar[r,"(-)^1"'] & A/(\Tcat\Phi)
        \end{tikzcd}
    \end{equation*}
    Indeed, the assignment 
    \begin{equation*}
        \begin{tikzcd}[scriptsize]
            A\ar[d,"p"'] \\
            \Phi X
        \end{tikzcd}
        \quad\mapsto\quad
        \begin{tikzcd}[scriptsize]
            A\ar[d,bend right=20,"p"{left}]\ar[d,bend left=20,"p"{right}] \\
            \Phi X
            \cellsymb(\heq){1-1}{2-1}
        \end{tikzcd}
    \end{equation*}
    gives such a common section $A/(\Tcat\Phi)\to\coslice{A}{\Phi}$.
\end{remark}

As in \cite{Pare1990simply}, we use the following terminology:
\begin{definition}
    For a category $\one{C}$, we write $\fgrpd\one{C}$ for the strict localization of $\one{C}$ by all morphisms.
    The groupoid $\fgrpd\one{C}$ is called the \emph{fundamental groupoid} of $\one{C}$.
    A category $\one{C}$ is called \emph{simply connected} if the fundamental groupoid $\fgrpd\one{C}$ has at most one morphism between any two objects.
\end{definition}

\begin{definition}
    An \ac{AVD}-functor $\Phi\colon\bJ\to\bK$ between \acp{AVDC} is called \emph{naively final} if:
    \begin{itemize}
        \item
            For every object $A\in\bK$, the comma category $A/(\Tcat\Phi)$ is simply connected.
        \item
            For every loose path $\tup{u}$ in $\bK$, the category $\coslice{\tup{u}}{\Phi}$ is connected.
        \item
            For every loose path $A_0\larr(\tup{u})[path]A_n$ in $\bK$, there exist data of the following form:
            \begin{equation}\label{eq:diagram_finality}
                \begin{tikzcd}
                    A_0\ar[d,"p_0"']\lar[r,"u_1"] & A_1\ar[d,"p_1"]\lar[r,"u_2"] & \cdots\lar[r,"u_n"] & A_n\ar[d,"p_n"] \\
                    \Phi X_0\ar[d,"\Phi f"']\lar[r,phan,"\Phi v_1"] & \Phi X_1\lar[r,phan,"\Phi v_2"] & \cdots\lar[r,phan,"\Phi v_n"] & \Phi X_n\ar[d,"\Phi g"] \\
                    \Phi Y\lar[rrr,phan,"\Phi w"'] &&& \Phi Z
                    \cellsymb(\phi_1){1-1}{2-2}
                    \cellsymb(\phi_2){1-2}{2-3}
                    \cellsymb(\phi_n){1-4}{2-3}
                    \cellsymb(\Phi\theta){2-1}{3-4}
                \end{tikzcd}\incat{\bK}.
            \end{equation}
    \end{itemize}
\end{definition}

\begin{example}
    For a large set $\zero{S}$, the inclusion \ac{AVD}-functor $\Idimdbl\zero{S}\to\Idbl\zero{S}$ is always naively final.
    On the other hand, the inclusion $\Idimdbl\one{C}\to\Idbl\one{C}$ for a category $\one{C}$ is not necessarily naively final due to the lack of simple connectedness of the coslice categories $c/\one{C}$.
\end{example}

\begin{lemma}\label{lem:tightwise_functor_of_final_AVDfunctor}
    Let $\Phi\colon\bJ\to\bK$ be a naively final \ac{AVD}-functor between \acp{AVDC}.
    Then, for every $A\in\bK$, the comma category $A/(\Tcat\Phi)$ is connected (and simply connected).
\end{lemma}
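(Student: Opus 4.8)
The plan is to deduce connectedness from the connectedness of the length-zero coslice category $\coslice{A}{\Phi}$, by exhibiting $A/(\Tcat\Phi)$ as a retract of it in $\Cat$. The parenthetical simple connectedness needs no work: it is literally the first defining clause of finality, so it holds by hypothesis; thus the only real content of the lemma is the \emph{connectedness} of $A/(\Tcat\Phi)$.

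First I would specialize the second clause of finality to the loose path $\tup{u}$ of length $0$ at $A$. By the notational convention recorded at the end of \cref{def:coslice_category}, this path gives $\coslice{\tup{u}}{\Phi}=\coslice{A}{\Phi}$, and the clause asserts that $\coslice{A}{\Phi}$ is connected (in particular nonempty). Next I would invoke \cref{rem:ordinary_coslise_is_retract}: for the length-$0$ path the two projections $(-)^0,(-)^1\colon\coslice{A}{\Phi}\to A/(\Tcat\Phi)$ admit a common section $s$, which sends a tight arrow $A\to\Phi X$ to the object whose underlying $\bK$-cell is the tight identity cell $\heq$. In particular $(-)^0\circ s=\id_{A/(\Tcat\Phi)}$, so $A/(\Tcat\Phi)$ is a retract of the connected category $\coslice{A}{\Phi}$, with $(-)^0$ as the retraction.

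Finally I would apply the elementary fact that connectedness is inherited by retracts. Since $(-)^0\circ s=\id$, the retraction $(-)^0$ is surjective on objects (each object equals $(-)^0(s(-))$ of itself), so $A/(\Tcat\Phi)$ is nonempty; and given two objects of $A/(\Tcat\Phi)$, their images under $s$ are joined by a zigzag of morphisms in the connected category $\coslice{A}{\Phi}$, whose image under $(-)^0$ is a zigzag joining the original two objects. Hence $A/(\Tcat\Phi)$ is connected. The argument is entirely formal, and I expect no genuine obstacle; the only point requiring a moment's care is checking that the length-$0$ instance of $\coslice{\tup{u}}{\Phi}$ really is $\coslice{A}{\Phi}$ so that the relevant finality clause applies, and I note that neither the simply connected comma clause beyond the hypothesis nor the length-$\ge 1$ fillings of \cref{eq:diagram_finality} are needed here.
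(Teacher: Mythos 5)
Your proposal is correct and follows exactly the paper's own argument: the paper's proof likewise cites \cref{rem:ordinary_coslise_is_retract} to exhibit $A/(\Tcat\Phi)$ as a retract of the connected category $\coslice{A}{\Phi}$ (connected by the second finality clause applied to the length-$0$ path at $A$), and concludes by the stability of connectedness under retracts. You have merely spelled out the details the paper leaves implicit; there is no gap.
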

\begin{proof}
    This follows from that $A/(\Tcat\Phi)$ is a retract of the category $\coslice{A}{\Phi}$ for any $A\in\bK$ (\cref{rem:ordinary_coslise_is_retract}).
\end{proof}

\begin{proposition}\label{prop:final_loosewise_VD_indiscrete}
    Let $\one{C},\one{D}$ be categories, and let $\Phi\colon\Idimdbl\one{C}\to\Idimdbl\one{D}$ be an \ac{AVD}-functor, which is the same data as the functor $\Tcat\Phi\colon\one{C}\to\one{D}$.
    Then, the following are equivalent:
    \begin{enumerate}
        \item\label{prop:final_loosewise_VD_indiscrete-1}
            For every object $d\in\one{D}$, the comma category $d/(\Tcat\Phi)$ is connected and simply connected.
        \item\label{prop:final_loosewise_VD_indiscrete-2}
            The \ac{AVD}-functor $\Phi$ is naively final.
    \end{enumerate}
\end{proposition}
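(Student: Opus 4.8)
The plan is to prove both implications by reducing the three clauses in the definition of finality to statements about the ordinary comma categories $d/(\Tcat\Phi)$, exploiting two features of a loosewise \ac{VD}-indiscrete \ac{AVDC}: every cell with a prescribed boundary is unique, and (since such \acp{AVDC} are diminished) a $0$-coary cell exists only when its top boundary has length $0$, in which case it is a tight identity cell. These two facts collapse the cellular data in the coslice categories $\coslice{\tup{u}}{\Phi}$ down to tight-arrow data, and the whole argument becomes bookkeeping about comma categories.

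For the direction \cref{prop:final_loosewise_VD_indiscrete-2}$\Rightarrow$\cref{prop:final_loosewise_VD_indiscrete-1}, I would simply observe that the first clause of finality already gives that each $d/(\Tcat\Phi)$ is simply connected, while \cref{lem:tightwise_functor_of_final_AVDfunctor} supplies the missing connectedness. This yields \cref{prop:final_loosewise_VD_indiscrete-1} at once.

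The substance is \cref{prop:final_loosewise_VD_indiscrete-1}$\Rightarrow$\cref{prop:final_loosewise_VD_indiscrete-2}, where I verify the three finality clauses. The first clause is literally the simple-connectedness half of \cref{prop:final_loosewise_VD_indiscrete-1}. For the third clause, note that once objects $X_i\in\one{C}$ and tight arrows $d_i\arr(p_i)\Phi X_i$ have been chosen (these exist because connectedness forces each $d_i/(\Tcat\Phi)$ to be nonempty), all the loose arrows $v_i=!_{X_{i-1}X_i}$ and $w$ and all the cells $\phi_i,\theta$ in diagram \cref{eq:diagram_finality} are forced and exist uniquely by \ac{VD}-indiscreteness; taking $Y=X_0$, $Z=X_n$ and $f,g$ identities completes the required data, so the third clause follows from nonemptiness alone.

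The second clause, namely connectedness of $\coslice{\tup{u}}{\Phi}$ for every loose path $\tup{u}$ from $A_0=d_0$ to $A_n=d_n$, is the main point and splits according to the length $n$. When $n\ge 1$, the bottom boundary of any object-cell must have length $1$, since a length-$0$ bottom would be a $0$-coary cell with top of length $n\ge1$, impossible in a diminished \ac{AVDC}; using uniqueness of cells I would then exhibit an isomorphism of categories $\coslice{\tup{u}}{\Phi}\cong(d_0/(\Tcat\Phi))\times(d_n/(\Tcat\Phi))$, whose connectedness follows from that of the two factors. When $n=0$ there are two kinds of objects, those whose $X$ has length $1$ (forming a copy of $(d_0/(\Tcat\Phi))^2$) and those of length $0$ (the tight-identity objects, the image of the common section of \cref{rem:ordinary_coslise_is_retract}, a copy of $d_0/(\Tcat\Phi)$); I would connect each length-$1$ object to its diagonal length-$1$ object using connectedness of $d_0/(\Tcat\Phi)$, and connect that diagonal to the corresponding length-$0$ object via the canonical morphism induced by identity tight arrows, finally joining all length-$0$ objects by connectedness of $d_0/(\Tcat\Phi)$. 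The main obstacle is precisely this $n=0$ bookkeeping: one must check that morphisms between the two kinds of objects can only run from length-$0$ to length-$1$ objects, and verify the pasting identities that make the connecting morphisms legitimate. All of these are forced by uniqueness of cells, but they are what the proof must spell out.
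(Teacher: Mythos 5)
Your proposal is correct and follows essentially the same route as the paper: the easy direction via \cref{lem:tightwise_functor_of_final_AVDfunctor}, the third finality clause from nonemptiness plus uniqueness of cells, and connectedness of $\coslice{\tup{u}}{\Phi}$ by identifying the length-$1$ objects with the product $(a/(\Tcat\Phi))\times(b/(\Tcat\Phi))$. The only cosmetic difference is that the paper avoids your $n=0$ bookkeeping by noting uniformly that every object admits a morphism (with identity tight components and the unique cell into $\Phi!_{x^0x^1}$) to a length-$1$ object, so connectedness of the product subcategory suffices in all cases.
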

\begin{proof}
    \proofdirection{\cref{prop:final_loosewise_VD_indiscrete-2}}{\cref{prop:final_loosewise_VD_indiscrete-1}} This follows from \cref{lem:tightwise_functor_of_final_AVDfunctor}.

    \proofdirection{\cref{prop:final_loosewise_VD_indiscrete-1}}{\cref{prop:final_loosewise_VD_indiscrete-2}}
    The first and third conditions for naive finality are trivial.
    We will show the second condition.
    Let $a\larr(\tup{u})[path]b$ in $\Idimdbl\one{D}$ be a loose path.
    The following shows that every object $(x,\phi)$ in $\coslice{\tup{u}}{\Phi}$ on the left below is connected with an object such that $X$ is of length 1 in \cref{eq:object_in_coslice}:
    \begin{equation*}
        \begin{tikzcd}
            a\ar[d,"\phi^0"']\lar[r,path,"\tup{u}"] & b\ar[d,"\phi^1"] \\
            \Phi x^0\ar[d,equal]\lar[r,phan,"\Phi x"] & \Phi x^1\ar[d,equal] \\
            \Phi x^0\lar[r,"\Phi !"'] & \Phi x^1
            \cellsymb(\phi){1-1}{2-2}
            \cellsymb(\Phi !){2-1}{3-2}
        \end{tikzcd}
        =
        \begin{tikzcd}
            a\ar[d,"\phi^0"']\lar[r,path,"\tup{u}"] & b\ar[d,"\phi^1"] \\
            \Phi x^0\lar[r,"\Phi !"'] & \Phi x^1
            \cellsymb(!)[right=-2]{1-1}{2-2}
        \end{tikzcd}\incat{\Idimdbl\one{D}}
    \end{equation*}
    The full subcategory of $\coslice{\tup{u}}{\Phi}$ consists of objects where $X$ is of length 1 in \cref{eq:object_in_coslice} is isomorphic to a product $a/(\Tcat\Phi)\times b/(\Tcat\Phi)$ of comma categories, which are connected by the assumption.
    Therefore, $\coslice{\tup{u}}{\Phi}$ is connected.
\end{proof}

We now present a slight generalization of cartesian cells.
While this may seem somewhat technical, we introduce it here since it will be used later.
\begin{definition}
    Let $A\larr(\tup{u})[path]B$ be a loose path in an \ac{AVDC} $\bL$.
    Let $\one{C}$ be a category, and let $F\colon\one{C}\to\Lphancat\bL$ be a functor.
    A \emph{cone} over $F$ with the vertex $\tup{u}$ is a family of cells $\alpha_c$ for $c\in\one{C}$ satisfying the following equality for any morphism $c\arr(s)[][1]d$ in $\one{C}$:
    \begin{equation*}
        \begin{tikzcd}
            A\ar[d,"\alpha^0_c"']\lar[r,path,"\tup{u}"] & B\ar[d,"\alpha^1_c"] \\
            F^0c\ar[d,"F^0s"']\lar[r,phan,"Fc"] & F^1c\ar[d,"F^1s"] \\
            F^0d\lar[r,phan,"Fd"'] & F^1d
            \cellsymb(\alpha_c){1-1}{2-2}
            \cellsymb(Fs){2-1}{3-2}
        \end{tikzcd}
        =
        \begin{tikzcd}
            A\ar[d,"\alpha^0_d"']\lar[r,path,"\tup{u}"] & B\ar[d,"\alpha^1_d"] \\
            F^0d\lar[r,phan,"Fd"'] & F^1d
            \cellsymb(\alpha_d){1-1}{2-2}
        \end{tikzcd}\incat{\bL}.
    \end{equation*}
\end{definition}

\begin{definition}[Jointly cartesian cells]
    Let $\bL$ be an \ac{AVDC}, let $\one{C}$ be a category, and let $F\colon\one{C}\to\Lphancat\bL$ be a functor.
    A cone over $F$
    \begin{equation*}
        \begin{tikzcd}
            X^0\ar[d,"\alpha^0_c"']\lar[r,phan,"X"] & X^1\ar[d,"\alpha^1_c"] \\
            F^0c\lar[r,phan,"Fc"'] & F^1c
            \cellsymb(\alpha_c){1-1}{2-2}
        \end{tikzcd}\incat{\bL}\quad (c\in\one{C})
    \end{equation*}
    is called \emph{jointly cartesian} in $\bL$ if it satisfies the following condition:
    Suppose that we are given a loose path $A\larr(\tup{u})[path]B$, tight arrows $A\arr(f)[][1]X^0$ and $B\arr(g)[][1]X^1$, and a cone $\beta$ over $F$ on the right below; then there uniquely exists a cell $\gamma$ satisfying the following equality for any $c\in\one{C}$.
    \begin{equation*}
        \begin{tikzcd}
            A\ar[d,"f"']\lar[r,path,"\tup{u}"] & B\ar[d,"g"] \\
            X^0\ar[d,"\alpha^0_c"']\lar[r,phan,"X"] & X^1\ar[d,"\alpha^1_c"] \\
            F^0c\lar[r,phan,"Fc"'] & F^1c
            \cellsymb(\gamma){1-1}{2-2}
            \cellsymb(\alpha_c){2-1}{3-2}
        \end{tikzcd}
        =
        \begin{tikzcd}
            A\ar[d,"f"']\lar[r,path,"\tup{u}"] & B\ar[d,"g"] \\
            X^0\ar[d,"\alpha^0_c"'] & X^1\ar[d,"\alpha^1_c"] \\
            F^0c\lar[r,phan,"Fc"'] & F^1c
            \cellsymb(\beta_c){1-1}{3-2}
        \end{tikzcd}\incat{\bL}
    \end{equation*}
\end{definition}

\begin{notation}
    Let $\Phi\colon\bJ\to\bK$ and $F\colon\bK\to\bL$ be \ac{AVD}-functors between \acp{AVDC}.
    Then, a tight cocone $l$ from $F$ yields a tight cocone from $F\Phi$, denoted by $l_\Phi$, in a natural way.
    We also use such a notation for modules and modulations.
\end{notation}

\begin{theorem}\label{thm:invariance_by_finality}
    Let $\Phi\colon\bJ\to\bK$ be a naively final \ac{AVD}-functor.
    Then, the following hold for any \ac{AVD}-functor $F\colon\bK\to\bL$.
    \begin{enumerate}
        \item\label{thm:invariance_by_finality-cocones}
            The assignment $l\mapsto l_\Phi$ yields isomorphisms of categories
            \begin{equation*}
                -_\Phi\colon\Cone\vvect{F}{L}\arr(\cong)\Cone\vvect{F\Phi}{L}
                \qquad
                (L\in\bL).
            \end{equation*}
        \item\label{thm:invariance_by_finality-modules}
            Assume the following additional condition: for any $A\in\bK$, there exists an object $(X,p)\in A/(\Tcat\Phi)$ such that $Fp$ is left-pulling in $\bL$.
            Then, the assignment $m\mapsto m_\Phi$ yields equivalences of categories
            \begin{equation*}
                -_\Phi\colon\Mdl{F}{M}\arr(\simeq)\Mdl{F\Phi}{M}
                \qquad
                (M\in\bL).
            \end{equation*}
        \item\label{thm:invariance_by_finality-modulations}
            The assignment $\rho\mapsto\rho_\Phi$ yields bijections among the classes of modulations of the same type.\qedhere
    \end{enumerate}
\end{theorem}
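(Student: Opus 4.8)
The plan is to treat all three parts uniformly: the functor $-_\Phi$ restricts $\bK$-indexed data to $\bJ$-indexed data, and its inverse (resp.\ quasi-inverse) \emph{transports} the $\bJ$-indexed data back along $\Phi$, the transport being manufactured from the finality of $\Phi$. For an object $A\in\bK$ the comma category $A/(\Tcat\Phi)$ is connected and simply connected (\cref{lem:tightwise_functor_of_final_AVDfunctor}), hence nonempty; choosing $(X,p)\in A/(\Tcat\Phi)$, i.e.\ a tight arrow $A\arr(p)[][1]\Phi X$ in $\bK$, I transport the $X$-indexed datum by tightwise pasting with $Fp$. For a loose path $\tup{u}$ in $\bK$ the category $\coslice{\tup{u}}{\Phi}$ is connected; choosing $(X,\phi)\in\coslice{\tup{u}}{\Phi}$ I transport by pasting with $F\phi$. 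Independence of the chosen representative is the key bookkeeping lemma: a morphism of $A/(\Tcat\Phi)$ (resp.\ of $\coslice{\tup{u}}{\Phi}$) induces, after applying $F$, exactly the naturality relation satisfied by the transported data, so the transported value is constant on connected components and is therefore well defined.

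For part \cref{thm:invariance_by_finality-cocones}, observe first that a tight cocone $l$ from $F$ is \emph{already} reconstructed from $l_\Phi$: the cocone axioms give $l_A=Fp\tcomp(l_\Phi)_X$ for $(X,p)\in A/(\Tcat\Phi)$ and $l_u=F\phi\tcomp(l_\Phi)_X$ for $(X,\phi)\in\coslice{u}{\Phi}$, which yields injectivity on objects and, together with the analogous identity for cells, faithfulness. Conversely, given a tight cocone $k$ from $F\Phi$ I define $l$ by these same formulas, with well-definedness supplied by the connectedness argument above and $l_\Phi=k$ obtained by evaluating the transport at the canonical object $(J,\id)$. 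The remaining work is to check the cocone axioms for $l$: compatibility with a tight arrow and with a single loose arrow reduces to connectedness of the relevant comma/coslice category, while compatibility with a general multiary cell $\alpha$ of $\bK$ is precisely where I invoke the third finality condition \cref{eq:diagram_finality}, which factors the boundary of $\tup{u}$ through $\Phi$-images of a cell of $\bJ$ and thereby reduces the axiom for $l$ to the known one for $k$. Since every transport here uses the strict tightwise composition $\tcomp$, the resulting bijection is a genuine isomorphism of categories.

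For part \cref{thm:invariance_by_finality-modulations} the scheme is identical, but now the underlying cocones and modules are fixed and only the modulation cells are transported; since these cells are pasted and determined on the nose, the transport is strictly inverse to $-_\Phi$, giving bijections (not mere equivalences) for each of the four types, with the loose-arrow naturality again resting on connectedness of $\coslice{u}{\Phi}$ and, for the multiary instances inside the axioms, on \cref{eq:diagram_finality}. Part \cref{thm:invariance_by_finality-modules} then splits into full faithfulness and essential surjectivity: full faithfulness is immediate from part \cref{thm:invariance_by_finality-modulations} applied to the type-$0$ modulations that constitute the morphisms of $\Mdl{F}{M}$ and $\Mdl{F\Phi}{M}$, as $-_\Phi$ visibly preserves their bottom boundary.

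The main obstacle is the essential surjectivity in part \cref{thm:invariance_by_finality-modules}, where the loose arrows $m_A\colon FA\larr M$ of the sought left $F$-module must be \emph{created} rather than merely transported. This is exactly where the additional hypothesis enters: for each $A\in\bK$ I choose $(X,p)\in A/(\Tcat\Phi)$ with $Fp$ left-pulling and set $m_A:=n_X(Fp,\id)$, the restriction along $Fp$, which exists precisely by left-pullingness. The cartesian cells $m_f$ and the action cells $m_u$ are then induced by the universal property of restrictions together with the module structure of $n$, and the loosewise invertible comparison $m_\Phi\cong n$ comes from the restrictions along identities. The delicate point is that, unlike the strict tight transports of parts \cref{thm:invariance_by_finality-cocones} and \cref{thm:invariance_by_finality-modulations}, everything here is controlled only up to loosewise invertible cell, so the coherence of $m$ as a module and the naturality of the comparison modulation must be verified while carrying these isomorphisms through \cref{eq:diagram_finality}; keeping this bookkeeping consistent across the three finality conditions is the crux of the argument.
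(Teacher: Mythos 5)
Your proposal follows essentially the same route as the paper's proof: transport along a chosen object of the comma/coslice category, well-definedness from connectedness (and simple connectedness), the third finality condition \cref{eq:diagram_finality} for the multiary cell axioms, full faithfulness in \cref{thm:invariance_by_finality-cocones} and \cref{thm:invariance_by_finality-modules} deduced from \cref{thm:invariance_by_finality-modulations}, and left-pullingness to create the restrictions $m_A$ for essential surjectivity, with the comparison $m_\Phi\cong n$ coming from the restrictions along identities. The only cosmetic difference is that the paper extends the single chosen cartesian cell to a jointly cartesian cone over all of $A/(\Tcat\Phi)$ at once (using simple connectedness), which makes the coherence checks you flag at the end strict rather than up-to-isomorphism.
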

\begin{proof}
    We first show \cref{thm:invariance_by_finality-modulations} for modulations of type 1.
    Let $\sigma$ be a modulation of type 1 exhibited by the following:
    \begin{equation*}
        \begin{tikzcd}
            F\Phi\ar[d,Rightarrow,"l_\Phi"']\lar[r,Rightarrow,"m_\Phi"] & M\lar[r,path,"\tup{p}"] & M'\ar[d,"j"] \\
            L\lar[rr,phan,"q"'] & & L'
            \cellsymb(\sigma){1-1}{2-3}
        \end{tikzcd}
    \end{equation*}
    Here, $m$ is a left $F$-module, and $l$ is a tight cocone from $F$.
    We have to construct a modulation $\fs$ such that $\fs_\Phi=\sigma$.
    For each $A\in\bK$, let us take a tight arrow $A\arr(a)[][1]\Phi X$ in $\bK$ by using the ordinary finality of $\Tcat\Phi$ and define $\fs_A$ as the following cell:
    \begin{equation*}
        \fs_A\coloneq
        \begin{tikzcd}
            FA\ar[d,"Fa"']\lar[r,"m_A"] & M\ar[d,equal]\lar[r,path,"\tup{p}"] & M'\ar[d,equal] \\
            F\Phi X\ar[d,"l_{\Phi X}"']\lar[r,"m_{\Phi X}"'] & M\lar[r,path,"\tup{p}"'] & M'\ar[d,"j"] \\
            L\lar[rr,phan,"q"'] && L'
            \cellsymb(m_a){1-1}{2-2}
            \cellsymb(\veq){1-2}{2-3}
            \cellsymb(\sigma_X){2-1}{3-3}
        \end{tikzcd}\incat{\bL}.
    \end{equation*}
    By using the ordinary finality of $\Tcat\Phi$ again, we can show that the cells $\fs_A$ are independent of the choice of $A\arr(a)[][1]\Phi X$.
    Then, from the independence of $\fs_A$ and the second condition in the definition of naive finality, it easily follows that the cells $\fs$ form a desired modulation $\fs$.
    The uniqueness of $\fs$ is trivial.
    The same argument works in the case of modulations of the other types.

    We next show \cref{thm:invariance_by_finality-cocones}.
    Since the functor $-_\Phi\colon\Cone\vvect{F}{L}\arr[][1]\Cone\vvect{F\Phi}{L}$ is fully faithful by \cref{thm:invariance_by_finality-modulations}, it suffices to show that the functor $-_\Phi$ is bijective on objects.
    Let $l$ be a tight cocone from $F\Phi$ to $L$.
    Since $A/(\Tcat\Phi)$ is connected for each $A\in\bK$, we can define $\fl_A$ as $(Fp)\tcomp l_X$ independently of a choice of $A\arr(p)[][1]\Phi X$ in $\bK$.
    Since $\coslice{\tup{u}}{\Phi}$ is connected for $A_0\larr(\tup{u})[path][1]A_n$ in $\bK$, we can also define a cell $\fl_\tup{u}$ as follows independently of a choice of an object $(X,\phi)\in\coslice{\tup{u}}{\Phi}$:
    \begin{equation*}
        \begin{tikzcd}[tri]
            FA_0\ar[dr,"\fl_{A_0}"']\lar[rr,path,"F\tup{u}"] &{}& FA_n\ar[dl,"\fl_{A_n}"] \\
            & L &
            \cellsymb(\fl_\tup{u})[above=-3]{1-2}{2-2}
        \end{tikzcd}
        \coloneq
        \begin{tikzcd}[tri]
            FA_0\ar[d,"F\phi^0"']\lar[rr,path,"F\tup{u}"] && FA_n\ar[d,"F\phi^1"] \\
            F\Phi X^0\ar[dr,"l_{X^0}"']\lar[rr,phan,"F\Phi X"] &{}& F\Phi X^1\ar[dl,"l_{X^1}"] \\
            & L &
            \cellsymb(F\phi){1-1}{2-3}
            \cellsymb(l_X)[above=-3]{2-2}{3-2}
        \end{tikzcd}\incat{\bL}.
    \end{equation*}
    Taking data $(\tup{X},Y,Z,\tup{p},f,g,\tup{v},w,\tup{\phi},\theta)$ as in \cref{eq:diagram_finality}, we can show that the cell $\fl_\tup{u}$ is the composite of the cells $(\fl_{u_1},\dots,\fl_{u_n})$:
    \begin{equation*}
        \begin{tikzcd}[tri]
            FA_0\ar[dr,"\fl_{A_0}"']\lar[rr,path,"F\tup{u}"] &{}& FA_n\ar[dl,"\fl_{A_n}"] \\
            & L &
            \cellsymb(\fl_\tup{u})[above=-3]{1-2}{2-2}
        \end{tikzcd}
        =
        \begin{tikzcd}[tri]
            FA_0\ar[d,"Fp_0"']\lar[rr,path,"F\tup{u}"] && FA_n\ar[d,"Fp_n"] \\
            F\Phi X_0\ar[d,"F\Phi f"']\lar[rr,path,"F\Phi\tup{v}"] && F\Phi X_n\ar[d,"F\Phi g"] \\
            F\Phi Y\ar[dr,"l_Y"']\lar[rr,phan,"F\Phi w"] &{}& F\Phi Z\ar[dl,"l_Z"] \\
            & L &
            \cellsymb(F\tup{\phi}){1-1}{2-3}
            \cellsymb(F\Phi\theta){2-1}{3-3}
            \cellsymb(l_w)[above=-3]{3-2}{4-2}
        \end{tikzcd}
    \end{equation*}
    \begin{equation*}
        =
        \begin{tikzcd}
            FA_0\ar[d,"Fp_0"']\lar[r,"Fu_1"] & FA_1\ar[d,"Fp_1"{description}]\lar[r,"Fu_2"] & \cdots\lar[r,"Fu_{n-1}"] & FA_{n-1}\ar[d,"Fp_{n-1}"{description}]\lar[r,"Fu_n"] & FA_n\ar[d,"Fp_n"] \\
            F\Phi X_0\ar[drr,"l_{X_0}"',bend right=10]\lar[r,phan,"F\Phi v_1"] & F\Phi X_1\ar[dr,"l_{X_1}"{right}]\lar[r,phan,"F\Phi v_2"] & \cdots\lar[r,phan,"F\Phi v_{n-1}"] & F\Phi X_{n-1}\ar[dl,"l_{X_{n-1}}"{left}]\lar[r,phan,"F\Phi v_n"] & F\Phi X_n\ar[dll,"l_{X_n}",bend left=10] \\[10pt]
            && L &&
            \cellsymb(F\phi_1){1-1}{2-2}
            \cellsymb(F\phi_2){1-2}{2-3}
            \cellsymb(F\phi_{n-1}){1-4}{2-3}
            \cellsymb(F\phi_n){1-4}{2-5}
            \cellsymb(l_{v_1})[above]{2-1}{3-3}
            \cellsymb(l_{v_n})[above]{2-5}{3-3}
        \end{tikzcd}
    \end{equation*}
    \begin{equation*}
        =
        \begin{tikzcd}[scriptsizecolumn]
            FA_0\ar[drr,"\fl_{A_0}"',bend right=10]\lar[r,"Fu_1"] & FA_1\ar[dr,"\fl_{A_1}"{right}]\lar[r,"Fu_2"] & \cdots\lar[r,"Fu_{n-1}"] & FA_{n-1}\ar[dl,"\fl_{A_{n-1}}"{left}]\lar[r,"Fu_n"] & FA_n\ar[dll,"\fl_{A_n}",bend left=10] \\[10pt]
            && L &&
            \cellsymb(\fl_{u_1})[above]{1-1}{2-3}
            \cellsymb(\fl_{u_n})[above]{1-5}{2-3}
        \end{tikzcd}\incat{\bL}.
    \end{equation*}
    To show that $\fl$ is a tight cocone, take an arbitrary cell
    \begin{equation}\label{eq:arbitrary_cell_in_K}
        \begin{tikzcd}
            A_0\ar[d,"b"']\lar[r,path,"\tup{u}"] & A_n\ar[d,"c"] \\
            B\lar[r,phan,"v"'] & C
            \cellsymb(\alpha){1-1}{2-2}
        \end{tikzcd}\incat{\bK}.
    \end{equation}
    Taking an object $(Z,\chi)\in\coslice{v}{\Phi}$, we have the following:
    \begin{equation*}
        \begin{tikzcd}[tri]
            FA_0\ar[d,"Fb"']\lar[rr,path,"F\tup{u}"] & & FA_n\ar[d,"Fc"] \\
            FB\ar[dr,"\fl_B"']\lar[rr,phan,"Fv"] &{}& FC\ar[dl,"\fl_C"] \\
            & L &
            \cellsymb(F\alpha){1-1}{2-3}
            \cellsymb(\fl_v)[above=-3]{2-2}{3-2}
        \end{tikzcd}
        =
        \begin{tikzcd}[tri]
            FA_0\ar[d,"Fb"']\lar[rr,path,"F\tup{u}"] & & FA_n\ar[d,"Fc"] \\
            FB\ar[d,"F\chi^0"']\lar[rr,phan,"Fv"] & & FC\ar[d,"F\chi^1"] \\
            F\Phi Z^0\ar[dr,"l_{Z^0}"']\lar[rr,phan,"F\Phi Z"] &{}& F\Phi Z^1\ar[dl,"l_{Z^1}"] \\
            & L &
            \cellsymb(F\alpha){1-1}{2-3}
            \cellsymb(F\chi){2-1}{3-3}
            \cellsymb(l_Z)[above=-3]{3-2}{4-2}
        \end{tikzcd}
        =
        \begin{tikzcd}[tri]
            FA_0\ar[dr,"\fl_{A_0}"']\lar[rr,path,"F\tup{u}"] &{}& FA_n\ar[dl,"\fl_{A_n}"] \\
            & L &
            \cellsymb(\fl_\tup{u})[above=-3]{1-2}{2-2}
        \end{tikzcd}\incat{\bL}.
    \end{equation*}
    Therefore, $\fl$ becomes a tight cocone.
    The uniqueness of $\fl$ is trivial.

    We next show \cref{thm:invariance_by_finality-modules} under the additional assumption on left-pullingness.
    Since the functor $-_\Phi\colon\Mdl{F}{M}\arr[][1]\Mdl{F\Phi}{M}$ is fully faithful by \cref{thm:invariance_by_finality-modulations}, it suffices to show that the functor $-_\Phi$ is essentially surjective.
    Let $m$ be a left $F\Phi$-module with a vertex $M$.
    Consider a functor $G_A\colon A/(\Tcat\Phi)\to\Larcat\bL$ defined by the following assignment:
    \begin{equation*}
        \begin{tikzcd}[small]
            A\ar[d,"p"] \\
            \Phi X
        \end{tikzcd}\incat{\bK}
        \qquad\mapsto\qquad
        \begin{tikzcd}
            F\Phi X\lar[r,"m_X"] & M
        \end{tikzcd}\incat{\bL}.
    \end{equation*}
    Note that $G_A$ can be decomposed into two functors $A/(\Tcat\Phi)\to\Tcat\bJ\arr(m_{(-)})\Larcat\bL$, where the first one is the forgetful functor and the second one is induced by the left module $m$.
    By the assumption, there are an object $A\arr(p_0)[][1]\Phi X_0$ in $A/(\Tcat\Phi)$ and a restriction, denoted by $\fm_A$, of the following form:
    \begin{equation}\label{eq:restriction_given_by_assumption}
        \begin{tikzcd}
            FA\ar[d,"Fp_0"']\lar[r,"\fm_A"] & M\ar[d,equal] \\
            F\Phi X_0\lar[r,"m_{X_0}"'] & M
            \cellsymb(\cart){1-1}{2-2}
        \end{tikzcd}\incat{\bL}.
    \end{equation}
    Since $A/(\Tcat\Phi)$ is connected and simply connected, the above cell \cref{eq:restriction_given_by_assumption} uniquely extends to a cone over $G_A$ of the following form:
    \begin{equation}\label{eq:cone_over_GA}
        \begin{tikzcd}
            FA\ar[d,"Fp"']\lar[r,"\fm_A"] & M\ar[d,equal] \\
            F\Phi X\lar[r,"m_X"'] & M
            \cellsymb(\rho^p_X\colon\cart){1-1}{2-2}
        \end{tikzcd}\incat{\bL}\text{, where }(X,p)\in A/(\Tcat\Phi).
    \end{equation}
    Note that $\rho^p_X$ automatically becomes cartesian since the cell \cref{eq:restriction_given_by_assumption} (=$\rho^{p_0}_{X_0}$) is cartesian.
    Since $A/(\Tcat\Phi)$ is connected, the cone \cref{eq:cone_over_GA} over $G_A$ becomes jointly cartesian.
    Furthermore, since $\coslice{\tup{u}}{\Phi}$ is connected for $A\larr(\tup{u})[path]B$ in $\bK$, a cone over $\coslice{\tup{u}}{\Phi}\arr((-)^0) A/(\Tcat\Phi)\arr(G_A)\Larcat\bL$ obtained by composing $(-)^0$ with the cone \cref{eq:cone_over_GA} also becomes jointly cartesian.

    Let $A\arr(f)[][1]B$ be a tight arrow in $\bK$.
    Then, the assignment to $(X,p)\in B/(\Tcat\Phi)$, the cell $\rho^{f\tcomp p}_X$ gives a cone over $G_B$.
    Using the joint cartesianness of ``$\rho$,'' we have a unique cell $\fm_f$ satisfying the following for any $(X,p)\in B/(\Tcat\Phi)$:
    \begin{equation*}
        \begin{tikzcd}
            FA\ar[d,"Ff"']\lar[r,"\fm_{A}"] & M\ar[d,equal] \\
            FB\ar[d,"Fp"'] & M\ar[d,equal] \\
            F\Phi X\lar[r,"m_X"'] & M
            \cellsymb(\rho^{f\tcomp p}_X){1-1}{3-2}
        \end{tikzcd}
        =
        \begin{tikzcd}
            FA\ar[d,"Ff"']\lar[r,"\fm_A"] & M\ar[d,equal] \\
            FB\ar[d,"Fp"']\lar[r,"\fm_B"] & M\ar[d,equal] \\
            F\Phi X\lar[r,"m_X"'] & M
            \cellsymb(\fm_f){1-1}{2-2}
            \cellsymb(\rho^p_X){2-1}{3-2}
        \end{tikzcd}\incat{\bL}.
    \end{equation*}
    It easily follows that the assignment $f\mapsto \fm_f$ is functorial.
    
    Let $A_0\larr(\tup{u})[path]A_n$ be a loose path in $\bK$.
    Then, the assignment to $(X,\phi)\in\coslice{\tup{u}}{\Phi}$, a cell on the left below gives a cone over $\coslice{\tup{u}}{\Phi}\arr((-)^0) A_0/(\Tcat\Phi)\arr(G_{A_0})\Larcat\bL$.
    Using the joint cartesianness of ``$\rho$,'' we have a unique cell, denoted by $\fm_\tup{u}$, such that the following holds for every object $(X,\phi)\in\coslice{\tup{u}}{\Phi}$:
    \begin{equation*}
        \begin{tikzcd}
            FA_0\ar[d,"F\phi^0"']\lar[r,path,"F\tup{u}"] & FA_n\ar[d,"F\phi^1"]\lar[r,"\fm_{A_n}"] & M\ar[d,equal] \\
            F\Phi X^0\ar[d,equal]\lar[r,phan,"F\Phi X"'] & F\Phi X^1\lar[r,"m_{X^1}"'] & M\ar[d,equal] \\
            F\Phi X^0\lar[rr,"m_{X^0}"'] && M
            \cellsymb(F\phi){1-1}{2-2}
            \cellsymb(\rho^{\phi^1}_{X^1}){1-2}{2-3}
            \cellsymb(m_X){2-1}{3-3}
        \end{tikzcd}
        =
        \begin{tikzcd}
            FA_0\ar[d,equal]\lar[r,path,"F\tup{u}"] & FA_n\lar[r,"\fm_{A_n}"] & M\ar[d,equal] \\
            FA_0\ar[d,"F\phi^0"']\lar[rr,"\fm_{A_0}"] & & M\ar[d,equal] \\
            F\Phi X^0\lar[rr,"m_{X^0}"'] & & M
            \cellsymb(\fm_\tup{u}){1-1}{2-3}
            \cellsymb(\rho^{\phi^0}_{X^0}){2-1}{3-3}
        \end{tikzcd}\incat{\bL}.
    \end{equation*}
    Taking data $(\tup{X},Y,Z,\tup{p},f,g,\tup{v},w,\tup{\phi},\theta)$ as in \cref{eq:diagram_finality}, we can decompose the cell $\fm_\tup{u}$ into the cells $(\fm_{u_1},\dots,\fm_{u_n})$ as follows:
    \begin{equation*}
        \begin{tikzcd}
            FA_0\ar[d,"Fp_0"']\lar[r,path,"F\tup{u}"] & FA_n\ar[d,"Fp_n"']\lar[r,"\fm_{A_n}"] & M\ar[dd,equal] \\
            F\Phi X_0\ar[d,"F\Phi f"']\lar[r,path,"F\Phi\tup{v}"] & F\Phi X_n\ar[d,"F\Phi g"'] & \\
            F\Phi Y\ar[d,equal]\lar[r,phan,"F\Phi w"'] & F\Phi Z\lar[r,"m_Z"'] & M\ar[d,equal] \\
            F\Phi Y\lar[rr,"m_Y"'] && M
            \cellsymb(F\tup{\phi}){1-1}{2-2}
            \cellsymb(F\Phi\theta){2-1}{3-2}
            \cellsymb(\rho^{p_n\tcomp\Phi g}_Z)[right=-8]{1-2}{3-3}
            \cellsymb(m_w){3-1}{4-3}
        \end{tikzcd}
        =
        \begin{tikzcd}
            FA_0\ar[d,"Fp_0"']\lar[r,path,"F\tup{u}"] & FA_n\ar[d,"Fp_n"']\lar[r,"\fm_{A_n}"] & M\ar[d,equal] \\
            F\Phi X_0\ar[d,"F\Phi f"']\lar[r,path,"F\Phi\tup{v}"] & F\Phi X_n\ar[d,"F\Phi g"']\lar[r,"m_{X_n}"] & M\ar[d,equal] \\
            F\Phi Y\ar[d,equal]\lar[r,phan,"F\Phi w"'] & F\Phi Z\lar[r,"m_Z"'] & M\ar[d,equal] \\
            F\Phi Y\lar[rr,"m_Y"'] && M
            \cellsymb(F\tup{\phi}){1-1}{2-2}
            \cellsymb(F\Phi\theta){2-1}{3-2}
            \cellsymb(\rho^{p_n}_{X_n}){1-2}{2-3}
            \cellsymb(m_g){2-2}{3-3}
            \cellsymb(m_w){3-1}{4-3}
        \end{tikzcd}
    \end{equation*}
    \begin{equation*}
        =
        \begin{tikzcd}
            FA_0\ar[d,"Fp_0"']\lar[r,path,"F\tup{u}"] & FA_n\ar[d,"Fp_n"']\lar[r,"\fm_{A_n}"] & M\ar[d,equal] \\
            F\Phi X_0\ar[d,equal]\lar[r,path,"F\Phi\tup{v}"] & F\Phi X_n\lar[r,"m_{X_n}"] & M\ar[d,equal] \\
            F\Phi X_0\ar[d,"F\Phi f"']\lar[rr,"m_{X_0}"] & & M\ar[d,equal] \\
            F\Phi Y\lar[rr,"m_Y"'] && M
            \cellsymb(F\tup{\phi}){1-1}{2-2}
            \cellsymb(\rho^{p_n}_{X_n}){1-2}{2-3}
            \cellsymb(m_\tup{v}){2-1}{3-3}
            \cellsymb(m_f){3-1}{4-3}
        \end{tikzcd}
        =
        \begin{tikzcd}
            FA_0\ar[d,equal]\lar[r,path,"{F(u_1,\dots,u_{n-1})}"] &[20pt] FA_{n-1}\ar[d,equal]\lar[r,"Fu_n"] & FA_n\lar[r,"\fm_{A_n}"] & M\ar[d,equal] \\
            FA_0\ar[d,"Fp_0"']\lar[r,path] & FA_{n-1}\ar[d,"Fp_{n-1}"]\lar[rr,"\fm_{A_{n-1}}"] & & M\ar[d,equal] \\
            F\Phi X_0\ar[d,equal]\lar[r,path,"{F\Phi(v_1,\dots,v_{n-1})}"] & F\Phi X_{n-1}\lar[rr,"m_{X_{n-1}}"] & & M\ar[d,equal] \\
            F\Phi X_0\ar[d,"F\Phi f"']\lar[rrr,"m_{X_0}"] & & & M\ar[d,equal] \\
            F\Phi Y\lar[rrr,"m_Y"'] & & & M
            \cellsymb(\veq){1-1}{2-2}
            \cellsymb(\fm_{u_n}){1-2}{2-4}
            \cellsymb(F(\phi_1,\dots,\phi_{n-1})){2-1}{3-2}
            \cellsymb(\rho^{p_{n-1}}_{X_{n-1}}){2-2}{3-4}
            \cellsymb(m_{(v_1,\dots,v_{n-1})}){3-1}{4-4}
            \cellsymb(m_f){4-1}{5-4}
        \end{tikzcd}
    \end{equation*}
    \begin{equation*}
        =\cdots =
        \begin{tikzcd}
            FA_0\ar[d,equal]\lar[r,path,"F\tup{u}"] & FA_n\lar[r,"\fm_{A_n}"] & M\ar[d,equal] \\
            FA_0\ar[d,"Fp_0"']\lar[rr,"\fm_{A_0}"] & & M\ar[d,equal] \\
            F\Phi X_0\ar[d,"F\Phi f"']\lar[rr,"m_{X_0}"] & & M\ar[d,equal] \\
            F\Phi Y\lar[rr,"m_Y"'] & & M
            \cellsymb((\fm_{u_1},\dots,\fm_{u_n})){1-1}{2-3}
            \cellsymb(\rho^{p_0}_{X_0}){2-1}{3-3}
            \cellsymb(m_f){3-1}{4-3}
        \end{tikzcd}
        =
        \begin{tikzcd}
            FA_0\ar[d,equal]\lar[r,path,"F\tup{u}"] & FA_n\lar[r,"\fm_{A_n}"] & M\ar[d,equal] \\
            FA_0\ar[d,"Fp_0"']\lar[rr,"\fm_{A_0}"] & & M\ar[dd,equal] \\
            F\Phi X_0\ar[d,"F\Phi f"'] & & \\
            F\Phi Y\lar[rr,"m_Y"'] & & M
            \cellsymb((\fm_{u_1},\dots,\fm_{u_n})){1-1}{2-3}
            \cellsymb(\rho^{p_0\tcomp\Phi f}_Y\colon\cart){2-1}{4-3}
        \end{tikzcd}\incat{\bL}.
    \end{equation*}
    To show that $\fm$ is a left $F$-module, let us take an arbitrary cell $\alpha$ in $\bK$ as in \cref{eq:arbitrary_cell_in_K}.
    Taking an object $(Y,\psi)\in\coslice{v}{\Phi}$, we have the following:
    \begin{equation*}
        \begin{tikzcd}
            FA_0\ar[d,"Fb"']\lar[r,path,"F\tup{u}"] & FA_n\ar[d,"Fc"]\lar[r,"\fm_{A_n}"] & M\ar[d,equal] \\
            FB\ar[d,equal]\lar[r,phan,"Fv"] & FC\lar[r,"\fm_C"] & M\ar[d,equal] \\
            FB\ar[d,"F\psi^0"']\lar[rr,"\fm_B"] & & M\ar[d,equal] \\
            F\Phi Y^0\lar[rr,"m_{Y^0}"'] & & M
            \cellsymb(F\alpha){1-1}{2-2}
            \cellsymb(\fm_c){1-2}{2-3}
            \cellsymb(\fm_v){2-1}{3-3}
            \cellsymb(\rho^{\psi^0}_{Y^0}\colon\cart){3-1}{4-3}
        \end{tikzcd}
        =
        \begin{tikzcd}
            FA_0\ar[d,"Fb"']\lar[r,path,"F\tup{u}"] & FA_n\ar[d,"Fc"]\lar[r,"\fm_{A_n}"] & M\ar[d,equal] \\
            FB\ar[d,"F\psi^0"']\lar[r,phan,"Fv"] & FC\ar[d,"F\psi^1"]\lar[r,"\fm_C"] & M\ar[d,equal] \\
            F\Phi Y^0\ar[d,equal]\lar[r,phan,"F\Phi Y"] & F\Phi Y^1\lar[r,"m_{Y^1}"] & M\ar[d,equal] \\
            F\Phi Y^0\lar[rr,"m_{Y^0}"'] & & M
            \cellsymb(F\alpha){1-1}{2-2}
            \cellsymb(\fm_c){1-2}{2-3}
            \cellsymb(F\psi){2-1}{3-2}
            \cellsymb(\rho^{\psi^1}_{Y^1}){2-2}{3-3}
            \cellsymb(m_Y){3-1}{4-3}
        \end{tikzcd}
    \end{equation*}
    \begin{equation*}
        =
        \begin{tikzcd}
            FA_0\ar[d,"Fb"']\lar[r,path,"F\tup{u}"] & FA_n\ar[d,"Fc"]\lar[r,"\fm_{A_n}"] & M\ar[dd,equal] \\
            FB\ar[d,"F\psi^0"'] & FC\ar[d,"F\psi^1"] & \\
            F\Phi Y^0\ar[d,equal]\lar[r,phan,"F\Phi Y"] & F\Phi Y^1\lar[r,"m_{Y^1}"] & M\ar[d,equal] \\
            F\Phi Y^0\lar[rr,"m_{Y^0}"'] & & M
            \cellsymb(F(\alpha\tcomp\psi)){1-1}{3-2}
            \cellsymb(\rho^{c\tcomp\psi^1}_{Y^1})[right=-8]{1-2}{3-3}
            \cellsymb(m_Y){3-1}{4-3}
        \end{tikzcd}
        =
        \begin{tikzcd}
            FA_0\ar[d,equal]\lar[r,path,"F\tup{u}"] & FA_n\lar[r,"\fm_{A_n}"] & M\ar[d,equal] \\
            FA_0\ar[d,"Fb"']\lar[rr,"\fm_{A_0}"] & & M\ar[dd,equal] \\
            FB\ar[d,"F\psi^0"'] & & \\
            F\Phi Y^0\lar[rr,"m_{Y^0}"'] & & M
            \cellsymb(\fm_\tup{u}){1-1}{2-3}
            \cellsymb(\rho^{b\tcomp\psi^0}_{Y^0}){2-1}{4-3}
        \end{tikzcd}
    \end{equation*}
    \begin{equation*}
        =
        \begin{tikzcd}
            FA_0\ar[d,equal]\lar[r,path,"F\tup{u}"] & FA_n\lar[r,"\fm_{A_n}"] & M\ar[d,equal] \\
            FA_0\ar[d,"Fb"']\lar[rr,"\fm_{A_0}"] & & M\ar[d,equal] \\
            FB\ar[d,"F\psi^0"']\lar[rr,"\fm_B"] & & M\ar[d,equal] \\
            F\Phi Y^0\lar[rr,"m_{Y^0}"'] & & M
            \cellsymb(\fm_\tup{u}){1-1}{2-3}
            \cellsymb(\fm_b){2-1}{3-3}
            \cellsymb(\rho^{\psi^0}_{Y^0}\colon\cart){3-1}{4-3}
        \end{tikzcd}\incat{\bL},
    \end{equation*}
    which shows that $\fm$ becomes a left $F$-module.
    We can easily verify that the cells $\rho^\id_X$ for $X\in\bJ$ form an invertible modulation $\fm_\Phi\cong m$ of type 0, which finishes the proof.
\end{proof}

\begin{example}
    Let $\bJ$ be the \ac{AVDC} consisting of two objects $0,1$ and a unique loose arrow $0\larr[][1]1$.
    Let $\bK$ be an \ac{AVDC} defined by the following:
    \begin{itemize}
        \item
            $\bK$ has just two objects $0,1$;
        \item
            $\bK$ has no non-trivial tight arrow;
        \item
            $\bK$ has just three loose arrows $0\larr[][1] 0\larr[][1] 1\larr[][1] 1$;
        \item
            For any boundary for cells, which include 0-coary ones, $\bK$ has a unique cell filling it.
    \end{itemize}
    Then, the inclusion $\bJ\to\bK$ gives a naively final \ac{AVD}-functor.
    An \ac{AVD}-functor $F\colon\bK\to\bL$ is the same as a choice of a loose arrow $F0\larr[][1] F1$ and loose units on $F0$ and $F1$.
    By \cref{thm:invariance_by_finality}, we can ignore the loose units when we regard $F$ as a diagram for tight cocones, modules, and modulations.
\end{example}

\begin{corollary}\label{cor:versatile_colim_and_finality}
    Let $\Phi\colon\bJ\to\bK$ be a naively final \ac{AVD}-functor.
    Let $F\colon\bK\to\bL$ be an \ac{AVD}-functor such that $Ff$ is pulling in $\bL$ for any tight arrow $f$ in $\bK$.
    Then, a tight cocone $\xi$ from $F$ is a (\ac{VD}-)versatile colimit if and only if so is the tight cocone $\xi_\Phi$ from $F\Phi$.
\end{corollary}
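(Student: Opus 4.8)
The plan is to reduce everything to \cref{thm:invariance_by_finality}, which already shows that precomposition with $\Phi$ induces isomorphisms $\Cone\vvect{F}{L}\cong\Cone\vvect{F\Phi}{L}$, equivalences $\Mdl{F}{M}\simeq\Mdl{F\Phi}{M}$, and bijections among the modulations of each fixed type. Every versatile-colimit condition \ref{axiom:tight_bij}\ref{axiom:loose_esssurj_left}\ref{axiom:loose_esssurj_right}\ref{axiom:cell_type1_left}\ref{axiom:cell_type1_right}\ref{axiom:cell_type2}\ref{axiom:cell_type3} is phrased purely in terms of these categories and modulation classes, so once the relevant canonical functors are seen to be compatible with $-_\Phi$, each condition transfers between $\xi$ and $\xi_\Phi$, yielding the biconditional.

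First I would dispatch the pulling hypotheses on components. In the forward direction, if $\xi$ is a versatile colimit then every $\xi_A$ $(A\in\bK)$ is pulling, so in particular $(\xi_\Phi)_X=\xi_{\Phi X}$ is pulling for all $X\in\bJ$. For the converse, assume $\xi_\Phi$ is a versatile colimit, so every $\xi_{\Phi X}$ is pulling. Given an arbitrary $A\in\bK$, finality of $\Phi$ makes the comma category $A/(\Tcat\Phi)$ nonempty (it is connected by \cref{lem:tightwise_functor_of_final_AVDfunctor}), so we may choose a tight arrow $A\arr(p)[][1]\Phi X$ in $\bK$; then $\xi_A=(Fp)\tcomp\xi_{\Phi X}$ since $\xi$ is a tight cocone. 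As $Fp$ is pulling by hypothesis and restrictions compose (stacking two cartesian cells yields a cartesian cell), $\xi_A$ is pulling. This same observation verifies the additional left-pullingness condition of \cref{thm:invariance_by_finality}\cref{thm:invariance_by_finality-modules}, so the equivalences $\Mdl{F}{M}\simeq\Mdl{F\Phi}{M}$ are available in both directions.

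Next I would record the compatibilities between the canonical constructions and $-_\Phi$. From the definitions $(\xi\tcomp k)_\Phi=\xi_\Phi\tcomp k$ for any $\Xi\arr(k)[][1]L$, so the triangle relating $\xi\tcomp-$, $\xi_\Phi\tcomp-$, and the isomorphism of \cref{thm:invariance_by_finality}\cref{thm:invariance_by_finality-cocones} commutes; hence \ref{axiom:tight_bij} holds for $\xi$ iff it holds for $\xi_\Phi$. Similarly, for a loose arrow $\Xi\larr(p)M$ the left $F$-module $\comp{\xi}p$ restricts along $\Phi$ to $\comp{\xi_\Phi}p$, since both have the same components $p(\xi_{\Phi X},\id)$ and share the defining cartesian cells, giving $(\comp{\xi}p)_\Phi\cong\comp{\xi_\Phi}p$; thus the triangle relating $\comp{\xi}-$, $\comp{\xi_\Phi}-$, and the equivalence $\Mdl{F}{M}\simeq\Mdl{F\Phi}{M}$ commutes up to isomorphism, and essential surjectivity of either functor transfers to the other, so \ref{axiom:loose_esssurj_left} passes back and forth; the loosewise dual handles \ref{axiom:loose_esssurj_right}. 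Finally, \ref{axiom:cell_type1_left}\ref{axiom:cell_type1_right}\ref{axiom:cell_type2}\ref{axiom:cell_type3} each assert the unique existence of a cell corresponding to a modulation of a fixed type whose bottom module is of the form $\comp{\xi}-$ or $-\conj{\xi}$; by \cref{thm:invariance_by_finality}\cref{thm:invariance_by_finality-modulations} the classes of such modulations are in bijection under $-_\Phi$, and the defining equalities match under the identifications just recorded, so these conditions transfer too.

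I expect the main obstacle to be the bookkeeping of the previous paragraph, specifically the clean identification $(\comp{\xi}p)_\Phi\cong\comp{\xi_\Phi}p$ and its right-handed analogue $(q\conj{\xi})_\Phi\cong q\conj{\xi_\Phi}$, together with checking that the defining equalities in \ref{axiom:cell_type1_left}--\ref{axiom:cell_type3} are literally carried across by the bijections of \cref{thm:invariance_by_finality}. Once these naturality-type identifications are in place, each biconditional is formal.
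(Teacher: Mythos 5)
Your proposal is correct and follows exactly the route the paper takes: the paper's own proof is simply the citation ``This follows from \cref{thm:invariance_by_finality},'' and your argument is a faithful expansion of that reduction, including the (correct) use of the hypothesis that $Ff$ is pulling together with nonemptiness of $A/(\Tcat\Phi)$ to transfer pullingness of the cocone components in the converse direction. No gaps.
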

\begin{proof}
    This follows from \cref{thm:invariance_by_finality}.
\end{proof}

\begin{remark}
    The definition of naively final \ac{AVD}-functors is too strong for the purpose of proving \cref{thm:invariance_by_finality} and \cref{cor:versatile_colim_and_finality}.
    For example, identity \ac{AVD}-functors are not necessarily naively final, but \cref{thm:invariance_by_finality} and \cref{cor:versatile_colim_and_finality} clearly hold for them.
\end{remark}

\section{Density in the case of virtual equipments}\label[appendix]{sec:density_ve}
\begin{definition}\label{def:canonical_tight_cocone}
    Let $\bE$ be an \ac{AVDC} with restrictions.
    Let $\bX\subseteq\bE$ be a full sub-\ac{AVDC}.
    Fix an object $E\in\bE$.
    \begin{enumerate}
        \item
            We define an \ac{AVD}-functor $K_E\colon\Idimdbl(\slice{\bX}{E})\to\bE$, factoring through $\bX$, as follows:
            \begin{itemize}
                \item
                    For $x\in\slice{\bX}{E}$, $K_E(x)\coloneq Dx$.
                \item
                    For $x,y\in\slice{\bX}{E}$, $K_E(!_{xy})$ is defined to be the following restriction:
                    \begin{equation}\label{eq:cartcell_K_X}
                        \begin{tikzcd}[tri]
                            Dx\ar[dr,"x"']\lar[rr,"{K_E(!_{xy})}"] & {} & Dy\ar[dl,"y"] \\
                            & E &
                            \cellsymb(\cart)[above=-3]{1-2}{2-2}
                        \end{tikzcd}\incat{\bE}.
                    \end{equation}
                \item
                    For $x_0,\dots,x_n\in\slice{\bX}{E}$ and $x_0\arr(f)[][1]y,x_n\arr(g)[][1]z$ in $\slice{\bX}{E}$, the assignment to the unique cell $!$ in $\Idimdbl(\slice{\bX}{E})$ is defined using the universality of restrictions:
                    \begin{equation*}
                        \begin{tikzcd}
                            Dx_0\ar[d,"f"']\lar[r,"K_E(!_{x_0x_1})"] & \cdots\lar[r,"K_E(!_{x_{n-1}x_n})"] & Dx_n\ar[d,"g"] \\
                            Dy\ar[dr,"y"']\lar[rr,"K_E(!_{yz})"] & {} & Dz\ar[dl,"z"] \\
                            & E &
                            \cellsymb(K_E(!)){1-1}{2-3}
                            \cellsymb(\cart)[above=-3]{2-2}{3-2}
                        \end{tikzcd}
                        =
                        \begin{tikzcd}[scriptsizecolumn]
                            Dx_0\ar[ddrr,"x_0"']\lar[r,"K_E(!)"] & Dx_1\ar[ddr,"x_1"{right}]\lar[r,"K_E(!)"] & \cdots\lar[r,"K_E(!)"] & Dx_{n-1}\ar[ddl,"x_{n-1}"{left}]\lar[r,"K_E(!)"] & Dx_n\ar[ddll,"x_n"] \\
                            &&&& \\
                            && E &&
                            \cellsymb(\cart)[above=5]{1-1}{3-3}
                            \cellsymb(\cart)[above=5]{1-5}{3-3}
                            \cellsymb(\cdots)[above=5]{1-3}{3-3}
                        \end{tikzcd}\incat{\bE}.
                    \end{equation*}
            \end{itemize}
        \item
            Furthermore, the cells \cref{eq:cartcell_K_X} yield a tight cocone $K_E\Rightarrow E$, which is denoted by $\kappa_E$.
            \qedhere
    \end{enumerate}
\end{definition}

\begin{theorem}[Density theorem]\label{thm:density}
    Let $\bE$ be an \ac{AVDC} with restrictions.
    For a full sub-\ac{AVDC} $\bX\subseteq\bE$ whose objects are collage-atomic in $\bE$, the following are equivalent:
    \begin{enumerate}
        \item\label{thm:density-1}
            $\bX\subseteq\bE$ is collage-dense.
        \item\label{thm:density-2}
            For every $E\in\bE$, the tight cocone $\kappa_E$ of \cref{def:canonical_tight_cocone} is a versatile colimit and the category $\slice{\bX}{E}$ is $C$-discrete.\qedhere
    \end{enumerate}
\end{theorem}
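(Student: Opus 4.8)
The theorem asserts the equivalence of two characterizations of collage-density for a full sub-\ac{AVDC} $\bX\subseteq\bE$ (with $\bE$ having restrictions and all objects of $\bX$ already collage-atomic). Since collage-atomicity is assumed on both sides, the real content is to reconcile the abstract condition ``every object is a large versatile collage of objects from $\bX$'' with the concrete condition ``the canonical cocone $\kappa_E$ is a versatile colimit and $\slice{\bX}{E}$ is $C$-discrete.'' The plan is to prove both implications by leveraging the collage-atomicity hypothesis to force any collage-presentation of $E$ to factor through the canonical one $K_E$, thereby identifying the shape $\Idimdbl\zero{S}$ of an abstract presentation with (a skeleton of) $\Idimdbl(\slice{\bX}{E})$.

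\textbf{Direction \cref{thm:density-2}$\Rightarrow$\cref{thm:density-1}.} This is the easier direction. Assuming $\kappa_E$ is a versatile colimit, I must produce a presentation of $E$ as a large versatile collage of objects of $\bX$, i.e.\ a versatile colimit of some $F\colon\Idimdbl\zero{S}\to\bE$ factoring through $\bX$. The issue is that $K_E$ has shape $\Idimdbl(\slice{\bX}{E})$, whose underlying category is not discrete. Here the $C$-discreteness of $\slice{\bX}{E}$ enters: by \cref{lem:C_discrete}, there is a large set $\zero{S}$ and a final functor $\zero{S}\to\slice{\bX}{E}$ (onto a skeleton of $\Max(\slice{\bX}{E})$), inducing a final \ac{AVD}-functor $\Idimdbl\zero{S}\to\Idimdbl(\slice{\bX}{E})$ by \cref{prop:final_loosewise_VD_indiscrete}. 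Since every $K_E(!_{xy})$ is a restriction, the tight arrows $K_E(x)=Dx\to E$ are built from restrictions and hence are pulling in $\bE$ (using that $\bE$ has restrictions), so \cref{cor:versatile_colim_and_finality} applies: restricting $\kappa_E$ along this final functor yields a versatile colimit of the composite $\Idimdbl\zero{S}\to\bE$, which factors through $\bX$. This exhibits $E$ as the required large versatile collage, giving collage-density.

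\textbf{Direction \cref{thm:density-1}$\Rightarrow$\cref{thm:density-2}.} This is where the main work lies. Fix $E\in\bE$. By collage-density, $E$ is a versatile colimit $\xi$ of some $F\colon\Idimdbl\zero{S}\to\bE$ with each $F c\in\bX$. First I would establish $C$-discreteness of $\slice{\bX}{E}$ via \cref{lem:C_discrete}\cref{lem:C_discrete-3}: the coprojections $\xi_s\colon Fs\to E$ form a large set of objects in $\slice{\bX}{E}$, and collage-atomicity of each $Fc\in\bX$ guarantees that every object $x\colon Dx\to E$ of $\slice{\bX}{E}$ factors \emph{uniquely} through a unique coprojection $\xi_s$—precisely condition \cref{lem:C_discrete-3}. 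The assignment $s\mapsto\xi_s$ then becomes a final functor $\zero{S}\to\slice{\bX}{E}$ onto a skeleton of $\Max(\slice{\bX}{E})$. The harder half is showing $\kappa_E$ itself is a versatile colimit. I would argue that the final functor $\Idimdbl\zero{S}\to\Idimdbl(\slice{\bX}{E})$ (final by \cref{prop:final_loosewise_VD_indiscrete}), together with the fact that both $F$ and $K_E$ send objects into $\bX$ with $Fc$ and the comparison tight arrows being pulling, lets me invoke \cref{cor:versatile_colim_and_finality} in reverse: since $\xi$ (the restriction of a would-be colimit along the final functor) is a versatile colimit and the relevant diagram arrows are pulling, the original $\kappa_E$ must be a versatile colimit. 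The subtlety is checking that $\xi$ genuinely agrees with $\kappa_E$ restricted along the finality functor up to the admissible-isomorphism bookkeeping of \cref{prop:vers_colim_is_up_to_admissible_iso}, using the strongness theorem (\cref{thm:strongness_theorem}) to identify the hom-loose-arrows $K_E(!_{xy})$ as the correct restrictions.

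\textbf{Main obstacle.} The crux is the second direction's identification of $\kappa_E$ with the abstractly-given versatile colimit $\xi$. The abstract presentation $F$ has discrete shape $\Idimdbl\zero{S}$, whereas $\kappa_E$ has the genuinely non-discrete shape $\Idimdbl(\slice{\bX}{E})$; bridging them requires \cref{cor:versatile_colim_and_finality} in the delicate direction where one \emph{infers} that a cocone on the larger shape is a versatile colimit from the fact that its restriction to the cofinal subshape is. I expect the pullingness hypotheses (guaranteed here because $\bE$ has restrictions, so every $K_E(!_{xy})$-type comparison arrow is a restriction and hence pulling) to be exactly what makes \cref{cor:versatile_colim_and_finality} applicable, but verifying that the canonical cells \cref{eq:cartcell_K_X} match the restrictions forced by $\xi$—and that no spurious cells are introduced—is the step demanding care, and it is where the strongness theorem does the heavy lifting.
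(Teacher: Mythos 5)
Your proposal is correct and follows essentially the same route as the paper: the (ii)$\Rightarrow$(i) direction via \cref{lem:C_discrete}, \cref{prop:final_loosewise_VD_indiscrete}, and \cref{cor:versatile_colim_and_finality}, and the (i)$\Rightarrow$(ii) direction by showing the coprojections give a final functor $\zero{S}\to\slice{\bX}{E}$ (hence $C$-discreteness), identifying $F$ with $K_E\circ\Idimdbl\Phi$ up to admissible isomorphism via the strongness theorem, and transferring the versatile-colimit property back along the final functor using \cref{prop:vers_colim_is_up_to_admissible_iso} and the ``if'' direction of \cref{cor:versatile_colim_and_finality}. The step you flag as the main obstacle is handled in the paper exactly as you anticipate.
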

\begin{proof}
    \proofdirection{\cref{thm:density-2}}{\cref{thm:density-1}}
    Since $\slice{\bX}{E}$ is $C$-discrete, there is a final functor $\Phi\colon\zero{S}\arr[][1.4]\slice{\bX}{E}$ from a large discrete category $\zero{S}$.
    By \cref{prop:final_loosewise_VD_indiscrete}, $\Phi$ induces a naively final \ac{AVD}-functor\linebreak \mbox{$\Idimdbl\Phi\colon\Idimdbl\zero{S}\to\Idimdbl(\slice{\bX}{E})$}.
    Then, \cref{cor:versatile_colim_and_finality} makes $(\kappa_E)_{\Idimdbl\Phi}$ be a versatile collage.
    
    \proofdirection{\cref{thm:density-1}}{\cref{thm:density-2}}
    Fix $E\in\bE$.
    By assumption, there is a large set $\zero{S}$, an \ac{AVD}-functor $F\colon\Idimdbl\zero{S}\to\bE$ factoring through $\bX$, and a tight cocone $\xi$ from $F$ that exhibits $E$ as a large versatile collage.
    Then, the following assignment yields a functor $\Phi\colon\zero{S}\to\slice{\bX}{E}$:
    \begin{equation*}
        i\in\zero{S} \qquad\arr(\Phi)[mapsto]\qquad
        \begin{tikzcd}[smallrow]
            Fi\ar[d,"\xi_i"] \\
            E
        \end{tikzcd}\incat{\slice{\bX}{E}}.
    \end{equation*}
    By the definition of collage-atomic objects, the functor $\Phi$ becomes final, hence $\slice{\bX}{E}$ is $C$-discrete.
    By virtue of the strongness theorem (\cref{thm:strongness_theorem}), we have an (admissible) invertible tight \ac{AVD}-transformation of the following form:
    \begin{equation*}
        \begin{tikzcd}[scriptsizerow]
            \Idimdbl\zero{S}\ar[dr,"\Idimdbl\Phi"']\ar[rr,"F"] & {} & \bE \\
            & \Idimdbl(\slice{\bX}{E})\ar[ru,"K_E"'] &
            \dtwocell(\cong){1-2}{2-2}
        \end{tikzcd}\incat{\AVDC}.
    \end{equation*}
    By \cref{prop:final_loosewise_VD_indiscrete}, the induced \ac{AVD}-functor $\Idimdbl\Phi$ is naively final.
    Then, \cref{prop:vers_colim_is_up_to_admissible_iso}\cref{prop:vers_colim_is_up_to_admissible_iso-diagram} and \cref{cor:versatile_colim_and_finality} imply that the canonical tight cocone $\kappa_E$ becomes a versatile colimit.
\end{proof}

\printbibliography

\end{document}